\def\ol#1{\overline{#1}}
\def\wt#1{\widetilde{#1}}
\def\ul#1{\underline{#1}}
\theoremstyle{plain}
    \newtheorem{theorem}{Theorem}[section]
    \newtheorem{proposition}[theorem]{Proposition}
    \newtheorem{lemma}[theorem]{Lemma}
    \newtheorem{corollary}[theorem]{Corollary}
\theoremstyle{definition}
    \newtheorem{definition}[theorem]{Definition}
    \newtheorem{example}[theorem]{Example}
    \newtheorem{remark}[theorem]{Remark}
\def\Alphabet{A,B,C,D,E,F,G,H,I,J,K,L,M,N,O,P,Q,R,S,T,U,V,W,X,Y,Z}
\def\alphabet{a,b,c,d,e,f,g,h,i,j,k,l,m,n,o,p,q,r,s,t,u,v,w,x,y,z}
\def\endpiece{xxx}
\def\makeAlphabet[#1]{\expandafter\makeA#1,xxx,}
\def\makealphabet[#1]{\expandafter\makea#1,xxx,}
\def\makeA#1,{\def\temp{#1}\ifx\temp\endpiece\else%
\mkbb{#1}\mkfrak{#1}\mkbf{#1}\mkcal{#1}\mkscr{#1}\mkbs{#1}\expandafter\makeA\fi}%
\def\makea#1,{\def\temp{#1}\ifx\temp\endpiece\else\mkfrak{#1}\mkbf{#1}\mkbs{#1}\expandafter\makea\fi}%
\def\mkbb#1{\expandafter\def\csname bb#1\endcsname{\mathbb{#1}}}
\def\mkfrak#1{\expandafter\def\csname fr#1\endcsname{\mathfrak{#1}}}
\def\mkbf#1{\expandafter\def\csname b#1\endcsname{\mathbf{#1}}}
\def\mkcal#1{\expandafter\def\csname c#1\endcsname{\mathcal{#1}}}
\def\mkscr#1{\expandafter\def\csname s#1\endcsname{\mathscr{#1}}}
\def\mkbs#1{\expandafter\def\csname bs#1\endcsname{{\boldsymbol{#1}}}}
\def\makeop[#1]{\xmakeop#1,xxx,}
\def\mkop#1{\expandafter\def\csname #1\endcsname{{\mathrm{#1}}}} %
\def\xmakeop#1,{\def\temp{#1}\ifx\temp\endpiece\else\mkop{#1}\expandafter\xmakeop\fi}%
\def\makeup[#1]{\xmakeup#1,xxx,}
\def\mkup#1{\expandafter\def\csname #1\endcsname{{\mathrm{#1}\,}}} %
\def\xmakeup#1,{\def\temp{#1}\ifx\temp\endpiece\else\mkup{#1}\expandafter\xmakeup\fi}%
\def\wMHS{\sM}
\def\hint#1{}%
\begin{document}
\title[Mixed Plectic Hodge Structures]{Category of Mixed Plectic Hodge Structures}
\author[Bannai]{Kenichi Bannai$^{\star*\diamond}$}
\email{bannai@math.keio.ac.jp}
\address{${}^\star$Keio Institute of Pure and Applied Sciences (KiPAS), Graduate School of Science and Technology, Keio University, 3-14-1 Hiyoshi, Kouhoku-ku, Yokohama 223-8522, Japan}
\address{${}^*$Department of Mathematics, Faculty of Science and Technology, Keio University, 3-14-1 Hiyoshi, Kouhoku-ku, Yokohama 223-8522, Japan}
\address{${}^\diamond$Mathematical Science Team, RIKEN Center for Advanced Intelligence Project (AIP),1-4-1 Nihonbashi, Chuo-ku, Tokyo 103-0027, Japan}
\address{${}^\dagger$Faculty of Mathematics, Kyushu University 744, Motooka, Nishi-Ku, Fukuoka 819-0395, Japan}
\address{${}^\ddagger$Department of Mathematics, Graduate School of Science, Osaka University, Toyonaka, Osaka 560-0043, Japan}
\author[Hagihara]{Kei Hagihara$^{*\diamond}$}
\author[Kobayashi]{Shinichi Kobayashi$^\dagger$}
\author[Yamada]{Kazuki Yamada$^*$}
\author[Yamamoto]{Shuji Yamamoto$^{\star*\diamond}$}
\author[Yasuda]{Seidai Yasuda$^{\ddagger\diamond}$}
\date{\today \quad (Version 2.0)}
\begin{abstract}
	The purpose of this article is to investigate the properties of the category of mixed plectic Hodge structures defined by Nekov\'{a}\v{r} and Scholl \cite{NS}.
	We give an equivalent description of mixed plectic Hodge structures in terms of the weight and partial Hodge filtrations.
	We also construct an explicit complex calculating the extension groups in this category. 
\end{abstract}
\thanks{This research was conducted as part of the KiPAS program FY2014--2018 of the Faculty of Science and Technology at Keio University.
This research was supported in part by KAKENHI 26247004, 16J01911, 16K13742, 18H05233 as well as the 
JSPS Core-to-Core program ``Foundation of a Global Research Cooperative
Center in Mathematics focused on Number Theory and Geometry''. }
\subjclass[2010]{14C30} 
\maketitle
\setcounter{tocdepth}{1}
\setcounter{section}{0}
\tableofcontents%

%
%
%
\section{Introduction}
%
%
%

Let $g$ be an integer $\geq 0$.
In a very insightful article \cite{NS}, Nekov\'{a}\v{r} and Scholl  introduced the category of 
\textit{mixed $g$-plectic $\bbR$-Hodge structures}, which is a generalization of the category $\MHS_\bbR$ of 
mixed $\bbR$-Hodge structures originally defined by Deligne \cite{D1}.
 If we let $\cG$ be the tannakian fundamental group of $\MHS_\bbR$, then the category of mixed $g$-plectic $\bbR$-Hodge
structures was defined in \cite[\S 16]{NS} to be the category $\Rep_\bbR(\cG^g)$ of finite $\bbR$-representations of the pro-algebraic group $\cG^g$.
The purpose of this article is to investigate some properties of the category $\Rep_\bbR(\cG^g)$.
In particular we give a description of objects in $\Rep_\bbR(\cG^g)$ in terms of the weight and partial Hodge filtrations.
We then give an explicit complex calculating the extension groups in this category.  
This article arose as an attempt by the authors to understand the beautiful theory proposed by Nekov\'{a}\v{r} and Scholl.

The detailed content of this article is as follows.  We will mainly deal with the complex case, and will return to the real case at the end of the article.
 In \S 2, we review the properties of mixed $\bbC$-Hodge structures, and will review the construction of the tannakian fundamental
group $\cG_\bbC$ of the category of mixed $\bbC$-Hodge structures $\MHS_\bbC$.   
We will then give in Proposition \ref{prop: gexplicit} the following explicit description of objects in
the category $\Rep_\bbC(\cG^g_\bbC)$:

\begin{proposition}[=Proposition \ref{prop: gexplicit}]
	An object in $\Rep_\bbC(\cG^g_\bbC)$  corresponds to a triple
	\[U = (U_\bbC, \{  U^{\bsp,\bsq} \}, \{ t_\mu\}),\] where $U_\bbC$ is a finite dimensional $\bbC$-vector space,
	$\{ U^{\bsp,\bsq}\}$ is a $2g$-grading of $U_\bbC$ by $\bbC$-linear subspaces 
	\begin{equation*}
		U_\bbC = \bigoplus_{\bsp,\bsq \in \bbZ^g} U^{\bsp,\bsq},
	\end{equation*}
	and $t_\mu$ for $\mu =1, \ldots, g$ are
 	$\bbC$-linear automorphisms of $U_\bbC$ commutative with each other, satisfying 
 	\begin{equation*}
	 	(t_\mu -1)(U^{\bsp,\bsq}) \subset \bigoplus_{\substack{\bsr,\bss\in\bbZ^g\\ (r_\nu,s_\nu)=(p_\nu, q_\nu)
		\text{ for }\nu\neq\mu\\(r_\mu,s_\mu) < (p_\mu, q_\mu)\quad}} U^{\bsr\!,\!\,\bss}
 	\end{equation*}
	for any $\bsp =  (p_1, \ldots, p_g), \bsq =(q_1, \ldots, q_g) \in \bbZ^g$, where the direct sum is over the indices
	$\bsr = (r_1, \ldots, r_g), \bss =(s_1, \ldots, s_g) \in \bbZ^g$ satisfying $r_\nu=p_\nu$, $s_\nu=q_\nu$ for $\nu\neq\mu$ and
	$r_\mu < p_\mu$, $s_\mu < q_\mu$.
\end{proposition}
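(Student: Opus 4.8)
The plan is to bootstrap from the case $g=1$ --- i.e.\ from the description of $\MHS_\bbC=\Rep_\bbC(\cG_\bbC)$ recalled in the review above --- by exploiting the product structure of $\cG^g_\bbC$. Recall the Levi decomposition $\cG_\bbC\cong\bbG_{m,\bbC}^2\ltimes\cU_\bbC$, in which $\bbG_{m,\bbC}^2$ is the tannakian fundamental group of the category of pure $\bbC$-Hodge structures --- equivalently, of $\bbZ^2$-graded finite-dimensional $\bbC$-vector spaces, the two factors recording the two Hodge indices --- and $\cU_\bbC=\Ker(\cG_\bbC\to\bbG_{m,\bbC}^2)$ is pro-unipotent, the splitting being furnished by the weight grading (Hodge bigrading) on the fiber functor. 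Combined with the $g=1$ case, this says that, for a fixed $\bbG_{m,\bbC}^2$-representation $V=\bigoplus_{p,q}V^{p,q}$, the $\bbG_{m,\bbC}^2$-equivariant homomorphisms $\cU_\bbC\to\GL(V)$ --- equivalently, the extensions of the $\bbG_{m,\bbC}^2$-action to a $\cG_\bbC$-action --- correspond bijectively and naturally to the automorphisms $t$ of $V$ with $(t-1)(V^{p,q})\subset\bigoplus_{r<p,\,s<q}V^{r,s}$.

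Taking $g$-fold products gives $\cG^g_\bbC\cong\bbG_{m,\bbC}^{2g}\ltimes\cU_\bbC^g$ with $\bbG_{m,\bbC}^{2g}=\prod_{\mu=1}^g\bbG_{m,\bbC}^2$ and $\cU_\bbC^g=\prod_{\mu=1}^g\cU_\bbC$, the conjugation being the product of the block actions (the $\mu$-th copy of $\bbG_{m,\bbC}^2$ acting on the $\mu$-th copy of $\cU_\bbC$ and trivially on the others); this is again a Levi decomposition, a product of tori being reductive and a product of pro-unipotent groups pro-unipotent. For $U\in\Rep_\bbC(\cG^g_\bbC)$ I would then: restrict along $\bbG_{m,\bbC}^{2g}\hookrightarrow\cG^g_\bbC$ to obtain the $\bbZ^{2g}$-grading $U_\bbC=\bigoplus_{\bsp,\bsq}U^{\bsp,\bsq}$; restrict along the inclusion of the $\mu$-th factor $\cU_\bbC\hookrightarrow\cU_\bbC^g\hookrightarrow\cG^g_\bbC$ and invoke the $g=1$ bijection ``in the $\mu$-th variable'' to obtain an automorphism $t_\mu$ of $U_\bbC$; note that the $t_\mu$ commute since the factors of $\cU_\bbC^g$ do; and observe that equivariance of the $\mu$-th restriction under the $\nu$-th block of $\bbG_{m,\bbC}^{2g}$ (which acts trivially on the $\mu$-th $\cU_\bbC$) forces $t_\mu$ to preserve each summand of the $\nu$-th pair of gradings for $\nu\neq\mu$. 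Combining the last point with the $g=1$ constraint in the $\mu$-th block yields exactly
\[(t_\mu-1)(U^{\bsp,\bsq})\subset\bigoplus_{\substack{r_\mu<p_\mu,\ s_\mu<q_\mu\\ r_\nu=p_\nu,\ s_\nu=q_\nu\ (\nu\neq\mu)}}U^{\bsr,\bss}.\]
Conversely, a triple $(U_\bbC,\{U^{\bsp,\bsq}\},\{t_\mu\})$ as in the statement determines, block by block, pairwise commuting homomorphisms out of the $\bbG_{m,\bbC}^2$'s and the $\cU_\bbC$'s, hence a homomorphism $\cG^g_\bbC\to\GL(U_\bbC)$; one checks the two constructions are mutually inverse and compatible with morphisms (a map of representations being precisely a grading-preserving linear map commuting with all $t_\mu$), giving the asserted equivalence of categories.

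The one slightly delicate point is the bookkeeping that converts ``commuting block actions / cross-block equivariance'' into precisely the combinatorial conditions in the statement: one must see that the simultaneous eigenspace decomposition of the $g$ grading tori, together with the $g=1$ lower-triangularity constraints in each block, produces neither more nor fewer summands than those indexed by $r_\mu<p_\mu$, $s_\mu<q_\mu$, $r_\nu=p_\nu$, $s_\nu=q_\nu$ $(\nu\neq\mu)$, and that the images of the $\cU_\bbC$-actions in $\GL(U_\bbC)$ commute if and only if the $t_\mu$ do. Using the $g=1$ case this reduces to the observation that the image of a single $\cU_\bbC$-action is generated, as an algebraic group, by $t_\mu$ together with its conjugates under the $\mu$-th grading torus, so that its interaction with the other blocks is governed entirely by $t_\mu$; the residual verifications are elementary multilinear algebra. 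Thus the genuine input is the $g=1$ description of $\MHS_\bbC$ recalled above, and everything else is the translation just sketched.
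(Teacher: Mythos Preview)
Your proposal is correct and follows essentially the same strategy as the paper: both reduce to the $g=1$ description (Proposition~\ref{prop: explicit}) applied to each factor of $\cG^g_\bbC=\cG_\bbC\times\cdots\times\cG_\bbC$, with the commutativity of the $t_\mu$ and the cross-block constraint on $(t_\mu-1)$ coming from the commutativity of the factors. The paper's proof is a terse two-sentence version of exactly this argument, so your more explicit Levi-decomposition phrasing and the attention to the ``cross-block equivariance'' bookkeeping simply flesh out what the paper leaves implicit.
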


Let $V =(V_\bbC, \{W^\mu_\bullet\}, \{F^\bullet_\mu\},\{\ol F^\bullet_\mu\})$ be a quadruple
consisting of a finite dimensional $\bbC$-vector space $V_\bbC$, 
a family of finite ascending filtrations $W^\mu_\bullet$ for $\mu=1,\ldots,g$ by $\bbC$-linear subspaces on $V_\bbC$,
and families of finite descending filtrations $F_\mu^\bullet$ and $\ol F_\mu^\bullet$ for $\mu=1,\ldots,g$ by $\bbC$-linear subspaces on $V_\bbC$.
We say that $V$ as above is a \textit{$g$-orthogonal family of mixed $\bbC$-Hodge structures}, 
if for any $\mu$, the quadruple $(V_\bbC, W^\mu_\bullet, F^\bullet_\mu, \ol F^\bullet_\mu)$ is a mixed $\bbC$-Hodge structure, 
and for any $\mu$ and $\nu \neq \mu$, the $\bbC$-linear subspaces 
$W^\mu_n V_\bbC$, $F^m_\mu V_\bbC$, $\ol F^m_\mu V_\bbC$
with the weight and Hodge  filtrations induced from $W^\nu_\bullet$, $F_\nu^\bullet$, $\ol F_\nu^\bullet$ are mixed $\bbC$-Hodge structures.
We call the filtrations $\{W^\mu_\bullet\}$ the partial weight filtrations and the filtrations $\{F^\bullet_\mu\}$,$\{\ol F^\bullet_\mu\}$ the partial
Hodge filtrations of $V$.

We denote by $\OF^g_\bbC$ the category whose objects are $g$-orthogonal family of mixed $\bbC$-Hodge structures.
A morphism in  $\OF^g_\bbC$ is a $\bbC$-linear homomorphism of underlying $\bbC$-vector spaces compatible with the partial weight and Hodge filtrations.
The main result of \S 3 is the following:

\begin{proposition}[=Corollary \ref{cor: main cor}]
	For $g\geq 0$, we have an equivalence of categories
	\begin{equation*}
		\Rep_\bbC(\cG^g_\bbC) \cong\OF^g_\bbC.
	\end{equation*}	
\end{proposition}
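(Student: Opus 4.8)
The plan is to combine Proposition~\ref{prop: gexplicit} with Deligne's canonical splitting of a mixed $\bbC$-Hodge structure, bootstrapping the case $g=1$ to arbitrary $g$ by induction. By Proposition~\ref{prop: gexplicit} we identify $\Rep_\bbC(\cG^g_\bbC)$ with the category of triples $(U_\bbC, \{U^{\bsp,\bsq}\}, \{t_\mu\})$ satisfying the conditions listed there, so it suffices to exhibit mutually quasi-inverse functors between that category and $\OF^g_\bbC$. The case $g=0$ is trivial ($\Vec_\bbC$ on both sides). The case $g=1$ is Deligne's description of $\MHS_\bbC$: there is a canonical bigrading $V_\bbC = \bigoplus_{p,q} I^{p,q}$ with $W_n = \bigoplus_{p+q\le n} I^{p,q}$ and $F^m = \bigoplus_{p\ge m} I^{p,q}$, and a canonical automorphism $t$ with $\ol F^m = t\bigl(\bigoplus_{q\ge m} I^{p,q}\bigr)$, and this assignment is an equivalence of categories; being a functor, it takes a sub-mixed-Hodge-structure $A\subset B$ to a homogeneous, $t_B$-stable subspace $A$ with $t_B|_A = t_A$. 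We take the case $g=1$, together with this functoriality, as input.

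From triples to $\OF^g_\bbC$: given $(U_\bbC, \{U^{\bsp,\bsq}\}, \{t_\mu\})$, for each $\mu$ set
\[
 W^\mu_n U_\bbC = \bigoplus_{p_\mu + q_\mu \le n} U^{\bsp,\bsq}, \qquad
 F^m_\mu U_\bbC = \bigoplus_{p_\mu \ge m} U^{\bsp,\bsq},
\]
\[
 \ol F^m_\mu U_\bbC = t_\mu\Bigl(\bigoplus_{q_\mu \ge m} U^{\bsp,\bsq}\Bigr).
\]
For fixed $\mu$, after coarsening the $2g$-grading to the pair of indices $(p_\mu, q_\mu)$, the inclusion on $(t_\mu - 1)$ in Proposition~\ref{prop: gexplicit} is exactly the hypothesis of the case $g=1$, so $(U_\bbC, W^\mu_\bullet, F^\bullet_\mu, \ol F^\bullet_\mu)$ is a mixed $\bbC$-Hodge structure. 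For the orthogonality, fix $\nu \neq \mu$: since $(t_\nu - 1)U^{\bsp,\bsq}$ lies in the span of those $U^{\bsr,\bss}$ with $(r_\mu, s_\mu) = (p_\mu, q_\mu)$, and since $t_\mu$ and $t_\nu$ commute, the automorphism $t_\nu$ and each of the filtrations $W^\nu_\bullet$, $F^\bullet_\nu$, $\ol F^\bullet_\nu$ preserve the subspaces $W^\mu_n U_\bbC$, $F^m_\mu U_\bbC$ and $\ol F^m_\mu U_\bbC$; restricting the $2g$-grading and $t_\nu$ to any such subspace and coarsening to the indices $(p_\nu, q_\nu)$ puts one again in the situation of the case $g=1$, which furnishes the mixed $\bbC$-Hodge structures required for the induced coordinate-$\nu$ data. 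This gives an object of $\OF^g_\bbC$, evidently functorially in the triple.

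From $\OF^g_\bbC$ to triples one applies Deligne's splitting one coordinate at a time. Given $V = (V_\bbC, \{W^\mu_\bullet\}, \{F^\bullet_\mu\}, \{\ol F^\bullet_\mu\})$, the splitting of the mixed Hodge structure $(V_\bbC, W^1_\bullet, F^\bullet_1, \ol F^\bullet_1)$ yields a bigrading $V_\bbC = \bigoplus_{p_1, q_1} V^{p_1, q_1}$. The orthogonality hypotheses say that, for every $\nu \ge 2$, the subspaces $W^\nu_n V_\bbC$, $F^m_\nu V_\bbC$, $\ol F^m_\nu V_\bbC$ — and hence, being formed from these by finitely many intersections and sums inside the ambient coordinate-$1$ mixed Hodge structure, also the coordinate-$\nu$ Deligne pieces — are sub-mixed-Hodge-structures for coordinate $1$; by the functoriality recalled above each is then homogeneous for $\{V^{p_1,q_1}\}$ and stable under $t_1$. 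Therefore the coordinates $2, \ldots, g$ induce on each $V^{p_1,q_1}$ the structure of an object of $\OF^{g-1}_\bbC$, to which the induction hypothesis applies; assembling the resulting data over $(p_1,q_1)$ and adjoining the first-coordinate bigrading together with $t_1$ produces a $2g$-grading $\{U^{\bsp,\bsq}\}$ of $V_\bbC$ with $W^\mu_n V_\bbC = \bigoplus_{p_\mu + q_\mu \le n} U^{\bsp,\bsq}$ and $F^m_\mu V_\bbC = \bigoplus_{p_\mu \ge m} U^{\bsp,\bsq}$, and automorphisms $t_\mu$ pinned down by $\ol F^m_\mu V_\bbC = t_\mu\bigl(\bigoplus_{q_\mu \ge m} U^{\bsp,\bsq}\bigr)$ and by the requirement that $t_\mu - 1$ strictly lower the bidegree in the coordinate $\mu$ and fix the others. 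With this choice the composite $\OF^g_\bbC \to (\text{triples}) \to \OF^g_\bbC$ is the identity; the composite taken in the other order is identified with the identity by induction from the case $g=1$, where it expresses that Deligne's splitting of the mixed Hodge structure attached to a triple recovers that triple.

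The step I expect to be the main obstacle is checking that this second functor genuinely lands in the category described by Proposition~\ref{prop: gexplicit}: that the $g$ bigradings produced coordinate-by-coordinate assemble into a single $2g$-grading independent of the order in which the coordinates are treated, that the $t_\mu$ pairwise commute, and that each $t_\mu - 1$ has exactly the prescribed support in the bidegree lattice rather than merely some lower-triangularity. This is where the full force of the orthogonality hypothesis is used. It forces every coordinate-$\nu$ filtration and every coordinate-$\nu$ Deligne piece to be a sub-mixed-Hodge-structure for every other coordinate $\mu$, so that, by the functoriality of Deligne's splitting together with the uniqueness of the correction automorphism, $t_\mu$ stabilizes each coordinate-$\nu$ Deligne piece — which is precisely the cross-condition of Proposition~\ref{prop: gexplicit} — and $t_\nu t_\mu t_\nu^{-1}$ satisfies the same characterization as $t_\mu$, hence equals it. Once these compatibilities are established, the remaining points — functoriality of the two constructions and that they are quasi-inverse — are routine.
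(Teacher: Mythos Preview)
Your proposal is correct and takes essentially the same approach as the paper: both iterate Deligne's equivalence between mixed Hodge structures and bigraded data with a unipotent automorphism one coordinate at a time, with the paper packaging this via an intermediary category $\MHS^g(\Vec_\bbC):=\MHS(\MHS(\cdots(\Vec_\bbC)))$ (Corollary~\ref{cor: mhs}, Theorem~\ref{thm: main}) rather than by direct induction on~$g$. The one point where the presentations differ is precisely what you flag as the main obstacle: to get orthogonality for \emph{all} pairs $\mu\neq\nu$ (not just $\nu<\mu$) and commutativity of the $t_\mu$, the paper observes (Remark~\ref{rem: reordering}) that the explicit formulas for the filtrations are manifestly symmetric in the coordinates, so one may reorder before applying the hierarchical construction; your direct verification via functoriality of Deligne's splitting achieves the same end but should make explicit that the step ``the coordinate-$\nu$ Deligne pieces are sub-MHS for coordinate~$1$'' rests on closure of sub-MHS under finite sums and intersections (Corollary~\ref{cor: sum and intersection}).
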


While writing this paper, Nekov\'{a}\v{r} and Scholl released a new preprint \cite{NS2}, which contains a result similar to Proposition 1.2.

Suppose $V =(V_\bbC, \{W^\mu_\bullet\}, \{F^\bullet_\mu\},\{\ol F^\bullet_\mu\})$ is a $g$-orthogonal family of mixed $\bbC$-Hodge structures.
We define the total weight filtration $W_\bullet$ of $V$ to be the finite ascending filtration by $\bbC$-linear subspaces of $V_\bbC$ given by
\begin{equation*}
	W_nV_\bbC:=\sum_{n_1+\cdots+n_g=n}(W^1_{n_1}\cap\cdots\cap W^g_{n_g})V_\bbC.
\end{equation*}
The purpose of \S 4 is to give a characterization of a quadruple $(V_\bbC,W_\bullet, \{F^\bullet_\mu\},\{\ol F^\bullet_\mu\})$
which is constructed from $\OF^g_\bbC$.  In particular, we will give in Definition \ref{def: pMHS} the definition of the category of
mixed $g$-plectic $\bbC$-Hodge structures $\MHS^g_\bbC$, whose objects are the quadruple $(V_\bbC,W_\bullet, \{F^\bullet_\mu\},\{\ol F^\bullet_\mu\})$
satisfying certain conditions.  We will then show in Theorem \ref{thm: equiv for C} that we have
an equivalence of categories as follows:

\begin{theorem}[=Theorem \ref{thm: equiv for C}]
	We have an equivalence of categories
	\begin{equation*}
		\OF^g_\bbC\cong\MHS^g_\bbC.
	\end{equation*}	
\end{theorem}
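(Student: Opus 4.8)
I would introduce the functor $\Phi\colon\OF^g_\bbC\to\MHS^g_\bbC$ that sends $(V_\bbC,\{W^\mu_\bullet\},\{F^\bullet_\mu\},\{\ol F^\bullet_\mu\})$ to $(V_\bbC,W_\bullet,\{F^\bullet_\mu\},\{\ol F^\bullet_\mu\})$, keeping the underlying space and the partial Hodge filtrations but replacing the family of partial weight filtrations by the total weight filtration $W_\bullet$; on morphisms $\Phi$ is the identity on underlying linear maps. The first task is to check that $\Phi$ lands in $\MHS^g_\bbC$, i.e.\ that the conditions of Definition \ref{def: pMHS} hold for its image. The key input is that the orthogonality of the $\{W^\mu_\bullet\}$ makes the multi-weight filtration $(n_1,\dots,n_g)\mapsto(W^1_{n_1}\cap\cdots\cap W^g_{n_g})V_\bbC$ split compatibly with the partial Hodge filtrations, so that $\Gr^W_nV_\bbC\cong\bigoplus_{n_1+\cdots+n_g=n}\Gr^{W^1}_{n_1}\cdots\Gr^{W^g}_{n_g}V_\bbC$ acquires an induced pure $g$-plectic structure; the axioms of $\MHS^g_\bbC$ then reduce on these graded pieces to the pure plectic case and, for each single $\mu$, to the statement that $(V_\bbC,W_\bullet,F^\bullet_\mu,\ol F^\bullet_\mu)$ is a mixed $\bbC$-Hodge structure. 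Since $W_\bullet$ is determined by $\{W^\mu_\bullet\}$, a morphism respecting all $W^\mu_\bullet$ respects $W_\bullet$, so $\Phi$ is well defined on morphisms, and it is faithful because it leaves underlying maps unchanged.

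It then remains to construct a quasi-inverse $\Psi$. Given $(V_\bbC,W_\bullet,\{F^\bullet_\mu\},\{\ol F^\bullet_\mu\})\in\MHS^g_\bbC$, I would first use the plectic axioms to equip each $\Gr^W_nV_\bbC$ with its canonical $\bbZ^{2g}$-grading, namely the one given by the iterated Hodge gradings $\Gr_{F_\mu}\Gr_{\ol F_\mu}$ — this is well defined because a pure $g$-plectic Hodge structure is rigid — and then lift this family of gradings to a canonical $\bbZ^{2g}$-grading $V_\bbC=\bigoplus_{\bsp,\bsq}U^{\bsp,\bsq}$ of $V_\bbC$ itself by a Deligne-type splitting of $W_\bullet$ extracted from $(W_\bullet,\{F^\bullet_\mu\},\{\ol F^\bullet_\mu\})$; for $g=1$ this is precisely Deligne's canonical bigrading, and in general it is built by induction on $g$, peeling off one coordinate at a time. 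One then defines
\begin{equation*}
	W^\mu_nV_\bbC:=\bigoplus_{\substack{\bsp,\bsq\in\bbZ^g\\ p_\mu+q_\mu\le n}}U^{\bsp,\bsq},
\end{equation*}
and, because both the iterated Hodge gradings and the Deligne-type splitting are functorial, this construction extends to a functor $\Psi\colon\MHS^g_\bbC\to\OF^g_\bbC$ — once one has checked that the $\{W^\mu_\bullet\}$ produced this way, together with the given $\{F^\bullet_\mu\},\{\ol F^\bullet_\mu\}$, really do form a $g$-orthogonal family.

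Finally, one verifies that $\Phi$ and $\Psi$ are mutually quasi-inverse. Forming the total weight filtration of the reconstructed $\{W^\mu_\bullet\}$ returns the given $W_\bullet$, by the very formula $W_n=\sum_{n_1+\cdots+n_g=n}\bigcap_\mu W^\mu_{n_\mu}$ read off on $\Gr^W$, so $\Phi\circ\Psi\cong\id$; and for a $g$-orthogonal family the canonical $\bbZ^{2g}$-grading produced by $\Psi$ coincides with the grading underlying its partial weight filtrations, so $\Psi\circ\Phi\cong\id$. Fullness of $\Phi$ then follows formally from $\Psi$ being a functor and $\Psi\circ\Phi\cong\id$, essential surjectivity from $\Phi\circ\Psi\cong\id$, and together with the faithfulness already observed, $\Phi$ is the desired equivalence. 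One could instead route the whole comparison through the equivalence $\OF^g_\bbC\cong\Rep_\bbC(\cG^g_\bbC)$ of Corollary \ref{cor: main cor} and the explicit description of Proposition \ref{prop: gexplicit}, matching the $2g$-graded data appearing there with the objects of $\MHS^g_\bbC$.

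The main obstacle is the construction and control of the canonical $\bbZ^{2g}$-grading of $V_\bbC$: one must show that the Deligne-type splitting lifting the gradings of the $\Gr^W_nV_\bbC$ interacts correctly with every partial Hodge filtration, so that the resulting $\{W^\mu_\bullet\}$ satisfy the strong orthogonality demanded by $\OF^g_\bbC$ and recombine to the prescribed total weight filtration. This amounts to tracking how several Deligne splittings interact, and it is exactly here that the hypotheses of Definition \ref{def: pMHS} enter in an essential way; the remaining steps — well-definedness of $\Phi$, functoriality of $\Psi$, faithfulness, and the formal deduction of fullness and essential surjectivity — are routine bookkeeping with the strictness of morphisms of mixed Hodge structures once that grading is under control.
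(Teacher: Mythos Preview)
Your strategy matches the paper's exactly: your $\Phi$ is the paper's functor $T^g_\bbC$, your $\Psi$ is the paper's $P^g_\bbC$, and the paper proves these are mutually inverse \emph{isomorphisms} (not just equivalences) of categories via Proposition~\ref{prop: char of MHS}.

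The one place where your sketch glosses over a real subtlety is the word ``canonical'' for the $\bbZ^{2g}$-grading. The paper's plectic Deligne splitting $\bsA_I^{\bsp,\bsq}(V)$ (Definition~\ref{def: pApq}) is given by an explicit formula, not built inductively, and it depends on a choice of subset $I\subset\{1,\ldots,g\}$ governing which partial filtrations play the role of $F$ versus $\ol F$; different $I$ give genuinely different $2g$-gradings of $V_\bbC$. Definition~\ref{def: pMHS} of $\MHS^g_\bbC$ is precisely the condition that the resulting partial weight filtrations $W^{I,\mu}_\bullet$ are independent of $I$ --- this is what makes $\Psi=P^g_\bbC$ well defined, and it is something to be \emph{used} from the definition rather than proved. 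The hard direction, that $P^g_\bbC(V)$ actually lies in $\OF^g_\bbC$, is then handled (Proof of Proposition~\ref{prop: char of MHS}) by an induction along $W_\bullet$ using the extension criterion of Lemma~\ref{lem: extension}, after first treating the pure case (Lemma~\ref{lem: pure plectic}); for the forward direction the key identity is $\bsA_I^{\bsp,\bsq}(T^g_\bbC V)=A_{I,1}^{p_1,q_1}(V)\cap\cdots\cap A_{I,g}^{p_g,q_g}(V)$ (Proposition~\ref{prop: pApq}), which shows the plectic splitting recovers the intersected ordinary Deligne splittings and hence the original $W^\mu_\bullet$.
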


In \S 5, we will introduce the category of mixed $g$-plectic $\bbR$-Hodge structures, and show the
corresponding results in the real case.
We will then prove in Corollary \ref{cor: subobject} that an object in $\Rep_\bbR(\cG^g)$
may be given as a subquotient of exterior products of objects in $\MHS_\bbR$. 
The main result of \S 5 is Theorem \ref{thm: old theorem}, which gives an explicit complex calculating the extension groups in $\Rep_\bbR(\cG^g)$.

%
%
%
\section{Mixed Hodge structures}
%
%
%

In this section, we will review the definition of the category of mixed Hodge structures $\MHS_\bbC$ and the tannakian 
fundamental group $\cG_\bbC$ associated to $\MHS_\bbC$.  We will then give an explicit description of objects in the category 
$\Rep_\bbC(\cG^g_\bbC)$ of finite dimensional $\bbC$-representations of $\cG^g_\bbC$, 
where $\cG^g_\bbC$ for an integer $g\geq0$ is the $g$-fold product of $\cG_\bbC$.

%
\subsection{Definition of the category of mixed plectic $\bbC$-Hodge structures}
%

In this subsection, we first give the definitions of pure and mixed $\bbC$-Hodge structures, and review their properties.

\begin{definition}[pure $\bbC$-Hodge structure]\label{def: pure}	
	Let $V_\bbC$ be a finite dimensional $\bbC$-vector space,
	and let $F^\bullet$ and $\ol F^\bullet$ be finite descending filtrations by $\bbC$-linear subspaces on $V_\bbC$.
	We say that the triple $V:= (V_\bbC, F^\bullet, \ol F^\bullet)$ is a \textit{pure $\bbC$-Hodge structure of weight $n$},
	if it satisfies	
	\begin{equation}\label{eq: n-oppose}
		V_\bbC = F^p V_\bbC \oplus \ol{F}^{n+1-p} V_\bbC
	\end{equation}
	for any $p \in \bbZ$.  We call the filtrations $F^\bullet$ and $\ol F^\bullet$ the \textit{Hodge filtrations} of $V$.
\end{definition}

\begin{example}
	The Tate object $\bbC(n) := (V_\bbC, F^\bullet, \ol F^\bullet)$, which is a $\bbC$-vector
	space $V_\bbC =  \bbC$, with the Hodge filtrations given by $F^{-n} V_\bbC = \ol F^{\,-n} V_\bbC  =V_\bbC$ and $F^{-n+1} V_\bbC = \ol F^{\,-n+1} V_\bbC = 0$
	is an example of a pure $\bbC$-Hodge structure of weight $-2n$.
\end{example}

It is known that pure $\bbC$-Hodge structures may be described as follows.

\begin{lemma}[\cite{D1} Proposition 1.2.5, Proposition 2.1.9]\label{lemma: splitting}
	Let $V_\bbC$ be a finite dimensional $\bbC$-vector space,
	and let $F^\bullet$ be a finite descending filtration by $\bbC$-linear subspaces on $V_\bbC$.
	Then $V:= (V_\bbC, F^\bullet, \ol F^\bullet)$ is a \textit{pure $\bbC$-Hodge structure of weight $n$} if and only if we have
	\begin{equation}\label{eq: bigrading}
		V_\bbC = \bigoplus_{\substack{p,q\in\bbZ\\p+q=n}} (F^p \cap \ol F^q)V_\bbC,
	\end{equation}
	where $(F^p \cap \ol F^q)V_\bbC := F^pV_\bbC\cap \ol F^qV_\bbC$.
\end{lemma}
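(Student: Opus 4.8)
\emph{Plan.} The statement is an equivalence, so the plan has two halves, and I will write $V^{p,q}:=(F^p\cap\ol F^q)V_\bbC$ throughout. The implication \eqref{eq: n-oppose}$\Rightarrow$\eqref{eq: bigrading} is the formal direction and proceeds by a telescoping iteration. The point is that \eqref{eq: n-oppose} applied at the index $p+1$, namely $V_\bbC=F^{p+1}V_\bbC\oplus\ol F^{n-p}V_\bbC$, yields $F^pV_\bbC=F^{p+1}V_\bbC\oplus V^{p,n-p}$: the sum is direct because $F^{p+1}V_\bbC\cap\ol F^{n-p}V_\bbC=0$, and it is all of $F^pV_\bbC$ because, writing $v\in F^pV_\bbC$ as $v=v'+v''$ with $v'\in F^{p+1}V_\bbC$ and $v''\in\ol F^{n-p}V_\bbC$, one has $v''=v-v'\in F^pV_\bbC$, hence $v''\in V^{p,n-p}$. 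Since $F^\bullet$ is finite, iterating this identity downward from $F^pV_\bbC=0$ for $p\gg0$ gives $F^pV_\bbC=\bigoplus_{a\ge p}V^{a,n-a}$ for every $p$, and taking $p\ll0$ is exactly \eqref{eq: bigrading}.

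For the converse I would induct on $\dim_\bbC V_\bbC$, the zero space being trivial. Assume \eqref{eq: bigrading}, and let $a_0$ (resp. $a_1$) be the smallest (resp. largest) integer with $V^{a,n-a}\neq0$. From $V^{a,n-a}\subseteq F^aV_\bbC\cap\ol F^{n-a}V_\bbC$ one reads off $\bigoplus_{a\ge p}V^{a,n-a}\subseteq F^pV_\bbC$, $\bigoplus_{a<p}V^{a,n-a}\subseteq\ol F^{n+1-p}V_\bbC$, as well as $F^{a_0}V_\bbC=V_\bbC=\ol F^{n-a_1}V_\bbC$. A short argument with \eqref{eq: bigrading} gives moreover $F^pV_\bbC=0$ for $p>a_1$: if $0\neq v\in F^{a_1+1}V_\bbC$, then since $V_\bbC\subseteq\ol F^{\,n-a_1-1}V_\bbC$ we get $v\in V^{a_1+1,\,n-a_1-1}$, which contradicts the maximality of $a_1$ because $(a_1+1)+(n-a_1-1)=n$; symmetrically $\ol F^qV_\bbC=0$ for $q>n-a_0$. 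These facts already establish \eqref{eq: n-oppose} for $p\le a_0$ (where $F^pV_\bbC=V_\bbC$ and $\ol F^{n+1-p}V_\bbC=0$) and for $p>a_1$.

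The remaining range $a_0<p\le a_1$, which is nonempty only when $a_0<a_1$, is where the induction does its work. Put $W:=\bigoplus_{a>a_0}V^{a,n-a}$ and $W':=\bigoplus_{a<a_1}V^{a,n-a}$, each with the filtrations induced by $F^\bullet,\ol F^\bullet$. One checks directly that $(W,F^\bullet_W,\ol F^\bullet_W)$ and $(W',F^\bullet_{W'},\ol F^\bullet_{W'})$ again satisfy \eqref{eq: bigrading} — the induced intersection $(F^a\cap\ol F^b)V_\bbC\cap W$ equals $V^{a,b}\cap W$, which is $V^{a,b}$ on the relevant indices — so, having strictly smaller dimension, they are pure $\bbC$-Hodge structures of weight $n$ by the inductive hypothesis. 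Since $W+W'=V_\bbC$ we obtain $F^pV_\bbC+\ol F^{n+1-p}V_\bbC\supseteq W+W'=V_\bbC$. For directness, take $x\in F^pV_\bbC\cap\ol F^{n+1-p}V_\bbC$ and split it along \eqref{eq: bigrading} as $x=x'+x''$, where $x'$ collects the components in $V^{a,n-a}$ with $a\ge p$ and $x''$ those with $a<p$. By the inclusions above $x'\in F^pV_\bbC$ and $x''\in\ol F^{n+1-p}V_\bbC$, hence $x''=x-x'\in F^pV_\bbC$ and $x'=x-x''\in\ol F^{n+1-p}V_\bbC$. Since $p>a_0$ forces $x'\in W$ and $p\le a_1$ forces $x''\in W'$, the Hodge decompositions of $W$ and $W'$ give $x'\in F^p_W\cap\ol F^{n+1-p}_W=0$ and likewise $x''=0$, so $x=0$. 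This completes the induction and the proof of the equivalence.

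\emph{Main obstacle.} The forward direction is essentially bookkeeping; the content is the converse, and within it the delicate point is the directness of $F^pV_\bbC+\ol F^{n+1-p}V_\bbC$ — one must show that the purely ``diagonal'' data $\{V^{p,q}\}_{p+q=n}$ already rigidifies the two filtrations. The splitting $x=x'+x''$ combined with the inductive passage to the graded subspaces $W,W'$ is what makes this work, and isolating the two extreme ranges of $p$ beforehand is necessary because there that mechanism degenerates (one of $W$, $W'$ would be all of $V_\bbC$).
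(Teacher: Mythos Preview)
The paper does not supply its own proof of this lemma; it simply cites Deligne \cite{D1}. Your argument is correct, and the forward direction is exactly the standard telescoping one. The converse, however, is considerably more elaborate than necessary: the induction on $\dim V_\bbC$ and the auxiliary subspaces $W,W'$ can be dispensed with entirely. Once you have observed $\bigoplus_{a\ge p}V^{a,n-a}\subseteq F^pV_\bbC$ and $\bigoplus_{a<p}V^{a,n-a}\subseteq\ol F^{n+1-p}V_\bbC$ (which already gives $F^pV_\bbC+\ol F^{n+1-p}V_\bbC=V_\bbC$), directness is a one-line consequence of the bigrading itself: since $F^\bullet$ and $\ol F^\bullet$ are descending,
\[
F^pV_\bbC\cap\ol F^{n+1-p}V_\bbC\ \subseteq\ \bigl(F^pV_\bbC\cap\ol F^{n-p}V_\bbC\bigr)\cap\bigl(F^{p-1}V_\bbC\cap\ol F^{n+1-p}V_\bbC\bigr)\ =\ V^{p,n-p}\cap V^{p-1,n+1-p}\ =\ 0,
\]
the last equality because $V^{p,n-p}$ and $V^{p-1,n+1-p}$ are distinct summands of the direct sum \eqref{eq: bigrading}. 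Your argument that $F^pV_\bbC=0$ for $p>a_1$ is already in this spirit; the same trick handles every $p$ at once and makes the inductive machinery superfluous.
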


Let $V$ be a pure $\bbC$-Hodge structure of weight $n$.
The Hodge filtration may be described in terms of this splitting as follows.

\begin{lemma}\label{lem: candy}
	If $V$ is a pure $\bbC$-Hodge structure of weight $n$, then for any $p,q \in \bbZ$, we have
	\begin{align}\label{eq: PHSFm}
		F^p V_\bbC&=\bigoplus_{\substack{r+s=n,\\r \geq p}}(F^r \cap\ol F^{s}) V_\bbC,&
		\ol F^q V_\bbC&=\bigoplus_{\substack{r+s=n,\\s \geq q}}(F^r \cap\ol F^{s}) V_\bbC.
	\end{align}
\end{lemma}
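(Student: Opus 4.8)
The plan is to deduce both identities from the Hodge decomposition of Lemma~\ref{lemma: splitting} together with the $n$-opposedness condition \eqref{eq: n-oppose}. The two asserted formulas are interchanged by swapping the roles of $F^\bullet$ and $\ol F^\bullet$ (and of $p$ and $q$), so it suffices to prove the first one, namely $F^p V_\bbC=\bigoplus_{r+s=n,\ r\ge p}(F^r\cap\ol F^s)V_\bbC$.

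Write $A:=\bigoplus_{r+s=n,\ r\ge p}(F^r\cap\ol F^s)V_\bbC$ and $B:=\bigoplus_{r+s=n,\ r<p}(F^r\cap\ol F^s)V_\bbC$. First I would record two easy facts. (i) $A\subseteq F^pV_\bbC$: for each summand, $r\ge p$ and the descending property of $F^\bullet$ give $(F^r\cap\ol F^s)V_\bbC\subseteq F^rV_\bbC\subseteq F^pV_\bbC$. (ii) $B\subseteq\ol F^{\,n+1-p}V_\bbC$: if $r<p$ and $r+s=n$ then $s=n-r\ge n+1-p$, so $\ol F^sV_\bbC\subseteq\ol F^{\,n+1-p}V_\bbC$ since $\ol F^\bullet$ is descending. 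Moreover, by Lemma~\ref{lemma: splitting} the sum $V_\bbC=A\oplus B$ is direct; in particular the sum defining $A$ is direct inside $F^pV_\bbC$, which already gives the inclusion $\supseteq$ in the desired identity.

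For the reverse inclusion I would invoke \eqref{eq: n-oppose} in the form $V_\bbC=F^pV_\bbC\oplus\ol F^{\,n+1-p}V_\bbC$. Combined with (ii) this yields $F^pV_\bbC\cap B\subseteq F^pV_\bbC\cap\ol F^{\,n+1-p}V_\bbC=0$. Now take $v\in F^pV_\bbC$ and write $v=a+b$ with $a\in A$, $b\in B$ using $V_\bbC=A\oplus B$. Then $b=v-a$ lies in $F^pV_\bbC$ by (i), hence in $F^pV_\bbC\cap B=0$, so $v=a\in A$. Therefore $F^pV_\bbC=A$, and the formula for $\ol F^q$ follows by the symmetry noted at the outset.

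I do not anticipate any substantive difficulty: the argument is purely formal once Lemma~\ref{lemma: splitting} is available. The only points requiring care are the index translation ``$r<p$ if and only if $s\ge n+1-p$'' (so that \eqref{eq: n-oppose} applies with exactly the right exponent), and the observation that a sum which is direct in $V_\bbC$ remains direct in any subspace containing its summands.
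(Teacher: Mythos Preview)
Your proof is correct and follows essentially the same approach as the paper: both show that the summands with $r\ge p$ lie in $F^pV_\bbC$ while those with $r<p$ lie in $\ol F^{\,n+1-p}V_\bbC$, and then combine Lemma~\ref{lemma: splitting} with the $n$-opposedness \eqref{eq: n-oppose} to conclude. Your write-up is simply more explicit than the paper's terse version.
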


\begin{proof}
	If $r \geq p$, then we have $(F^r \cap\ol F^{n-r}) V_\bbC \subset F^p V_\bbC$, and if $r < p$, then $n-r \geq n+1-p$,
	hence $(F^r \cap\ol F^{n-r}) V_\bbC \subset\ol F^{n+1-p}V_\bbC$.  The first equality follows from Lemma \ref{lemma: splitting} 
	and \eqref{eq: n-oppose}.  The second equality is proved in a similar manner.
\end{proof}

The definition of mixed $\bbC$-Hodge structures is given as follows.

\begin{definition}[mixed $\bbC$-Hodge structure]
	Let $V_\bbC$ be a finite dimensional $\bbC$-vector space.
	Let $W_\bullet$ be a finite ascending filtration by $\bbC$-linear subspaces on $V_\bbC$,
	and let $F^\bullet$ and $\ol F^\bullet$ be finite descending filtrations by $\bbC$-linear subspaces on $V_\bbC$.
	We say that the quadruple $V=(V_\bbC,  W_\bullet, F^\bullet, \ol F^\bullet)$ is a \textit{mixed $\bbC$-Hodge structure} if,
	for each $n \in \bbZ$, the structure induced by $F^\bullet$ and $\ol F^\bullet$ on $\Gr_n^W V_\bbC$ is a pure 
	$\bbC$-Hodge structure of weight $n$.
\end{definition}

If $V=(V_\bbC,  W_\bullet, F^\bullet, \ol F^\bullet)$ is a mixed $\bbC$-Hodge structure, then we call $W_\bullet$
the \textit{weight filtration} and $F^\bullet$, $\ol F^\bullet$ the \textit{Hodge filtrations} of $V$.
The Deligne splitting below gives a generalization of \eqref{eq: bigrading} for
mixed $\bbC$-Hodge structures.

\begin{proposition}[Deligne splitting]\label{prop: splitting}
	Let $V = (V_\bbC, W_\bullet, F^\bullet, \ol F^\bullet)$ be a mixed $\bbC$-Hodge structure,
	and let
	\begin{equation}\label{eq: Apq}
		A^{p,q}(V) := (F^p \cap W_{n}) V_\bbC \cap \biggl((\ol F^q \cap W_{n})V_\bbC+\sum_{j\geq 0}(\ol F^{q-j}\cap W_{n-j-1}) V_\bbC\biggr)
	\end{equation}	
	for $p,q\in\bbZ$ and $n:=p+q$.
	Then $\{A^{p,q}(V)\}$ gives a bigrading of $V_\bbC$ by $\bbC$-linear subspaces
	\begin{equation}\label{eq: Deligne splitting}
		V_\bbC =\bigoplus_{p,q \in \bbZ} A^{p,q}(V).
	\end{equation}
	Moreover, for $n, p\in\bbZ$, the weight and Hodge filtrations on $V$ satisfy
	\begin{align*}
		W_n V_\bbC &=\bigoplus_{\substack{p,q\in\bbZ\\p+q \leq n}} A^{p,q}(V), &
		F^p V_\bbC &=\bigoplus_{\substack{r,s\in\bbZ\\r \geq p}} A^{r,s}(V).
	\end{align*}	
\end{proposition}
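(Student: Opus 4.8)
\emph{Overall strategy.} Following Deligne, the plan is to induct on the length of the weight filtration $W_\bullet$, reducing everything to the pure case, which is already handled: for a pure $\bbC$-Hodge structure \eqref{eq: bigrading} supplies the Hodge decomposition and Lemma \ref{lem: candy} the filtration formulas. Throughout, for a pure $\bbC$-Hodge structure $P$ of weight $m$ write $H^{p,q}(P) := (F^p \cap \ol F^q) P$ for its Hodge subspaces, nonzero only when $p + q = m$; in particular each $\Gr^W_m V_\bbC$ carries such a decomposition.

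\emph{Base case.} Suppose $V$ is pure of weight $w$, so $W_{w-1} V_\bbC = 0$ and $W_w V_\bbC = V_\bbC$. If $p + q < w$ then $A^{p,q}(V) \subseteq W_{p+q} V_\bbC = 0$; if $p + q = w$ the auxiliary sum $\sum_{j \geq 0}(\ol F^{q-j} \cap W_{w-j-1}) V_\bbC$ vanishes, so $A^{p,q}(V) = (F^p \cap \ol F^q) V_\bbC$ and \eqref{eq: bigrading} yields \eqref{eq: Deligne splitting}; if $p + q > w$ the auxiliary sum contains $\ol F^{\,w+1-p} V_\bbC$, hence $A^{p,q}(V) \subseteq (F^p \cap \ol F^{\,w+1-p}) V_\bbC = 0$ by \eqref{eq: n-oppose}. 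The two filtration formulas then follow from Lemmas \ref{lemma: splitting} and \ref{lem: candy}.

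\emph{Inductive step and assembly.} Let $b$ be the top weight of $V$ (so $\Gr^W_b V_\bbC \neq 0$ and $W_b V_\bbC = V_\bbC$), let $M := W_{b-1} V_\bbC$ with its induced mixed $\bbC$-Hodge structure, of strictly smaller length, and $N := \Gr^W_b V_\bbC$, pure of weight $b$. I would establish: (i) $A^{p,q}(V) = A^{p,q}(M)$ whenever $p + q \leq b-1$ --- a direct check, since $W_k V_\bbC \subseteq M$ for $k \leq b-1$ leaves every intersection in \eqref{eq: Apq} unchanged upon restriction to $M$; (ii) $A^{p,q}(V) = 0$ whenever $p + q > b$; (iii) for $p + q = b$, the projection $V_\bbC \twoheadrightarrow N$ restricts to an isomorphism $A^{p,q}(V) \xrightarrow{\sim} H^{p,q}(N)$. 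Granting these, put $L := \bigoplus_{p+q = b} A^{p,q}(V)$; by (iii) and \eqref{eq: bigrading} for $N$ the projection restricts to $L \xrightarrow{\sim} N$, so $V_\bbC = M \oplus L$. Combining with the inductive hypothesis $M = \bigoplus_{r+s \leq b-1} A^{r,s}(M)$ and with (i) gives \eqref{eq: Deligne splitting}; the identity $W_n V_\bbC = \bigoplus_{p+q \leq n} A^{p,q}(V)$ follows by the same bookkeeping (the induced weight filtration on $M$ agrees with $W_\bullet$ in degrees $\leq b-1$); and $F^p V_\bbC = \bigoplus_{r \geq p} A^{r,s}(V)$ follows by decomposing any $v \in F^p V_\bbC$ along $V_\bbC = M \oplus L$, killing the components with $F$-index $< p$ in the $L$-part by Lemma \ref{lem: candy} for $N$ together with the injectivity in (iii), and in the $M$-part by the $F$-formula of the inductive hypothesis.

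\emph{Main obstacle.} The crux is (iii), and within it the injectivity $A^{p,q}(V) \cap M = 0$ --- precisely what the auxiliary term $\sum_{j \geq 0}(\ol F^{q-j} \cap W_{n-j-1}) V_\bbC$ in \eqref{eq: Apq} is engineered to force. I would prove it by finite descent: for $v \in A^{p,q}(V) \cap W_{b-1} V_\bbC$, writing $v$ according to \eqref{eq: Apq} and tracking its $\ol F$-part shows that the image of $v$ in $\Gr^W_{b-1-m} V_\bbC$ lies in $(F^p \cap \ol F^{\,q-m}) \Gr^W_{b-1-m} V_\bbC$ for every $m \geq 0$; but $\Gr^W_{b-1-m} V_\bbC$ is pure of weight $b-1-m = p+q-1-m$, and no pair $r \geq p$, $s \geq q-m$ satisfies $r+s = p+q-1-m$, so that image vanishes and $v \in W_{b-2-m} V_\bbC$ --- iterating forces $v \in \bigcap_m W_m V_\bbC = 0$. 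For surjectivity, given $\bar v \in H^{p,q}(N)$ I would pick lifts $v_1 \in F^p V_\bbC$ and $v_2 \in \ol F^q V_\bbC$ of $\bar v$, so $u := v_1 - v_2 \in M$, decompose $u = \sum_{r,s} u^{r,s}$ via the inductive hypothesis for $M$, and replace $v_1$ by $v_1 - \sum_{r \geq p} u^{r,s}$, still in $F^p V_\bbC$ by the $F$-formula for $M$; an elementary index computation, using precisely that the surviving components have $r < p$, shows the $\ol F$-part of this corrected lift lands in $\sum_{j \geq 0}(\ol F^{\,q-j} \cap W_{b-j-1}) V_\bbC$, so it represents an element of $A^{p,q}(V)$ mapping to $\bar v$. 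Finally, for (ii): reducing modulo $M$ and invoking \eqref{eq: n-oppose} on $N$ gives $A^{p,q}(V) \subseteq M$ when $p+q > b$, after which the same kind of descent --- or a comparison with the vanishing of $A^{r,s}(M)$ above the top weight of $M$, supplied by the inductive hypothesis --- concludes. The remaining points, namely the verification in (i) and the bookkeeping in the surjectivity step with induced filtrations, are routine.
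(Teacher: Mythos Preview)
Your proof is correct and self-contained. The paper takes a much shorter route: it isolates the statement that the projection $W_{p+q}V_\bbC\twoheadrightarrow\Gr^W_{p+q}V_\bbC$ restricts to an isomorphism $A^{p,q}(V)\xrightarrow{\cong}(F^p\cap\ol F^q)\Gr^W_{p+q}V_\bbC$ as Lemma~\ref{lem: Apq}, cites \cite[Lemma--Definition 3.4]{PS} for its proof, and then deduces Proposition~\ref{prop: splitting} in a few lines by summing over $p+q=n$ and invoking Lemma~\ref{lemma: splitting} on each $\Gr^W_n V$. By contrast, your induction on the length of $W_\bullet$ is precisely a from-scratch proof of Lemma~\ref{lem: Apq}: your step~(iii) is exactly that isomorphism at the top weight, with the descent giving injectivity and the lift-correction giving surjectivity, and your step~(i) propagates it to lower weights. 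So the two arguments are not really different in substance---both hinge on the same isomorphism---but the paper treats it as a black box while you open it up. Your approach buys independence from the external reference and makes the mechanism behind the auxiliary sum in \eqref{eq: Apq} transparent; the paper's approach buys brevity and cleanly separates the key lemma from its consequences, which is convenient since the same lemma is reused later (e.g.\ in defining the quasi-inverse $\psi_\bbC$ and in the plectic generalization Proposition~\ref{prop: psplitting}).
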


We call the bigrading $\{A^{p,q}(V)\}$ of $V_\bbC$ given in Proposition \ref{prop: splitting} the \textit{Deligne splitting} of 
the mixed $\bbC$-Hodge structure $V$.  The key ingredient for the proof of Proposition \ref{prop: splitting} is the following lemma.

\begin{lemma}\label{lem: Apq}
	Let $V$ be a mixed $\bbC$-Hodge structure, and let $\{A^{p,q}(V)\}$ be the Deligne splitting of $V$ as in \eqref{eq: Apq}.
	Then for any $p,q\in\bbZ$ and $n:=p+q$, the canonical surjection
	$
		W_{n}V_\bbC \rightarrow  \Gr_{n}^W V_\bbC
	$
	induces a $\bbC$-linear isomorphism
	\begin{align*}
		A^{p,q}(V)&\xrightarrow\cong (F^p \cap\ol F^q) \Gr_{n}^W V_\bbC.
	\end{align*}
	Here, $(F^p \cap\ol F^q) \Gr_{n}^W V_\bbC: =  F^p \Gr_{n}^W V_\bbC \cap\ol F^q \Gr_{n}^W V_\bbC$.
\end{lemma}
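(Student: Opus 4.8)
The plan is to prove the lemma by induction on the length of the weight filtration of $V$, that is, on $b-a$, where $a$ and $b$ are respectively the smallest and largest integers with $W_aV_\bbC\neq 0$ and $W_bV_\bbC=V_\bbC$ (the case $V_\bbC=0$ being trivial). Throughout I write $n:=p+q$, let $\pi_n\colon W_nV_\bbC\to\Gr^W_nV_\bbC$ denote the canonical surjection, and abbreviate the second factor in \eqref{eq: Apq} by
\begin{equation*}
	B^{p,q}(V):=(\ol F^q\cap W_n)V_\bbC+\sum_{j\geq 0}(\ol F^{q-j}\cap W_{n-j-1})V_\bbC,
\end{equation*}
so that $A^{p,q}(V)=F^pV_\bbC\cap B^{p,q}(V)$ and $B^{p,q}(V)\subseteq W_nV_\bbC$. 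The first, routine step is to verify that $\pi_n$ maps $A^{p,q}(V)$ into $(F^p\cap\ol F^q)\Gr^W_nV_\bbC$: the inclusion $A^{p,q}(V)\subseteq(F^p\cap W_n)V_\bbC$ gives the $F^p$-part by the very definition of the induced filtration, while $B^{p,q}(V)\subseteq(\ol F^q\cap W_n)V_\bbC+W_{n-1}V_\bbC$ (the sum over $j$ lying in $W_{n-1}V_\bbC$) gives the $\ol F^q$-part.

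Next I would dispose of the base case, and more generally of the case $n\leq a$ for an arbitrary $V$. If $n<a$ then $W_nV_\bbC=0$, so $A^{p,q}(V)=0=\Gr^W_nV_\bbC$; if $n=a$ then $W_{a-1}V_\bbC=0$ annihilates the sum over $j$, so $A^{p,q}(V)=(F^p\cap\ol F^q)W_aV_\bbC$, which $\pi_a$ carries isomorphically onto $(F^p\cap\ol F^q)\Gr^W_aV_\bbC$ since $\pi_a$ restricts to the isomorphism $W_aV_\bbC\xrightarrow{\cong}\Gr^W_aV_\bbC$. Finally, if $V_\bbC$ is pure of weight $a$ and $n>a$, a direct computation collapses $B^{p,q}(V)$ to $\ol F^{a+1-p}V_\bbC$, so $A^{p,q}(V)=F^pV_\bbC\cap\ol F^{a+1-p}V_\bbC=0$ by \eqref{eq: n-oppose}, again matching $\Gr^W_nV_\bbC=0$.

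For the inductive step one may assume $a<b$ and $n>a$. Put $K:=W_aV_\bbC$, which with the induced filtrations is a pure $\bbC$-Hodge structure of weight $a$ (its only nonzero weight-graded piece being $\Gr^W_aV_\bbC$); let $V':=V_\bbC/K$ carry the quotient weight and Hodge filtrations, and let $\rho\colon V_\bbC\to V'$ be the projection. Since $W_aV'=0$, the structure $V'$ has strictly smaller length, so the inductive hypothesis applies to it. The core assertion is that $\rho$ induces an isomorphism $A^{p,q}(V)\xrightarrow{\cong}A^{p,q}(V')$ fitting into a commutative square involving $\pi_n$, $\pi'_n$, and the canonical isomorphism $\Gr^W_nV_\bbC\cong\Gr^W_nV'$ (available because $n-1\geq a$). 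Granting this, the inductive hypothesis makes $\pi'_n$ an isomorphism on $A^{p,q}(V')$, and chasing the square then forces $\pi_n$ to be an isomorphism on $A^{p,q}(V)$, completing the induction.

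Proving the core assertion is where I expect the real difficulty to lie. Since $K=W_aV_\bbC\subseteq W_mV_\bbC$ for every $m\geq a$ while $W_mV'=0$ for $m<a$, the map $\rho$ carries each of the building blocks $(F^p\cap W_n)V_\bbC$ and $(\ol F^{q-j}\cap W_{n-j-1})V_\bbC$ \emph{onto} its counterpart for $V'$; hence an arbitrary element of $A^{p,q}(V')$ admits a lift $v_0\in(F^p\cap W_n)V_\bbC$ and a lift $v_1\in B^{p,q}(V)$ with $v_0-v_1\in K$. Using that $K$ is pure, one splits $v_0-v_1$ along $K=F^pK\oplus\ol F^{a+1-p}K$ and notes that $\ol F^{a+1-p}K=(\ol F^{a+1-p}\cap W_a)V_\bbC$ occurs as the $j=n-1-a$ summand of $B^{p,q}(V)$; this permits correcting $v_0$ and $v_1$ so that they coincide, producing a lift lying in $A^{p,q}(V)$, which gives surjectivity. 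For injectivity the crucial observation is that, when $n>a$, every building block of $B^{p,q}(V)$ already lies in $\ol F^{a+1-p}V_\bbC$ --- for $j\leq n-1-a$ because $\ol F^{q-j}\subseteq\ol F^{a+1-p}$, and for $j\geq n-a$ because $W_{n-j-1}V_\bbC\subseteq W_{a-1}V_\bbC=0$ --- so that $B^{p,q}(V)\subseteq\ol F^{a+1-p}V_\bbC$ and hence $A^{p,q}(V)\cap K\subseteq F^pK\cap\ol F^{a+1-p}K=0$ by purity of $K$, which means $\rho$ is injective on $A^{p,q}(V)$. The delicate point in all of this is checking that the correction by elements of $K$ can be arranged to respect the $F^p$-condition and the $B^{p,q}$-condition simultaneously; the remainder reduces to formal manipulation of the filtrations together with the pure case treated above.
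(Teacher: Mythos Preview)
Your argument is correct. The paper itself does not prove this lemma---it simply refers the reader to \cite[Lemma-Definition~3.4]{PS}---so there is no ``paper's own proof'' to compare against in detail. Your induction on the length of the weight filtration, peeling off the bottom pure piece $K=W_aV_\bbC$ and passing to the quotient $V'=V_\bbC/K$, is the standard strategy and is essentially what one finds in the cited reference. The key steps (surjectivity via the splitting $K=F^pK\oplus\ol F^{a+1-p}K$ and absorbing the $\ol F^{a+1-p}K$-part into the $j=n-1-a$ summand of $B^{p,q}(V)$; injectivity via the containment $B^{p,q}(V)\subseteq\ol F^{a+1-p}V_\bbC$ when $n>a$) are all verified correctly, and there is no circularity, since the fact that $V'$ with the quotient filtrations is again a mixed $\bbC$-Hodge structure follows by direct inspection of $\Gr^W_nV'\cong\Gr^W_nV_\bbC$ for $n>a$ and does not require the abelian-category machinery developed later.

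One small wording slip: you describe $b$ as ``the largest integer with $W_bV_\bbC=V_\bbC$'', but there is no largest such integer; you mean the smallest. This does not affect the argument, since only $a$ enters into the actual proof and the induction is really on any measure of filtration length that strictly decreases upon passing to $V'$.
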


\begin{proof}
	See for example \cite[ Lemma-Definition 3.4]{PS}.
\end{proof}

We may now prove Proposition \ref{prop: splitting} as follows.

\begin{proof}[Proof of Proposition \ref{prop: splitting}]
	Let $\{A^{p,q}(V)\}$ be the Deligne splitting of $V$.  By Lemma \ref{lem: Apq}, we have an isomorphism
	\begin{equation*}
		\bigoplus_{\substack{p,q\in\bbZ\\p+q=n}} A^{p,q}(V) \xrightarrow\cong \bigoplus_{\substack{p,q\in\bbZ\\p+q=n}} 
		 (F^p \cap\ol F^q)\Gr^W_nV_\bbC 
	\end{equation*}
	for any integer $n \in \bbZ$.  By the definition of the weight filtration on mixed $\bbC$-Hodge structures, 
	$\Gr^W_n V$ is a pure $\bbC$-Hodge structure of weight $n$, hence we have	
	\begin{equation*}
		\Gr^W_nV_\bbC =\bigoplus_{p+q=n}  (F^p \cap\ol F^q)\Gr^W_nV_\bbC.
	\end{equation*}
	by Lemma \ref{lemma: splitting}.
	This shows that $V_\bbC =\bigoplus_{p,q\in\bbZ} A^{p,q}(V)$ as desired. The statements for the Hodge and weight filtrations follow from this result.
\end{proof}

\begin{remark}\label{rem: splitting}
	Exchanging the roles of $F^\bullet$ and $\ol F^\bullet$, we define
	\begin{equation*}
		\ol A^{p,q}(V) := (\ol F^p \cap W_{n}) V_\bbC \cap \biggl((F^q \cap W_{n})V_\bbC+\sum_{j\geq 0}(F^{q-j}\cap W_{n-j-1}) V_\bbC\biggr).
	\end{equation*}
	Then for any $p,q\in\bbZ$ and $n:=p+q$, the canonical surjection
	$
		W_{n}V_\bbC \rightarrow  \Gr_{n}^W V_\bbC
	$
	induces a $\bbC$-linear isomorphism
	\begin{align*}
		\ol A^{p,q}(V)&\xrightarrow\cong (\ol F^p \cap F^q) \Gr_{n}^W V_\bbC,
	\end{align*}	
	$\{\ol A^{p,q}(V)\}$ gives a bigrading of $V_\bbC$, and we have for any $n,p\in\bbZ$
	\begin{align*}
		W_n V_\bbC &=\bigoplus_{\substack{p,q\in\bbZ\\p+q \leq n}} \ol A^{p,q}(V), &
		\ol F^p V_\bbC &=\bigoplus_{\substack{r,s\in\bbZ\\r \geq p}} \ol A^{r,s}(V).
	\end{align*}
\end{remark}

We will use Proposition \ref{prop: splitting} and Remark \ref{rem: splitting}
to prove the strictness with respect to the weight and Hodge filtrations of morphism of mixed Hodge structures.
We first prepare some terminology.

\begin{definition}
	Suppose $U$ and $V$ are finite dimensional $\bbC$-vector spaces with $\bbC$-linear subspaces $WU \subset U$ and $WV \subset V$.  
	We say that a $\bbC$-linear homomorphism
	\begin{equation*}
		\alpha: U \rightarrow V
	\end{equation*}
	is \textit{compatible with $W$} if $\alpha(WU) \subset WV$, and that $\alpha$ is \textit{strict with respect to $W$} if we have
	\begin{equation*}
		\alpha(W U) = \alpha(U) \cap WV.
	\end{equation*}
\end{definition}

We denote by $\MHS_\bbC$ the category of mixed $\bbC$-Hodge structures.  A morphism $\alpha: U \rightarrow V$ in this category is a $\bbC$-linear
homomorphism $\alpha: U_\bbC \rightarrow V_\bbC$ of underlying $\bbC$-vector spaces compatible with the weight and Hodge filtrations.
Then we have the following.

\begin{proposition}\label{prop: strictness of A}
	Let $\alpha:U\rightarrow V$ be a morphism in $\MHS_\bbC$, and let $\cS$ be a subset of $\bbZ\times\bbZ$.
	Then we have
	\begin{align}\label{A is strict}
		\alpha\left(\bigoplus_{(p,q)\in \cS}A^{p,q}(U)\right)=\alpha(U_\bbC)\cap\left(\bigoplus_{(p,q)\in \cS}A^{p,q}(V)\right)
	\end{align}
	and
	\begin{equation}\label{WF is strict}
		\alpha\left(\sum_{(p,n)\in \cS}(F^p \cap W_n)U_\bbC\right)=\alpha(U_\bbC)\cap\left(\sum_{(p,n)\in \cS}(F^p \cap W_n)V_\bbC\right).
	\end{equation}
	Statements \eqref{A is strict} and \eqref{WF is strict} with $F^p$ replaced by $\ol F^p$ and $A^{p,q}$ replaced by $\ol A^{p,q}$
	are also true.  
	In particular, $\alpha$ is strict with respect to the filtrations $F^\bullet\cap W_\bullet$ and $\ol F^\bullet\cap W_\bullet$.
	Furthermore, if $U$ and $V$ are both pure $\bbC$-Hodge structures of weight $n$, then we have
	\begin{equation}\label{eq: pure}
		\alpha((F^p \cap\ol F^q)U_\bbC) = \alpha(U_\bbC) \cap (F^p \cap\ol F^q)V_\bbC.
	\end{equation}
\end{proposition}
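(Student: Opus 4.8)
The crux is the functoriality of the Deligne bigrading: I will first show that $\alpha\bigl(A^{p,q}(U)\bigr)\subseteq A^{p,q}(V)$ for all $p,q\in\bbZ$, after which every assertion of the proposition follows by formal manipulations of direct sums inside the fixed Deligne decomposition \eqref{eq: Deligne splitting}. To establish $\alpha\bigl(A^{p,q}(U)\bigr)\subseteq A^{p,q}(V)$ I argue directly from formula \eqref{eq: Apq}. Since $\alpha$ is a morphism in $\MHS_\bbC$ it is compatible with $W_\bullet$, $F^\bullet$ and $\ol F^\bullet$, so $\alpha\bigl((F^p\cap W_n)U_\bbC\bigr)\subseteq(F^p\cap W_n)V_\bbC$ and $\alpha\bigl((\ol F^{q-j}\cap W_{n-j-1})U_\bbC\bigr)\subseteq(\ol F^{q-j}\cap W_{n-j-1})V_\bbC$ for all $j\geq0$, using $\alpha(A\cap B)\subseteq\alpha(A)\cap\alpha(B)$; combining these with $\alpha(A+B)=\alpha(A)+\alpha(B)$ applied to the internal sum in \eqref{eq: Apq}, and once more with $\alpha(A\cap B)\subseteq\alpha(A)\cap\alpha(B)$, gives the claim. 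By Remark \ref{rem: splitting} the identical reasoning gives $\alpha\bigl(\ol A^{p,q}(U)\bigr)\subseteq\ol A^{p,q}(V)$.

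Granting this, \eqref{A is strict} is proved as follows. The inclusion ``$\subseteq$'' is immediate. Conversely, let $v\in\alpha(U_\bbC)\cap\bigoplus_{(p,q)\in\cS}A^{p,q}(V)$, write $v=\alpha(u)$, and decompose $u=\sum_{p,q}u^{p,q}$ with $u^{p,q}\in A^{p,q}(U)$ as in \eqref{eq: Deligne splitting}; then $v=\sum_{p,q}\alpha(u^{p,q})$ with $\alpha(u^{p,q})\in A^{p,q}(V)$, so uniqueness of the decomposition of $v$ forces $\alpha(u^{p,q})=0$ for $(p,q)\notin\cS$, whence $v=\alpha\bigl(\sum_{(p,q)\in\cS}u^{p,q}\bigr)\in\alpha\bigl(\bigoplus_{(p,q)\in\cS}A^{p,q}(U)\bigr)$. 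The same argument with $\ol A^{p,q}$ in place of $A^{p,q}$ works verbatim.

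For \eqref{WF is strict} I use Proposition \ref{prop: splitting} to write, for each $(p,n)\in\cS$, the subspace $(F^p\cap W_n)V_\bbC=\bigoplus_{r\geq p,\ r+s\leq n}A^{r,s}(V)$ as an intersection of direct summands of \eqref{eq: Deligne splitting}, and similarly over $U_\bbC$; summing over $\cS$ identifies $\sum_{(p,n)\in\cS}(F^p\cap W_n)V_\bbC$ with $\bigoplus_{(r,s)\in\cT}A^{r,s}(V)$, where $\cT=\bigcup_{(p,n)\in\cS}\{(r,s):r\geq p,\ r+s\leq n\}$, and likewise over $U_\bbC$, so \eqref{WF is strict} is precisely \eqref{A is strict} for the subset $\cT$; taking $\cS$ a singleton recovers strictness with respect to $F^\bullet\cap W_\bullet$, and the $\ol F$-versions are obtained from Remark \ref{rem: splitting} in the same way. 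Finally, when $U$ and $V$ are pure of weight $n$ one has $\Gr^W_nV_\bbC=V_\bbC$ with the canonical surjection equal to the identity, so Lemma \ref{lem: Apq} gives $A^{r,s}(V)=(F^r\cap\ol F^s)V_\bbC$ for $r+s=n$; together with Lemma \ref{lem: candy} this yields $(F^p\cap\ol F^q)V_\bbC=\bigoplus_{r+s=n,\ r\geq p,\ s\geq q}A^{r,s}(V)$, and the same over $U_\bbC$, so \eqref{eq: pure} is \eqref{A is strict} for $\cS=\{(r,s):r+s=n,\ r\geq p,\ s\geq q\}$.

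The substantive inputs are Proposition \ref{prop: splitting} and Lemma \ref{lem: Apq}, which make the Deligne bigrading available and compatible with the three filtrations; granted these, the only point requiring care is the routine bookkeeping of intersections and sums of subfamilies of $\{A^{p,q}\}$ inside the fixed decomposition \eqref{eq: Deligne splitting}, so I anticipate no real obstacle beyond carrying out these reductions cleanly.
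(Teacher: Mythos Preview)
Your proof is correct and follows essentially the same approach as the paper's: both rest on the inclusion $\alpha(A^{p,q}(U))\subset A^{p,q}(V)$ together with the Deligne bigrading \eqref{eq: Deligne splitting}, then reduce \eqref{WF is strict} and \eqref{eq: pure} to \eqref{A is strict} by expressing the relevant subspaces as direct sums of $A^{r,s}$. Your write-up is simply more explicit---you spell out why $\alpha$ preserves each $A^{p,q}$ from formula \eqref{eq: Apq}, and in the pure case you invoke Lemma~\ref{lem: candy} to handle all $(p,q)$ uniformly, whereas the paper just records that $A^{p,q}=(F^p\cap\ol F^q)$ when $p+q=n$ and is zero otherwise.
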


\begin{proof}
	Since $\alpha(A^{p,q}(U))\subset A^{p,q}(V)$, assertion \eqref{A is strict} follows from 
	the fact that the Deligne splitting gives a bigrading \eqref{eq: Deligne splitting} of $U_\bbC$ and $V_\bbC$.
	Equality \eqref{WF is strict} 
	follows from the fact that 
	\begin{align*}
		(F^p\cap W_n)U_\bbC &= \bigoplus_{\substack{r \geq p, \\r+s \leq n}} A^{r,s}(U), &
		(F^p\cap W_n)V_\bbC &= \bigoplus_{\substack{r \geq p,\\r+s \leq n}} A^{r,s}(V),
	\end{align*}
	and this proves the strictness of $\alpha$ with respect to $F^\bullet\cap W_\bullet$. The strictness of $\alpha$ with respect to $\ol F^\bullet\cap W_\bullet$
	follows from a parallel argument with $A^{p,q}$ replaced by $\ol A^{p,q}$.  The assertion \eqref{eq: pure} for the pure case follows from \eqref{A is strict}, noting the fact that
	\begin{align*}
		A^{p,q}(U)&= (F^p\cap\ol F^q)U_\bbC, &
		A^{p,q}(V)&= (F^p\cap\ol F^q)V_\bbC
	\end{align*}
	if $p+q=n$ and is zero otherwise.
\end{proof}

Using Proposition \ref{prop: strictness of A}, one can prove that $\MHS_\bbC$ is an abelian category (\cite{D1} Th\'eor\`eme 2.3.5).
The following result will be used in the proof of Proposition \ref{prop: pApq}.

\begin{corollary}\label{cor: sum and intersection}
	Let $V$ be a mixed $\bbC$-Hodge structure.
	For any $\bbC$-linear subspace $U_\bbC$ of $V_\bbC$, the weight and Hodge filtrations on $V$ induce
	the filtrations
	\begin{align*}
			W_nU_\bbC&:= U_\bbC \cap W_nV_\bbC,&F^p U_\bbC&:=U_\bbC \cap F^pV_\bbC,&
			\ol F^q U_\bbC&:=U_\bbC \cap\ol F^qV_\bbC
	\end{align*}
	on $U_\bbC$.
	Suppose two $\bbC$-linear subspaces $U_\bbC$ and $U'_\bbC$ of $V_\bbC$ with the induced filtrations as above are mixed $\bbC$-Hodge structures.
	Then $U_\bbC+U'_\bbC$ and $U_\bbC\cap U'_\bbC$ with the induced filtrations are also mixed $\bbC$-Hodge structures.
	Moreover, we have
	$W_n(U_\bbC+U'_\bbC)=W_nU_\bbC+W_nU'_\bbC$, $F^p(U_\bbC+U'_\bbC)=F^pU_\bbC+F^pU'_\bbC$,
	and $\ol F^q(U_\bbC+U'_\bbC)=\ol F^q U_\bbC+\ol F^q U'_\bbC$
	which by definition is equivalent to
	\begin{align}
		\bigl(U_\bbC+U'_\bbC\bigr) \cap W_n V_\bbC&=  U_\bbC\cap W_n V_\bbC+U'_\bbC \cap W_n V_\bbC \label{filtration of sum:1},\\  
		 (U_\bbC+U'_\bbC)\cap   F^p V_\bbC&= U_\bbC\cap   F^p V_\bbC+U'_\bbC\cap   F^p V_\bbC\label{filtration of sum:2},\\
		  (U_\bbC+U'_\bbC)\cap   \ol F^q V_\bbC&= U_\bbC\cap   \ol F^q V_\bbC+U'_\bbC\cap   \ol F^q V_\bbC\label{filtration of sum:3}.
	\end{align}
\end{corollary}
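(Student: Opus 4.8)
The plan is to exhibit both $U_\bbC+U'_\bbC$ and $U_\bbC\cap U'_\bbC$ as the image and the kernel of a single morphism in $\MHS_\bbC$, and then to read off everything from the fact that $\MHS_\bbC$ is abelian together with the strictness in Proposition \ref{prop: strictness of A}. First, since $U_\bbC$ and $U'_\bbC$ equipped with their induced filtrations are mixed $\bbC$-Hodge structures by hypothesis, both inclusions $U_\bbC\hookrightarrow V_\bbC$ and $U'_\bbC\hookrightarrow V_\bbC$ are morphisms in $\MHS_\bbC$. The direct sum $U\oplus U'$, with underlying space $U_\bbC\oplus U'_\bbC$ and the componentwise weight and Hodge filtrations, is again a mixed $\bbC$-Hodge structure: indeed $\Gr^W_n(U\oplus U')=\Gr^W_nU\oplus\Gr^W_nU'$ with its induced filtrations, and this is pure of weight $n$ by Lemma \ref{lemma: splitting} applied to the direct sum of the two bigradings. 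I would then consider the addition map
\[
	\phi\colon U\oplus U'\longrightarrow V,\qquad (u,u')\longmapsto u+u',
\]
which carries $W_nU_\bbC\oplus W_nU'_\bbC$ into $W_nV_\bbC$ and likewise for $F^\bullet$ and $\ol F^\bullet$; hence $\phi$ is a morphism in $\MHS_\bbC$.

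Next I would identify the image and kernel of $\phi$. The underlying space of $\Im\phi$ is $U_\bbC+U'_\bbC$, and since $\MHS_\bbC$ is abelian with strict morphisms (Proposition \ref{prop: strictness of A}, \cite[Th\'eor\`eme 2.3.5]{D1}), $\Im\phi$ carries the filtrations induced from $V$, so $U_\bbC+U'_\bbC$ with its induced filtrations is a mixed $\bbC$-Hodge structure. The underlying space of $\Ker\phi$ is $\{(u,-u):u\in U_\bbC\cap U'_\bbC\}$, which $(u,-u)\mapsto u$ identifies $\bbC$-linearly with $U_\bbC\cap U'_\bbC$; under this identification $\Ker\phi\cap\bigl(W_nU_\bbC\oplus W_nU'_\bbC\bigr)$ corresponds to $W_nU_\bbC\cap W_nU'_\bbC=(U_\bbC\cap U'_\bbC)\cap W_nV_\bbC$, and similarly for $F^\bullet$ and $\ol F^\bullet$. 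Hence $U_\bbC\cap U'_\bbC$ with its induced filtrations is isomorphic to $\Ker\phi$ and is therefore a mixed $\bbC$-Hodge structure. This establishes the first assertion.

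Finally, the identities \eqref{filtration of sum:1}--\eqref{filtration of sum:3} are precisely the strictness of $\phi$. For \eqref{filtration of sum:1}, apply \eqref{WF is strict} to $\alpha=\phi$ with $\cS=\bbZ\times\{n\}$: the left-hand side equals $\phi\bigl(W_n(U\oplus U')\bigr)=\phi\bigl(W_nU_\bbC\oplus W_nU'_\bbC\bigr)=W_nU_\bbC+W_nU'_\bbC$, while the right-hand side is $\phi(U_\bbC\oplus U'_\bbC)\cap W_nV_\bbC=(U_\bbC+U'_\bbC)\cap W_nV_\bbC$; combined with $W_nU_\bbC=U_\bbC\cap W_nV_\bbC$ and $W_nU'_\bbC=U'_\bbC\cap W_nV_\bbC$ this is exactly \eqref{filtration of sum:1}. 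Taking instead $\cS=\{p\}\times\bbZ$ in \eqref{WF is strict} gives \eqref{filtration of sum:2}, and the same argument with the $\ol F^\bullet$-version of \eqref{WF is strict} gives \eqref{filtration of sum:3}; the displayed identities $W_n(U_\bbC+U'_\bbC)=W_nU_\bbC+W_nU'_\bbC$, $F^p(U_\bbC+U'_\bbC)=F^pU_\bbC+F^pU'_\bbC$, $\ol F^q(U_\bbC+U'_\bbC)=\ol F^qU_\bbC+\ol F^qU'_\bbC$ are just reformulations of these.

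I do not expect a serious difficulty here; the point requiring care is the bookkeeping in the second paragraph, namely verifying that the mixed Hodge structures on $\Im\phi$ and $\Ker\phi$ provided by the abelian category (with filtrations induced from $V$, resp.\ from $U\oplus U'$) agree with the concretely defined induced filtrations on $U_\bbC+U'_\bbC$ and $U_\bbC\cap U'_\bbC$. This reduces to the elementary computation of filtrations on a direct sum and on a kernel indicated above, and uses nothing beyond Proposition \ref{prop: strictness of A}; in particular \eqref{filtration of sum:1}--\eqref{filtration of sum:3} on their own do not require the abelian category structure.
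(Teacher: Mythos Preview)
Your proposal is correct and follows essentially the same approach as the paper: both use the addition map $U\oplus U'\to V$ as a morphism of mixed Hodge structures to obtain the sum as its image and the strictness identities from Proposition~\ref{prop: strictness of A}. The only cosmetic difference is that for the intersection the paper realizes $U_\bbC\cap U'_\bbC$ as the kernel of $U_\bbC\to (U_\bbC+U'_\bbC)/U'_\bbC$ rather than as the kernel of the addition map itself; your identification via $(u,-u)\mapsto u$ is equally valid and the filtration check you give is exactly what is needed.
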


\begin{proof}
	The map $U_\bbC\oplus U'_\bbC\rightarrow V_\bbC$ sending $(u,u')$ to $u+u'$ is a morphism of mixed $\bbC$-Hodge structures, 
	hence is strictly compatible with the filtrations $W_{\bullet}$, $F^{\bullet}$ and $\ol F^{\bullet}$. 
	This implies \eqref{filtration of sum:1}, \eqref{filtration of sum:2} and \eqref{filtration of sum:3}, and
	we see that the image $U_\bbC+U'_\bbC$ is also a mixed $\bbC$-Hodge structure.  The natural map 
	$U_\bbC\rightarrow(U_\bbC+U'_\bbC)/U'_\bbC$ is also a morphism of mixed $\bbC$-Hodge structures, hence we see that the 
	kernel $U_\bbC \cap U'_\bbC$ is also a mixed $\bbC$-Hodge structure.
\end{proof}


The category $\MHS_\bbC$ is known to be a neutral tannakian category with respect to the natural tensor product 
and the fiber functor $\omega: \MHS_\bbC \rightarrow \Vec_\bbC$ obtained by associating to $V$ the 
$\bbC$-vector space $\Gr^W_\bullet V_\bbC:=\bigoplus_n\Gr^W_nV_\bbC$. If we denote by $\cG_\bbC$ 
the tannakian fundamental group of $\MHS_\bbC$, then $\cG_\bbC$ is an affine group scheme over $\bbC$.  
By the definition of the tannakian fundamental group, we have a natural equivalence of categories
\begin{equation*}
	 \MHS_\bbC \xrightarrow\cong \Rep_\bbC(\cG_\bbC)
\end{equation*}
induced by the fiber functor $\omega$, where $\Rep_\bbC(\cG_\bbC)$ is the category of $\bbC$-linear representations  of $\cG_\bbC$ on finite dimensional 
$\bbC$-vector spaces.

%
\subsection{The tannakian fundamental group of $\MHS_\bbC$.}
%

In this subsection, we will review the construction of the tannakian fundamental group $\cG_\bbC$ of the category $\MHS_\bbC$,
and give an explicit description of objects in $\Rep_\bbC(\cG_\bbC)\cong \MHS_\bbC$.

We denote by $\frL_n$ the free Lie algebra over $\bbC$ generated by symbols $T^{i,j}$ for positive integers $i,j$ with $i+j\leq n$. 
We define the degree of elements of $\frL_n$ by $\deg(T^{i,j}):=i+j$, and denote by $I_n$ the ideal of $\frL_n$ generated by elements of degree larger than $n$.
Then $\fru_n:=\frL_n/I_n$ is a nilpotent Lie algebra over $\bbC$.  
The category $\Rep^\nil_\bbC(\fru_n)$ of nilpotent representations of $\fru_n$ form a neutral
tannakian category over $\bbC$, hence there exists a simply connected unipotent algebraic group $\cU_n$ over $\bbC$ such that
$
	\Rep^\nil_\bbC(\fru_n) = \Rep_\bbC(\cU_n).
$

Let $\bbS_\bbC :=\bbG_m \times \bbG_m$ be the product over $\bbC$ of the multiple group $\bbG_m$ defined over $\bbC$.
We give an action of $\bbS_\bbC(\bbC)$ on the Lie algebra $\fru_n$ over $\bbC$ by 
\begin{equation}\label{eq: action}
	(x,y)\cdot T^{i,j}:=x^{-i}y^{-j}T^{i,j},
\end{equation} 
for any $(x,y)\in\bbC^\times\times\bbC^\times=\bbS_\bbC(\bbC)$,
hence by functoriality an action of the algebraic group $\bbS_\bbC$ on $\cU_n$.
If we denote by $\cU$ the projective limit of $\cU_n$, then $\bbS_\bbC$ acts on $\cU$,
and we let $\cG_\bbC:=\bbS_\bbC\ltimes\cU$ be the semi-direct product with respect to this action. 

We will show that $\cG_\bbC$ is the tannakian fundamental group of $\MHS_\bbC$.
To compare the categories $\Rep_\bbC(\cG_\bbC)$ and $\MHS_\bbC$, we give an explicit description of objects in $\Rep_\bbC(\cG_\bbC)$.

\begin{proposition}\label{prop: explicit}
	An object in $\Rep_\bbC(\cG_\bbC)$ corresponds to a triple $U=(U_\bbC,\{U^{p,q}\},t)$,
	where $U_\bbC$ is a finite dimensional $\bbC$-vector space, $\{U^{p,q}\}$ is a bigrading
	of $U_\bbC$ by $\bbC$-linear subspaces  
	\begin{equation*}
		U_\bbC=\bigoplus_{p,q\in\bbZ}U^{p,q},
	\end{equation*}
	and $t$ is a $\bbC$-linear automorphism of $U_\bbC$ satisfying
	\begin{equation}\label{eq: unipotence}
		(t-1)(U^{p,q})\subset\bigoplus_{\substack{r,s\in\bbZ \\r<p,\,s<q}}U^{r,s}
	\end{equation}
	for any $p,q\in\bbZ$.
	The morphisms in $\Rep_\bbC(\cG_\bbC)$ correspond to $\bbC$-linear homomorphisms of underlying $\bbC$-vector spaces
	compatible with the bigradings and commutative with $t$.
\end{proposition}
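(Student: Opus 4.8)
The plan is to unwind the definition of $\cG_\bbC = \bbS_\bbC \ltimes \cU$ one factor at a time. A representation of the semidirect product on a finite dimensional $\bbC$-vector space $U_\bbC$ is the same as a representation of $\bbS_\bbC$ together with a representation of $\cU$ on the same space that are compatible with the conjugation action, i.e.\ $\rho(s)\rho(u)\rho(s)^{-1} = \rho(s\cdot u)$ for $s \in \bbS_\bbC$ and $u \in \cU$. Since $\bbS_\bbC = \bbG_m \times \bbG_m$, a representation of $\bbS_\bbC$ is precisely a $\bbZ^2$-grading, so the $\bbS_\bbC$-part supplies the bigrading $U_\bbC = \bigoplus_{p,q} U^{p,q}$, where $U^{p,q}$ is the subspace on which $(x,y)$ acts through the character $x^p y^q$.

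Next I would treat the $\cU$-part. Because $\cU = \varprojlim \cU_n$ is pro-unipotent and $U_\bbC$ is finite dimensional, the representation of $\cU$ factors through some $\cU_n$; via the equivalence $\Rep_\bbC(\cU_n) \cong \Rep^\nil_\bbC(\fru_n)$ and the fact that $\frL_n$ is free on the symbols $T^{i,j}$, this amounts to a choice of (necessarily nilpotent) operators $N^{i,j}$ giving the action of the generators $T^{i,j}$ for $i,j\ge1$, $i+j\le n$, constrained only by the vanishing of all brackets of degree exceeding $n$. Imposing compatibility with the $\bbS_\bbC$-action and using \eqref{eq: action}, a short computation on homogeneous vectors shows that $N^{i,j}(U^{p,q}) \subset U^{p-i,q-j}$ for all $p,q$ (the sign in \eqref{eq: action} is arranged precisely so that this comes out as a strict lowering of both indices). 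As the bigrading is finite, such operators are automatically nilpotent and all sufficiently high brackets vanish for degree reasons, so once $n$ exceeds the total spread of the bigrading no genuine relation is imposed.

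It then remains to repackage the family $\{N^{i,j}\}$ into a single automorphism. I would set $t := \exp\bigl(\textstyle\sum_{i,j} N^{i,j}\bigr)$, a well-defined unipotent automorphism of $U_\bbC$ since $\sum_{i,j}N^{i,j}$ is nilpotent; as this sum strictly lowers both indices, so does every power of $t-1$, which is exactly \eqref{eq: unipotence}. Conversely, given $t$ as in the statement, the nilpotent operator $\log t$ sends $U^{p,q}$ into $\bigoplus_{r<p,\,s<q}U^{r,s}$, hence decomposes uniquely as $\log t = \sum_{i,j\ge1}N^{i,j}$ with $N^{i,j}$ the component that shifts the bigrading by $(-i,-j)$; one checks that $t \mapsto \{N^{i,j}\}$ and $\{N^{i,j}\} \mapsto \exp(\sum N^{i,j})$ are mutually inverse, and that for $n$ larger than the spread of the bigrading these $N^{i,j}$ define a representation of $\fru_n$, hence of $\cU_n$, hence of $\cU$. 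Finally, a $\bbC$-linear map between two such objects commutes with the $\bbS_\bbC$-action iff it preserves the bigradings, and granting that, it commutes with $t$ iff it commutes with $\log t$ iff it commutes with each graded piece $N^{i,j}$ iff it commutes with the $\cU$-action; this gives the stated description of morphisms.

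The only real content — and the step I expect to demand the most care — is the passage between the family $\{N^{i,j}\}$ (constrained only by vanishing of high brackets) and the single $t$: one must verify that the graded components of $\log t$ land in the prescribed shift spaces and assemble into a consistent $\fru_n$-representation for suitable $n$, so that no relation is silently added or lost when collapsing to $t$. The remaining ingredients — identifying $\Rep_\bbC(\bbG_m\times\bbG_m)$ with bigradings, the semidirect-product bookkeeping, and factoring a finite dimensional representation of $\varprojlim\cU_n$ through a finite stage — are standard tannakian formalism.
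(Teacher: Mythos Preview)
Your proposal is correct and follows essentially the same approach as the paper: extract the bigrading from the $\bbS_\bbC$-action, pass from $\cU$ to the Lie algebra $\fru_n$ to obtain shifting operators $N^{i,j}$ (the paper writes $T^{i,j}$), and package them into the single unipotent $t$ via $\exp$/$\log$. The paper's proof is slightly terser on the semidirect-product bookkeeping and the description of morphisms, but the argument is the same.
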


\begin{proof}
	Suppose that $U_\bbC$ is a finite $\bbC$-representation of the pro-algebraic group $\cG_\bbC$.  Then $U_\bbC$ is a representation of both $\bbS_\bbC$ and $\cU$,
	and
	\begin{equation*}
		U^{p,q} := \{  u \in U_\bbC \mid (x,y) \cdot u = x^{p} y^{q} u \text{ for all } (x,y)\in \bbS_\bbC(\bbC) \}
	\end{equation*}
	gives a bigrading of $U_\bbC$.  
	If $n$ is a sufficiently large natural number, then
	$U_\bbC$ is a representation of $\cU_n$, hence it is also a representation of $\fru_n$.
	Hence we have a nilpotent endomorphism $T^{i,j}: U_\bbC \rightarrow U_\bbC$ for any positive integers $i,j$.
	For any $u \in U^{p,q}$, we have $(x,y)\cdot(T^{i,j}(u)) = ((x,y)\cdot T^{i,j})((x,y)\cdot u)  = x^{p-i} y^{q-j}(T^{i,j}(u))$, 
	hence $T^{i,j}$ restricted to $U^{p,q}$ gives a morphism
	\begin{equation*}
		T^{i,j}: U^{p,q} \rightarrow U^{p-i,q-j}.
	\end{equation*}  
	If we let $T:= \sum_{i,j>0} T^{i,j}$, then $T$ is again a nilpotent endomorphism of $U_\bbC$, and $t:=\exp(T)$ satisfies \eqref{eq: unipotence} by construction.
	Hence $(U_\bbC, \{U^{p,q}\}, t)$ satisfies the required conditions.
	Conversely, suppose  $(U_\bbC, \{U^{p,q}\}, t)$ satisfies the conditions of the proposition.  Then we may define an action of $\bbS_\bbC(\bbC)$ on $U_\bbC$
	by $(x,y) \cdot u = x^p y^q u$ for any $(x,y) \in \bbS_\bbC(\bbC)$ and $u \in U^{p,q}$.	
	Furthermore, if we let $T:= \log(t) = \log(1 + (t-1))$, then $T$ is an endomorphism of $U_\bbC$ satisfying
	\begin{equation*}
		T(U^{p,q}) \subset \bigoplus_{\substack{r,s\in\bbZ\\r<p,s<q}} U^{r,s}
	\end{equation*}
	by \eqref{eq: unipotence}.  For positive integers $i,j>0$, we let $T^{i,j}: U_\bbC \rightarrow U_\bbC$ be the morphisms
	given as the direct sum of morphisms $U^{p,q} \rightarrow U^{p-i,q-j}$ induced from $T$, which
	gives a representation of the Lie algebra $\fru_n$ on $U_\bbC$ for a natural number $n$ sufficiently large.
	This shows that our representation gives a representation of $\fru_n$ on $U_\bbC$, hence a representation of the algebraic group $\cU_n$ on $U_\bbC$.
	This combined with the action of $\bbS_\bbC$ gives a representation of the algebraic group $\cG_\bbC=\bbS_\bbC\ltimes\cU$ on $U_\bbC$.  
	The above construction shows that a representation $U_\bbC$ of $\cG_\bbC$ is equivalent to the triple $(U_\bbC, \{U^{p,q}\}, t)$, proving our assertion.
\end{proof}

The category $\Rep_\bbC(\cG_\bbC)$ is known to be equivalent to the category of mixed $\bbC$-Hodge structures $\MHS_\bbC$.
We may define a functor $\varphi_\bbC:\Rep_\bbC(\cG_\bbC)\rightarrow\MHS_\bbC$ by associating to 
any object $U$ in $\Rep_\bbC(\cG_\bbC)$ the object 
\begin{equation}\label{eq: phi}
	\varphi_\bbC(U):=(V_\bbC,W_\bullet,F^\bullet,\ol F^\bullet),
\end{equation}
where $V_\bbC:=U_\bbC$, the weight and Hodge filtrations are defined by
\begin{equation*}
	W_nV_\bbC:=\bigoplus_{\substack{p,q\in\bbZ\\p+q\leq n}}U^{p,q}
\end{equation*}
for any $n\in\bbZ$, and 
\begin{align*}
	F^pV_\bbC&:=t\bigg(\bigoplus_{\substack{r,s\in\bbZ\\r\geq p}}U^{r,s}\biggr),  &
	\ol F^qV_\bbC&:=t^{-1}\bigg(\bigoplus_{\substack{r,s\in\bbZ\\s\geq q}}U^{r,s}\biggr)
\end{align*}
for any integers $p,q\in\bbZ$.  

\begin{proposition}[\cite{D3} Proposition 2.1]\label{prop: equiv for RMHS}
	The functor $\varphi_\bbC$ in \eqref{eq: phi} gives an equivalence of categories
	\begin{equation*}
		\Rep_\bbC(\cG_\bbC)\cong\MHS_\bbC.
	\end{equation*}
\end{proposition}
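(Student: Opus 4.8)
The plan is to realize $\varphi_\bbC$ as an equivalence by constructing an explicit quasi-inverse $\psi_\bbC\colon\MHS_\bbC\to\Rep_\bbC(\cG_\bbC)$ out of the Deligne splitting, and checking that $\varphi_\bbC\circ\psi_\bbC$ and $\psi_\bbC\circ\varphi_\bbC$ equal (hence are naturally isomorphic to) the respective identity functors; throughout I identify objects of $\Rep_\bbC(\cG_\bbC)$ with triples $U=(U_\bbC,\{U^{p,q}\},t)$ as in Proposition~\ref{prop: explicit}. As a preliminary, $\varphi_\bbC$ does take values in $\MHS_\bbC$: by the unipotence condition~\eqref{eq: unipotence} the operator $t-1$ sends $U^{p,q}$ into $\bigoplus_{r<p,\,s<q}U^{r,s}\subseteq W_{p+q-2}V_\bbC$, so $t$ preserves each $W_nV_\bbC$ and acts as the identity on $\Gr^W_nV_\bbC$; hence the Hodge filtrations induced on $\Gr^W_nV_\bbC$ are the coordinate filtrations of the bigrading formed by the images of the $U^{p,q}$ with $p+q=n$, and Lemma~\ref{lemma: splitting} shows $\Gr^W_nV_\bbC$ is pure of weight $n$. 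Functoriality of $\varphi_\bbC$ is immediate, since a morphism in $\Rep_\bbC(\cG_\bbC)$ is a $\bbC$-linear map respecting the bigradings and commuting with $t$, hence respecting $W_\bullet,F^\bullet,\ol F^\bullet$; and $\varphi_\bbC$ is faithful because it is the identity on underlying spaces and on morphisms.

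\textbf{The quasi-inverse.} Given $V=(V_\bbC,W_\bullet,F^\bullet,\ol F^\bullet)$, I would begin from the Deligne splitting $\{A^{p,q}(V)\}$ of Proposition~\ref{prop: splitting}, which already exhibits $W_\bullet$ and $F^\bullet$ as coordinate filtrations for that bigrading. The crucial step is to attach to $V$ a canonical automorphism $t_V$ of $V_\bbC$ that is unipotent for the Deligne bigrading, i.e. $(t_V-1)(A^{p,q}(V))\subseteq\bigoplus_{r<p,\,s<q}A^{r,s}(V)$, and satisfies $t_V^{-2}\bigl(\bigoplus_{s\ge q}A^{r,s}(V)\bigr)=\ol F^qV_\bbC$ for all $q$. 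I would obtain $t_V$ as the (unique) unipotent square root of the (unique) automorphism $\delta_V$ that is unipotent for the Deligne bigrading and carries $\ol F^\bullet$ onto the split filtration $\ol F^q_{\mathrm{spl}}:=\bigoplus_{s\ge q}A^{r,s}(V)$; the basic input toward the existence of $\delta_V$ is that, by Lemmas~\ref{lem: candy} and~\ref{lem: Apq}, $\ol F^\bullet$ and $\ol F^\bullet_{\mathrm{spl}}$ induce the same filtration on every $\Gr^W_nV_\bbC$. One then sets $\psi_\bbC(V):=\bigl(V_\bbC,\{t_V^{-1}A^{p,q}(V)\},t_V\bigr)$; this meets the conditions of Proposition~\ref{prop: explicit}, and $\psi_\bbC$ is a functor because $A^{p,q}(-)$ is functorial (Proposition~\ref{prop: strictness of A}) and $\delta_V$, hence $t_V$, is canonical.

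\textbf{The two composites.} Applying $\varphi_\bbC$ to $\psi_\bbC(V)$ returns the weight filtration $t_V^{-1}(W_nV_\bbC)=W_nV_\bbC$ (as $t_V$ preserves $W_\bullet$), the Hodge filtration $\bigoplus_{r\ge p}A^{r,s}(V)=F^pV_\bbC$, and the conjugate Hodge filtration $t_V^{-2}\bigl(\bigoplus_{s\ge q}A^{r,s}(V)\bigr)=\ol F^qV_\bbC$, so $\varphi_\bbC\circ\psi_\bbC=\id$. In the other direction one first establishes the identity $A^{p,q}(\varphi_\bbC(U))=t(U^{p,q})$: by Lemma~\ref{lem: Apq} both subspaces project isomorphically onto the same piece of $\Gr^W_{p+q}V_\bbC$, and the inclusion $t(U^{p,q})\subseteq A^{p,q}(\varphi_\bbC(U))$ follows from the description~\eqref{eq: Apq} by an index computation. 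Uniqueness of $\delta_V$ then forces $\delta_{\varphi_\bbC(U)}=t^2$, whence $t_{\varphi_\bbC(U)}=t$ and $\psi_\bbC(\varphi_\bbC(U))=U$. Full faithfulness is now automatic: for a morphism $\alpha\colon\varphi_\bbC(U)\to\varphi_\bbC(U')$ in $\MHS_\bbC$, functoriality of $A^{p,q}(-)$ and of $t$ yields $\alpha(U^{p,q})\subseteq(U')^{p,q}$ and $\alpha t=t'\alpha$, so $\alpha$ is a morphism in $\Rep_\bbC(\cG_\bbC)$.

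\textbf{Main obstacle.} I expect the real work to lie in the existence, uniqueness and functoriality of the comparison automorphism $t_V$ — equivalently, in the statement that $\ol F^\bullet$ differs from the split filtration $\bigoplus_{s\ge q}A^{r,s}(V)$ by an automorphism unipotent not merely for $W_\bullet$ but for the full Deligne bigrading of $V$. This is the genuine Hodge-theoretic content (a refined comparison of $\ol F^\bullet$ with the bigrading canonically attached to $V$); everything else is formal manipulation of Propositions~\ref{prop: splitting} and~\ref{prop: strictness of A}. (One could alternatively argue tannakially: $\MHS_\bbC$ is neutral tannakian with fibre functor $\Gr^W_\bullet$; its maximal reductive quotient is $\bbS_\bbC$ because $\Gr^W_\bullet$ is an exact tensor functor onto the pure, i.e. semisimple, objects; and its pro-unipotent radical is the free pro-unipotent group on the dual of the sum of the groups $\Ext^1_{\MHS_\bbC}$ between simple pure $\bbC$-Hodge structures, freeness reflecting the vanishing of the relevant $\Ext^2$; computing these $\Ext^1$'s then reproduces the presentation of $\fru_n$ via the generators $T^{i,j}$. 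This is close to Deligne's original argument in \cite{D3}.)
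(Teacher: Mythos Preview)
Your approach is essentially the same as the paper's (which in turn follows \cite{D3}): build the quasi-inverse out of the Deligne splitting together with the square root of a unipotent comparison automorphism. The only substantive difference is cosmetic. The paper takes the underlying space of $\psi_\bbC(V)$ to be $\Gr^W_\bullet V_\bbC$ with bigrading $U^{p,q}=(F^p\cap\ol F^q)\Gr^W_{p+q}V_\bbC$, and defines $s:=\ol\rho_\bbC\circ\rho_\bbC^{-1}$ explicitly as the composite of the two canonical isomorphisms $\rho_\bbC,\ol\rho_\bbC\colon V_\bbC\xrightarrow{\cong}\Gr^W_\bullet V_\bbC$ coming from the splittings $\{A^{p,q}(V)\}$ and $\{\ol A^{p,q}(V)\}$; then $t:=\sqrt{s}$. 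You instead stay on $V_\bbC$ and conjugate everything by $\rho_\bbC$: your $\delta_V$ is $\rho_\bbC^{-1}\circ\ol\rho_\bbC$, and your bigrading $t_V^{-1}A^{p,q}(V)$ is $\rho_\bbC^{-1}(U^{p,q})$ transported by $t_V$. Your version has the pleasant feature that both composites $\varphi_\bbC\circ\psi_\bbC$ and $\psi_\bbC\circ\varphi_\bbC$ are literally the identity, whereas the paper only gets a natural isomorphism in one direction via \eqref{eq: equivalence isom}.

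One point to tighten: you characterize $\delta_V$ as ``the (unique) automorphism unipotent for the Deligne bigrading carrying $\ol F^\bullet$ onto $\ol F^\bullet_{\mathrm{spl}}$''. Neither the paper nor Deligne argues via such a uniqueness statement; they simply \emph{define} $s$ (equivalently, your $\delta_V$) using the second Deligne splitting $\{\ol A^{p,q}(V)\}$ of Remark~\ref{rem: splitting}. Concretely, $\delta_V$ is the automorphism sending $\ol A^{q,p}(V)$ isomorphically onto $A^{p,q}(V)$ via their common image in $\Gr^W_{p+q}V_\bbC$. With this explicit definition, functoriality is immediate from functoriality of both splittings, the required unipotence for the bigrading follows from the containment $\ol A^{q,p}(V)\subset A^{p,q}(V)+\bigoplus_{r<p,\,s<q}A^{r,s}(V)$ (which is the genuine Hodge-theoretic input you correctly flag), and your computation $\delta_{\varphi_\bbC(U)}=t^2$ goes through because $\ol A^{q,p}(\varphi_\bbC(U))=t^{-1}(U^{p,q})$ by the same argument that gives $A^{p,q}(\varphi_\bbC(U))=t(U^{p,q})$. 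If you want to keep the uniqueness formulation, you would need to prove separately that a bigrading-unipotent automorphism preserving $\ol F^\bullet$ is the identity; this is true but is extra work that the explicit definition avoids.
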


An quasi-inverse functor $\psi_\bbC:\MHS_\bbC\rightarrow\Rep_\bbC(\cG_\bbC)$ is given by associating to any $V\in\MHS_\bbC$ 
the object
\begin{equation}\label{eq: psi}
	\psi_\bbC(V):=(U_\bbC, \{ U^{p,q}\}, t)
\end{equation}
in $\Rep_\bbC(\cG_\bbC)$, where 
\begin{equation*}
	U_\bbC:= \bigoplus_{n\in\bbZ}\Gr^W_n V_\bbC,
\end{equation*}
$U^{p,q} = (F^p\cap\ol F^q)\Gr^W_{p+q}V_\bbC$ for any $p,q\in\bbZ$, and the $\bbC$-linear automorphism
$t$ is defined as follows: 
Let $\{A^{p,q}(V)\}$ be the Deligne splitting of $V$ given in \eqref{eq: Apq}.
By Lemma \ref{lem: Apq} we have an isomorphism
\begin{equation*}
	\bigoplus_{p+q=n} A^{p,q}(V) \xrightarrow\cong \bigoplus_{p+q=n}  (F^p \cap\ol F^q)\Gr^W_nV_\bbC = \Gr^W_nV_\bbC,
\end{equation*}
hence isomorphism
\begin{align*}
	\rho_\bbC:\bigoplus_{p,q\in\bbZ}A^{p,q}(V)&\rightarrow U_\bbC.
\end{align*}
Similarly, by Remark \ref{rem: splitting}, we have an isomorphism
\begin{align*}
	\ol\rho_\bbC:\bigoplus_{p,q\in\bbZ}\ol A^{p,q}(V)&\rightarrow U_\bbC. 
\end{align*}
We denote by $s$ the composition
\begin{equation*}
	U_\bbC\overset{\rho^{-1}_\bbC}{\longrightarrow}\bigoplus_{p,q\in\bbZ}A^{p,q}(V)=V_\bbC=\bigoplus_{p,q\in\bbZ}\ol A^{p,q}(V)\overset{\ol\rho_\bbC}{\longrightarrow}U_\bbC.
\end{equation*}
Then it is known that $s$ is unipotent, and $t$ is defined by
\begin{equation}\label{eq: t}
	t:=\sqrt{s}=\sum_{k=0}^\infty\bigg(\begin{array}{c}1/2\\ k\end{array}\bigg)(s-1)^k.
\end{equation}
Then we may prove that $\psi_\bbC\circ\varphi_\bbC=\id$ and $\varphi_\bbC\circ\psi_\bbC\simeq\id$.
The isomorphism of functors $\id \simeq\varphi_\bbC\circ\psi_\bbC$ is given by the composition
\begin{equation}\label{eq: equivalence isom}
	V_\bbC = \bigoplus_{p,q\in\bbZ}A^{p,q}(V)\overset{\rho_\bbC}{\longrightarrow}  \bigoplus_{n\in\bbZ}\Gr^W_n V_\bbC
	 \overset{t}{\longrightarrow} \bigoplus_{n\in\bbZ}\Gr^W_n V_\bbC
\end{equation}
for any object $V$ in $\MHS_\bbC$.\medskip

%
\subsection{The category $\Rep_\bbC(\cG^g_\bbC)$.}
%
Recall that $\cG_\bbC$ denotes the tannakian fundamental group of $\MHS_\bbC$ with respect to $\omega$. 
Let $g$ be an integer $\geq 0$.  In \cite[\S 16]{NS}, Nekov\'{a}\v{r} and Scholl defined the category of 
mixed $g$-plectic $\bbC$-Hodge structures to be the category $\Rep_\bbC(\cG^g_\bbC)$ of finite 
dimensional $\bbC$-linear representations of the $g$-fold product $\cG^g_\bbC:= \cG_\bbC \times \cdots \times \cG_\bbC$.
As a direct generalization of Proposition \ref{prop: explicit}, we have the following explicit description of objects in $\Rep_\bbC(\cG^g_\bbC)$.

\begin{proposition}\label{prop: gexplicit}
	A finite dimensional $\bbC$-linear representation of $\cG^g_\bbC$ corresponds to a triple
	$U := (U_\bbC, \{  U^{\bsp,\bsq} \}, \{ t_\mu\})$, where $U_\bbC$ is a finite dimensional $\bbC$-vector space,
	$\{ U^{\bsp,\bsq} \}$ is a $2g$-grading of $U_\bbC$ by $\bbC$-linear subspaces 
	\begin{equation*}
		U_\bbC = \bigoplus_{\bsp,\bsq \in \bbZ^g} U^{\bsp,\bsq},
	\end{equation*}
	and $t_\mu$ for $\mu =1, \ldots, g$ are 
 	$\bbC$-linear automorphisms of $U_\bbC$ commutative with each other, satisfying 
 	\begin{equation*}
	 	(t_\mu -1)(U^{\bsp,\bsq}) \subset \bigoplus_{\substack{\bsr,\bss\in\bbZ^g\\ (r_\nu,s_\nu)=(p_\nu,q_\nu)\text{ for }
		\nu\neq\mu\\(r_\mu,s_\mu) < (p_\mu, q_\mu)\quad}} U^{\bsr\!,\!\,\bss}
 	\end{equation*}
	for any $\bsp, \bsq\in \bbZ^g$, where the direct sum is over the indices
	$\bsr, \bss  \in \bbZ^g$ satisfying $r_\nu=p_\nu$, $s_\nu=q_\nu$ for $\nu\neq\mu$ and
	$r_\mu < p_\mu$, $s_\mu < q_\mu$.
	Morphisms in  $\Rep_\bbC(\cG^g_\bbC)$ correspond to $\bbC$-linear homomorphisms of underlying $\bbC$-vector spaces compatible 
	with the $2g$-gradings and commutes with $t_\mu$.
\end{proposition}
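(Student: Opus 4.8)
The plan is to imitate, essentially verbatim, the proof of Proposition~\ref{prop: explicit}, carrying along the extra multi-index bookkeeping. The starting point is the identification $\cG^g_\bbC = (\bbS_\bbC\ltimes\cU)^g = \bbS^g_\bbC\ltimes\cU^g$, in which $\bbS^g_\bbC = \bbS_\bbC\times\cdots\times\bbS_\bbC$ acts on $\cU^g = \cU\times\cdots\times\cU$ factorwise: the $\mu$-th copy of $\bbS_\bbC$ acts on the $\mu$-th copy of $\cU$ through \eqref{eq: action} and trivially on the other factors. So a finite $\bbC$-representation of $\cG^g_\bbC$ is the same datum as a finite dimensional $U_\bbC$ that is simultaneously a representation of $\bbS^g_\bbC\cong\bbG_m^{2g}$ and of $\cU^g$, the two being compatible through the semidirect product.

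First I would restrict to $\bbS^g_\bbC$ and decompose $U_\bbC$ into characters of $\bbG_m^{2g}$; this yields the desired $2g$-grading $U_\bbC = \bigoplus_{\bsp,\bsq}U^{\bsp,\bsq}$, where $U^{\bsp,\bsq}$ is the eigenspace on which $(x_1,y_1,\dots,x_g,y_g)$ acts by $\prod_\mu x_\mu^{p_\mu}y_\mu^{q_\mu}$. Next, choosing $n$ large enough that $U_\bbC$ factors through $\cU^g_n = \cU_n\times\cdots\times\cU_n$, hence through $\fru^g_n = \fru_n\times\cdots\times\fru_n$, I obtain nilpotent endomorphisms $T^{i,j}_\mu$ of $U_\bbC$ (for $\mu = 1,\dots,g$ and $i,j>0$ with $i+j\le n$) coming from the generator $T^{i,j}$ of the $\mu$-th factor. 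Two facts then drop out: $T^{i,j}_\mu$ and $T^{i',j'}_\nu$ commute whenever $\mu\ne\nu$, since they lie in different direct factors of $\fru^g_n$; and, by $\bbS^g_\bbC$-equivariance together with \eqref{eq: action} on the $\mu$-th factor, $(x_1,y_1,\dots)\cdot T^{i,j}_\mu = x_\mu^{-i}y_\mu^{-j}T^{i,j}_\mu$, so that $T^{i,j}_\mu$ carries $U^{\bsp,\bsq}$ into the summand with $(p_\mu,q_\mu)$ lowered by $(i,j)$ and the other index pairs unchanged. I would then set $T_\mu := \sum_{i,j>0}T^{i,j}_\mu$ and $t_\mu := \exp(T_\mu)$: the $t_\mu$ are unipotent, they commute pairwise (for $\mu\ne\nu$ every summand of $T_\mu$ commutes with every summand of $T_\nu$), and they satisfy the asserted inclusion.

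For the converse, given a triple $(U_\bbC,\{U^{\bsp,\bsq}\},\{t_\mu\})$ I would let $\bbS^g_\bbC$ act through the grading, put $T_\mu := \log t_\mu$, and split each $T_\mu$ into homogeneous components for the $2g$-grading; the inclusion hypothesis on $t_\mu$ forces $T_\mu = \sum_{i,j>0}T^{i,j}_\mu$ with $T^{i,j}_\mu$ lowering only the $\mu$-th index pair by $(i,j)$. One then checks that for each fixed $\mu$ the $T^{i,j}_\mu$ satisfy the defining relations of $\fru_n$ (exactly as in Proposition~\ref{prop: explicit}) once $n$ is large, and that $[T^{i,j}_\mu,T^{i',j'}_\nu] = 0$ for $\mu\ne\nu$: this follows by comparing homogeneous components in the identity $[T_\mu,T_\nu] = 0$ (valid since $t_\mu$ and $t_\nu$ commute), because for $\mu\ne\nu$ the $\mu$-th index pair of a component determines $(i,j)$ and the $\nu$-th determines $(i',j')$. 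Hence the $T^{i,j}_\mu$ assemble into a representation of $\fru^g_n$, thus of $\cU^g_n$, thus of $\cG^g_\bbC$; the two constructions are visibly mutually inverse, and the description of morphisms is immediate, since a map of representations is precisely one intertwining the $\bbS^g_\bbC$- and $\cU^g$-actions, i.e.\ one compatible with the $2g$-grading and commuting with all $t_\mu$.

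The routine part is the single-index case, which is Proposition~\ref{prop: explicit}; the only genuinely new point — and the step I would be most careful about — is verifying that the homogeneous pieces of $\log t_\mu$ really do define a representation of the \emph{product} Lie algebra $\fru^g_n$, i.e.\ that both the internal relations for each $\mu$ and the cross-commutativity for $\mu\ne\nu$ hold. Everything else is multi-index bookkeeping transcribed from the $g=1$ argument.
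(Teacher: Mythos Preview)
Your argument is correct, but it is organized differently from the paper's. The paper does not redo the semidirect-product analysis at the level of $\cG^g_\bbC = \bbS^g_\bbC \ltimes \cU^g$; instead it simply applies Proposition~\ref{prop: explicit} separately to the action of each factor $\cG_\bbC \subset \cG^g_\bbC$, obtaining for each $\mu$ a bigrading $\{U^{p_\mu,q_\mu}_\mu\}$ and an automorphism $t_\mu$, and then sets $U^{\bsp,\bsq} := U^{p_1,q_1}_1 \cap \cdots \cap U^{p_g,q_g}_g$. The remaining conditions --- that the $2g$-grading is a simultaneous refinement, that each $t_\mu$ respects the other bigradings, and that the $t_\mu$ commute --- are then read off directly from the commutativity of the $g$ factors of $\cG^g_\bbC$. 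This is shorter because Proposition~\ref{prop: explicit} is used as a black box $g$ times, whereas you unpack its proof once more with multi-indices. What your route buys is an explicit description of the generators $T^{i,j}_\mu$ of $\fru^g_n$ acting on $U_\bbC$, and a direct verification of the cross-commutativity $[T^{i,j}_\mu,T^{i',j'}_\nu]=0$ via homogeneous components of $[\log t_\mu,\log t_\nu]=0$; the paper's route buys brevity by hiding all of that inside the $g=1$ case.
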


\begin{proof}
	For eqch $\mu=1,\ldots,g$, let $\{ U^{p_\mu,q_\mu}_\mu\} $ be the bigrading and $t_\mu$ the $\bbC$-linear automorphism of $U_\bbC$
	given by the action of the $\mu$-th component of $\cG^g_\bbC$.
	For any $\bsp,\bsq\in\bbZ^g$, let $U^{\bsp,\bsq} := U^{p_1,q_1}_1 \cap \cdots \cap U^{p_g,q_g}_g$.
	Our conditions on  $\{ U^{\bsp,\bsq} \}$  and $\{t_\mu\}$ 
	correspond to the commutativity of the actions of the $g$ components.
\end{proof}

The tensor product and the internal homomorphism in $\Rep_\bbC(\cG^g_\bbC)$ are given as follows.  Suppose $T = (T_\bbC, \{T^{\bsp,\bsq}\}, \{t'_\mu\})$ and 
$U=(U_\bbC,\{U^{\bsp,\bsq}\}, \{t''_\mu\})$ are object in $\Rep_\bbC(\cG^g_\bbC)$.
Then the tensor product $T \otimes U$ is given by the triple
\begin{equation}\label{eq: ttensor}
	T \otimes U = (T_\bbC\otimes_\bbC U_\bbC,  \{(T\otimes U)^{\bsp,\bsq}\}, \{t_\mu\}),
\end{equation}
where $T_\bbC\otimes_\bbC U_\bbC$ is the usual tensor product over $\bbC$,
\begin{equation*}
	(T\otimes U)^{\bsp,\bsq} = \bigoplus_{\substack{\bsp',\bsq',\bsp'',\bsq''\in\bbZ^g\\\bsp'+\bsp''=\bsp,\,\bsq'+\bsq''=\bsq}} T^{\bsp',\bsq'}\otimes_\bbC U^{\bsp'',\bsq''}
\end{equation*}
for any $\bsp,\bsq\in\bbZ^g$, and $t_\mu := t'_\mu\otimes t''_\mu$ for $\mu=1,\ldots,g$.  The internal homomorphism $\ul\Hom(T,U)$ is given by the triple
\begin{equation}\label{eq: tihom}
	\ul\Hom(T, U) = (\Hom_\bbC(T_\bbC, U_\bbC),  \{\Hom(T,U)^{\bsp,\bsq}\}, \{t_\mu\}),
\end{equation}
where $\Hom_\bbC(T_\bbC, U_\bbC)$ is the set of $\bbC$-linear homomorphisms of $T_\bbC$ to $U_\bbC$,
\begin{equation*}
	\Hom(T,U)^{\bsp,\bsq} =  \bigl\{\alpha\in \Hom_\bbC(T_\bbC, U_\bbC)  \mid\alpha(T^{\bsp',\bsq'}) \subset U^{\bsp'+\bsp,\bsq'+\bsq}\,\,
	\forall \bsp',\bsq'\in\bbZ^g\bigr\}
\end{equation*}
for any $\bsp,\bsq\in\bbZ^g$, and $t_\mu(\alpha) := t''_\mu \circ\alpha\circ t'^{-1}_\mu$ for any $\alpha\in\Hom_\bbC(T_\bbC, U_\bbC)$ and $\mu=1,\ldots,g$.

\begin{example}[Tate object]\label{example: tate}
	One of the simplest examples of an object in $\Rep_\bbC(\cG^g_\bbC)$  is the plectic Tate object
	\[
		\bbC(\boldsymbol 1_\mu) := (V_\bbC, \{V^{\bsp,\bsq}\}, \{t_\mu\}),
	\]
	where $V_\bbC :=  \bbC$ and the grading of $V_\bbC$
	is such that $V^{\bsp,\bsq} = V_\bbC$ if
	\begin{equation*}
		\bsp=\bsq = (0, \ldots, -1, \ldots, 0),
	\end{equation*}
	where $-1$ is at the $\mu$-th component, and $V^{\bsp,\bsq}=0$ otherwise,
	and $t_\mu$ is the identity map for $\mu=1, \ldots, g$.  For any $\bsn \in\bbZ^g$, we let 
	\begin{equation*}
		\bbC(\bsn) := \bigotimes_{\mu=1}^g \bbC(\boldsymbol 1_\mu)^{\otimes n_\mu} =  \bbC(\boldsymbol 1_1)^{\otimes n_1} \otimes \cdots 
		\otimes \bbC(\boldsymbol 1_g)^{\otimes n_g}.
	\end{equation*}
\end{example}

\begin{remark}\label{rem: inclusion}
	For any positive integer $\mu \leq g$, the natural projection $\cG^g_\bbC \rightarrow \cG^\mu_\bbC$ of pro-algebraic groups
	mapping $(u_1,\ldots,u_\mu,u_{\mu+1},\ldots,u_g)$ to $(u_1,\ldots,u_\mu)$ induces a natural functor
	$\Rep_\bbC(\cG^\mu_\bbC) \rightarrow \Rep_\bbC(\cG^g_\bbC)$, and the category $\Rep_\bbC(\cG^\mu_\bbC)$ is a full subcategory of
	 $\Rep_\bbC(\cG^g_\bbC)$ with respect to this functor.
	On the level of objects, this functor may be given by	
	associating to any 
	\begin{equation*}
		U' = (U_\bbC, \{ U^{\bsp',\bsq'}\}, \{ t'_\nu\} )
	\end{equation*} 
	in $\Rep_\bbC(\cG^\mu_\bbC)$ the object $U = (U_\bbC, \{ U^{\bsp,\bsq}\}, \{ t_\nu\})$
	in $\Rep_\bbC(\cG^g_\bbC)$, where the bigrading is defined by
	\begin{equation*}
		U^{\bsp,\bsq} := U^{(p_1, \ldots, p_{\mu}), (q_1, \ldots, q_{\mu})} 
	\end{equation*}
	if $(p_{\mu+1}, \ldots, p_g) = (q_{\mu+1}, \ldots, q_g) = (0, \ldots, 0)$ and $U^{\bsp,\bsq} := 0$ otherwise,
	and we let the automorphisms $t_\nu$ be $t_\nu := t'_\nu$ for $1 \leq \nu \leq \mu$ and $t_{\nu}:=\id$ for $\mu < \nu \leq g$.
\end{remark}

\begin{remark}\label{rem: exterior product}
	Let $g_1,g_2$ be integers $>0$, and let $T = (T_\bbC, \{T^{\bsp_1,\bsq_1}\}, \{t'_\mu\})$ and
	$U=(U_\bbC,\{U^{\bsp_2,\bsq_2}\}, \{t''_\mu\})$ be objects respectively in $\Rep_\bbC(\cG^{g_1}_\bbC)$ and $\Rep_\bbC(\cG^{g_2}_\bbC)$.
	Then the exterior product $T \boxtimes U$ in $\Rep_\bbC(\cG^{g_1+g_2}_\bbC)$ corresponds to the triple
	\begin{equation*}
		T\boxtimes U:= (T_\bbC\otimes_\bbC U_\bbC, (T\boxtimes U)^{\bsp,\bsq}, \{ t_\mu\}),
	\end{equation*}
	where $T_\bbC\otimes_\bbC U_\bbC$ is the usual tensor product over $\bbC$, 
	\begin{equation*}
		 (T\boxtimes U)^{\bsp,\bsq}=  T^{\bsp_1,\bsq_1} \otimes_\bbC  U^{\bsp_2,\bsq_2}
	\end{equation*}
	with the convention that $\bsp_1:= (p_1,\ldots, p_{g_1}), \bsp_2:= (p_{g_1+1},\ldots,p_{g_1+g_2})$,
	$\bsq_1 := (q_1,\ldots, q_{g_1})$, and $\bsq_2:= (q_{g_1+1},\ldots,q_{g_1+g_2})$ for any $\bsp = (p_\mu),\bsq= (q_\mu)\in\bbZ^{g_1+g_2}$,
	and $t_\mu$ is the $\bbC$-linear automorphism on $T_\bbC\otimes_\bbC U_\bbC$ given by $t_\mu = t'_\mu\otimes 1$ for $\mu=1,\ldots,g_1$
	and $t_\mu=1\otimes t''_{\mu-g_1}$ for $\mu=g_1+1,\ldots,g_1+g_2$.
\end{remark}

%
%
%
%
%
\section{Orthogonal families of mixed $\bbC$-Hodge structures}\label{section: OF}
%
%
%
%
%

Let $\cG_\bbC$ be the tannakian fundamental group of $\MHS_\bbC$ with respect to $\omega$.
The purpose of this section is to prove an equivalence of categories between the category $\Rep_\bbC(\cG^g_\bbC)$ and the 
category of $g$-orthogonal family of mixed $\bbC$-Hodge structures $\OF^g_\bbC$ defined in Definition \ref{def: orthogonal}.

%
\subsection{Categorical version of mixed $\bbC$-Hodge structures}
%

In this subsection, we will give an iterated description of the category $\Rep_\bbC(\cG^g_\bbC)$,
using the categorical version of mixed Hodge structures.
Using the result of Proposition \ref{prop: explicit} as an inspiration,
we first define the category of bigraded objects $\BG(\sA)$ for an abelian category $\sA$ as follows.

\begin{definition}
	We let $\BG(\sA)$ be the category whose objects consist of  a triple $U = (B, \{B^{p,q}\}, t)$, where $B$ is an object of 
	$\sA$, $\{B^{p,q}\}$ is a bigrading of $B$ by subobjects in $\sA$
	\[
		B = \bigoplus_{p,q\in\bbZ} B^{p,q},
	\]
	where $B^{p,q}=0$ for all but finitely many $(p,q)\in\bbZ^2$,
	and $t$ is an automorphism of $B$ satisfying
	\[
		(t-1)(B^{p,q}) \subset \bigoplus_{\substack{r,s\in\bbZ\\r <p,s<q}} B^{r,s}
	\]
	for any $p,q\in\bbZ$.  The morphisms in  $\BG(\sA)$ are morphisms of underlying objects in $\sA$
	compatible with the bigradings and commutative with $t$.
\end{definition}

If $\sA$ is the category of finite dimensional $\bbC$-vector spaces $\Vec_\bbC$,
then Proposition \ref{prop: explicit} shows that $\BG(\Vec_\bbC)$ is equivalent to 
the category $\Rep_\bbC(\cG_\bbC)$ of finite dimensional $\bbC$-representations of $\cG_\bbC$.

\begin{proposition}\label{prop: Rep G}
	For any integer $g > 0$, we have an isomorphism of categories
	\[
		\Rep_\bbC(\cG^g_\bbC) = \BG( \Rep_\bbC(\cG^{g-1}_\bbC)).
	\]
\end{proposition}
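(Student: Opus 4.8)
The plan is to unwind both sides of the claimed identity into concrete linear-algebraic data and exhibit a bijection that is visibly functorial. By Proposition \ref{prop: gexplicit}, an object of $\Rep_\bbC(\cG^g_\bbC)$ is a triple $(U_\bbC, \{U^{\bsp,\bsq}\}_{\bsp,\bsq\in\bbZ^g}, \{t_\mu\}_{\mu=1}^g)$ with the stated unipotence condition on each $t_\mu$ relative to the $\mu$-th pair of indices. On the other side, an object of $\BG(\Rep_\bbC(\cG^{g-1}_\bbC))$ is a triple $(B, \{B^{p,q}\}_{p,q\in\bbZ}, t)$ where $B$ is itself a triple $(B_\bbC, \{B^{\bsp',\bsq'}\}_{\bsp',\bsq'\in\bbZ^{g-1}}, \{t'_\nu\}_{\nu=1}^{g-1})$, each $B^{p,q}$ is a subobject in $\Rep_\bbC(\cG^{g-1}_\bbC)$ (hence inherits a bigrading and automorphisms), and $t$ is an automorphism in $\Rep_\bbC(\cG^{g-1}_\bbC)$ satisfying the two-variable unipotence bound. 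The correspondence is the obvious bookkeeping one: write $\bsp = (\bsp', p_g)$ and $\bsq = (\bsq', q_g)$ with $\bsp',\bsq'\in\bbZ^{g-1}$, and set $B^{p_g,q_g} := \bigoplus_{\bsp',\bsq'} U^{(\bsp',p_g),(\bsq',q_g)}$ as a $\bbC$-vector space, with its $\Rep_\bbC(\cG^{g-1}_\bbC)$-structure given by the bigrading $\{U^{(\bsp',p_g),(\bsq',q_g)}\}_{\bsp',\bsq'}$ and the restrictions of $t_1,\ldots,t_{g-1}$; then take $t := t_g$ acting on $B = U_\bbC$.

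The verification proceeds in a few steps. First, I would check that this assignment lands in the target: that each $B^{p_g,q_g}$ really is a subobject in $\Rep_\bbC(\cG^{g-1}_\bbC)$ (the automorphisms $t_1,\ldots,t_{g-1}$ preserve it because their unipotence bounds only move the first $g-1$ index pairs, keeping $(p_g,q_g)$ fixed), that $B = \bigoplus_{p_g,q_g} B^{p_g,q_g}$ as objects of $\Rep_\bbC(\cG^{g-1}_\bbC)$, that $t = t_g$ is an automorphism commuting with each $t_\nu = t_\mu|_{\nu\leq g-1}$ (immediate from the mutual commutativity in the source) hence a morphism in $\Rep_\bbC(\cG^{g-1}_\bbC)$, and that the unipotence condition $(t-1)(B^{p_g,q_g}) \subset \bigoplus_{r_g < p_g,\, s_g < q_g} B^{r_g,s_g}$ is exactly the $\mu = g$ case of the condition in Proposition \ref{prop: gexplicit}. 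Second, I would construct the inverse assignment by reversing this recipe and check it is well-defined. Third, for morphisms: a morphism in $\Rep_\bbC(\cG^g_\bbC)$ is a $\bbC$-linear map compatible with the full $2g$-grading and commuting with all $t_\mu$; I would observe that "compatible with the $2g$-grading and commuting with $t_1,\ldots,t_{g-1}$" is precisely "a morphism in $\Rep_\bbC(\cG^{g-1}_\bbC)$ for each fixed $(p_g,q_g)$, respecting the $(p_g,q_g)$-grading", and "commuting with $t_g$" is the remaining condition, so the two Hom-sets coincide on the nose. Finally I would note that the two assignments are mutually inverse on objects and morphisms and respect composition and identities, giving an isomorphism (not merely an equivalence) of categories.

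The only genuine subtlety — and the step I would single out as requiring care rather than being purely mechanical — is the bookkeeping around subobjects: one must confirm that the abelian-category notion of "bigrading by subobjects $B^{p,q}$ of $\Rep_\bbC(\cG^{g-1}_\bbC)$" matches the vector-space decomposition $U_\bbC = \bigoplus U^{\bsp,\bsq}$ after regrouping, i.e. that the induced $\cG^{g-1}_\bbC$-structure on each summand $B^{p_g,q_g}$ is the one coming from the remaining indices and automorphisms, and that $t_g$ genuinely descends to an automorphism \emph{in the category} $\Rep_\bbC(\cG^{g-1}_\bbC)$ rather than merely a $\bbC$-linear one. Both follow from the commutativity of the $t_\mu$ and the product structure of $\cG^g_\bbC = \cG^{g-1}_\bbC \times \cG_\bbC$, and indeed the cleanest phrasing of the whole proof may be to observe at the level of group schemes that $\Rep_\bbC(\cG^{g-1}_\bbC \times \cG_\bbC) = \BG(\Rep_\bbC(\cG^{g-1}_\bbC))$ is just the $g=1$ case (i.e. the statement that $\Rep_\bbC(\cH \times \cG_\bbC) \cong \BG(\Rep_\bbC(\cH))$ for any affine group scheme $\cH$), applied with $\cH = \cG^{g-1}_\bbC$, with the $g=1$ case being Proposition \ref{prop: explicit} read relative to the base category $\Rep_\bbC(\cH)$ in place of $\Vec_\bbC$. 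Everything else is routine diagram-chasing.
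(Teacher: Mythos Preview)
Your proposal is correct and follows essentially the same route as the paper: both proofs unpack the explicit description of Proposition \ref{prop: gexplicit}, regroup the $2g$-grading by fixing the last pair of indices $(p_g,q_g)$ to produce the $B^{p,q}$'s with their induced $\Rep_\bbC(\cG^{g-1}_\bbC)$-structure, set $t:=t_g$, and observe that commutativity of the $t_\mu$ is precisely what makes $t_g$ a morphism in $\Rep_\bbC(\cG^{g-1}_\bbC)$ and makes the inverse construction well-defined. Your closing remark about phrasing it as a relative version of Proposition \ref{prop: explicit} is a nice conceptual gloss, but the actual argument is the same bookkeeping the paper carries out.
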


\begin{proof}
	Let $U=(U_\bbC,\{U^{\bsp,\bsq} \}, \{t_\mu\})$ be an object in $\Rep_\bbC(\cG^{g}_\bbC)$.
	For $\bsp'=(p_\mu),\bsq'=(q_\mu) \in \bbZ^{g-1}$, if we let
	\[
		U^{\bsp',\bsq'} : = \bigoplus_{p,q\in\bbZ} U^{(p_1,\ldots,p_{g-1}, p),(q_1,\ldots,q_{g-1}, q)}
	\]
	and $t'_\mu := t_\mu$ for $\mu=1,\ldots,g-1$, then the triple
	$B:=(U_\bbC,\{U^{\bsp',\bsq'} \}, \{t'_\mu\})$ defines an object in $\Rep_\bbC(\cG^{g-1}_\bbC)$.
	For any $p,q\in\bbZ$, if we let
	\begin{equation*}
		B^{p,q} :=  \bigoplus_{\substack{\bsp,\bsq\in\bbZ^g\\p_g=p,q_g=q}} U^{\bsp,\bsq}, 	
	\end{equation*}
	$(B^{p,q})^{\bsp',\bsq'}  :=  U^{(p_1,\ldots,p_{g-1},p),(q_1,\ldots,q_{g-1},q)}$ for any $\bsp',\bsq'\in\bbZ^{g-1}$ 
	and $t'_\mu:= t_\mu|_{B^{p,q}}$ for $\mu=1,\ldots,g-1$, then the triple $B^{p,q}:=(B^{p,q}, (B^{p,q})^{\bsp',\bsq'}, \{t'_\mu\})$
	defines an object in $\Rep_\bbC(\cG^{g-1}_\bbC)$.  If we let $t:=t_g$, then we see that the triple $(B, \{B^{p,q}\}, t)$ gives an object in $\BG( \Rep_\bbC(\cG^{g-1}_\bbC))$.
	Conversely, let $(B, B^{p,q}, t)$ be an object in $\BG( \Rep_\bbC(\cG^{g-1}_\bbC))$.
	Then $B$ is an object in $\Rep_\bbC(\cG^{g-1}_\bbC)$ hence is of the form $B = (U_\bbC, \{ U^{\bsp',\bsq'}\}, \{t'_\mu\})$.
	Since $B^{p,q}$ is an object in $\Rep_\bbC(\cG^{g-1}_\bbC)$, it is also of the form 
	$B^{p,q} = (U^{p,q}_\bbC, \{ (U^{p,q})^{\bsp',\bsq'}\}, \{t'_\mu\}))$.
	If we let 
	\[
		U^{\bsp,\bsq} := (U^{p_g,q_g})^{(p_1,\ldots,p_{g-1}),(q_1,\ldots,q_{g-1})}
	\]
	and $t_\mu:=t'_\mu$ for $\mu=1,\ldots,g-1$ and $t_g:=t$, then the triple $(U_\bbC, \{U^{\bsp,\bsq}\}, \{t_\mu\})$ gives
	an object in $\Rep_\bbC(\cG^g_\bbC)$.  The automorphism $t_g$ is commutative with $t_1,\ldots,t_{g-1}$ since
	$t$ is a morphism in $\Rep_\bbC(\cG^{g-1}_\bbC)$.
	The above constructions are inverse to each other, hence we have the desired isomorphism of categories.
\end{proof}

\begin{definition} 
	Let $A$ be an object in $\sA$.
	Let
	$W_\bullet$ be a finite ascending filtration by subobjects of $A$, and let $F^\bullet$ and $\ol F^\bullet$ be
	finite descending filtrations by subobjects of $A$.
	We say that the quadruple $V = (A, W_\bullet, F^\bullet, \ol F^\bullet)$ is a \textit{mixed Hodge structure} in $\sA$,
	if for each $n \in \bbZ$, the structure induced by $F^\bullet$ and $\ol F^\bullet$ on $\Gr^W_n A$ satisfies
	\[
		\Gr^W_n A = F^p \Gr^W_n A \oplus \ol F^{n+1-p} \Gr^W_n A
	\]
	for any $p \in \bbZ$.
\end{definition}

If $V=(A,  W_\bullet, F^\bullet, \ol F^\bullet)$ is a mixed Hodge structure in $\sA$, then we call $W_\bullet$
the \textit{weight filtration} and $F^\bullet$, $\ol F^\bullet$ the \textit{Hodge filtrations} of $V$.
We denote by $\MHS(\sA)$ the category whose objects consist of mixed Hodge structures in $\sA$ and whose morphisms are 
morphisms of underlying objects in $\sA$ compatible with the weight and Hodge filtrations.
If $\sA$ is the category $\Vec_\bbC$ of finite dimensional $\bbC$-vector spaces,
then we have an isomorphism of categories $\MHS(\Vec_\bbC) = \MHS_\bbC$.

As in the case of mixed $\bbC$-Hodge structures, we have the Deligne splitting for mixed Hodge structures in $\sA$.

\begin{proposition}[\cite{D3} \S1.1]
	Let $V = (A, W_\bullet, F^\bullet, \ol F^\bullet)$ be a mixed Hodge structure in $\sA$,
	and as in \eqref{eq: Apq}, we let
	\begin{equation*}
		A^{p,q}(V) := (F^p \cap W_{n}) A \cap \biggl((\ol F^q \cap W_{n})A+\sum_{j\geq 0}(\ol F^{q-j}\cap W_{n-j-1}) A\biggr)
	\end{equation*}	
	for $p,q\in\bbZ$ and $n:=p+q$.
	Then $\{A^{p,q}(V)\}$ gives a bigrading of $A$ by subobjects of $\sA$,
	\begin{equation}
		A =\bigoplus_{p,q \in \bbZ} A^{p,q}(V).
	\end{equation}
	Moreover, for $n, p\in\bbZ$, the weight and Hodge filtrations on $V$ satisfy
	\begin{align*}
		W_n A &=\bigoplus_{\substack{p,q\in\bbZ\\p+q \leq n}} A^{p,q}(V), &
		F^p A &=\bigoplus_{\substack{r,s\in\bbZ\\r \geq p}} A^{r,s}(V).
	\end{align*}	
\end{proposition}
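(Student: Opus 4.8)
The plan is to reduce the statement to the already-established case $\sA=\Vec_\bbC$ (Proposition \ref{prop: splitting}) by a formal abelian-category argument, exactly as the cited reference \cite{D3} \S1.1 does for $\MHS_\bbC$. The key observation is that every step in the proof of Proposition \ref{prop: splitting} — the definition of $A^{p,q}(V)$ via finite sums and intersections of the filtration steps, the isomorphism $A^{p,q}(V)\xrightarrow{\cong}(F^p\cap\ol F^q)\Gr^W_nA$ of Lemma \ref{lem: Apq}, and the conclusion $A=\bigoplus_{p,q}A^{p,q}(V)$ — uses only that $\sA$ is abelian, together with the pointwise defining property that on each $\Gr^W_nA$ the induced data satisfies the $n$-opposedness condition $\Gr^W_nA=F^p\Gr^W_nA\oplus\ol F^{n+1-p}\Gr^W_nA$. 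So the real content is to redevelop Lemmas \ref{lemma: splitting}, \ref{lem: candy}, and \ref{lem: Apq} with ``$V_\bbC$'' replaced by ``$A$'' and ``$\bbC$-linear subspace'' by ``subobject of $\sA$''.

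First I would verify the abelian-category analogue of Lemma \ref{lemma: splitting}: if $A$ is an object of $\sA$ with descending filtrations $F^\bullet$, $\ol F^\bullet$ satisfying $A=F^p A\oplus \ol F^{n+1-p}A$ for all $p$, then $A=\bigoplus_{p+q=n}(F^p\cap\ol F^q)A$. This follows by induction on the length of the filtration, using that in an abelian category the splitting $A=F^pA\oplus \ol F^{n+1-p}A$ gives compatible projections; intersecting with the next filtration step and invoking the modular law for subobjects (valid in any abelian category) produces the direct-sum decomposition into the $(F^p\cap\ol F^q)A$. Then Lemma \ref{lem: candy}'s analogue ($F^pA=\bigoplus_{r\geq p, r+s=n}(F^r\cap\ol F^s)A$) follows formally, since the argument given in the excerpt — noting $(F^r\cap\ol F^{n-r})A\subset F^pA$ when $r\geq p$ and $\subset\ol F^{n+1-p}A$ when $r<p$ — is purely order-theoretic.

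Next I would establish the analogue of Lemma \ref{lem: Apq}: the canonical epimorphism $W_nA\twoheadrightarrow\Gr^W_nA$ restricts to an isomorphism $A^{p,q}(V)\xrightarrow{\cong}(F^p\cap\ol F^q)\Gr^W_nA$. Here one cannot simply cite \cite{PS}, which is stated for vector spaces, but the proof transports: the reference's argument only manipulates finite sums and intersections of subobjects inside $W_nA$ and tracks them through $\Gr^W_n$, and all such manipulations (the relevant instances of strictness and the modular law) hold in $\sA$. With these three lemmas in hand, the proof of the Proposition is then literally the proof of Proposition \ref{prop: splitting}: from Lemma \ref{lem: Apq} we get $\bigoplus_{p+q=n}A^{p,q}(V)\xrightarrow{\cong}\bigoplus_{p+q=n}(F^p\cap\ol F^q)\Gr^W_nA=\Gr^W_nA$, and summing over $n$ (using that $W_\bullet$ is finite, so $A$ is built from the $\Gr^W_nA$ by finitely many extensions) yields $A=\bigoplus_{p,q}A^{p,q}(V)$. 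The formulas for $W_nA$ and $F^pA$ in terms of the bigrading drop out by the same bookkeeping.

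The main obstacle is the subtlety hidden in ``summing over $n$'': one must justify that the pieces $A^{p,q}(V)$, which are defined as subobjects of $W_nA$ for varying $n$, actually assemble into a \emph{direct sum decomposition of $A$} rather than merely an isomorphism on each graded piece. In the vector-space case this is automatic by dimension count; in a general abelian category one argues by induction on the weight filtration length, using that each $\Gr^W_n$ splits off (the point is that $A^{p,q}(V)\subset W_nA$ maps isomorphically to a summand of $\Gr^W_nA$, so one can lift a splitting of $W_nA\twoheadrightarrow\Gr^W_nA$ and proceed). This is exactly the place where \cite{D3} \S1.1 does the work, so rather than reproduce it I would cite that reference and remark that the argument is formal, verbatim the same as for $\MHS_\bbC$, with $\Vec_\bbC$ replaced by $\sA$.
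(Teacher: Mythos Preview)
Your proposal is correct and is essentially the same approach the paper takes: the paper does not supply its own proof here but simply cites \cite{D3} \S1.1, and your sketch is precisely the content of that reference---redo Lemmas \ref{lemma: splitting}, \ref{lem: candy}, \ref{lem: Apq} verbatim with subobjects in $\sA$ in place of $\bbC$-linear subspaces, then argue as in the proof of Proposition \ref{prop: splitting}. Your observation that the ``summing over $n$'' step needs an induction on the length of $W_\bullet$ rather than a dimension count is exactly right, and is the one place where the general abelian-category argument requires a word beyond the $\Vec_\bbC$ case.
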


As in the case of mixed $\bbC$-Hodge structures given in Remark \ref{rem: splitting}, 
a similar statement holds for $\ol A^{p,q}$, where $\ol A^{p,q}$ is defined by replacing the roles
of $F^\bullet$ and $\ol F^\bullet$.
As in the case of mixed $\bbC$-Hodge structures, the morphisms in $\MHS(\sA)$ are strictly compatible with the filtrations,
and we may prove that $\MHS(\sA)$ is an abelian category.  

We define the functor $\varphi:\BG(\sA)\rightarrow\MHS(\sA)$ by associating to 
any object $U=(B,\{B^{p,q}\},t)$ in $\BG(\sA)$ the object 
\begin{equation*}
	\varphi(U):=(A ,W_\bullet,F^\bullet,\ol F^\bullet),
\end{equation*}
where $A:=B$, the weight and Hodge filtrations are defined by
\begin{equation*}
	W_n A:=\bigoplus_{\substack{p,q\in\bbZ\\p+q\leq n}}B^{p,q}
\end{equation*}
for any $n\in\bbZ$ and 
\begin{align*}
	F^p A&:=t\bigg(\bigoplus_{\substack{r,s\in\bbZ\\r\geq p}}B^{r,s}\biggr),  &
	\ol F^q A&:=t^{-1}\bigg(\bigoplus_{\substack{r,s\in\bbZ\\s\geq q}} B^{r,s}\biggr)
\end{align*}
for any integers $p,q\in\bbZ$.  
By \cite[Proposition 1.2 and Remark 1.3]{D3}, we have the following result.

\begin{proposition}
	The functor $\varphi$ gives an equivalence of categories
	\[
		\varphi: \BG( \sA) \cong \MHS(\sA).
	\]
\end{proposition}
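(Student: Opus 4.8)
The plan is to construct an explicit quasi-inverse functor $\psi\colon\MHS(\sA)\to\BG(\sA)$ mimicking the construction of $\psi_\bbC$ in \eqref{eq: psi}, and then to verify directly that $\psi\circ\varphi=\id$ and $\varphi\circ\psi\cong\id$. The only subtlety in doing this over a general abelian category $\sA$, rather than over $\Vec_\bbC$, is that every construction that was previously phrased in terms of elements must be rephrased in terms of subobjects, kernels, cokernels and biproducts; but since all the relevant statements (the Deligne splitting and its analogue for $\ol A^{p,q}$, strictness of morphisms, and the exactness properties of $\MHS(\sA)$) have already been established at the level of $\sA$ in the preceding paragraphs, this transcription goes through unchanged. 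I would therefore explicitly cite \cite[Proposition 1.2 and Remark 1.3]{D3} as the source of the assertion, and present the proof as a review of Deligne's argument in this slightly more general setting.

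First I would check that $\varphi$ is well defined, i.e. that for $U=(B,\{B^{p,q}\},t)\in\BG(\sA)$ the quadruple $\varphi(U)=(B,W_\bullet,F^\bullet,\ol F^\bullet)$ is a mixed Hodge structure in $\sA$: one computes $\Gr^W_nB\cong\bigoplus_{p+q=n}B^{p,q}$, and the condition $(t-1)(B^{p,q})\subset\bigoplus_{r<p,\,s<q}B^{r,s}$ shows that $t$ induces the identity on $\Gr^W_n B$, so that the images of $F^p$ and $\ol F^{n+1-p}$ in $\Gr^W_n B$ are $\bigoplus_{r\geq p,\,r+s=n}B^{r,s}$ and $\bigoplus_{s\geq n+1-p,\,r+s=n}B^{r,s}$ respectively, and these are complementary subobjects. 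Functoriality is immediate since a morphism compatible with the bigrading and commuting with $t$ visibly respects $W_\bullet$, $F^\bullet$ and $\ol F^\bullet$. Next I would define $\psi(V)=(B,\{B^{p,q}\},t)$ for $V=(A,W_\bullet,F^\bullet,\ol F^\bullet)\in\MHS(\sA)$ by $B:=\bigoplus_n\Gr^W_n A$, $B^{p,q}:=(F^p\cap\ol F^q)\Gr^W_{p+q}A$, and $t:=\sqrt{s}$ where $s$ is the composite automorphism $B\xrightarrow{\rho^{-1}}\bigoplus A^{p,q}(V)=A=\bigoplus\ol A^{p,q}(V)\xrightarrow{\ol\rho}B$, the isomorphisms $\rho$, $\ol\rho$ coming from Lemma \ref{lem: Apq} and its $\ol A$-analogue in Remark \ref{rem: splitting} (transcribed to $\sA$). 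The point that $s$ is unipotent — and hence that $\sqrt s=\sum_k\binom{1/2}{k}(s-1)^k$ is a finite sum defining an automorphism — follows because $A^{p,q}(V)$ and $\ol A^{p,q}(V)$ both reduce to $(F^p\cap\ol F^q)\Gr^W_{p+q}A$ modulo lower weight, so $s-1$ strictly decreases the total weight $p+q$; one must also check $(t-1)(B^{p,q})\subset\bigoplus_{r<p,\,s<q}B^{r,s}$, which is exactly the estimate on $A^{p,q}$ versus $\ol A^{p,q}$ in the Deligne splitting.

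Finally I would verify the two compositions. For $\psi\circ\varphi=\id$: starting from $U=(B,\{B^{p,q}\},t)$, one has $\Gr^W_n(\varphi U)\cong\bigoplus_{p+q=n}B^{p,q}$ canonically, $(F^p\cap\ol F^q)\Gr^W_{p+q}$ recovers $B^{p,q}$, and a computation of the Deligne splittings $A^{p,q}(\varphi U)$ and $\ol A^{p,q}(\varphi U)$ identifies them with $t(\bigoplus B^{r,s})$-type subobjects so that the resulting $s$ is $t^2$ and $\sqrt s=t$; here one uses that $t$ already has the prescribed off-diagonal form. For $\varphi\circ\psi\cong\id$: the natural isomorphism $V\xrightarrow{\cong}\varphi(\psi(V))$ is the composite $A=\bigoplus A^{p,q}(V)\xrightarrow{\rho}\bigoplus\Gr^W_n A\xrightarrow{t}\bigoplus\Gr^W_n A$ exactly as in \eqref{eq: equivalence isom}, and one checks it carries $W_\bullet$, $F^\bullet$, $\ol F^\bullet$ on $V$ to those on $\varphi(\psi(V))$ using the formulas for the filtrations in terms of the Deligne splitting together with $t\rho=\ol\rho\cdot(\text{identity of }A)$ up to $\sqrt s$. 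The main obstacle — really the only place requiring care — is the verification that $s$ is unipotent and that $t=\sqrt s$ interacts correctly with the bigrading, i.e. the bookkeeping relating the two Deligne splittings $A^{p,q}$ and $\ol A^{p,q}$; everything else is formal. Since this is precisely the content of \cite[Proposition 1.2 and Remark 1.3]{D3}, I would keep the write-up brief, indicating the quasi-inverse and the natural isomorphism explicitly and leaving the elementwise-free but routine diagram checks to the reader or to the cited reference.
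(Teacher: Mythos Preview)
Your proposal is correct and takes essentially the same approach as the paper: the paper does not give a proof at all, simply citing \cite[Proposition 1.2 and Remark 1.3]{D3} before the statement and remarking afterward that the quasi-inverse $\psi$ is defined as in \eqref{eq: psi}. Your write-up is a faithful (and more detailed) unpacking of exactly that argument, so there is nothing to correct.
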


We can define a quasi-inverse functor $\psi$ as in \eqref{eq: psi}.\medskip

Next, for any integer $g \geq 0$, we inductively define the category $\MHS^g(\Vec_\bbC)$ by
$\MHS^0(\Vec_\bbC) := \Vec_\bbC$ and $\MHS^g(\Vec_\bbC) := 
\MHS( \MHS^{g-1}( \Vec_\bbC))$ for $g>0$.
Combining this result with Proposition \ref{prop: Rep G}, we have the following corollary.

\begin{corollary}\label{cor: mhs}
	We have equivalences of categories
	\begin{equation}\label{eq: functor4}
		\Rep_\bbC(\cG^g_\bbC) \cong \MHS( \Rep_\bbC(\cG^{g-1}_\bbC) ) \cong
		\cdots\cong \MHS^g(\Vec_\bbC).
	\end{equation}
\end{corollary}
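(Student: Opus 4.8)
The plan is to obtain the chain of equivalences in \eqref{eq: functor4} by assembling the two structural results already available: the isomorphism of categories $\Rep_\bbC(\cG^g_\bbC) = \BG(\Rep_\bbC(\cG^{g-1}_\bbC))$ from Proposition \ref{prop: Rep G}, and the equivalence $\varphi\colon \BG(\sA) \cong \MHS(\sA)$ valid for an arbitrary abelian category $\sA$. First I would set up the induction on $g$. The base case $g = 0$ is the identity $\Rep_\bbC(\cG^0_\bbC) = \Vec_\bbC = \MHS^0(\Vec_\bbC)$, which holds by the convention $\cG^0_\bbC = \{1\}$ (so representations are just finite-dimensional $\bbC$-vector spaces) and by the definition $\MHS^0(\Vec_\bbC) := \Vec_\bbC$. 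For the inductive step, assume $\Rep_\bbC(\cG^{g-1}_\bbC) \cong \MHS^{g-1}(\Vec_\bbC)$; I want to conclude $\Rep_\bbC(\cG^g_\bbC) \cong \MHS^g(\Vec_\bbC)$.

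The key point to check before applying $\varphi$ is that the construction $\BG(-)$ and the formation of $\MHS(-)$ are \emph{functorial in the abelian category}, and in particular that an equivalence $\sA \cong \sA'$ of abelian categories induces equivalences $\BG(\sA) \cong \BG(\sA')$ and $\MHS(\sA) \cong \MHS(\sA')$. This is essentially formal: the definitions of $\BG(\sA)$ and $\MHS(\sA)$ refer only to finite direct sums, subobjects, kernels/cokernels (for the filtration quotients $\Gr^W_n$), and automorphisms — all notions preserved by an equivalence of abelian categories — so an equivalence $\Phi\colon \sA \to \sA'$ applies termwise to a triple $(B, \{B^{p,q}\}, t)$ or a quadruple $(A, W_\bullet, F^\bullet, \ol F^\bullet)$ and yields the corresponding equivalence on the derived categories of diagrams. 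Granting this, the inductive step runs as follows: by Proposition \ref{prop: Rep G},
\[
	\Rep_\bbC(\cG^g_\bbC) = \BG\bigl(\Rep_\bbC(\cG^{g-1}_\bbC)\bigr);
\]
by the inductive hypothesis and functoriality of $\BG$, this is equivalent to $\BG(\MHS^{g-1}(\Vec_\bbC))$; by the equivalence $\varphi$ applied with $\sA = \MHS^{g-1}(\Vec_\bbC)$ (which is abelian, as noted after the definition of $\MHS(\sA)$), this is equivalent to $\MHS(\MHS^{g-1}(\Vec_\bbC))$; and the latter is $\MHS^g(\Vec_\bbC)$ by definition. Composing gives the middle-to-right portion of \eqref{eq: functor4}, and unwinding the induction one step at a time displays the full chain $\Rep_\bbC(\cG^g_\bbC) \cong \MHS(\Rep_\bbC(\cG^{g-1}_\bbC)) \cong \cdots \cong \MHS^g(\Vec_\bbC)$.

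The only genuine subtlety — and the step I would state carefully rather than wave at — is that $\MHS^{g-1}(\Vec_\bbC)$ is an abelian category, so that the general equivalence $\varphi\colon \BG(\sA) \cong \MHS(\sA)$ is actually applicable at each stage; this is the reason the whole argument is phrased for arbitrary abelian $\sA$ in the first place. It follows from the remark, made just before the statement of the $\varphi$-equivalence in the excerpt, that morphisms in $\MHS(\sA)$ are strictly compatible with the filtrations and hence $\MHS(\sA)$ is abelian whenever $\sA$ is — so abelianness propagates up the tower by induction in parallel with the equivalences. Everything else is bookkeeping: matching the bigradings $\{B^{p,q}\}$ and nesting indices $\bsp, \bsq \in \bbZ^g$ across the two descriptions, which Proposition \ref{prop: Rep G} has already done explicitly.
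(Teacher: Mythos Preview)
Your proposal is correct and follows essentially the same approach as the paper, which simply states the corollary as the result of combining Proposition~\ref{prop: Rep G} with the equivalence $\varphi\colon \BG(\sA)\cong\MHS(\sA)$ and offers no further argument. You have spelled out the implicit induction and the two formal points (functoriality of $\BG$ and $\MHS$ in the ambient abelian category, and abelianness of $\MHS(\sA)$) that make the iteration legitimate; the paper takes these for granted.
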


In \S \ref{subsection: OF}, we will use this result to prove that $\Rep_\bbC(\cG^g_\bbC)$ is equivalent to
the category of $g$-orthogonal family of mixed $\bbC$-Hodge structures.

%
%
%
%
%
\subsection{Orthogonal families of mixed $\bbC$-Hodge structures}\label{subsection: OF}
%
%
%
%
%

In this subsection, we will define the category of $g$-orthogonal family of mixed $\bbC$-Hodge structures and
show that this category is equivalent to the category $\Rep_\bbC(\cG^g_\bbC)$.
We first define the category of multi-filtered $\bbC$-vector spaces $\Fil^l_{m}(\bbC)$.

\begin{definition}\label{def: multi-filtered}
	Let $l$ and $m$ be non-negative integers.
	An object in the category $\Fil^l_{m}(\bbC)$ is a quadruple $V=(V_\bbC,\{W^\lambda_\bullet\},\{F^\bullet_\mu\},\{\ol F^\bullet_\mu\})$ consisting of
	a finite dimensional $\bbC$-vector space $V_\bbC$,
	a family of finite ascending filtrations $W^\lambda_\bullet$ for $\lambda=1,\ldots,l$ by $\bbC$-linear subspaces on $V_\bbC$,
	and families of finite descending filtrations $F_\mu^\bullet$ and $\ol F_\mu^\bullet$ for $\mu=1,\ldots,m$ by $\bbC$-linear subspaces on $V_\bbC$.
	A morphism in $\Fil^l_{m}(\bbC)$ is a $\bbC$-linear homomorphism compatible with $W^\lambda_\bullet$, $F_\mu^\bullet$, and $\ol F_\mu^\bullet$.
\end{definition}

We define the notion of a $g$-orthogonal family of mixed $\bbC$-Hodge structures as follows.

 \begin{definition}[Orthogonal Family]\label{def: orthogonal}
 	We say that an object  $(V_\bbC, \{W^\mu_\bullet\}, \{F^\bullet_\mu\},\{\ol F^\bullet_\mu\})$ in $\Fil^g_{g}(\bbC)$ 
	is a \textit{$g$-orthogonal family of mixed $\bbC$-Hodge structures}, 
	if for any $\mu$, the quadruple $(V_\bbC, W^\mu_\bullet, F^\bullet_\mu, \ol F^\bullet_\mu)$ is a mixed $\bbC$-Hodge structure, 
	and for any $\mu$ and $\nu \neq \mu$, the $\bbC$-linear subspaces 
	$W^\mu_n V_\bbC$, $F^m_\mu V_\bbC$, $\ol F^m_\mu V_\bbC$
	with the weight and Hodge  filtrations induced from $W^\nu_\bullet$, $F_\nu^\bullet$, $\ol F_\nu^\bullet$ are mixed $\bbC$-Hodge structures.
	We denote by $\OF^g_\bbC$ the full subcategory of $\Fil^g_{g}(\bbC)$ whose objects are 
	$g$-orthogonal family of mixed $\bbC$-Hodge structures.
 \end{definition}
 
 If $V=(V_\bbC, \{W^\mu_\bullet\}, \{F^\bullet_\mu\},\{\ol F^\bullet_\mu\})$ is a $g$-orthogonal family of mixed $\bbC$-Hodge structures,
 then we call $\{W^\mu_\bullet\}$ the weight filtrations and  $\{F^\bullet_\mu\}$, $\{ \ol F^\bullet_\mu\}$ the Hodge filtrations of $V$.
 Note that $\OF^1_\bbC = \MHS_\bbC$.
 
Next, let $\MHS^g(\Vec_\bbC)$ be as in Corollary \ref{cor: mhs}.  An object $A$ in $\MHS^g(\Vec_\bbC)$ consists of a 
finite dimensional $\bbC$-vector space $V_\bbC$ with additional structures.
Then there exists a natural functor
\begin{equation}\label{eq: functor}
	\MHS^g(\Vec_\bbC) \rightarrow \Fil^g_{g}(\bbC)
\end{equation}
by associating to an object $A$ its underlying $\bbC$-vector space $V_\bbC$,
with the $\mu$-th weight and Hodge filtrations given by the image of the $\mu$-th weight and Hodge filtrations 
of $\MHS^g(\Vec_\bbC)$.  More precisely, for any $\mu=1,\ldots, g$, there exists an object $A^\mu$ in $\MHS^\mu(\Vec_\bbC)$
which underlies $A$, with the weight and Hodge filtrations $W^\mu_\bullet$, $F^\bullet_\mu$, $\ol F^\bullet_\mu$ given by subobjects of $A^\mu$
in $\MHS^{\mu-1}(\Vec_\bbC)$.  Then we define the filtrations $W^\mu_\bullet$, $F^\bullet_\mu$, $\ol F^\bullet_\mu$ by $\bbC$-linear subspaces on $V_\bbC$ 
to be the filtrations given as the images of the subobjects $W^\mu_\bullet$, $F^\bullet_\mu$, $\ol F^\bullet_\mu$ of $A^\mu$.

\begin{remark}\label{rem: reordering}
	Combining \eqref{eq: functor} with the functor in Corollary \ref{cor: mhs}, we have a functor
	\begin{equation}\label{eq: functor2}
		\varphi^g_\bbC: \Rep_\bbC(\cG^g_\bbC) \rightarrow \Fil^g_{g}(\bbC).
	\end{equation}
	By definition, this functor associates to an object $U=(U_\bbC, \{ U^{\bsp,\bsq}\}, \{t_\mu\})$ in $\Rep_\bbC(\cG^g_\bbC)$ the object
	$V:=(V_\bbC, \{W^\mu_\bullet\}, \{F^\bullet_\mu\},\{\ol F^\bullet_\mu\})$, where $V_\bbC := U_\bbC$,
	\begin{equation*}
		W^\mu_n V_\bbC :=\bigoplus_{\substack{\bsp,\bsq\in\bbZ^g\\p_\mu+q_\mu\leq n}}U^{\bsp,\bsq}
	\end{equation*}
	for any $n\in\bbZ$ and 
	\begin{align*}
		F^p_\mu V_\bbC&:=t_\mu\bigg(\bigoplus_{\substack{\bsr,\bss\in\bbZ^g\\r_\mu\geq p}}U^{\bsr,\bss}\biggr),  &
		\ol F^q_\mu V_\bbC&:=t_\mu^{-1}\bigg(\bigoplus_{\substack{\bsr,\bss\in\bbZ^g\\s_\mu\geq q}} U^{\bsr,\bss}\biggr)
	\end{align*}
	for any integers $p,q\in\bbZ$.  This shows that the functor \eqref{eq: functor2} is defined independently of the ordering of the index $\mu=1,\ldots,g$,
	hence if $V =(V_\bbC, \{W^\mu_\bullet\}, \{F^\bullet_\mu\},\{\ol F^\bullet_\mu\})\in  \Fil^g_{g}(\bbC)$ is an object in the essential image of the functor 
	\eqref{eq: functor}, then the object $V'=(V_\bbC, \{W^{\mu'}_\bullet\}, \{F^\bullet_{\mu'}\},\{\ol F^\bullet_{\mu'}\})$ given by a reordering $\mu' = \sigma(\mu)$ 
	of the index for some bijection $\sigma: \{1,\ldots,g\} \rightarrow \{1, \ldots,g\}$ is also in the essential image of \eqref{eq: functor}.
\end{remark}


\begin{theorem}\label{thm: main}
	For any integer $g\geq 0$, the functor \eqref{eq: functor} gives an isomorphism of categories
	\begin{equation}\label{eq: functor3}
		\MHS^g(\Vec_\bbC) \xrightarrow\cong \OF^g_\bbC.
	\end{equation}
\end{theorem}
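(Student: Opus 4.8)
The plan is to prove the isomorphism \eqref{eq: functor3} by induction on $g$, exploiting the inductive structure $\MHS^g(\Vec_\bbC) = \MHS(\MHS^{g-1}(\Vec_\bbC))$ established in Corollary \ref{cor: mhs}. The base case $g=0$ is trivial since both sides are $\Vec_\bbC$ (and $g=1$ is the remark $\OF^1_\bbC = \MHS_\bbC$). For the inductive step, assume the functor induces an isomorphism $\MHS^{g-1}(\Vec_\bbC) \xrightarrow{\cong} \OF^{g-1}_\bbC$. An object of $\MHS^g(\Vec_\bbC)$ is a mixed Hodge structure $(A, W^g_\bullet, F^\bullet_g, \ol F^\bullet_g)$ in the abelian category $\MHS^{g-1}(\Vec_\bbC)$, where $A$ itself unwinds to an object of $\OF^{g-1}_\bbC$ and the three filtrations are by \emph{subobjects} in $\MHS^{g-1}(\Vec_\bbC)$, i.e.\ by $\bbC$-subspaces that inherit the structure of a $(g-1)$-orthogonal family. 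The functor \eqref{eq: functor} sends this to the quadruple $(V_\bbC, \{W^\mu_\bullet\}, \{F^\bullet_\mu\}, \{\ol F^\bullet_\mu\})$ with the first $g-1$ filtrations coming from $A$ and the $g$-th from the top-level MHS structure.

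I would organize the verification into three assertions. First, \emph{well-definedness}: the image quadruple actually lies in $\OF^g_\bbC$. For each $\mu \le g-1$ the quadruple $(V_\bbC, W^\mu_\bullet, F^\bullet_\mu, \ol F^\bullet_\mu)$ is a mixed $\bbC$-Hodge structure by the inductive hypothesis applied to $A$; for $\mu = g$ this holds because $\MHS^g$ is built as $\MHS$ of $\MHS^{g-1}$ and the forgetful functor to $\Vec_\bbC$ carries the MHS-in-$\sA$ structure to an honest mixed $\bbC$-Hodge structure (the purity condition on $\Gr^W$ in $\sA$ forgets to the purity condition in $\Vec_\bbC$). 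The orthogonality conditions --- that $W^\mu_n V_\bbC$, $F^m_\mu V_\bbC$, $\ol F^m_\mu V_\bbC$ are sub-mixed-Hodge-structures with respect to the $\nu$-th data for $\nu \ne \mu$ --- split into the cases $\mu,\nu \le g-1$ (handled by the inductive hypothesis for $A$), the case $\nu = g$, $\mu \le g-1$ (the $g$-th weight and Hodge filtrations are by subobjects in $\MHS^{g-1}(\Vec_\bbC)$, so in particular restricting them to the $\bbC$-subspace $W^\mu_n V_\bbC$ etc.\ and using that subobjects of objects of $\OF^{g-1}_\bbC$ of the required shape are again in $\OF^{g-1}_\bbC$, which follows from Corollary \ref{cor: sum and intersection} applied inside each of the iterated abelian categories), and the symmetric case $\mu = g$, $\nu \le g-1$, which uses Remark \ref{rem: reordering} to swap the roles of the indices.

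Second, \emph{essential surjectivity}: given $V = (V_\bbC, \{W^\mu_\bullet\}, \{F^\bullet_\mu\}, \{\ol F^\bullet_\mu\}) \in \OF^g_\bbC$, forget the $g$-th triple of filtrations to get $V' = (V_\bbC, \{W^\mu_\bullet\}_{\mu < g}, \ldots) \in \OF^{g-1}_\bbC$, lift it through the inductive isomorphism to an object $A \in \MHS^{g-1}(\Vec_\bbC)$, and then observe that the orthogonality hypotheses on $V$ say precisely that $W^g_\bullet$, $F^\bullet_g$, $\ol F^\bullet_g$ define filtrations of $A$ by subobjects in $\MHS^{g-1}(\Vec_\bbC)$ (each step $W^g_n V_\bbC$ is, by orthogonality, a $(g-1)$-orthogonal family, hence a subobject of $A$), and that $(A, W^g_\bullet, F^\bullet_g, \ol F^\bullet_g)$ is a mixed Hodge structure in $\MHS^{g-1}(\Vec_\bbC)$ --- the graded pieces $\Gr^{W^g}_n A$ being pure amounts to the $n$-opposedness condition $\Gr^{W^g}_n A = F^p_g \oplus \ol F^{n+1-p}_g$, which is a statement about subobjects and can be checked after forgetting to $\Vec_\bbC$ because the involved subspaces are all sub-orthogonal-families and Corollary \ref{cor: sum and intersection} guarantees intersections and sums behave, so the direct-sum decomposition of $\bbC$-vector spaces upgrades to one in $\MHS^{g-1}(\Vec_\bbC)$. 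Third, \emph{full faithfulness}: a morphism in $\Fil^g_g(\bbC)$ between objects in the essential image is a $\bbC$-linear map respecting all $3g$ filtrations; respecting the first $3(g-1)$ makes it a morphism in $\MHS^{g-1}(\Vec_\bbC)$ by the inductive isomorphism, and additionally respecting $W^g_\bullet, F^\bullet_g, \ol F^\bullet_g$ makes it a morphism of the top-level MHS structures, hence a morphism in $\MHS^g(\Vec_\bbC)$; conversely any such morphism clearly maps to a filtered map. The main obstacle is the essential-surjectivity step: one must check that the ``horizontal'' orthogonality conditions in the definition of $\OF^g_\bbC$ are exactly equivalent to the ``vertical'' requirement that the $g$-th filtrations be by subobjects in the \emph{iterated} abelian category $\MHS^{g-1}(\Vec_\bbC)$ --- not merely by $\bbC$-subspaces that happen to be orthogonal families --- and that the purity of $\Gr^{W^g}$ holds at the level of this iterated category rather than just at the level of underlying vector spaces; the technical tool making both upgrades possible is Corollary \ref{cor: sum and intersection} (applied recursively), which ensures that all the subspace operations involved stay within the relevant subcategories and that the filtrations they induce are the expected ones.
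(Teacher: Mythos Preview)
Your inductive strategy and overall shape are the same as the paper's: both proofs proceed by unwinding $\MHS^g(\Vec_\bbC)=\MHS(\MHS^{g-1}(\Vec_\bbC))$, using the inductive hypothesis for the first $g-1$ indices, and invoking Remark~\ref{rem: reordering} to handle the remaining orthogonality condition. The essential-surjectivity argument you sketch is also the paper's, just made more explicit (and your invocation of Corollary~\ref{cor: sum and intersection} as the recursive tool is appropriate there).

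There is, however, a swap in your case analysis for well-definedness. The case $\mu=g$, $\nu\le g-1$ is the \emph{easy} one and does not need reordering: by definition $W^g_nV_\bbC$, $F^p_gV_\bbC$, $\ol F^p_gV_\bbC$ are subobjects of $A$ in $\MHS^{g-1}(\Vec_\bbC)$, hence via the inductive isomorphism they are objects of $\OF^{g-1}_\bbC$, and in particular mixed $\bbC$-Hodge structures for each $\nu$-th triple with $\nu\le g-1$. It is the case $\nu=g$, $\mu\le g-1$ that genuinely requires Remark~\ref{rem: reordering}. Your proposed argument for that case---restricting the $g$-th filtrations to $W^\mu_nV_\bbC$ and appealing to Corollary~\ref{cor: sum and intersection}---is circular as stated: Corollary~\ref{cor: sum and intersection} concerns intersections of subspaces already known to be sub-MHS for a \emph{fixed} MHS structure, whereas what you need is precisely that $W^\mu_nV_\bbC$ is a sub-MHS for the $g$-th structure, which is the conclusion in question. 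The clean fix is exactly the paper's: since the functor \eqref{eq: functor2} is independent of the ordering (Remark~\ref{rem: reordering}), reorder the indices so that $\mu$ becomes the outermost one; then $W^\mu_nV_\bbC$ is by construction a subobject in the (reordered) $\MHS^{g-1}(\Vec_\bbC)$, which now contains the $g$-th structure among its first $g-1$ layers, and the desired MHS property follows as in the easy case. With this swap your argument is complete and coincides with the paper's.
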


\begin{proof}
	The statement is trivial for $g=0$.  Assume $g>0$, and let $A$ be an object in $\MHS^g(\Vec_\bbC)$, 
	and let $V = (V_\bbC, \{W^\mu_\bullet\},\{F^\bullet_\mu\},\{\ol F^\bullet_\mu\})$ be the image of $A$ in 
	$\Fil^g_{g}(\bbC)$ with respect to the functor \eqref{eq: functor}.  
	Then by construction, for any $\mu =1,\ldots, g$, the quadruple $(V_\bbC, W^\mu_\bullet, F^\bullet_\mu, \ol F^\bullet_\mu)$
	is a mixed $\bbC$-Hodge structure.  Furthermore, for any index $\nu < \mu$, the $\bbC$-linear subspaces $W^\mu_n V_\bbC$, $F^p_\mu V_\bbC$,
	$\ol F^p_\mu V_\bbC$ with the weight and Hodge filtrations induced from $W^\nu_\bullet$, $F^\bullet_\nu$, $\ol F^\bullet_\nu$ are mixed $\bbC$-Hodge
	structures.  Remark \ref{rem: reordering} shows that since we may reorder the index of the filtrations, hence by reordering the filtrations,
	we see that the $\bbC$-linear subspaces $W^\mu_n V_\bbC$, $F^p_\mu V_\bbC$,
	$\ol F^p_\mu V_\bbC$ with the weight and Hodge filtrations induced from $W^\nu_\bullet$, $F^\bullet_\nu$, $\ol F^\bullet_\nu$ are mixed $\bbC$-Hodge
	structures even for the case $\nu>\mu$.  This shows that $V$ is an object in $\OF^g_\bbC$, hence we see that the functor \eqref{eq: functor}
	induces the functor \eqref{eq: functor3}.
	
	Conversely,  let $V = (V_\bbC, \{W^\mu_\bullet\},\{F^\bullet_\mu\},\{\ol F^\bullet_\mu\})$ be an object in $\OF^g_\bbC$.
	Then for $\mu=1,\ldots,g$, the $\bbC$-linear subspaces $W^\mu_n V_\bbC$, $F^p_\mu V_\bbC$, $\ol F^p_\mu V_\bbC$ 
	with the weight and Hodge filtrations induced from $W^\nu_\bullet$, $F^\bullet_\nu$, $\ol F^\bullet_\nu$ for $\nu\neq\mu$ 
	are mixed $\bbC$-Hodge structures,
	hence the decomposition
	\[
		\Gr^{W^\mu}_n V_\bbC = F^p_\mu \Gr^{W^\mu}_n V_\bbC \oplus  \ol F^{n+1-p}_\mu \Gr^{W^\mu}_n V_\bbC
	\]
	is also a decomposition of mixed $\bbC$-Hodge structures.  This shows that $V$ gives an object in $\MHS^g(\Vec_\bbC)$.
	The above constructions are inverse to each other, hence we have the isomorphism of categories \eqref{eq: functor3} as desired.
\end{proof}

By combining Corollary \ref{cor: mhs} and Theorem \ref{thm: main}, we have the following.

\begin{corollary}\label{cor: main cor}
	For $g\geq 0$, the functor $\varphi^g_\bbC$ of \eqref{eq: functor2} gives an equivalence of categories
	\begin{equation}\label{eq: equivalence}
		\varphi^g_\bbC:  \Rep_\bbC(\cG^g_\bbC) \cong\OF^g_\bbC.
	\end{equation}	
\end{corollary}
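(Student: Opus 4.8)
The plan is to deduce the statement by composing the two equivalences that have already been established, so that the proof is essentially a matter of identifying $\varphi^g_\bbC$ with the relevant composite. First I would invoke Corollary \ref{cor: mhs}, which furnishes a chain of equivalences $\Rep_\bbC(\cG^g_\bbC) \cong \MHS(\Rep_\bbC(\cG^{g-1}_\bbC)) \cong \cdots \cong \MHS^g(\Vec_\bbC)$; write $\Phi\colon \Rep_\bbC(\cG^g_\bbC) \xrightarrow{\cong} \MHS^g(\Vec_\bbC)$ for the resulting equivalence. Then I would invoke Theorem \ref{thm: main}, which says that the forgetful-type functor \eqref{eq: functor} restricts to an isomorphism of categories $\MHS^g(\Vec_\bbC) \xrightarrow{\cong} \OF^g_\bbC$.

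Next I would observe that, by the definition given just before Remark \ref{rem: reordering}, the functor $\varphi^g_\bbC$ of \eqref{eq: functor2} is precisely the composite of $\Phi$ with the functor \eqref{eq: functor} $\colon \MHS^g(\Vec_\bbC) \to \Fil^g_g(\bbC)$. Since Theorem \ref{thm: main} shows that \eqref{eq: functor} factors through the full subcategory $\OF^g_\bbC \subset \Fil^g_g(\bbC)$ and induces an isomorphism onto $\OF^g_\bbC$, the composite $\varphi^g_\bbC$ likewise factors through $\OF^g_\bbC$ and, being the composition of an equivalence with an isomorphism of categories, is itself an equivalence $\Rep_\bbC(\cG^g_\bbC) \cong \OF^g_\bbC$. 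To be safe I would also check the explicit formulas for $W^\mu_\bullet$, $F^\bullet_\mu$, $\ol F^\bullet_\mu$ recorded in Remark \ref{rem: reordering} against the inductive construction of $\Phi$, confirming that $\varphi^g_\bbC$ really is this composite and not a reordered variant; Remark \ref{rem: reordering} is exactly what guarantees independence from the ordering of the index $\mu$, so no discrepancy can arise.

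The genuine content lies entirely in Corollary \ref{cor: mhs} and Theorem \ref{thm: main}, both available by assumption, so there is no real obstacle at this stage. The one point demanding mild care is bookkeeping: tracking the iterated equivalence $\Phi$ correctly through all $g$ steps so that the $\mu$-th weight filtration and the $\mu$-th pair of Hodge filtrations produced by $\varphi^g_\bbC$ match those in the definition of an object of $\OF^g_\bbC$. This is routine in view of the explicit description of $\varphi^g_\bbC$ in Remark \ref{rem: reordering}, and it completes the argument.
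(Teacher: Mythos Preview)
Your proposal is correct and follows exactly the same approach as the paper, which simply states that the corollary is obtained by combining Corollary~\ref{cor: mhs} and Theorem~\ref{thm: main}. Your added remarks about identifying $\varphi^g_\bbC$ with the composite and the bookkeeping via Remark~\ref{rem: reordering} are accurate elaborations of what the paper leaves implicit.
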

We denote by $\psi^g_\bbC$ the quasi-inverse functor of $\varphi^g_\bbC$ obtained as the composition of the
inverse functor of \eqref{eq: functor3} with the quasi-inverse functor of \eqref{eq: functor4}.
%
%
%
%

%
%
%
%
%
\section{Mixed plectic $\bbC$-Hodge structures}\label{section: pMHS}
%
%
%
%
%
The main result of this section is Proposition \ref{prop: char of MHS}, which characterizes $g$-orthogonal families in terms of the total weight filtration instead of the partial weight filtrations.
First we will define the notion of a mixed weak $g$-plectic $\bbC$-Hodge structure as an object in $\Fil^1_g(\bbC)$ having the plectic Hodge decomposition and good systems of representatives of the decomposition.
A mixed $g$-plectic $\bbC$-Hodge structure will be defined to be a mixed weak $g$-plectic $\bbC$-Hodge structure satisfying certain compatibility of filtrations.
Then we will see that there is an isomorphism between the category $\OF_\bbC^g$ of $g$-orthogonal families of mixed $\bbC$-Hodge structures
and the category $\MHS^g_\bbC$ of mixed $g$-plectic $\bbC$-Hodge structures.

%
\subsection{Mixed weak plectic $\bbC$-Hodge structures}
%
In this subsection, we will define the category $\wMHS_\bbC^g$ of mixed weak $g$-plectic $\bbC$-Hodge structures.
In what follows, for any index $\bsn =(n_\mu) \in \bbZ^g$, we let $\lvert\bsn\rvert := n_1+\cdots + n_g$.  
Furthermore, for $\bsr = (r_\mu), \bsp = (p_\mu) \in \bbZ^g$, we say that $\bsr \geq \bsp$ if $r_\mu\geq p_\mu$ 
for any $\mu = 1,\ldots,g$.

For non-negative integers $l$ and $m$, we let $\Fil^l_{m}(\bbC)$ be the category of
multi-filtered $\bbC$-vector spaces defined in Definition \ref{def: multi-filtered}.
For an object $V=(V_\bbC,\{W^\lambda_\bullet\},\{F_\mu^\bullet\},\{\ol F_\mu^\bullet\})$ in $\Fil^l_{g}(\bbC)$ and a subset $I\subset\{1,\ldots,g\}$, we define the \textit{plectic filtrations} $\bsF_I^\bullet,\ol\bsF_I^\bullet$ and the \textit{total filtrations} $F_I^\bullet,\ol F_I^\bullet$ on $V_\bbC$ associated to $\{F_\mu^\bullet\}$ and $\{\ol F_\mu^\bullet\}$ with respect to $I$ by
\begin{align}\label{eq: plectic Hodge}
	\bsF_I^\bsp V_\bbC:=\bigcap_{\mu\not\in I}F_\mu^{p_\mu}V_\bbC\cap\bigcap_{\nu\in I}\ol F_\nu^{p_\nu}V_\bbC,&&
	\ol\bsF_I^\bsp V_\bbC:=\bigcap_{\mu\not\in I}\ol F_\mu^{p_\mu}V_\bbC\cap\bigcap_{\nu\in I}F_\nu^{p_\nu}V_\bbC
	\end{align}
for any $\bsp = (p_\mu) \in \bbZ^g$,
and
\begin{align}\label{eq: total Hodge}
	F^p_IV_\bbC:= \sum_{\substack{\bsp\in\bbZ^g,\,\lvert\bsp\rvert=p}} \bsF_I^\bsp V_\bbC,&&
	\ol F^p_IV_\bbC:= \sum_{\substack{\bsp\in\bbZ^g,\,\lvert\bsp\rvert=p}} \ol\bsF_I^\bsp V_\bbC
\end{align}
for any $p \in \bbZ$.
Note that there are natural inclusions 
$\bsF_I^\bsp V_\bbC\hookrightarrow F_I^{\lvert\bsp\rvert}V_\bbC$ and $\ol\bsF_I^\bsp V_\bbC\hookrightarrow \ol F_I^{\lvert\bsp\rvert}V_\bbC$.
We will often omit the subscript of the notation when $I = \emptyset$.  For example, $\bsF^\bsp V_\bbC:= \bsF^\bsp_\emptyset V_\bbC$.


We first define the notion of a pure weak $g$-plectic $\bbC$-Hodge structure.

\begin{definition}[pure weak plectic $\bbC$-Hodge structure]\label{def: pure weak plectic}
	Let $n$ be an integer. A \textit{pure weak $g$-plectic $\bbC$-Hodge structure of weight $n$} is
	an object $V=(V_\bbC,\{F_\mu^\bullet\},\{\ol F_\mu^\bullet\})$ in $\Fil^0_g(\bbC)$ satisfying
	\begin{equation}\label{eq: monday}
		\bsF_I^\bsp V_\bbC=\bigoplus_{\substack{\bsr,\bss\in\bbZ^g\\ \bsr\geq\bsp,\ \lvert\bsr+\bss\rvert=n}}(\bsF_I^\bsr\cap\ol\bsF_I^\bss)V_\bbC
	\end{equation}
	for any $\bsp\in\bbZ^g$ and $I\subset\{1,\ldots,g\}$.
\end{definition}

Note that since $F_\mu^\bullet$ and $\ol F_\mu^\bullet$ are finite filtrations, we have $\bsF^\bsp_IV_\bbC=V_\bbC$ for any $\bsp$ whose components are sufficiently small.
Hence \eqref{eq: monday} implies that we have
\begin{equation}\label{eq: sunday}
	V_\bbC=\bigoplus_{\substack{\bsp,\bsq\in\bbZ^g\\ \lvert\bsp+\bsq\rvert=n}}(\bsF_I^\bsp\cap\ol\bsF_I^\bsq)V_\bbC.
\end{equation}

\begin{remark}\label{rem: weak}
	For any subset $I\subset\{1,\ldots,g\}$ we have $\ol\bsF_I^\bullet=\bsF_{I^c}^\bullet$,
	where $I^c:=\{I,\ldots,g\}\setminus I$ is the complement of $I$ in $\{1,\ldots,g\}$.
	In particular, the equation \eqref{eq: monday} for $I^c$ implies that
	\begin{equation}\label{eq: monday2}
		\ol\bsF_I^\bsq V_\bbC=\bigoplus_{\substack{\bsr,\bss\in\bbZ^g\\ \bss\geq\bsq,\ \lvert\bsr+\bss\rvert=n}}(\bsF_I^\bsr\cap\ol\bsF_I^\bss)V_\bbC.
	\end{equation}
\end{remark}

\begin{remark}\label{rem: nuts}
	Let $V$ be a pure weak $g$-plectic $\bbC$-Hodge structure of weight $n$, and consider
	$\bsp,\bsq \in \bbZ^g$ such that $\lvert\bsp+\bsq\rvert >n$.
	If we let $r:= \lvert\bsp+\bsq\rvert-n > 0$ and $\bsr_1 := (r,0,\ldots, 0) \in \bbZ^g$,
	then we have $\lvert\bsp+\bsq-\bsr_1\rvert = n$.  
	Since $\bsp-\bsr_1 < \bsp$ and $\bsq-\bsr_1 < \bsq$, we have
	\begin{equation*}
		 (\bsF_I^\bsp \cap\ol\bsF_I^\bsq) V_\bbC \subset
		   (\bsF_I^{\bsp-\bsr_1}\cap\ol\bsF_I^\bsq) V_\bbC \cap ( \bsF_I^{\bsp}\cap\ol\bsF_I^{\bsq-\bsr_1})V_\bbC
	\end{equation*}
	for any subset $I\subset\{1,\ldots,g\}$.
	By \eqref{eq: sunday}, the right hand side is $\{0\}$, hence we have the equality
	\begin{equation}\label{eq: fudge}
		(\bsF_I^\bsp \cap\ol\bsF_I^\bsq) V_\bbC = \{0\}.
	\end{equation}
\end{remark}

	
\begin{remark}\label{rem: bolts}
	Let $V$ be a pure weak $g$-plectic $\bbC$-Hodge structure of weight $n$ and $I\subset\{1,\ldots,g\}$ a subset. 
	Then the total Hodge filtrations $F_I^\bullet$ and $\ol F_I^\bullet$ on $V_\bbC$ with respect to $I$ are given by
	\begin{align*}
		F^p_IV_\bbC &= \sum_{\substack{\lvert\bsp\rvert=p}} \bsF_I^\bsp V_\bbC = \bigoplus_{\substack{\lvert\bsr\rvert\geq p\\\lvert\bsr+\bss\rvert=n}} (\bsF_I^{\bsr}\cap\ol\bsF_I^{\bss})V_\bbC, \\
		\ol F_I^{n+1-p}V_\bbC &= \sum_{\substack{\lvert\bsq\rvert=n+1-p}} \ol\bsF_I^\bsq V_\bbC 
		= \bigoplus_{\substack{\lvert\bss\rvert\geq n+1-p\\\lvert\bsr+\bss\rvert=n}} (\bsF_I^{\bsr}\cap\ol\bsF_I^{\bss})V_\bbC
		=  \bigoplus_{\substack{\lvert\bsr\rvert<p\\\lvert\bsr+\bss\rvert=n}} (\bsF_I^{\bsr}\cap\ol\bsF_I^{\bss})V_\bbC
	\end{align*}
	for any $p \in \bbZ$.
	Hence by \eqref{eq: sunday}, we have
	$
		V_\bbC = F^p_IV_\bbC \oplus \ol F^{n+1-p}_I V_\bbC.
	$
	By \eqref{eq: n-oppose}, we see that $(V_\bbC,F_I^\bullet,\ol F_I^\bullet)$ is a pure $\bbC$-Hodge structure of weight $n$ in the usual sense.
\end{remark}

We next define the notion of mixed weak plectic $\bbC$-Hodge structures.
One subtlty is that for an object $V=(V_\bbC,W_\bullet,\{F_\mu^\bullet\},\{\ol F_\mu^\bullet\})$ in $\Fil^1_g(\bbC)$,
there are two natural ``plectic" filtraions on $\Gr^W_n V_\bbC$, which in general do not coincide.  
More precisely, the natural inclusion
\begin{equation}\label{eq: incluzion}
	(W_n \cap \bsF^{\bsp}_I) V_\bbC 
	 / (W_{n-1} \cap \bsF^{\bsp}_I) V_\bbC
	 \subset\bigcap_{\mu\not\in I}F_\mu^{p_\mu}\Gr^W_nV_\bbC \cap \bigcap_{\nu\in I}\ol F_\nu^{p_\nu}\Gr^W_nV_\bbC
\end{equation}
is not in general an equality (see Example \ref{ex: counter} below).  In what follows, we adopt the left hand side and let
\begin{equation}\label{eq: grplectic}\begin{split}
	\bsF^p_I \Gr^W_n V_\bbC &:= (W_n \cap \bsF^{\bsp}_I) V_\bbC 
	 / (W_{n-1} \cap \bsF^{\bsp}_I) V_\bbC, \\
	 \ol\bsF^q_I \Gr^W_n V_\bbC &:= (W_n \cap \ol\bsF^{\bsq}_I) V_\bbC 
	 / (W_{n-1} \cap \ol\bsF^{\bsq}_I) V_\bbC
\end{split}\end{equation}
for any $I \subset \{1,\ldots,g \}$.  

\begin{definition}[mixed weak plectic $\bbC$-Hodge structure]\label{def: mixed weak plectic}
	A \textit{mixed weak $g$-plectic $\bbC$-Hodge structure} is an object $V=(V_\bbC,W_\bullet,\{F_\mu^\bullet\},\{\ol F_\mu^\bullet\})$ in $\Fil^1_g(\bbC)$ satisfying the following conditions for any subset $I\subset\{1,\ldots,g\}$:
	\begin{enumerate}
		\renewcommand{\theenumi}{\alph{enumi}$_I$}%
		\item For any $n\in \bbZ$ and $\bsp\in\bbZ^g$, we have	
		\begin{equation}\label{eq: tuesday}
			\bsF_I^\bsp  \Gr^W_n V_\bbC=\bigoplus_{\substack{\bsr,\bss\in\bbZ^g\\\bsr\geq\bsp,\,\lvert\bsr+\bss\rvert=n}} (\bsF_I^{\bsr}\cap\ol\bsF_I^{\bss})\Gr^W_n V_\bbC,
		\end{equation}
		where $ (\bsF_I^{\bsr}\cap\ol\bsF_I^{\bss})\Gr^W_n V_\bbC :=  \bsF_I^{\bsr} \Gr^W_n V_\bbC \cap \ol\bsF_I^{\bss}\Gr^W_n V_\bbC$.
		\item The object $V_I:=(V_\bbC,W_\bullet,F_I^\bullet,\ol F_I^\bullet)$ in $\Fil^1_1(\bbC)$ is a mixed $\bbC$-Hodge structure in the usual sense.
		\item For any $\bsp,\bsq\in\bbZ^g$ and $n:=\lvert\bsp+\bsq\rvert$, we have
		\begin{equation*}
			((\bsF_I^\bsp \cap W_n + \ol\bsF_I^\bsq \cap W_n)\cap W_{n-1})V_\bbC 
			\subset (\bsF_I^\bsp\cap W_{n-1}) V_\bbC + \sum_{\bsj \geq \boldsymbol{0}} 
			(\ol\bsF_I^{\bsq-\bsj} \cap W_{n-\lvert\bsj\rvert-1}) V_\bbC.
		\end{equation*}		
	\end{enumerate}
	We denote by $\wMHS_\bbC^g\subset\Fil^1_g(\bbC)$ the full subcategory of mixed weak $g$-plectic $\bbC$-Hodge structures.
	If $V= (V_\bbC, W_\bullet,\{F^\bullet_\mu\},\{\ol F_\mu^\bullet\})$ is a mixed weak $g$-plectic $\bbC$-Hodge structure, then we call $W_\bullet$ the \textit{weight filtration},
	$F^\bullet_\mu$ and $\ol F_\mu^\bullet$ the \textit{partial Hodge filtrations}, $\bsF^\bullet_I$ and $\ol\bsF^\bullet_I$ the \textit{plectic Hodge filtrations} with respect to $I$, and $F^\bullet_I$ and $\ol F_I^\bullet$ the \textit{total Hodge filtrations} with respect to $I$ of $V$.
\end{definition}

Due to Remark \ref{rem: bolts}, we will view a pure weak $g$-plectic $\bbC$-Hodge structure $V$ of weight $n$ as a mixed weak $g$-plectic $\bbC$-Hodge structure 
by taking the weight filtration to satisfy $W_{n-1}V_\bbC:=\{0\}$ and $W_nV_\bbC:=V_\bbC$.

\begin{remark}\label{rem: filt on Gr}
	Let $V=(V_\bbC,W_\bullet,\{F_\mu^\bullet\},\{\ol F_\mu^\bullet\})$ be an object in $\Fil^1_g(\bbC)$.
	Then, we have natural inclusions
	\begin{equation}\label{eq: filt on Gr}\begin{split}
		 (W_n\cap F_I^p)V_\bbC&\supset\sum_{\bsp\in\bbZ^g,\ \lvert\bsp\rvert=p}(W_n\cap\bsF_I^\bsp)V_\bbC,\\
		 (W_n\cap F_I^p)V_\bbC/ (W_{n-1}\cap F_I^p)V_\bbC &\supset\sum_{\bsp\in\bbZ^g,\ \lvert\bsp\rvert=p}\bsF_I^\bsp\Gr^W_nV_\bbC
	\end{split}\end{equation}
	for any $I\subset\{1,\ldots,g\}$, which are not equalities in general.  In what follows, we let
	\[
		F^p_I \Gr^W_n V_\bbC :=  (W_n\cap F_I^p)V_\bbC/ (W_{n-1}\cap F_I^p)V_\bbC
	\]
	and similarly for $\ol F^p_I\Gr^W_n V_\bbC$.
\end{remark}

\begin{example}\label{ex: counter}
	We note that the definition of a mixed weak $g$-plectic $\bbC$-Hodge structure is in general strictly stronger than the condition that for any $n\in\bbZ$, 
	the triple $\Gr^W_n V:= (\Gr^W_nV_\bbC,\{F^\bullet_\mu\},\{\ol F_\mu^\bullet\})$ is a pure weak $g$-plectic $\bbC$-Hodge structure of weight $n$.
	Consider the case when $g=2$ and let $V_\bbC:= \bbC e_0\oplus \bbC e_{-4}$ with the filtrations defined by
	\[
		W_n V_\bbC := \begin{cases}
				0  &  n \leq -5, \\
				\bbC e_{-4} &  n=-4,\ldots,-1,\\
				V_\bbC  &  n \geq 0,
		\end{cases}
	\]
	\begin{align*}	
		F^{p_1}_1 V_\bbC=\ol F^{p_1}_1 V_\bbC := \begin{cases}
				V_\bbC &  p_1 < 0, \\
				\bbC e_0 &  p_1=0,\\
				0  &  p_1>0,
		\end{cases}
		&&\text{and}&&
		F^{p_2}_2 V_\bbC =\ol F^{p_2}_2V_\bbC:= \begin{cases}
				V_\bbC &  p_2 < 0, \\
				\bbC (e_0+e_{-4}) &  p_2=0,\\
				0  &  p_2>0.
		\end{cases}
	\end{align*}
	Then we have $\Gr^W_0 V_\bbC = \bbC e_0$ and $F^0_1 \Gr^W_0 V_\bbC=F^0_2 \Gr^W_0 V_\bbC = \bbC e_0$,
	which shows that $(F^0_1\cap F^0_2) \Gr^W_0 V_\bbC = \bbC e_0$.
	However, since $\bsF^{(0,0)} V_\bbC := (F^0_1 \cap F^0_2) V_\bbC = \{0\}$, we have $\bsF^{(0,0)}\Gr^W_0 V_\bbC=\{0\}$, hence 
	\begin{equation*}
		\bsF^{(0,0)}\Gr^W_0 V_\bbC \subsetneq (F^0_1\cap F^0_2) \Gr^W_0 V_\bbC.
	\end{equation*}
	One can show that for $V = (V_\bbC, W_\bullet, \{ F^\bullet_1, F^\bullet_2\},\{\ol F^\bullet_1,\ol F_2^\bullet\})$ defined as above, $\Gr^W_n V$ is
	a pure weak $2$-plectic $\bbC$-Hodge structure of weight $n$ for any $n\in \bbZ$,
	but $V$ does not satisfy \eqref{eq: tuesday}.
\end{example}

In the next subsection, we will see that \eqref{eq: incluzion} and \eqref{eq: filt on Gr} are actually equalities for objects in $\wMHS_\bbC^g$.	

\begin{proposition}
	A mixed $\bbC$-Hodge structure in the usual sense is a mixed weak $1$-plectic $\bbC$-Hodge structure.
	In particular, the category $\wMHS^1_\bbC$ is equal to the category $\MHS_\bbC$ of mixed $\bbC$-Hodge structures.
\end{proposition}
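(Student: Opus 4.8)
The plan is to compare the two categories as full subcategories of $\Fil^1_1(\bbC)$: in either case an object is a single quadruple $(V_\bbC,W_\bullet,F^\bullet,\ol F^\bullet)$ and a morphism is a $\bbC$-linear map respecting $W_\bullet$, $F^\bullet$ and $\ol F^\bullet$, so it suffices to check that the two classes of objects coincide. The first step is to observe that for $g=1$ the only subsets are $I=\emptyset$ and $I=\{1\}$, and that all the plectic and total Hodge filtrations collapse onto the partial ones: $\bsF^\bullet_\emptyset=F^\bullet_\emptyset=F^\bullet$ and $\ol\bsF^\bullet_\emptyset=\ol F^\bullet_\emptyset=\ol F^\bullet$, while for $I=\{1\}$ the same holds with the roles of $F^\bullet$ and $\ol F^\bullet$ interchanged. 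Hence the three conditions of Definition~\ref{def: mixed weak plectic} for $I=\{1\}$ are exactly those for $I=\emptyset$ with $F^\bullet$ and $\ol F^\bullet$ swapped; and since the axiom $\Gr^W_nV_\bbC=F^p\Gr^W_nV_\bbC\oplus\ol F^{\,n+1-p}\Gr^W_nV_\bbC$ defining a mixed $\bbC$-Hodge structure is invariant under that swap (replace $p$ by $n+1-p$), the class $\MHS_\bbC$ is stable under it too. So it is enough to treat the conditions for $I=\emptyset$.

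Granting this reduction, one inclusion is immediate: condition (b) of Definition~\ref{def: mixed weak plectic} for $I=\emptyset$ says precisely that $(V_\bbC,W_\bullet,F^\bullet,\ol F^\bullet)$ is a mixed $\bbC$-Hodge structure in the usual sense, whence $\wMHS^1_\bbC\subseteq\MHS_\bbC$. For the reverse inclusion I would take a mixed $\bbC$-Hodge structure $V$ and verify conditions (a), (b) and (c) for $I=\emptyset$. Condition (b) holds by hypothesis. For condition (a), namely \eqref{eq: tuesday} with $I=\emptyset$, I would observe that the left-hand side $\bsF^\bullet_\emptyset\Gr^W_nV_\bbC$ defined in \eqref{eq: grplectic} is just the usual induced Hodge filtration on the pure $\bbC$-Hodge structure $\Gr^W_nV$ of weight $n$, so \eqref{eq: tuesday} becomes exactly the first identity of Lemma~\ref{lem: candy} applied to $\Gr^W_nV$.

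The real content is condition (c) for $I=\emptyset$, which for $g=1$ reads $\bigl((F^p\cap W_n)V_\bbC+(\ol F^q\cap W_n)V_\bbC\bigr)\cap W_{n-1}V_\bbC\subseteq(F^p\cap W_{n-1})V_\bbC+R$, where $n=p+q$ and $R:=\sum_{j\geq0}(\ol F^{q-j}\cap W_{n-j-1})V_\bbC$. I intend to deduce it from the Deligne splitting of Proposition~\ref{prop: splitting}. Put $R':=(\ol F^q\cap W_n)V_\bbC+R$, so that $A^{p,q}(V)=(F^p\cap W_n)V_\bbC\cap R'$ by \eqref{eq: Apq}; an elementary check gives $R'\cap W_{n-1}V_\bbC=R$. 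Now let $x=a+b$ lie in the left-hand side, with $a\in(F^p\cap W_n)V_\bbC$, $b\in(\ol F^q\cap W_n)V_\bbC$ and $x\in W_{n-1}V_\bbC$. Then the image $\bar a=-\bar b$ of $a$ in $\Gr^W_nV_\bbC$ lies in $F^p\Gr^W_nV_\bbC\cap\ol F^q\Gr^W_nV_\bbC$, so by Lemma~\ref{lem: Apq} it is the image of some $a'\in A^{p,q}(V)$. Since $a'$ lies in $(F^p\cap W_n)V_\bbC$ and in $R'$, one gets $a-a'\in(F^p\cap W_{n-1})V_\bbC$ and $a'+b\in R'\cap W_{n-1}V_\bbC=R$, hence $x=(a-a')+(a'+b)$ lies in $(F^p\cap W_{n-1})V_\bbC+R$, as required. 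Condition (c) for $I=\{1\}$, and likewise condition (a) for $I=\{1\}$, then follow from the $I=\emptyset$ cases by the $F^\bullet\!\leftrightarrow\!\ol F^\bullet$ symmetry of the first step. I expect the verification of condition (c) for $I=\emptyset$ to be the only genuine obstacle, its crux being the $W_\bullet$-compatibility of the Deligne bigrading encoded in \eqref{eq: Apq} and Lemma~\ref{lem: Apq}; the rest is just unwinding the $g=1$ specializations of the definitions.
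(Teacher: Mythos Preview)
Your proof is correct. The reduction to $I=\emptyset$ via the $F^\bullet\leftrightarrow\ol F^\bullet$ symmetry, the verification of (a) by Lemma~\ref{lem: candy}, and the derivation of (c) from the Deligne splitting are all valid; the elementary claim $R'\cap W_{n-1}V_\bbC=R$ is indeed immediate since any $c+r$ with $c\in(\ol F^q\cap W_n)V_\bbC$, $r\in R\subset W_{n-1}V_\bbC$ lying in $W_{n-1}V_\bbC$ forces $c\in(\ol F^q\cap W_{n-1})V_\bbC\subset R$.

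The route you take for condition~(c) differs from the paper's. You invoke Lemma~\ref{lem: Apq} to lift $\bar a\in(F^p\cap\ol F^q)\Gr^W_nV_\bbC$ to $a'\in A^{p,q}(V)$ and then split $x=(a-a')+(a'+b)$; this is a one-step argument leaning on the already-established Deligne bigrading. The paper instead proves the \emph{stronger} inclusion $W_{n-1}V_\bbC\subset(F^p\cap W_{n-1})V_\bbC+R$ by an elementary induction on $k$, using at each stage only the pure splitting $\Gr^W_{n-k-1}V_\bbC=F^p\Gr^W_{n-k-1}V_\bbC\oplus\ol F^{q-k}\Gr^W_{n-k-1}V_\bbC$ and no part of the Deligne machinery beyond Definition~\ref{def: pure}. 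Your argument is shorter and more conceptual given the ambient results of \S2; the paper's is more self-contained and yields a slightly sharper statement (the whole of $W_{n-1}V_\bbC$, not just the intersection with $(F^p\cap W_n+\ol F^q\cap W_n)V_\bbC$, lies in the right-hand side).
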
 	

\begin{proof}
	By definition, an object in $\wMHS^1_\bbC$ is a mixed $\bbC$-Hodge structure in the usual sense.
	Conversely, consider an object $V$ in $\MHS_\bbC$.   Then ($\textrm{a}_I$) holds by Lemma \ref{lem: candy} and ($\textrm{b}_I$) holds by definition.
	We prove ($\textrm{c}_I$).
	Let $p,q\in\bbZ$ and $n:=p+q$.
	We prove by induction on $k\geq0$ that
	\begin{equation}\label{eq: thursday}
		W_{n-1}V_\bbC \subset (F^p \cap W_{n-1}) V_\bbC + \sum_{j=0}^k (\ol F^{q-j} \cap W_{n-j-1})V_\bbC + W_{n-k-2}	
		V_\bbC.
	\end{equation}
	Suppose $w \in W_{n-1} V_\bbC$. Since 
	$\Gr^W_{n-1}V$ is a pure $\bbC$-Hodge structure of weight $n-1$, we have a splitting
	\begin{equation*}
		\Gr^W_{n-1} V_\bbC = F^p \Gr^W_{n-1} V_\bbC \oplus \ol F^{q} \Gr^W_{n-1} V_\bbC,
	\end{equation*}
	hence $w$ is of the form $w = u_0 + v_0 + w_1$ for some $u_0 \in (F^p \cap W_{n-1})V_\bbC$, $v_0 \in (\ol F^q \cap W_{n-1})V_\bbC$
	and $w_1 \in W_{n-2}V_\bbC$, which proves \eqref{eq: thursday} for $k=0$.  Suppose \eqref{eq: thursday} is true for an integer $k \geq 0$.
	Then any element $w \in W_{n-1} V_\bbC$ is of the form
	\begin{equation*}
		w = u_k + \sum_{j=0}^k v_j + w_{k+1}
	\end{equation*}
	for some $u_k \in  (F^p \cap W_{n-1})V_\bbC$, $v_j \in (\ol F^{q-j} \cap W_{n-j-1})V_\bbC$, and $w_{k+1} \in W_{n-k-2}V_\bbC$.
	Since
	$\Gr^W_{n-k-2}V$ is a pure $\bbC$-Hodge structure of weight $n-k-2$, we have a splitting
	\begin{equation*}
		\Gr^W_{n-k-2} V_\bbC = F^p \Gr^W_{n-k-2} V_\bbC \oplus \ol F^{q-k-1} \Gr^W_{n-k-2} V_\bbC,
	\end{equation*}
 	hence $w_{k+1}$ is of the form $w_{k+1} = u'_{k+1} + v_{k+1} + w_{k+2}$ for some 
	$u'_{k+1} \in (F^p \cap W_{n-k-2})V_\bbC$, $v_{k+1} \in (\ol F^{q-k-1} \cap W_{n-k-2})V_\bbC$
	and $w_{k+2} \in W_{n-k-3}V_\bbC$.  Then $u_{k+1}:=u_k+ u'_{k+1} \in (F^p\cap W_{n-1})V_\bbC$, and we see that
	\begin{equation*}
		w =  u_{k+1} + \sum_{j=0}^{k+1} v_j + w_{k+2} \in  (F^p \cap W_{n-1}) V_\bbC + \sum_{j=0}^{k+1} (\ol F^{q-j} \cap W_{n-j-1})V_\bbC + W_{n-k-3}
		V_\bbC.
	\end{equation*}
	By induction, \eqref{eq: thursday} is true for any $k \geq 0$.   Since $W_{n-k-2} V_\bbC = \{0\}$ for $k$ sufficiently large, 
	we have
	\begin{equation*}
		((F^p\cap W_n+\ol F^q\cap W_n)\cap W_{n-1})V_\bbC \subset W_{n-1} V_\bbC \subset 
		(F^p \cap W_{n-1}) V_\bbC + \sum_{j\geq 0} (\ol F^{q-j} \cap W_{n-j-1})V_\bbC,
	\end{equation*}
	which proves condition ($\textrm{c}_I$) for $I=\emptyset$. 
	Since the quadruple $(V_\bbC,W_\bullet,\ol F^\bullet,F^\bullet)$ is also a mixed $\bbC$-Hodge structure, 
	condition ($\textrm{c}_I$) for $I=\{1\}$ also holds.
\end{proof}

\subsection{The plectic Deligne splitting}
%

In this subsection, we will prove Proposition \ref{prop: psplitting},
which is a plectic version of the Deligne splitting for objects in $\wMHS^g_\bbC$.
We will first define the plectic version of the bigradings $A^{p,q}$ and $\ol A^{p,q}$.

\begin{definition}\label{def: pApq}
	Let $V=(V_\bbC, W_\bullet, \{ F^\bullet_\mu\},\{\ol F_\mu^\bullet\})$ be an object in $\Fil^1_g(\bbC)$.
	For any $I\subset\{1,\ldots,g\}$, $\bsp,\bsq\in\bbZ^g$, and $n:=\lvert\bsp+\bsq\rvert$, we put
	\begin{equation}\label{eq: pApq}
		\bsA^{\bsp,\bsq}_I(V):= (\bsF_I^\bsp \cap W_{n})V_\bbC
		\cap\biggl((\ol\bsF_I^\bsq \cap W_{n})V_\bbC+\sum_{\bsj\geq\boldsymbol 0} (\ol\bsF_I^{\bsq-\bsj}\cap W_{n-\lvert\bsj\rvert-1}) V_\bbC\biggr).
	\end{equation}
	We denote by
	\begin{equation}\label{eq: peach}
		\rho_I:\bsA_I^{\bsp,\bsq}(V) \rightarrow (\bsF_I^\bsp\cap\ol\bsF_I^\bsq) \Gr^W_nV_\bbC
	\end{equation}
	the $\bbC$-linear homomorphism induced by the natural surjection $W_nV_\bbC\rightarrow\Gr^W_nV_\bbC$.
\end{definition}

Note that when $g=1$, the subspaces \eqref{eq: pApq} coincide with \eqref{eq: Apq} in Proposition \ref{prop: splitting}.
	
\begin{proposition}\label{prop: psplitting}
	Let $V$ be an object in $\Fil^1_g(\bbC)$. Consider the conditions ($\textrm{a}_I$),($\textrm{b}_I$),($\textrm{c}_I$) in Definition \ref{def: mixed weak plectic}.
	\begin{enumerate}
	\item ($\textrm{b}_I$) implies that $\rho_I$ is injective.
	\item ($\textrm{c}_I$) is equivalent to that $\rho_I$ is surjective.
	\item ($\textrm{a}_I$), ($\textrm{b}_I$), and ($\textrm{c}_I$) together imply that we have
		\begin{align}\label{eq: W F by A}
		W_n V_\bbC&=\bigoplus_{\substack{\bsp,\bsq\in\bbZ^g\\\lvert\bsp+\bsq\rvert\leq n}}\bsA_I^{\bsp,\bsq}(V), &
		\bsF_I^\bsp V_\bbC&=\bigoplus_{\substack{\bsr,\bss\in\bbZ^g\\ \bsr \geq \bsp}}\bsA_I^{\bsr,\bss}(V),
		\end{align}
		for any $n\in\bbZ$ and $\bsp\in\bbZ^g$, and in particular
		\begin{equation}\label{eq: psplitting}
		V_\bbC =\bigoplus_{\bsp,\bsq\in\bbZ^g}\bsA_I^{\bsp,\bsq}(V).
		\end{equation}
	\item If $V$ is an object in $\wMHS_\bbC^g$, then for any $I\subset\{1,\ldots,g\}$, $\rho_I$ is an isomorphism and the equalities \eqref{eq: W F by A} and \eqref{eq: psplitting} hold.
	\end{enumerate}
\end{proposition}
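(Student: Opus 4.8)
The plan is to prove the four assertions in order, since each builds on the previous ones, and to reduce everything as far as possible to the classical ($g=1$) Deligne splitting recorded in Proposition \ref{prop: splitting} and Lemma \ref{lem: Apq}. Throughout I fix a subset $I\subset\{1,\ldots,g\}$ and abbreviate $\bsF^\bullet:=\bsF_I^\bullet$, $\ol\bsF^\bullet:=\ol\bsF_I^\bullet$, $F^\bullet:=F_I^\bullet$, $\ol F^\bullet:=\ol F_I^\bullet$, $\bsA^{\bsp,\bsq}:=\bsA_I^{\bsp,\bsq}(V)$ and $\rho:=\rho_I$.

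\textbf{Assertion (1).} The point is that $\rho$ is, up to the definitions, a restriction of the classical Deligne-splitting comparison map for the one-variable mixed $\bbC$-Hodge structure $V_I=(V_\bbC,W_\bullet,F^\bullet,\ol F^\bullet)$. First I would observe the inclusion $\bsA^{\bsp,\bsq}\subset A^{|\bsp|,|\bsq|}(V_I)$: indeed $\bsF^\bsp V_\bbC\subset F^{|\bsp|}V_\bbC$ and $\ol\bsF^{\bsq-\bsj}V_\bbC\subset\ol F^{|\bsq|-|\bsj|}V_\bbC$ by the natural inclusions noted after \eqref{eq: total Hodge}, and these inclusions are compatible with intersecting against $W_\bullet$, so term-by-term the defining formula \eqref{eq: pApq} for $\bsA^{\bsp,\bsq}$ maps into the defining formula \eqref{eq: Apq} for $A^{|\bsp|,|\bsq|}(V_I)$. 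Now $(\textrm{b}_I)$ says $V_I$ is a genuine mixed $\bbC$-Hodge structure, so by Lemma \ref{lem: Apq} the surjection $W_{|\bsp|+|\bsq|}V_\bbC\to\Gr^W_{|\bsp|+|\bsq|}V_\bbC$ restricts to an \emph{isomorphism} on $A^{|\bsp|,|\bsq|}(V_I)$; in particular it is injective there, hence injective on the subspace $\bsA^{\bsp,\bsq}$, which is exactly the injectivity of $\rho$.

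\textbf{Assertion (2).} Here I would unwind what surjectivity of $\rho$ means. An element of $(\bsF^\bsp\cap\ol\bsF^\bsq)\Gr^W_nV_\bbC$ lifts (via the inclusion of numerator-over-denominator, which is the left-hand side of \eqref{eq: grplectic}) to an element $x\in(W_n\cap\bsF^\bsp)V_\bbC$ whose image also lies in $\ol\bsF^\bsq\Gr^W_nV_\bbC$, i.e. $x\in (W_n\cap\bsF^\bsp)V_\bbC$ and $x\equiv y\pmod{W_{n-1}V_\bbC}$ for some $y\in(W_n\cap\ol\bsF^\bsq)V_\bbC$; thus $x-y\in((\bsF^\bsp\cap W_n+\ol\bsF^\bsq\cap W_n)\cap W_{n-1})V_\bbC$. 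Surjectivity of $\rho$ means precisely that $x$ can be adjusted by an element of $W_{n-1}V_\bbC$ so as to land in $\bsA^{\bsp,\bsq}$; chasing the definition \eqref{eq: pApq}, this amounts exactly to saying the element $x-y$ of $((\bsF^\bsp\cap W_n+\ol\bsF^\bsq\cap W_n)\cap W_{n-1})V_\bbC$ can be written in the form $(\text{something in }\bsF^\bsp\cap W_{n-1})+(\text{something in }\sum_{\bsj\geq\boldsymbol 0}\ol\bsF^{\bsq-\bsj}\cap W_{n-|\bsj|-1})$, which is condition $(\textrm{c}_I)$. So $(\textrm{c}_I)\Leftrightarrow\rho$ surjective, modulo bookkeeping that $\bsA^{\bsp,\bsq}=0$ automatically when the relevant indices are out of range; I would write the equivalence out as an iff-chain to make the reduction transparent.

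\textbf{Assertions (3) and (4).} Assuming $(\textrm{a}_I)$, $(\textrm{b}_I)$, $(\textrm{c}_I)$, parts (1) and (2) give that $\rho$ is an isomorphism $\bsA^{\bsp,\bsq}\xrightarrow{\cong}(\bsF^\bsp\cap\ol\bsF^\bsq)\Gr^W_nV_\bbC$ for all $\bsp,\bsq$ with $n=|\bsp+\bsq|$. Then I would run the same argument as in the proof of Proposition \ref{prop: splitting}: for fixed $n$, summing over $\{(\bsp,\bsq):|\bsp+\bsq|=n\}$ gives an isomorphism $\bigoplus\bsA^{\bsp,\bsq}\xrightarrow{\cong}\bigoplus(\bsF^\bsp\cap\ol\bsF^\bsq)\Gr^W_nV_\bbC$; by condition $(\textrm{a}_I)$ with $\bsp$ taken small enough that $\bsF^\bsp\Gr^W_nV_\bbC=\Gr^W_nV_\bbC$ (possible since the partial Hodge filtrations are finite), equation \eqref{eq: sunday} applied on $\Gr^W_nV_\bbC$ gives $\Gr^W_nV_\bbC=\bigoplus_{|\bsp+\bsq|=n}(\bsF^\bsp\cap\ol\bsF^\bsq)\Gr^W_nV_\bbC$, so $W_nV_\bbC/W_{n-1}V_\bbC\cong\bigoplus_{|\bsp+\bsq|=n}\bsA^{\bsp,\bsq}$ compatibly with the filtration step; induction on $n$ (using $W_\bullet$ finite, so $W_nV_\bbC=0$ for $n\ll0$) then yields $W_nV_\bbC=\bigoplus_{|\bsp+\bsq|\leq n}\bsA^{\bsp,\bsq}$, which is the first formula of \eqref{eq: W F by A}, and taking $n$ large gives \eqref{eq: psplitting}. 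For the second formula of \eqref{eq: W F by A}, the inclusion $\bigoplus_{\bsr\geq\bsp}\bsA^{\bsr,\bss}\subset\bsF^\bsp V_\bbC$ is clear since $\bsA^{\bsr,\bss}\subset\bsF^\bsr V_\bbC\subset\bsF^\bsp V_\bbC$ when $\bsr\geq\bsp$; for the reverse inclusion I would intersect $\bsF^\bsp V_\bbC$ with $W_nV_\bbC$, use $(\textrm{a}_I)$ and the isomorphism $\rho$ to identify $\bsF^\bsp\Gr^W_mV_\bbC$ with $\bigoplus_{\bsr\geq\bsp,|\bsr+\bss|=m}\rho(\bsA^{\bsr,\bss})$ for each $m\leq n$, and conclude by induction on $n$ exactly as in the weight case. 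Finally assertion (4) is immediate: an object of $\wMHS_\bbC^g$ satisfies $(\textrm{a}_I)$, $(\textrm{b}_I)$, $(\textrm{c}_I)$ for every $I$ by definition, so (1)--(3) apply for every $I$. The main obstacle I anticipate is purely notational rather than conceptual: carefully verifying that the multi-index inclusions $\bsF^\bsp\subset F^{|\bsp|}$, $\ol\bsF^{\bsq-\bsj}\subset\ol F^{|\bsq|-|\bsj|}$ interact correctly with the sums-and-intersections in \eqref{eq: pApq} and \eqref{eq: Apq} so that $\bsA^{\bsp,\bsq}\subset A^{|\bsp|,|\bsq|}(V_I)$ really holds, and keeping the denominators $(W_{n-1}\cap\bsF^\bsp)V_\bbC$ versus $(W_{n-1}\cap F^{|\bsp|})V_\bbC$ straight when comparing \eqref{eq: grplectic} with the usual graded Hodge filtration — the point being that $(\textrm{a}_I)$ and $(\textrm{b}_I)$ are precisely what force \eqref{eq: incluzion} to be an equality, which is what makes the comparison legitimate.
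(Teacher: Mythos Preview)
Your proposal is correct and follows essentially the same approach as the paper: part (1) via the inclusion $\bsA^{\bsp,\bsq}\subset A^{|\bsp|,|\bsq|}(V_I)$ and Lemma~\ref{lem: Apq}, part (2) by unwinding surjectivity of $\rho$ against the definition of $\bsA^{\bsp,\bsq}$, and parts (3)--(4) by induction on the weight. The only organizational difference is that the paper carries out a single induction proving the joint statement $(\bsF_I^\bsp\cap W_n)V_\bbC=\bigoplus_{\bsr\geq\bsp,\,|\bsr+\bss|\leq n}\bsA_I^{\bsr,\bss}(V)$, from which both formulas in \eqref{eq: W F by A} follow at once, whereas you treat $W_n$ and $\bsF^\bsp$ by two separate inductions; either works.
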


\begin{proof}
	(1) Let $p:=\lvert\bsp\rvert$, $q:=\lvert\bsq\rvert$, and $A^{p,q}_I(V):=A^{p,q}(V_I)$ for $V_I:=(V_\bbC, W_\bullet, F^\bullet_I, \ol F^\bullet_I)$.
	Since we have a commutative diagram
	\[\xymatrix{
	\bsA_I^{\bsp,\bsq}(V)\ar[r]\ar@{_(->}[d]&(\bsF^\bsp_I\cap\ol\bsF^\bsq_I)\Gr^W_nV_\bbC\ar@{_(->}[d]\\
	A_I^{p,q}(V)\ar[r]&(F^p_I\cap\ol F^q_I)\Gr^W_nV_\bbC,
	}\]
	the assertion follows from Lemma \ref{lem: Apq}.
	
	(2) Assume condition ($\textrm{c}_I$) and consider an element $\xi\in(\bsF_I^\bsp\cap\ol\bsF_I^\bsq) \Gr^W_n V_\bbC$.  
	Let $u\in(\bsF_I^\bsp \cap W_n)V_\bbC$ and $v\in(\ol\bsF_I^\bsq \cap W_n)V_\bbC$ be elements lifting $\xi$.
	Then since $u-v \equiv 0 \pmod{W_{n-1}}$, we have
	\begin{equation*}
		u-v \in ((\bsF_I^\bsp\cap W_n+\ol\bsF_I^\bsq\cap W_n)\cap W_{n-1}) V_\bbC.
	\end{equation*}
	By condition ($\textrm{c}_I$), there exist $u_0\in(\bsF_I^\bsp\cap W_{n-1})V_\bbC$ and $v_{\bsj}\in 
	(\ol\bsF_I^{\bsq-\bsj}\cap W_{n-\lvert\bsj\rvert-1})V_\bbC$ for $\bsj\geq\boldsymbol 0$ such that
	\begin{equation*}
		u-v=u_0+\sum_{\bsj \geq \boldsymbol 0}v_\bsj.
	\end{equation*}
	If we let $\wt\xi:=u-u_0= v+\sum_{\bsj\geq\boldsymbol 0}v_\bsj$, then we have $\wt\xi \in\bsA_I^{\bsp,\bsq}(V)$ and $\wt\xi\equiv\xi\pmod{W_{n-1}}$,
	hence this proves that $\rho_I$ is surjective as desired.\\
	Conversely assume $\rho_I$ is surjective.
	An element $w\in((\bsF_I^\bsp\cap W_n+\ol\bsF_I^\bsq \cap W_n)\cap W_{n-1})V_\bbC$ may be written in the form
	$w = u-v$, with $u \in (\bsF_I^\bsp\cap W_n)V_\bbC$, $v \in  (\ol\bsF_I^\bsq \cap W_n)V_\bbC$ and $w \in W_{n-1}V_\bbC$.
	If we let $\xi \equiv u \equiv v \pmod{W_{n-1}}$, then $\xi$ is an element in $(\bsF_I^\bsp \cap\ol\bsF_I^\bsq)\Gr^W_nV_\bbC$.
	Since $\rho_I$ is surjective, there exists $u_0\in\bsA_I^{\bsp,\bsq}(V)$ such that $u_0 \equiv\xi\pmod{W_{n-1}}$,  where
	by \eqref{eq: pApq}, 
	we have $u_0\in(\bsF_I^\bsp\cap W_n)V_\bbC$ and $u_0$ is of the form 
	\begin{equation*}
		u_0 = v_0+\sum_{\bsj\geq\boldsymbol 0} w_\bsj
	\end{equation*}
	for $v_0 \in(\ol\bsF_I^\bsq\cap W_n)V_\bbC$ and $w_\bsj\in(\ol\bsF_I^{\bsq-\bsj}\cap W_{n-\lvert\bsj\rvert-1})V_\bbC$.
	Since $u_0\equiv u\pmod{W_{n-1}}$ and $v_0\equiv v\pmod{W_{n-1}}$, we have 
	$u_0 = u-w_0$ and $v_0 = v+w_1$ for some $w_0,w_1 \in W_{n-1} V_\bbC$.
	Note that $w_0 = u-u_0 \in (\bsF_I^\bsp\cap W_{n-1})V_\bbC$ and $w_1 = v_0-v \in  (\ol\bsF_I^\bsq\cap W_{n-1}) V_\bbC$.
	Then we have
	\begin{equation*}
		w = u-v = w_0+w_1+\sum_{\bsj\geq\boldsymbol 0} w_\bsj,
	\end{equation*}
	hence $w \in  (\bsF_I^\bsp\cap W_{n-1})V_\bbC+(\ol\bsF_I^\bsq \cap W_{n-1}) 
	V_\bbC+\sum_{\bsj\geq\boldsymbol 0}(\ol\bsF_I^{\bsq-\bsj}\cap W_{n-\lvert\bsj\rvert-1}) V_\bbC$ as desired.
	
	(3) We prove by induction on $n$ that
	\begin{equation}\label{eq: statement W}
		(\bsF_I^\bsp \cap W_n) V_\bbC= \bigoplus_{\substack{\bsr,\bss\in\bbZ^g,\,\bsr\geq\bsp\\\lvert\bsr+\bss\rvert \leq n}} \bsA_I^{\bsr,\bss}(V).
	\end{equation}
	If $n$ is sufficiently small so that $W_n V_\bbC=\{0\}$, then the statement is trivially true.  
	Next suppose that \eqref{eq: statement W} is true for $n-1$.
	We have a commutative diagram
	\begin{equation*}\xymatrix{
		0 \ar[r] &  \bigoplus_{\substack{\bsr,\bss\in\bbZ^g,\,\bsr\geq\bsp\\\lvert\bsr+\bss\rvert \leq n-1}} \bsA_I^{\bsr,\bss}(V) \ar[r] \ar[d]^\cong&
		 \bigoplus_{\substack{\bsr,\bss\in\bbZ^g,\,\bsr\geq\bsp\\\lvert\bsr+\bss\rvert \leq n}} \bsA_I^{\bsr,\bss}(V) \ar[r] \ar[d]&
		  \bigoplus_{\substack{\bsr,\bss\in\bbZ^g,\,\bsr\geq\bsp\\\lvert\bsr+\bss\rvert =n}} \bsA_I^{\bsr,\bss}(V) \ar[r] \ar[d]^\cong&0\\
		 0 \ar[r] & (\bsF_I^\bsp\cap W_{n-1}) V_\bbC \ar[r] & (\bsF_I^\bsp\cap W_n)V_\bbC \ar[r] &\bsF_I^\bsp\Gr^W_n V_\bbC \ar[r]&0,
	}\end{equation*}
	where the left and middle vertical arrows are the sum of the natural inclusions.
	The left vertical arrow is an isomorphism by the induction hypothesis, and the right vertical arrow is an isomorphism by (1),(2), and condition ($\textrm{a}_I$).
	This shows that the central vertical arrow is also an isomorphism, hence by induction, \eqref{eq: statement W} is true for any $n\in\bbZ$.
	This proves our assertion, noting that $W_n V_\bbC = V_\bbC$ for $n$ sufficiently large and $\bsF_I^\bsp V_\bbC=V_\bbC$ for $\bsp$ sufficiently small.

	(4) Follows from (1), (2), and (3).
	\end{proof}
	
Let $V$ be an object in $\wMHS^g_\bbC$.
Then by Proposition \ref{prop: psplitting}, $\rho_I$ is an isomorphism and the equalities \eqref{eq: W F by A} and \eqref{eq: psplitting} hold for any $I\subset\{1,\ldots,g\}$.
We call the $2g$-grading $\{\bsA_I^{\bsp,\bsq}(V)\}$ of $V_\bbC$ the \textit{plectic Deligne splitting} of $V$ with respect to $I$.

For an object $V=(V_\bbC,W_\bullet,\{F_\mu^\bullet\},\{\ol F_\mu^\bullet\})$ in $\Fil^1_g(\bbC)$ and $n\in\bbZ$, we define an object $W_nV$ (resp. $\Gr^W_nV$) in $\Fil^1_g(\bbC)$ to be the quadruple consisting of the $\bbC$-vector space $W_nV_\bbC$ (resp. $\Gr^W_nV_\bbC$) and the filtrations induced from those of $V$. We often regard $\Gr^W_nV$ as an object in $\Fil^0_g(\bbC)$ by forgetting the weight filtration.
Then we obtain additive functors
\begin{align}
	W_n:\Fil^1_g(\bbC)\rightarrow\Fil^1_g(\bbC)&&\text{and}&&\Gr^W_n:\Fil^1_g(\bbC)\rightarrow\Fil^0_g(\bbC).
\end{align}

\begin{corollary}\label{cor: coincides}
	Let $V$ be an object in $\wMHS_\bbC^g$.
	Then for any $n\in\bbZ$, the plectic (resp. total) Hodge filtrations of $W_nV$ and $\Gr^W_nV$ 
	coincide with the filtrations induced from the plectic (resp. total) Hodge filtrations of $V$.
	In particular, $W_nV$ is also an object in $\wMHS_\bbC^g$, and $\Gr^W_nV$ is a pure weak $g$-plectic $\bbC$-Hodge structure of weight $n$.
\end{corollary}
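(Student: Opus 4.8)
The plan is to deduce the corollary entirely from the plectic Deligne splitting, so that nothing beyond index bookkeeping remains. Fix $I\subset\{1,\ldots,g\}$. By Remark \ref{rem: weak} we have $\ol\bsF_I^\bullet=\bsF_{I^c}^\bullet$, hence also $\ol F_I^\bullet=F_{I^c}^\bullet$, so every assertion about a barred filtration with respect to $I$ is an assertion about an unbarred filtration with respect to $I^c$; it therefore suffices to treat $\bsF_I^\bullet$ and $F_I^\bullet$ (applying the result to $I^c$ covers the barred case). By Proposition \ref{prop: psplitting}(4) the bigrading $\{\bsA_I^{\bsp,\bsq}(V)\}$ splits $V_\bbC$, with $W_nV_\bbC=\bigoplus_{\lvert\bsr+\bss\rvert\le n}\bsA_I^{\bsr,\bss}(V)$ and $\bsF_I^\bsp V_\bbC=\bigoplus_{\bsr\ge\bsp}\bsA_I^{\bsr,\bss}(V)$. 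First I would record two more ``coordinate'' descriptions: evaluating the formula for $\bsF_I^\bsp$ at a multi-index whose $\mu$-th entry is a given integer and whose other entries are sufficiently negative, and using that $\bsA_I^{\bsr,\bss}(V)=0$ outside a finite set of indices, gives $F_\mu^p V_\bbC=\bigoplus_{r_\mu\ge p}\bsA_I^{\bsr,\bss}(V)$ for $\mu\notin I$ (and $\ol F_\nu^p V_\bbC=\bigoplus_{r_\nu\ge p}\bsA_I^{\bsr,\bss}(V)$ for $\nu\in I$); and since $\bigcup_{\lvert\bsp\rvert=p}\{\bsr:\bsr\ge\bsp\}=\{\bsr:\lvert\bsr\rvert\ge p\}$, the definition $F_I^pV_\bbC=\sum_{\lvert\bsp\rvert=p}\bsF_I^\bsp V_\bbC$ gives $F_I^pV_\bbC=\bigoplus_{\lvert\bsr\rvert\ge p}\bsA_I^{\bsr,\bss}(V)$. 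All subspaces occurring in the statement are thus coordinate subspaces for $\{\bsA_I^{\bsp,\bsq}(V)\}$, and sums and intersections of coordinate subspaces are governed by unions and intersections of the indexing sets.

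For $W_nV$ the plectic statement is immediate: its plectic Hodge filtration is $\bigcap_{\mu\notin I}(W_nV_\bbC\cap F_\mu^{p_\mu}V_\bbC)\cap\bigcap_{\nu\in I}(W_nV_\bbC\cap\ol F_\nu^{p_\nu}V_\bbC)=W_nV_\bbC\cap\bsF_I^\bsp V_\bbC$, the induced one. The content of the total statement is that $\cap\,W_nV_\bbC$ distributes over the sum $F_I^pV_\bbC=\sum_{\lvert\bsp\rvert=p}\bsF_I^\bsp V_\bbC$, and the splitting supplies this: both $\sum_{\lvert\bsp\rvert=p}(W_nV_\bbC\cap\bsF_I^\bsp V_\bbC)$ and $W_nV_\bbC\cap F_I^pV_\bbC$ equal $\bigoplus_{\lvert\bsr\rvert\ge p,\ \lvert\bsr+\bss\rvert\le n}\bsA_I^{\bsr,\bss}(V)$. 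Consequently $W_nV\in\wMHS_\bbC^g$: conditions $(\textrm{a}_I)$ and $(\textrm{c}_I)$ for $W_nV$ are automatic in weights $>n$ and coincide with those for $V$ in weights $\le n$, hence follow from the hypothesis on $V$; and $(\textrm{b}_I)$ holds because, the total Hodge filtrations of $W_nV$ being the induced ones, the object is the weight-$\le n$ subobject $W_nV_I$ of the mixed $\bbC$-Hodge structure $V_I=(V_\bbC,W_\bullet,F_I^\bullet,\ol F_I^\bullet)$, which is again a mixed $\bbC$-Hodge structure.

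For $\Gr^W_nV$ the isomorphism $\rho_I$ of Proposition \ref{prop: psplitting}(4) identifies $\Gr^W_nV_\bbC$ with $\bigoplus_{\lvert\bsr+\bss\rvert=n}\bsA_I^{\bsr,\bss}(V)$, and under this identification the induced partial filtration $F_\mu^p\Gr^W_nV_\bbC=(W_n\cap F_\mu^p)V_\bbC/(W_{n-1}\cap F_\mu^p)V_\bbC$ becomes $\bigoplus_{r_\mu\ge p,\ \lvert\bsr+\bss\rvert=n}\bsA_I^{\bsr,\bss}(V)$ (similarly $\ol F_\nu^p$ for $\nu\in I$, and $F_I^p\Gr^W_nV_\bbC=\bigoplus_{\lvert\bsr\rvert\ge p,\ \lvert\bsr+\bss\rvert=n}\bsA_I^{\bsr,\bss}(V)$ in the sense of Remark \ref{rem: filt on Gr}). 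Intersecting these over $\mu$ and $\nu$ shows that the right-hand side of \eqref{eq: incluzion} equals $\bigoplus_{\bsr\ge\bsp,\ \lvert\bsr+\bss\rvert=n}\bsA_I^{\bsr,\bss}(V)$, which is exactly the left-hand side $(W_n\cap\bsF_I^\bsp)V_\bbC/(W_{n-1}\cap\bsF_I^\bsp)V_\bbC$ of \eqref{eq: grplectic}; thus \eqref{eq: incluzion} is an equality, and summing over $\lvert\bsp\rvert=p$ (again using $\bigcup_{\lvert\bsp\rvert=p}\{\bsr:\bsr\ge\bsp\}=\{\bsr:\lvert\bsr\rvert\ge p\}$) shows \eqref{eq: filt on Gr} is an equality too. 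Finally, since the plectic Hodge filtrations of $\Gr^W_nV$ intrinsic to $\Fil^0_g(\bbC)$ now agree with those of \eqref{eq: grplectic} (for all $I$, so also for the barred ones via $I\leftrightarrow I^c$), condition $(\textrm{a}_I)$ for $V$ reads off literally as the identity \eqref{eq: monday} defining a pure weak $g$-plectic $\bbC$-Hodge structure of weight $n$, so $\Gr^W_nV$ is one.

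The only non-formal ingredient is Proposition \ref{prop: psplitting}(4); once the splitting is in hand the argument is pure combinatorics of coordinate subspaces. The step requiring care is keeping apart the a priori distinct candidates for the plectic (and total) Hodge filtration on the quotient $\Gr^W_nV$ — the one induced from $\bsF_I^\bullet$ as in \eqref{eq: grplectic}, the one built from the induced partial filtrations as in \eqref{eq: incluzion}, and the one intrinsic to $\Gr^W_nV$ viewed in $\Fil^0_g(\bbC)$ — and noticing that the coordinate description forces all of them to coincide; the inclusions \eqref{eq: incluzion} and \eqref{eq: filt on Gr}, which are strict in general by Example \ref{ex: counter}, become equalities precisely because Proposition \ref{prop: psplitting}(4) applies to objects of $\wMHS_\bbC^g$.
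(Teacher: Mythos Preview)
Your proposal is correct and follows essentially the same approach as the paper: both arguments invoke the plectic Deligne splitting of Proposition \ref{prop: psplitting}(4) to express $W_n$, $F_\mu^p$, $\ol F_\nu^p$, $\bsF_I^\bsp$, and $F_I^p$ as coordinate subspaces for $\{\bsA_I^{\bsr,\bss}(V)\}$, from which the equalities \eqref{eq: incluzion} and \eqref{eq: filt on Gr} and the conditions $(\textrm{a}_I)$, $(\textrm{b}_I)$, $(\textrm{c}_I)$ for $W_nV$ and $\Gr^W_nV$ follow by index bookkeeping. Your write-up is simply a detailed unpacking of the paper's two-sentence proof.
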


\begin{proof}
	By the direct decompositions \eqref{eq: W F by A}, the natural inclusions \eqref{eq: incluzion} and \eqref{eq: filt on Gr} are 
	actually equalities. Then the conditions ($\textrm{a}_I$), ($\textrm{b}_I$), ($\textrm{c}_I$) for $W_nV$ and $\Gr^W_nV$ follow from those for $V$.
\end{proof}

\begin{corollary}\label{cor: strictness of pA}
	Let $\alpha:U\rightarrow V$ be a morphism in $\wMHS^g_\bbC$. For any subsets $\boldsymbol{\cS}\subset\bbZ^g\times\bbZ^g$ and $I\subset\{1,\ldots,g\}$,
	we have
	\begin{equation}\label{eq: p1}
		\alpha\left(\bigoplus_{(\bsp,\bsq)\in \boldsymbol{\cS}}\bsA_I^{\bsp,\bsq}(U)\right)=\alpha(U_\bbC)
		\cap\left(\bigoplus_{(\bsp,\bsq)\in \boldsymbol{\cS}}\bsA_I^{\bsp,\bsq}(V)\right).
	\end{equation}
	In particular, if $\boldsymbol{\cS}'$ is a subset of $\bbZ^g \times \bbZ$, then we have
	\begin{equation}\label{eq: p2}
		\alpha\left(\sum_{(\bsp,n)\in \boldsymbol{\cS}'}(\bsF_I^\bsp \cap W_n)U_\bbC\right)
		=\alpha(U_\bbC)\cap\left(\sum_{(\bsp,n)\in \boldsymbol{\cS}'}(\bsF_I^\bsp \cap W_n)V_\bbC\right).
	\end{equation}
	In particular, $\alpha$ is strict with respect to the filtration $(\bsF_I^\bullet\cap W_\bullet)$.
\end{corollary}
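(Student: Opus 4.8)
The plan is to follow the structure of the proof of Proposition~\ref{prop: strictness of A}, replacing the Deligne splitting by the plectic Deligne splitting $\{\bsA_I^{\bsp,\bsq}(\,\cdot\,)\}$ and using Proposition~\ref{prop: psplitting}(4) — which is available because $U$ and $V$ lie in $\wMHS^g_\bbC$ — as the essential input. Fix $I\subset\{1,\ldots,g\}$. First I would check that $\alpha$ is compatible with the plectic bigrading, that is, $\alpha\bigl(\bsA_I^{\bsp,\bsq}(U)\bigr)\subset\bsA_I^{\bsp,\bsq}(V)$ for all $\bsp,\bsq\in\bbZ^g$. Since $\alpha$ is a morphism in $\Fil^1_g(\bbC)$ it is compatible with $W_\bullet$ and with every $F_\mu^\bullet$ and $\ol F_\mu^\bullet$, hence with each plectic filtration $\bsF_I^\bsp$ and $\ol\bsF_I^\bsp$, these being finite intersections of the former; reading off the defining formula \eqref{eq: pApq} of $\bsA_I^{\bsp,\bsq}$ and using the trivial inclusion $\alpha(X\cap Y)\subset\alpha(X)\cap\alpha(Y)$ then gives the claim with essentially no computation.

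Granting this, \eqref{eq: p1} follows from a general fact about a linear map between two vector spaces carrying direct sum decompositions indexed by the same set and respecting those decompositions: for any subset $\boldsymbol{\cS}$ of the index set, $\alpha\bigl(\bigoplus_{(\bsp,\bsq)\in\boldsymbol{\cS}}\bsA_I^{\bsp,\bsq}(U)\bigr)=\alpha(U_\bbC)\cap\bigoplus_{(\bsp,\bsq)\in\boldsymbol{\cS}}\bsA_I^{\bsp,\bsq}(V)$. I would prove the nonobvious inclusion by taking $\alpha(u)$ in the right-hand side, decomposing $u$ along $\{\bsA_I^{\bsp,\bsq}(U)\}$ via Proposition~\ref{prop: psplitting}(4), and noting that the components of $\alpha(u)$ supported outside $\boldsymbol{\cS}$ must vanish, so that $\alpha(u)$ equals $\alpha$ applied to the part of $u$ supported on $\boldsymbol{\cS}$. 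For \eqref{eq: p2} I would first use \eqref{eq: W F by A} to identify $(\bsF_I^\bsp\cap W_n)V_\bbC$ with the ``coordinate subspace'' $\bigoplus_{\bsr\geq\bsp,\ \lvert\bsr+\bss\rvert\leq n}\bsA_I^{\bsr,\bss}(V)$ — the intersection of two spans of families of summands being the span of the family of common indices — and likewise over $U_\bbC$. Summing over $(\bsp,n)\in\boldsymbol{\cS}'$ then rewrites both sides of \eqref{eq: p2} in terms of the single index set $\cT=\{(\bsr,\bss)\in\bbZ^g\times\bbZ^g\mid\exists\,(\bsp,n)\in\boldsymbol{\cS}',\ \bsr\geq\bsp,\ \lvert\bsr+\bss\rvert\leq n\}$, and applying \eqref{eq: p1} with $\boldsymbol{\cS}=\cT$ concludes. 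The final clause — strictness of $\alpha$ with respect to the filtration $(\bsF_I^\bullet\cap W_\bullet)$ — is the case of \eqref{eq: p2} in which $\boldsymbol{\cS}'=\{(\bsp,n)\}$ is a singleton.

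I do not anticipate a real obstacle: the argument is bookkeeping once Proposition~\ref{prop: psplitting}(4) supplies the plectic Deligne splitting and the first compatibility step is in place. The only thing that needs care is the index-set combinatorics in the passage from \eqref{eq: p1} to \eqref{eq: p2}, i.e.\ that intersections and sums of the subspaces $\bsF_I^\bsp\cap W_n$ correspond precisely to intersections and unions of the associated index sets — and this rests entirely on the direct sum decompositions \eqref{eq: W F by A}.
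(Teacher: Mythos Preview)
Your proposal is correct and follows essentially the same approach as the paper: first observe $\alpha(\bsA_I^{\bsp,\bsq}(U))\subset\bsA_I^{\bsp,\bsq}(V)$ from the definition \eqref{eq: pApq}, deduce \eqref{eq: p1} from the $2g$-grading provided by Proposition~\ref{prop: psplitting}(4), and then reduce \eqref{eq: p2} to \eqref{eq: p1} via the identification $(\bsF_I^\bsp\cap W_n)=\bigoplus_{\bsr\geq\bsp,\,\lvert\bsr+\bss\rvert\leq n}\bsA_I^{\bsr,\bss}$ and the union of index sets you call $\cT$. The paper's proof is identical in structure and cites the same ingredients.
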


\begin{proof}
	Since $\alpha(\bsA_I^{\bsp,\bsq}(U))\subset \bsA_I^{\bsp,\bsq}(V)$, the equality \eqref{eq: p1} follows
	from the fact that $\bsA_I^{\bsp,\bsq}$ gives $2g$-gradings \eqref{eq: pApq} of $U_\bbC$ and $V_\bbC$.
	Since we have by Proposition \ref{prop: psplitting}
	\[
		(\bsF_I^\bsp\cap W_n)U_\bbC= \bigoplus_{\substack{\bsr,\bss\in\bbZ^g\\\bsr \geq \bsp,\,\lvert\bsr+\bss\rvert \leq n}}\bsA_I^{\bsr,\bss}(U)
	\]
	for any $\bsp\in\bbZ^g$ and $n\in\bbZ$, the equality \eqref{eq: p2} follows from equality \eqref{eq: p1} for
	\[
		\cS:=\bigcup_{(\bsp,n)\in\cS'}\{(\bsr,\bss)\in\bbZ^g\times\bbZ^g\mid\bsr\geq\bsp,\lvert\bsr+\bss\rvert\leq n\}.
	\qedhere\]
\end{proof}

%
\subsection{Plectic Hodge decomposition of orthogonal families}
%
Let $g$ be a positive integer.
We define a functor $T^g_\bbC:\Fil^g_{g}(\bbC)\rightarrow\Fil^1_g(\bbC)$ by taking the total filtration of $\{W^\mu_\bullet\}$.
Namely, for an object $V=(V_\bbC,\{W^\mu_\bullet\},\{F_\mu^\bullet\},\{\ol F_\mu^\bullet\})$, we have
$T^g_\bbC(V)=(V_\bbC,W_\bullet,\{F_\mu^\bullet\},\{\ol F_\mu^\bullet\})$ with
\begin{equation}\label{eq: total weight}
	W_nV_\bbC:=\sum_{n_1+\cdots+n_g=n}(W^1_{n_1}\cap\cdots\cap W^g_{n_g})V_\bbC.
\end{equation}

The purpose of this subsection is to prove the following proposition.
\begin{proposition}\label{prop: Hodge}
	Let $V$ be an object in $\OF_\bbC^g$.
	Then the quadruple $T^g_\bbC(V)$ is an object in $\wMHS^g_\bbC$.
\end{proposition}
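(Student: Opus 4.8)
The plan is to make $V$ explicit via the equivalence $\varphi^g_\bbC\colon\Rep_\bbC(\cG^g_\bbC)\xrightarrow{\cong}\OF^g_\bbC$ of Corollary \ref{cor: main cor}. Since $\wMHS^g_\bbC$ is a full subcategory of $\Fil^1_g(\bbC)$ stable under isomorphism and $T^g_\bbC$ is a functor, it suffices to treat $V=\varphi^g_\bbC(U)$ for $U=(U_\bbC,\{U^{\bsp,\bsq}\},\{t_\mu\})$ in $\Rep_\bbC(\cG^g_\bbC)$. Fix $I\subset\{1,\ldots,g\}$, put $t_I:=\prod_{\mu\notin I}t_\mu\cdot\prod_{\nu\in I}t_\nu^{-1}$ (a product of pairwise commuting automorphisms), and let $\{U^{\bsa,\bsb}_I\}$ be the $2g$-grading of $U_\bbC$ obtained from $\{U^{\bsp,\bsq}\}$ by exchanging the two indices in the $\mu$-th slot for every $\mu\in I$. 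Using Remark \ref{rem: reordering}, the definition \eqref{eq: total weight} of the total weight filtration, and the observation --- immediate from the unipotence condition of Proposition \ref{prop: gexplicit} --- that each $t_\nu^{\pm1}$ alters only the $\nu$-th pair of grading indices, one checks that $T^g_\bbC(V)$ has underlying space $V_\bbC=U_\bbC$ and that, for this $I$,
\begin{gather*}
	\bsF^\bsp_IV_\bbC=t_I\Bigl(\bigoplus_{\bsa\geq\bsp}U^{\bsa,\bsb}_I\Bigr),\qquad
	\ol\bsF^\bsp_IV_\bbC=t_I^{-1}\Bigl(\bigoplus_{\bsb\geq\bsp}U^{\bsa,\bsb}_I\Bigr),\qquad
	W_nV_\bbC=\bigoplus_{\lvert\bsa+\bsb\rvert\leq n}U^{\bsa,\bsb}_I,
\end{gather*}
where the running indices $\bsa,\bsb$ range over $\bbZ^g$. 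The point is that the automorphisms $t_\nu^{\pm1}$ with $\nu\neq\mu$ stabilize any subspace cut out by a condition on the $\mu$-th grading index, so the intersections defining the plectic Hodge filtrations factor as $t_I$ applied to a single graded subspace. Furthermore $t_I$ is unipotent and $(t_I-1)$ strictly lowers the total degree in both gradings; in particular $(t_I-1)(W_nV_\bbC)\subset W_{n-2}V_\bbC$, so $t_I^{\pm1}$ preserve every $W_nV_\bbC$ and act as the identity on $\Gr^W_nV_\bbC$.

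Granting these formulas, I verify the conditions of Definition \ref{def: mixed weak plectic} for this $I$. For ($\textrm{b}_I$), summing over $\lvert\bsp\rvert=p$ yields $F^p_IV_\bbC=t_I(\bigoplus_{\lvert\bsa\rvert\geq p}U^{\bsa,\bsb}_I)$ and $\ol F^p_IV_\bbC=t_I^{-1}(\bigoplus_{\lvert\bsb\rvert\geq p}U^{\bsa,\bsb}_I)$; setting $\wh U^{p,q}_I:=\bigoplus_{\lvert\bsa\rvert=p,\,\lvert\bsb\rvert=q}U^{\bsa,\bsb}_I$, the triple $(V_\bbC,\{\wh U^{p,q}_I\},t_I)$ is an object of $\Rep_\bbC(\cG_\bbC)$ by Proposition \ref{prop: explicit} (its unipotence being a consequence of the degree-lowering above), and $V_I=(V_\bbC,W_\bullet,F^\bullet_I,\ol F^\bullet_I)$ is exactly its image under the functor $\varphi_\bbC$; hence $V_I$ is a mixed $\bbC$-Hodge structure by Proposition \ref{prop: equiv for RMHS}. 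For ($\textrm{a}_I$), since $t_I$ preserves $W_\bullet$ and induces the identity on $\Gr^W_n$, reducing the formulas above modulo $W_{n-1}$ and identifying $\Gr^W_nV_\bbC\cong\bigoplus_{\lvert\bsa+\bsb\rvert=n}U^{\bsa,\bsb}_I$ gives $\bsF^\bsp_I\Gr^W_nV_\bbC=\bigoplus_{\bsa\geq\bsp,\,\lvert\bsa+\bsb\rvert=n}U^{\bsa,\bsb}_I$ and likewise $\ol\bsF^\bsq_I\Gr^W_nV_\bbC=\bigoplus_{\bsb\geq\bsq,\,\lvert\bsa+\bsb\rvert=n}U^{\bsa,\bsb}_I$; consequently $(\bsF^\bsa_I\cap\ol\bsF^\bsb_I)\Gr^W_nV_\bbC=U^{\bsa,\bsb}_I$ whenever $\lvert\bsa+\bsb\rvert=n$, and both sides of \eqref{eq: tuesday} equal $\bigoplus_{\bsa\geq\bsp,\,\lvert\bsa+\bsb\rvert=n}U^{\bsa,\bsb}_I$.

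For ($\textrm{c}_I$) I apply Proposition \ref{prop: psplitting}(2), which reduces the condition to the surjectivity of $\rho_I$. Fix $\bsp,\bsq$ and set $n=\lvert\bsp+\bsq\rvert$; by the computation just made, $(\bsF^\bsp_I\cap\ol\bsF^\bsq_I)\Gr^W_nV_\bbC=U^{\bsp,\bsq}_I$. Given $c\in U^{\bsp,\bsq}_I$, regarded as an element of $V_\bbC$ through the grading, set $\wt c:=t_I^{-1}(c)$. Then $t_I(\wt c)=c$ lies in $\bigoplus_{\bsa\geq\bsp}U^{\bsa,\bsb}_I$, so $\wt c\in\bsF^\bsp_IV_\bbC$; we have $\wt c\in t_I^{-1}(W_nV_\bbC)=W_nV_\bbC$; and $c$ lies in $\bigoplus_{\bsb\geq\bsq}U^{\bsa,\bsb}_I$, so $\wt c\in\ol\bsF^\bsq_IV_\bbC$. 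Thus $\wt c\in(\bsF^\bsp_I\cap W_n)V_\bbC\cap(\ol\bsF^\bsq_I\cap W_n)V_\bbC$, which by \eqref{eq: pApq} is contained in $\bsA^{\bsp,\bsq}_I(T^g_\bbC(V))$, and $\rho_I(\wt c)=c$ since $t_I^{-1}$ induces the identity on $\Gr^W_nV_\bbC$. Hence $\rho_I$ is surjective, ($\textrm{c}_I$) holds, and as $I$ was arbitrary we conclude that $T^g_\bbC(V)$ lies in $\wMHS^g_\bbC$.

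The step I expect to be the main obstacle is the first one: establishing the displayed formulas for $\bsF^\bsp_I$ and $\ol\bsF^\bsp_I$ --- that is, proving that the plectic Hodge filtration, a priori an intersection of images of distinct subspaces under the automorphisms $t_1,\ldots,t_g$, none of which respects the grading, collapses to the image under the single automorphism $t_I$ of one graded subspace, and arranging the interchange of $F_\nu$ with $\ol F_\nu$ for $\nu\in I$ so that it is absorbed into the twisted grading $\{U^{\bsa,\bsb}_I\}$. Once this reduction to an essentially one-variable situation is secured, conditions ($\textrm{a}_I$), ($\textrm{b}_I$), ($\textrm{c}_I$) follow by the routine computations indicated above.
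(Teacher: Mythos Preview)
Your route through the equivalence $\varphi^g_\bbC$ is genuinely different from the paper's and is a good idea: the paper works intrinsically in $\OF^g_\bbC$, uses the \emph{partial} Deligne splittings $A^{p_\mu,q_\mu}_{I,\mu}(V)$ of the individual mixed Hodge structures $(V_\bbC,W^\mu_\bullet,F^\bullet_\mu,\ol F^\bullet_\mu)$, and proves $(\textrm{a}_I)$ by induction on $g$, $(\textrm{b}_I)$ from $(\textrm{a}_I)$, and $(\textrm{c}_I)$ by showing $\bigcap_\mu A^{p_\mu,q_\mu}_{I,\mu}(V)=\bsA^{\bsp,\bsq}_I(T^g_\bbC(V))$. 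Your reduction to a single automorphism $t_I$ acting on a single $2g$-grading is cleaner and avoids the induction; your displayed formulas for $\bsF^\bsp_I$, $\ol\bsF^\bsp_I$, and $W_n$ are correct (the argument that $\tau_\nu^{\pm1}$ for $\nu\neq\mu$ preserves any subspace defined by the $\mu$-th grading index, so that the intersection of the $\tau_\mu(\text{graded piece})$'s collapses to $t_I(\text{intersection})$, is exactly right), and your verifications of $(\textrm{a}_I)$ and $(\textrm{b}_I)$ go through.

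There is, however, a genuine error in your verification of $(\textrm{c}_I)$. With $\wt c:=t_I^{-1}(c)$, the implication ``$t_I(\wt c)=c\in\bigoplus_{\bsa\geq\bsp}U^{\bsa,\bsb}_I$, so $\wt c\in\bsF^\bsp_IV_\bbC$'' is backwards: since $\bsF^\bsp_IV_\bbC=t_I(\bigoplus_{\bsa\geq\bsp}U^{\bsa,\bsb}_I)$, membership of $\wt c$ in $\bsF^\bsp_I$ is equivalent to $t_I^{-1}(\wt c)=t_I^{-2}(c)$ lying in $\bigoplus_{\bsa\geq\bsp}U^{\bsa,\bsb}_I$, and this fails because $t_I^{-2}-1$ strictly lowers the $\bsa$-grading. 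In fact no lift lies in $(\bsF^\bsp_I\cap\ol\bsF^\bsq_I\cap W_n)V_\bbC$ in general; the tail terms in \eqref{eq: pApq} are essential. The fix is to take $\wt c:=t_I(c)$: then $t_I^{-1}(\wt c)=c\in\bigoplus_{\bsa\geq\bsp}U^{\bsa,\bsb}_I$ gives $\wt c\in(\bsF^\bsp_I\cap W_n)V_\bbC$, and one checks $t_I^2(c)\in\bigoplus_{\bsb\geq\bsq,\,\lvert\bsa+\bsb\rvert\leq n}U^{\bsa,\bsb}_I+\sum_{\bsj\geq\boldsymbol0}\bigoplus_{\bsb\geq\bsq-\bsj,\,\lvert\bsa+\bsb\rvert\leq n-\lvert\bsj\rvert-1}U^{\bsa,\bsb}_I$ by decomposing $t_I^2-1=\sum_{\emptyset\neq S}\prod_{\mu\in S}(\tau_\mu^2-1)$ and, for each graded component $U^{\bsa',\bsb'}_I$ arising from the $S$-term, taking $\bsj:=\bsq-\bsb'$; the inequality $\lvert\bsa'+\bsb'\rvert\leq n-\lvert\bsj\rvert-1$ then follows from $a'_\mu<p_\mu$ for $\mu\in S\neq\emptyset$. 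This places $\wt c$ in $\bsA^{\bsp,\bsq}_I(T^g_\bbC(V))$ with $\rho_I(\wt c)=c$, and $(\textrm{c}_I)$ follows.
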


Let $V$ be an object in $\OF_\bbC^g$ and $I\subset\{1,\ldots,g\}$ a subset.
For each $\mu=1,\ldots,g$, we define
\begin{equation}
A_{I,\mu}^{p,q}(V):=\begin{cases}
(F_\mu^p\cap W^\mu_{p+q})V_\bbC\cap\biggl((\ol F_\mu^q\cap W_{p+q}^\mu)V_\bbC+\sum_{j\geq 0}(\ol F_\mu^{q-j}\cap W^\mu_{p+q-j-1})V_\bbC\biggr),& \mu\not\in I,\\
\biggl((F_\mu^p\cap W^\mu_{p+q})V_\bbC+\sum_{j\geq 0}(F_\mu^{q-j}\cap W^\mu_{p+q-j-1})V_\bbC\biggr)\cap (\ol F_\mu^q\cap W^\mu_{p+q})V_\bbC,&\mu\in I,
\end{cases}\end{equation}
that is the Deligne splitting of the mixed $\bbC$-Hodge structure $(V_\bbC,W^\mu_\bullet,F_\mu^\bullet,\ol F^\bullet_\mu)$.
By Proposition \ref{prop: strictness of A} and Corollary \ref{cor: sum and intersection}, the $\bbC$-vector space $A_{I,\mu}^{p,q}(V)$ with $\nu$-th filtrations for $\nu\neq\mu$ is an object in $\OF_\bbC^{g-1}$, and we have $A_{I,\nu}^{r,s}\circ A_{I,\mu}^{p,q}(V)=A_{I,\nu}^{r,s}(V)\cap A_{I,\mu}^{p,q}(V)$.
Hence we have the direct decompositions
\begin{align}
	\label{eq: W by partial A}W_n^\mu V_\bbC&=\bigoplus_{\substack{\bsp,\bsq\in\bbZ^g\\ p_\mu+q_\mu\leq n}}A_{I,1}^{p_1,q_1}(V)\cap\cdots\cap A_{I,g}^{p_g,q_g}(V),\\
	\label{eq: F by partial A}F_\mu^p V_\bbC&=\bigoplus_{\substack{\bsr,\bss\in\bbZ^g\\ r_\mu\geq p}}A_{I,1}^{r_1,s_1}(V)\cap\cdots\cap A_{I,g}^{r_g,s_g}(V), & \mu\not\in I,\\
	\label{eq: olF by partial A}\ol F_\mu^p V_\bbC&=\bigoplus_{\substack{\bsr,\bss\in\bbZ^g\\ r_\mu\geq p}}A_{I,1}^{r_1,s_1}(V)\cap\cdots\cap A_{I,g}^{r_g,s_g}(V), & \mu\in I.
\end{align}

Then Proposition \ref{prop: Hodge} follows from the following propositions.

\begin{proposition}\label{prop: intermediate}
	Let $(V_\bbC, \{ W^\mu_\bullet\}, \{F^\bullet_\mu\},\{\ol F_\mu^\bullet\})$ be an object in $\OF_\bbC^g$. Then $ T^g_\bbC(V)$ satisfies the condition ($\textrm{a}_I$) of Definition \ref{def: mixed weak plectic}.
	That is, for any $n \in \bbZ$, $\bsp,\bsq \in \bbZ^g$, and $I\subset\{1,\ldots,g\}$, we have	
	\begin{align}\label{eq: Hodge}
		\bsF_I^\bsp \Gr^W_n V_\bbC&=\bigoplus_{\substack{\bsr,\bss\in\bbZ^g
		\\\bsr\geq\bsp,\,\lvert\bsr+\bss\rvert=n}}(\bsF^\bsr_I \cap\ol\bsF^\bss_I) \Gr^W_n V_\bbC.
	\end{align}	
\end{proposition}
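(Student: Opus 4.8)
The plan is to reduce the statement about the total weight filtration $W_\bullet = \sum_{|\bsn|=\bullet}(W^1_{n_1}\cap\cdots\cap W^g_{n_g})$ to the orthogonality hypothesis on the individual mixed $\bbC$-Hodge structures $(V_\bbC, W^\mu_\bullet, F^\bullet_\mu, \ol F^\bullet_\mu)$, exploiting the partial Deligne splittings $A^{p,q}_{I,\mu}(V)$ and the multi-direct-sum decompositions \eqref{eq: W by partial A}, \eqref{eq: F by partial A}, \eqref{eq: olF by partial A}. The key observation is that the intersections
\[
	A_I^{\bsp,\bsq}(V) := A^{p_1,q_1}_{I,1}(V)\cap\cdots\cap A^{p_g,q_g}_{I,g}(V)
\]
form a $2g$-grading of $V_\bbC$ (by the compatibility $A^{r,s}_{I,\nu}\circ A^{p,q}_{I,\mu}(V)=A^{r,s}_{I,\nu}(V)\cap A^{p,q}_{I,\mu}(V)$ recorded just before the proposition), and that each of the relevant subspaces of $V_\bbC$ appearing in \eqref{eq: Hodge} is a direct sum of such pieces. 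First I would establish, using \eqref{eq: F by partial A}--\eqref{eq: olF by partial A} together with Lemma \ref{lem: candy} applied to each pure Hodge structure $\Gr^{W^\mu}_{p_\mu+q_\mu}$, the formulas
\[
	\bsF^\bsp_I V_\bbC=\bigoplus_{\bsr\geq\bsp}A_I^{\bsr,\bss}(V),\qquad
	\ol\bsF^\bsq_I V_\bbC=\bigoplus_{\bss\geq\bsq}A_I^{\bsr,\bss}(V),\qquad
	W_nV_\bbC=\bigoplus_{|\bsr+\bss|\leq n}A_I^{\bsr,\bss}(V),
\]
where the last one requires checking that the total weight filtration $\sum_{|\bsn|=n}\bigcap_\mu W^\mu_{n_\mu}$ matches the span of the pieces $A_I^{\bsr,\bss}$ with $|\bsr+\bss|\le n$; this follows since $W^\mu_{n_\mu}V_\bbC=\bigoplus_{r_\mu+s_\mu\le n_\mu}A_I^{\bsr,\bss}(V)$ and the pieces are a grading, so the intersection $\bigcap_\mu W^\mu_{n_\mu}$ is $\bigoplus_{r_\mu+s_\mu\le n_\mu\,\forall\mu}A_I^{\bsr,\bss}$, and summing over $n_1+\cdots+n_g=n$ gives exactly $\bigoplus_{|\bsr+\bss|\le n}A_I^{\bsr,\bss}$.

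Granting these, I would compute both sides of \eqref{eq: Hodge} in terms of the grading. For the left side: $\bsF^\bsp_I\Gr^W_nV_\bbC=(W_n\cap\bsF^\bsp_I)V_\bbC/(W_{n-1}\cap\bsF^\bsp_I)V_\bbC$ by the definition \eqref{eq: grplectic}; using $(W_n\cap\bsF^\bsp_I)V_\bbC=\bigoplus_{\bsr\geq\bsp,\ |\bsr+\bss|\le n}A_I^{\bsr,\bss}(V)$ and the analogue for $n-1$, the quotient is canonically $\bigoplus_{\bsr\geq\bsp,\ |\bsr+\bss|=n}A_I^{\bsr,\bss}(V)$. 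For the right side, I need that $(\bsF^\bsr_I\cap\ol\bsF^\bss_I)\Gr^W_nV_\bbC$, as defined in \eqref{eq: tuesday}, equals the image of $A_I^{\bsr,\bss}(V)$ in $\Gr^W_n$ when $|\bsr+\bss|=n$ and is zero otherwise — this is the plectic analogue of Lemma \ref{lem: Apq}, and here it should fall out directly from the grading formulas rather than needing a separate argument, because $\bsF^\bsr_I\Gr^W_nV_\bbC$ and $\ol\bsF^\bss_I\Gr^W_nV_\bbC$ are each sub-direct-sums of $\Gr^W_nV_\bbC=\bigoplus_{|\bsa+\bsb|=n}(\text{image of }A_I^{\bsa,\bsb})$ cut out by $\bsa\geq\bsr$ and $\bsb\geq\bss$ respectively, whose intersection is the single piece indexed by $(\bsr,\bss)$ (using that $|\bsr+\bss|=n$ forces equality). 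Summing over $\bsr\geq\bsp$, $|\bsr+\bss|=n$ then reproduces the left side, which is exactly \eqref{eq: Hodge}.

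The main obstacle I anticipate is bookkeeping rather than conceptual: carefully justifying that the partial Deligne splitting pieces behave well when passing to the total weight graded piece — that is, that the weight filtration $W_{n-1}\cap\bsF^\bsp_I$ sits inside $W_n\cap\bsF^\bsp_I$ as precisely the sub-direct-sum with $|\bsr+\bss|\le n-1$, so that the quotient is clean. This rests on the direct-sum decompositions being compatible with all the filtrations simultaneously, which in turn is where the orthogonality hypothesis is genuinely used: it is what guarantees (via Proposition \ref{prop: strictness of A} and Corollary \ref{cor: sum and intersection}) that each $A^{p,q}_{I,\mu}(V)$ carries the structure of an object of $\OF^{g-1}_\bbC$ and that the iterated intersections $A_I^{\bsp,\bsq}(V)$ really do form a grading compatible with every $W^\nu$, $F^\bullet_\nu$, $\ol F^\bullet_\nu$. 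I would isolate this as the one place needing care, and otherwise present the computation of the two sides of \eqref{eq: Hodge} as a formal consequence of the grading identities.
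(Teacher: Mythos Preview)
Your second displayed formula, $\ol\bsF^\bsq_I V_\bbC=\bigoplus_{\bss\geq\bsq}A_I^{\bsr,\bss}(V)$, is false, and this is where the argument breaks. The references you cite, \eqref{eq: F by partial A}--\eqref{eq: olF by partial A}, only decompose $F_\mu^p$ for $\mu\notin I$ and $\ol F_\mu^p$ for $\mu\in I$ in terms of the $A_I$-grading; they say nothing about $\ol F_\mu^{q_\mu}$ for $\mu\notin I$ (or $F_\nu^{q_\nu}$ for $\nu\in I$), which are exactly the filtrations entering $\ol\bsF_I^\bsq$. Already for $g=1$ and $I=\emptyset$ this is the well-known asymmetry of the Deligne splitting: $A^{p,q}$ is built so that $F^p=\bigoplus_{r\geq p}A^{r,s}$, but $\ol F^q\neq\bigoplus_{s\geq q}A^{r,s}$ unless the mixed Hodge structure is split --- one needs the other splitting $\ol A^{p,q}$ of Remark~\ref{rem: splitting} for that. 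Lemma~\ref{lem: candy} does give a symmetric statement, but only on the pure pieces $\Gr^{W^\mu}_m$, not on $V_\bbC$ itself, and there is no way to lift it back.

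The downstream effect is that your identification of $(\bsF_I^\bsr\cap\ol\bsF_I^\bss)\Gr^W_n V_\bbC$ with the image of a single $A_I^{\bsr,\bss}$ is unjustified: since $\ol\bsF_I^\bss\Gr^W_n$ is defined as $(W_n\cap\ol\bsF_I^\bss)/(W_{n-1}\cap\ol\bsF_I^\bss)$, and $\ol\bsF_I^\bss V_\bbC$ is not a sub-direct-sum of your grading, you cannot read off its image in $\Gr^W_n$ this way. You could instead describe $\ol\bsF_I^\bss V_\bbC=\bsF_{I^c}^\bss V_\bbC$ via the \emph{other} grading $A_{I^c}^{\bullet,\bullet}$, but then you must show the two $2g$-gradings induce the same decomposition of $\Gr^W_n$ --- and that is essentially Proposition~\ref{prop: pApq}, whose proof \emph{uses} Proposition~\ref{prop: intermediate}, so invoking it here would be circular. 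The paper avoids this by induction on $g$: it applies Lemma~\ref{lem: candy} to the pure structure $\Gr^{W^g}_m$ (where the $F_g/\ol F_g$ asymmetry disappears), applies the inductive hypothesis to the $(g{-}1)$-orthogonal family $(F_g^{r_g}\cap\ol F_g^{s_g})\Gr^{W^g}_m V_\bbC$, and reassembles via the strictness in Corollary~\ref{cor: sum and intersection} to match $\bigoplus_m\Gr^{W'}_{n-m}\Gr^{W^g}_m$ with $\Gr^W_n$ compatibly with all filtrations. Your grading shortcut would work if you first proved that $\Gr^W_n$ is isomorphic, with all its induced filtrations including $\ol\bsF_I^\bullet$, to $\bigoplus_{|\bsm|=n}\Gr^{W^1}_{m_1}\cdots\Gr^{W^g}_{m_g}$; but establishing that compatibility is the same inductive work the paper does.
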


\begin{proof}
	For simplicity, we assume $I=\emptyset$.
	We prove the statement by induction on $g$.
	The statement for $g=1$ is Lemma \ref{lem: candy}.
	Suppose the statement is true for objects in $\OF_\bbC^{g-1}$.
	By Lemma \ref{lem: candy}, we have
	\begin{equation}\label{eq: case g=1}
		\bigoplus_{\substack{r_g,s_g\in\bbZ\\ r_g\geq p_g,\ r_g+s_g=m}}(F_g^{r_g}\cap\ol F_g^{s_g})\Gr^{W^g}_mV_\bbC\xrightarrow[]{\cong} F_g^{p_g}\Gr^{W^g}_mV_\bbC.
	\end{equation}
	By Corollary, \ref{cor: sum and intersection} $(F_g^{r_g}\cap\ol F_g^{s_g})\Gr^{W^g}_mV_\bbC$ is an object in $\OF_\bbC^{g-1}$ with respect to $W^\nu_\bullet$, $F_\nu^\bullet$, and $\ol F_\nu^\bullet$ for $\nu=1,\ldots,g-1$, and \eqref{eq: case g=1} is an isomorphism in $\OF_\bbC^{g-1}$.
	If we denote by $W'_\bullet$ the filtration given by \eqref{eq: total weight} for $\mu=1,\ldots,g-1$, the induction implies
	\begin{equation}\label{eq: ind hyp}
	\bigoplus_{\substack{\bsr',\bss'\in\bbZ^{g-1}\\ \bsr'\geq\bsp',\ \lvert\bsr'+\bss'\rvert=n-m}}(\bsF^{\bsr'}\cap\ol\bsF^{\bss'})\Gr^{W'}_{n-m}(F_g^{r_g}\cap\ol F_g^{s_g})\Gr^{W^g}_mV_\bbC
	\xrightarrow[]{\cong} \bsF^{\bsp'}\Gr^{W'}_{n-m}(F_g^{r_g}\cap\ol F_g^{s_g})\Gr^{W^g}_mV_\bbC.
	\end{equation}
	Note that by Corollary \ref{cor: sum and intersection}, $W'_{n-m}V_\bbC$ with the filtration induced from $W^g_\bullet$, $F_g^\bullet$, and $\ol F_g^\bullet$ is a mixed $\bbC$-Hodge structure.
	Then 
	\[0\rightarrow W'_{n-m-1}\Gr^{W^g}_mV_\bbC\rightarrow W'_{n-m}\Gr^{W^g}_mV_\bbC\rightarrow\Gr^{W'}_{n-m}\Gr^{W^g}_mV_\bbC\rightarrow 0\]
	is an exact sequence of pure $\bbC$-Hodge structures, hence by \eqref{eq: pure}, we have
	\begin{equation}\label{eq: pure2}
	\begin{split}
	(F_g^{r_g}\cap\ol F_g^{s_g})\Gr^{W'}_{n-m}\Gr^{W^g}_mV_\bbC&\cong(F_g^{r_g}\cap\ol F_g^{s_g}\cap W'_{n-m})\Gr^{W^g}_mV_\bbC/(F_g^{r_g}\cap\ol F_g^{s_g}\cap W'_{n-m-1})\Gr^{W^g}_mV_\bbC\\
	&\cong\Gr^{W'}_{n-m}(F_g^{r_g}\cap\ol F_g^{s_g})\Gr^{W^g}_mV_\bbC,
	\end{split}\end{equation}
	which is an isomorphism in $\OF_\bbC^{g-1}$ with respect to $W^\mu_\bullet$, $F_\mu^\bullet$, and $\ol F_\mu^\bullet$ for $\mu=1,\ldots,g-1$.
	Then by \eqref{eq: case g=1}, \eqref{eq: ind hyp}, and \eqref{eq: pure2}, we obtain
	\begin{equation}\label{eq: decomp OF}
	\bigoplus_{m\in\bbZ}\bsF^{\bsp'}\Gr^{W'}_{n-m}F_g^{p_g}\Gr^{W^g}_mV_\bbC\cong\bigoplus_{m\in\bbZ}\bigoplus_{\substack{\bsr',\bss'\in\bbZ^{g-1}\\ \bsr'\geq\bsp',\ \lvert\bsr'+\bss'\rvert=n-m}}\bigoplus_{\substack{r_g,s_g\in\bbZ\\ r_g\geq p_g,\ r_g+s_g=m}}(\bsF^\bsr\cap\ol\bsF^\bss)\Gr^{W'}_{n-m}\Gr^{W^g}_mV_\bbC.
	\end{equation}
	Since $F_\mu^pV_\bbC$ and $W^\mu_l V_\bbC$ can be written as direct sums of $A_{1}^{p_1,q_1}(V)\cap\cdots\cap A_{g}^{p_g,q_g}(V)$ 
	as in \eqref{eq: W by partial A} and \eqref{eq: F by partial A}, the left hand side of \eqref{eq: decomp OF} is isomorphic to $\bsF^\bsp\Gr^W_nV_\bbC$.
	On the other hand, by \eqref{eq: W by partial A}, \eqref{eq: F by partial A}, and \eqref{eq: olF by partial A}, we have an isomorphism $\bigoplus_{m\in\bbZ}\Gr^{W'}_{n-m}\Gr^{W^g}_mV_\bbC\xrightarrow[]{\cong}\Gr^W_nV_\bbC$ in $\OF_\bbC^g$. Hence the right hand side of \eqref{eq: decomp OF} is isomorphic to $\bigoplus_{\substack{\bsr,\bss\in\bbZ^g\\ \bsr\geq\bsp,\ \lvert\bsr+\bss\rvert=n}}(\bsF^\bsr\cap\ol\bsF^\bss)\Gr^W_nV_\bbC$.
\end{proof}

\begin{proposition}\label{prop: OF total}
	Let $(V_\bbC, \{ W^\mu_\bullet\}, \{F^\bullet_\mu\},\{\ol F_\mu^\bullet\})$ be an object in $\OF_\bbC^g$. Then $ T^g_\bbC(V)$ 
	satisfies the condition ($\textrm{b}_I$) of Definition \ref{def: mixed weak plectic}.
	In other words, $(V_\bbC,W_\bullet,F_I^\bullet,\ol F_I^\bullet)$ is a mixed $\bbC$-Hodge structure for any subset $I\subset\{1,\ldots,g\}$.	
\end{proposition}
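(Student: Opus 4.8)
The plan is to deduce condition ($\textrm{b}_I$) from the condition ($\textrm{a}_I$) already established in Proposition~\ref{prop: intermediate}, together with the direct-sum decompositions \eqref{eq: W by partial A}, \eqref{eq: F by partial A}, \eqref{eq: olF by partial A}. First I would bundle those decompositions into a single grading: for a subset $J\subset\{1,\ldots,g\}$ put $C_J^{\bsp,\bsq}:=A_{J,1}^{p_1,q_1}(V)\cap\cdots\cap A_{J,g}^{p_g,q_g}(V)$. By the relation $A_{J,\nu}^{r,s}\circ A_{J,\mu}^{p,q}(V)=A_{J,\nu}^{r,s}(V)\cap A_{J,\mu}^{p,q}(V)$ recorded above, $\{C_J^{\bsp,\bsq}\}$ is a $2g$-grading of $V_\bbC$, and \eqref{eq: W by partial A}--\eqref{eq: olF by partial A} become
\[
	W_nV_\bbC=\bigoplus_{\lvert\bsp+\bsq\rvert\le n}C_J^{\bsp,\bsq},\quad
	\bsF_J^\bsp V_\bbC=\bigoplus_{\bsr\ge\bsp}C_J^{\bsr,\bss},\quad
	F_J^pV_\bbC=\bigoplus_{\lvert\bsr\rvert\ge p}C_J^{\bsr,\bss};
\]
the last equality holds because $\bsF_J^{\bsp'}V_\bbC\subset\bsF_J^\bsp V_\bbC$ whenever $\bsp'\ge\bsp$, so that $F_J^pV_\bbC=\sum_{\lvert\bsp\rvert=p}\bsF_J^\bsp V_\bbC=\sum_{\lvert\bsp\rvert\ge p}\bsF_J^\bsp V_\bbC$. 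Taking $J=I$ and $J=I^c$, and using $\ol\bsF_I^\bullet=\bsF_{I^c}^\bullet$, $\ol F_I^\bullet=F_{I^c}^\bullet$ from Remark~\ref{rem: weak}, this expresses every one of $W_\bullet,\bsF_I^\bullet,\ol\bsF_I^\bullet,F_I^\bullet,\ol F_I^\bullet$ through the two gradings $\{C_I^{\bsp,\bsq}\}$ and $\{C_{I^c}^{\bsp,\bsq}\}$.

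Next I would pass to $\Gr^W_nV_\bbC=W_nV_\bbC/W_{n-1}V_\bbC$. Since $\{C_J^{\bsp,\bsq}\}$ grades every $W_mV_\bbC$, the images $\bar C_J^{\bsr,\bss}$ (for $\lvert\bsr+\bss\rvert=n$) of the $C_J^{\bsr,\bss}$ form a $2g$-grading of $\Gr^W_nV_\bbC$, and the formulas above pass to the quotient to give $\bsF_J^\bsp\Gr^W_nV_\bbC=\bigoplus_{\bsr\ge\bsp}\bar C_J^{\bsr,\bss}$ and $F_J^p\Gr^W_nV_\bbC=\bigoplus_{\lvert\bsr\rvert\ge p}\bar C_J^{\bsr,\bss}$, where these denote the filtrations of \eqref{eq: grplectic} and of Remark~\ref{rem: filt on Gr}. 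The point I want to extract is that for objects coming from $\OF^g_\bbC$ the inclusion \eqref{eq: incluzion} is an equality: the plectic filtration $\bsF_I^\bsp(\Gr^W_nV)$ built from the partial Hodge filtrations induced on $\Gr^W_nV$ — computed from the decompositions of $F_\mu^\bullet$ ($\mu\notin I$) and $\ol F_\mu^\bullet$ ($\mu\in I$) by the $C_I^{\bsr,\bss}$, and for $\ol\bsF_I^\bsp(\Gr^W_nV)$ by the $C_{I^c}^{\bsr,\bss}$ — again equals $\bigoplus_{\bsr\ge\bsp}\bar C_I^{\bsr,\bss}$, resp.\ $\bigoplus_{\bsr\ge\bsp}\bar C_{I^c}^{\bsr,\bss}$, so it coincides with $\bsF_I^\bsp\Gr^W_nV_\bbC$, resp.\ $\ol\bsF_I^\bsp\Gr^W_nV_\bbC$, of \eqref{eq: grplectic}.

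With these identifications, condition ($\textrm{a}_I$) of Proposition~\ref{prop: intermediate}, which holds for every $I$, is exactly equation \eqref{eq: monday} for $\Gr^W_nV$ regarded in $\Fil^0_g(\bbC)$; hence $\Gr^W_nV$ is a pure weak $g$-plectic $\bbC$-Hodge structure of weight $n$. Now I would invoke Remark~\ref{rem: bolts}: the triple $(\Gr^W_nV_\bbC,F_I^\bullet(\Gr^W_nV),\ol F_I^\bullet(\Gr^W_nV))$, formed from the total Hodge filtrations of this pure weak plectic structure, is a pure $\bbC$-Hodge structure of weight $n$ in the usual sense. It remains only to check that these total filtrations agree with the ones induced on the quotient $\Gr^W_nV_\bbC$ by $F_I^\bullet,\ol F_I^\bullet$ on $V_\bbC$; but $F_I^p(\Gr^W_nV)=\sum_{\lvert\bsp\rvert=p}\bsF_I^\bsp(\Gr^W_nV)=\bigoplus_{\lvert\bsr\rvert\ge p}\bar C_I^{\bsr,\bss}$, which is precisely the image of $(W_n\cap F_I^p)V_\bbC=\bigoplus_{\lvert\bsr\rvert\ge p,\ \lvert\bsr+\bss\rvert\le n}C_I^{\bsr,\bss}$ in $\Gr^W_nV_\bbC$, and symmetrically $\ol F_I^q(\Gr^W_nV)=\ol F_I^q\Gr^W_nV_\bbC$ using $I^c$. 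Therefore $\Gr^W_nV_\bbC$ with the filtrations induced from $F_I^\bullet$ and $\ol F_I^\bullet$ is a pure $\bbC$-Hodge structure of weight $n$ for every $n\in\bbZ$, i.e.\ $(V_\bbC,W_\bullet,F_I^\bullet,\ol F_I^\bullet)$ is a mixed $\bbC$-Hodge structure, which is condition ($\textrm{b}_I$).

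The step I expect to be the main obstacle is the equality \eqref{eq: incluzion} for $\Gr^W_nV$ — equivalently, that forming the total Hodge filtration commutes with passing to $\Gr^W_n$ — because this is exactly the kind of coincidence that fails for general objects of $\Fil^1_g(\bbC)$, as Example~\ref{ex: counter} shows. Here it is forced by the circumstance that, for an object of $\OF^g_\bbC$, all the partial weight and Hodge filtrations are simultaneously split by the Deligne splittings $A_{I,\mu}^{p,q}(V)$; once that is in place, the rest is bookkeeping with the resulting $2g$-grading, together with Proposition~\ref{prop: intermediate} and Remark~\ref{rem: bolts}.
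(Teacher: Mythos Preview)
Your proposal is correct and follows essentially the same route as the paper: deduce ($\textrm{b}_I$) from ($\textrm{a}_I$) of Proposition~\ref{prop: intermediate} together with the simultaneous $2g$-grading $A_{I,1}^{p_1,q_1}(V)\cap\cdots\cap A_{I,g}^{p_g,q_g}(V)$ furnished by \eqref{eq: W by partial A}--\eqref{eq: olF by partial A}. The paper's own proof is much terser: it simply writes down, directly from Proposition~\ref{prop: intermediate}, the decompositions $F_I^p\Gr^W_nV_\bbC=\bigoplus_{\lvert\bsr\rvert\ge p,\ \lvert\bsr+\bss\rvert=n}(\bsF_I^\bsr\cap\ol\bsF_I^\bss)\Gr^W_nV_\bbC$ and the analogous one for $\ol F_I^q$ via $I^c$, and reads off the splitting $\Gr^W_nV_\bbC=F_I^p\Gr^W_nV_\bbC\oplus\ol F_I^{n-p+1}\Gr^W_nV_\bbC$ --- your detour through Remark~\ref{rem: bolts} and your explicit verification that \eqref{eq: incluzion} and \eqref{eq: filt on Gr} are equalities for $\OF^g_\bbC$-objects are exactly the bookkeeping the paper suppresses.
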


\begin{proof}
	By Proposition \ref{prop: intermediate}, we have
	\begin{align*}
	F_I^p\Gr^W_nV_\bbC&\cong\bigoplus_{\substack{\bsr,\bss\in\bbZ^g\\ \lvert\bsr\rvert\geq p,\ \lvert\bsr+\bss\rvert=n}}(\bsF_I^\bsr\cap\ol\bsF_I^\bss)\Gr^W_nV_\bbC,\\
	\ol F_I^q\Gr^W_nV_\bbC&\cong\bigoplus_{\substack{\bsr,\bss\in\bbZ^g\\ \lvert\bsr\rvert\geq q,\ \lvert\bsr+\bss\rvert=n}}(\bsF_{I^c}^\bsr\cap\ol\bsF_{I^c}^\bss)\Gr^W_nV_\bbC=
	\bigoplus_{\substack{\bsr,\bss\in\bbZ^g\\ \lvert\bss\rvert\geq q,\ \lvert\bsr+\bss\rvert=n}}(\bsF_I^\bsr\cap\ol\bsF_I^\bss)\Gr^W_nV_\bbC
	\end{align*}
	for any $p,q,n\in\bbZ$.
	Hence we obtain $\Gr^W_nV_\bbC=F_I^p\Gr^W_nV_\bbC\oplus\ol F_I^{n-p+1}\Gr^W_nV_\bbC$ as desired.
\end{proof}

\begin{proposition}\label{prop: pApq}
	Let $V$ be an object in $\OF_\bbC^g$ and $I\subset\{1,\ldots,g\}$ a subset.
	Then we have
	\begin{equation}\label{eq: A and bsA}
	    A_{I,1}^{p_1,q_1}(V)\cap\cdots \cap A_{I,g}^{p_g,q_g}(V)=\bsA_I^{\bsp,\bsq}(T^g_\bbC(V))
	\end{equation}
	for any $\bsp,\bsq\in\bbZ^g$.  Moreover, the homomorphism
	\[
		\rho_I:\bsA_I^{\bsp,\bsq}(T^g_\bbC(V)) \rightarrow (\bsF_I^\bsp\cap\ol\bsF_I^\bsq)\Gr^W_n V_\bbC
	\]
	is an isomorphism, where $n:=|\bsp+\bsq|$.
\end{proposition}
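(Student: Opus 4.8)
The plan is to prove the identity \eqref{eq: A and bsA} first, and then deduce the isomorphism statement from Proposition \ref{prop: psplitting} together with Propositions \ref{prop: intermediate} and \ref{prop: OF total}. For \eqref{eq: A and bsA}, the key observation is that both sides are subspaces of $V_\bbC$ defined purely in terms of intersections and sums of the spaces $(F_\mu^{p_\mu}\cap W^\mu_\bullet)V_\bbC$, $(\ol F_\mu^{q_\mu}\cap W^\mu_\bullet)V_\bbC$ (for $\mu\notin I$, and with $F$ and $\ol F$ exchanged for $\mu\in I$) and $W_\bullet V_\bbC$. I would treat the case $I=\emptyset$ first to keep notation light, the general case being identical after swapping the roles of $F_\mu^\bullet$ and $\ol F_\mu^\bullet$ for $\mu\in I$. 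The left-hand side $A_1^{p_1,q_1}(V)\cap\cdots\cap A_g^{p_g,q_g}(V)$, by the direct decompositions \eqref{eq: W by partial A}, \eqref{eq: F by partial A}, \eqref{eq: olF by partial A}, is exactly the span of those $\bigcap_\mu A_\mu^{r_\mu,s_\mu}(V)$ with $(r_\mu,s_\mu)=(p_\mu,q_\mu)$ for each $\mu$ — i.e. it is itself one of the summands in the $2g$-grading of $V_\bbC$ indexed by pairs $(\bsr,\bss)$. The right-hand side $\bsA^{\bsp,\bsq}(T^g_\bbC(V))$ is, by Proposition \ref{prop: psplitting}(3) applied to $T^g_\bbC(V)$ (which is legitimate once we know $T^g_\bbC(V)\in\wMHS^g_\bbC$, but in fact we only need that $T^g_\bbC(V)$ satisfies $(\mathrm{a}_I)$ and $(\mathrm{b}_I)$ from Propositions \ref{prop: intermediate} and \ref{prop: OF total}, plus $(\mathrm{c}_I)$ which will come from Proposition \ref{prop: psplitting}(2) once we verify $\rho_I$ surjective), one of the summands in the plectic Deligne splitting. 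So the real content is to identify these two summands.

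The cleanest route is: \textbf{Step 1.} Show directly from the definitions that $\bigcap_\mu A_\mu^{p_\mu,q_\mu}(V)\subset\bsA^{\bsp,\bsq}(T^g_\bbC(V))$. For the ``$\bsF^\bsp\cap W_n$'' factor this is immediate: $\bigcap_\mu A_\mu^{p_\mu,q_\mu}(V)\subset\bigcap_\mu F_\mu^{p_\mu}V_\bbC=\bsF^\bsp V_\bbC$, and it lies in $W_nV_\bbC$ because by \eqref{eq: W by partial A} each $A_\mu^{p_\mu,q_\mu}(V)$ lies in $W^\mu_{p_\mu+q_\mu}V_\bbC$, so the intersection lies in $\bigcap_\mu W^\mu_{p_\mu+q_\mu}V_\bbC\subset W_{|\bsp+\bsq|}V_\bbC=W_nV_\bbC$ by \eqref{eq: total weight}. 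For the second factor one must exhibit an element of $\bigcap_\mu A_\mu^{p_\mu,q_\mu}(V)$ as an element of $(\ol\bsF^\bsq\cap W_n)V_\bbC$ plus a sum of terms in $(\ol\bsF^{\bsq-\bsj}\cap W_{n-|\bsj|-1})V_\bbC$; here one uses the defining formula for $A_\mu^{p_\mu,q_\mu}(V)$ in each coordinate and combines the ``error terms'' across $\mu$, keeping careful track of the weight drops — an element with a weight drop of $j_\mu$ in the $\mu$-th direction contributes a total-weight drop of $\sum j_\mu=|\bsj|$. \textbf{Step 2.} For the reverse inclusion, invoke that $\{\bigcap_\mu A_\mu^{p_\mu,q_\mu}(V)\}_{\bsp,\bsq}$ is a $2g$-grading of $V_\bbC$ (established just before Proposition \ref{prop: pApq}) and that $\{\bsA^{\bsp,\bsq}(T^g_\bbC(V))\}_{\bsp,\bsq}$ is a $2g$-grading of $V_\bbC$ by Proposition \ref{prop: psplitting}(3); since one grading refines into the other by Step 1, and both are gradings of the same space, they must coincide summand by summand.

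Once \eqref{eq: A and bsA} is known, the isomorphism claim is essentially free: $\rho_I$ is injective by Proposition \ref{prop: psplitting}(1) (using $(\mathrm{b}_I)$ from Proposition \ref{prop: OF total}), and surjectivity follows because, by \eqref{eq: A and bsA} and the direct decomposition $\Gr^W_nV_\bbC\cong\bigoplus_m\Gr^{W'}_{n-m}\Gr^{W^g}_mV_\bbC$ used in the proof of Proposition \ref{prop: intermediate}, the domain $\bigoplus_{|\bsp+\bsq|=n}\bsA_I^{\bsp,\bsq}(T^g_\bbC(V))$ surjects onto $\Gr^W_nV_\bbC$ compatibly with the bigradings; comparing dimensions of the $(\bsF_I^\bsp\cap\ol\bsF_I^\bsq)$-pieces (which are the summands of the pure weak plectic Hodge structure $\Gr^W_nV$ by Proposition \ref{prop: intermediate} and \eqref{eq: sunday}) forces $\rho_I$ to be surjective onto each piece. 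Alternatively, and more in the spirit of the paper, one notes that by Proposition \ref{prop: psplitting}(2) surjectivity of $\rho_I$ is equivalent to condition $(\mathrm{c}_I)$, so it suffices to verify $(\mathrm{c}_I)$ for $T^g_\bbC(V)$ — which can be done by the same coordinate-by-coordinate bookkeeping as in Step 1, or deduced directly from the fact that $\Gr^W_nV$ is a pure weak plectic Hodge structure.

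\textbf{Main obstacle.} The delicate point is Step 1 for the second (the $\ol\bsF$-with-error-terms) factor: one must correctly combine, over the $g$ coordinates $\mu$, the filtration conditions defining $A_\mu^{p_\mu,q_\mu}(V)$ and track how individual weight drops $j_\mu$ in the $W^\mu$-filtrations assemble into a single weight drop $|\bsj|$ in the total weight filtration $W_\bullet$ defined by \eqref{eq: total weight}. This requires using the strictness results of Proposition \ref{prop: strictness of A} and the compatibility of sums and intersections from Corollary \ref{cor: sum and intersection} in an essential way, since a priori the sum $\sum_j$ in one coordinate does not obviously interact well with the intersection $\cap_\mu$; the orthogonal-family hypothesis is exactly what makes this work, via the explicit direct-sum formulas \eqref{eq: W by partial A}--\eqref{eq: olF by partial A} for the partial filtrations in terms of the $\bigcap_\mu A_\mu^{p_\mu,q_\mu}(V)$.
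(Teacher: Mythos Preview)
Your overall strategy matches the paper's, but there is a genuine circularity in Step~2 as written. You invoke Proposition~\ref{prop: psplitting}(3) to conclude that $\{\bsA_I^{\bsp,\bsq}(T^g_\bbC(V))\}$ is a $2g$-grading of $V_\bbC$, but that part of Proposition~\ref{prop: psplitting} requires condition $(\mathrm{c}_I)$. At this point in the paper only $(\mathrm{a}_I)$ and $(\mathrm{b}_I)$ are available (Propositions~\ref{prop: intermediate} and~\ref{prop: OF total}); indeed, $(\mathrm{c}_I)$ is \emph{derived from} the present proposition (see the sentence concluding the proof of Proposition~\ref{prop: Hodge}). Your later paragraph proposes to get surjectivity of $\rho_I$---equivalently $(\mathrm{c}_I)$---from \eqref{eq: A and bsA}, which in turn you obtained via Step~2, so the loop does not close.

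The fix is exactly what the paper does: after Step~1 (the inclusion $\bigcap_\mu A_{I,\mu}^{p_\mu,q_\mu}(V)\subset\bsA_I^{\bsp,\bsq}(T^g_\bbC(V))$), bypass Proposition~\ref{prop: psplitting}(3) entirely and run the dimension argument \emph{before} claiming equality. Namely, Step~1 together with injectivity of $\rho_I$ (Proposition~\ref{prop: psplitting}(1), needing only $(\mathrm{b}_I)$) gives a chain of injections
\[
V_\bbC=\bigoplus_{\bsp,\bsq}\bigcap_\mu A_{I,\mu}^{p_\mu,q_\mu}(V)\hookrightarrow\bigoplus_{\bsp,\bsq}\bsA_I^{\bsp,\bsq}(T^g_\bbC(V))\hookrightarrow\bigoplus_{\bsp,\bsq}(\bsF_I^\bsp\cap\ol\bsF_I^\bsq)\Gr^W_{|\bsp+\bsq|}V_\bbC=\bigoplus_n\Gr^W_nV_\bbC,
\]
the last equality by Proposition~\ref{prop: intermediate}. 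The endpoints have the same dimension, so every arrow is an isomorphism; this yields \eqref{eq: A and bsA} and the bijectivity of $\rho_I$ simultaneously. A secondary remark: the paper establishes Step~1 by induction on $g$, writing $\bigcap_{\mu=1}^g A_{I,\mu}^{p_\mu,q_\mu}(V)=\bsA_I^{\bsp',\bsq'}(T^{g-1}_\bbC(V))\cap A_{I,g}^{p_g,q_g}(V)$ and then applying Proposition~\ref{prop: strictness of A} and Corollary~\ref{cor: sum and intersection} once, to the $g$-th coordinate only. Your ``all coordinates at once'' sketch would require iterating that distributivity across all $\mu$ simultaneously, which is messier to write down; the inductive formulation is what makes the bookkeeping tractable.
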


\begin{proof}
	For simplicity we assume $I=\emptyset$. We prove by induction on $g$. The statement for $g=1$ follows by definition. 
	Suppose the statement is true for $g-1$, and let $\{\bsA^{\bsp',\bsq'}(T^{g-1}_\bbC(V))\}$ for indices $\bsp':=(p_1,\ldots,p_{g-1})$ and $\bsq':=(q_1,\ldots,q_{g-1})$
	be the plectic Deligne splitting for the quadruple $(V_\bbC, W'_\bullet, \{F^\bullet_\mu\},\{\ol F_\mu^\bullet\})$, 
	where $W'_\bullet$ is the filtration defined from the filtrations
	 $W^\mu_\bullet$ for $\mu=1,\ldots, g-1$, and the family $\{F^\bullet_\mu\}$ and $\{\ol F_\mu^\bullet\}$ are for the indices $\mu=1,\ldots,g-1$.
	 Then for $n' := \lvert\bsp'+\bsq'\rvert$ and $n_g:=p_g+q_g$, we have
	\begin{equation*}
		A^{p_1,q_1}_{1}(V) \cap\cdots \cap A^{p_g,q_g}_{g}(V) = \bsA^{\bsp',\bsq'}(T^{g-1}_\bbC(V))\cap A_g^{p_g,q_g}(V)
	\end{equation*}
	by the induction hypothesis.  Note that by definition, $\bsA^{\bsp',\bsq'}(T^{g-1}_\bbC(V))\cap A_g^{p_g,q_g}(V)$ is equal to
	\begin{multline*}
		(\bsF^{\bsp'}\cap W'_{n'})V_\bbC  
		\cap\biggl((\ol\bsF^{\bsq'}\cap W'_{n'})V_\bbC+\sum_{\bsj'\geq\boldsymbol{0}}(\ol\bsF^{\bsq'-\bsj'}\cap W'_{n'-\lvert\bsj'\rvert-1})V_\bbC\biggr)\\
		\cap (F^{p_g}_g\cap W^g_{n_g})V_\bbC 
		\cap\biggl((\ol F^{q_g}_g\cap W^g_{n_g})V_\bbC+\sum_{j_g\geq 0}(\ol F_g^{q_g-j_g}\cap W^g_{n_g-j_g-1})V_\bbC\biggr).
	\end{multline*}
	Hence we have
	\begin{equation}\label{eq: R2D2}
		\bsA^{\bsp',\bsq'}(T^{g-1}_\bbC(V))\cap A_g^{p_g,q_g}(V)\subset (\bsF^\bsp\cap W_n)V_\bbC.
	\end{equation}
	 Let $U$ be the mixed $\bbC$-Hodge structure on $U_\bbC=(\ol\bsF^{\bsq'}\cap W'_{n'})V_\bbC+\sum_{\bsj'\geq\boldsymbol{0}}(\ol\bsF^{\bsq'-\bsj'}\cap W'_{n'-\lvert\bsj'\rvert-1})V_\bbC$ with filtrations induced from $W^g_\bullet$, $F_g^\bullet$, and $\ol F_g^\bullet$.
	 Applying Proposition \ref{prop: strictness of A} to the natural inclusion $U\hookrightarrow V$, we have
	\begin{multline*}
		 \biggl((\ol\bsF^{\bsq'}\cap W'_{n'})V_\bbC+\sum_{\bsj'\geq\boldsymbol{0}}(\ol\bsF^{\bsq'-\bsj'}
		\cap W'_{n'-\lvert\bsj'\rvert-1})V_\bbC\biggr)
		\cap\biggl((\ol F^{q_g}_g\cap W^g_{n_g})V_\bbC
		+\sum_{j_g\geq 0}(\ol F^{q_g-j_g}_g\cap W^g_{n_g-j_g-1})V_\bbC\biggr)\\
		= \biggl((\ol\bsF^{\bsq'}\cap W'_{n'})V_\bbC
		+\sum_{\bsj'\geq\boldsymbol{0}}(\ol\bsF^{\bsq'-\bsj'}\cap W'_{n'-\lvert\bsj'\rvert-1})V_\bbC\biggr)\cap(\ol F_g^{q_g}\cap W^g_{n_g})V_\bbC\\
		+\sum_{j_g\geq 0}\biggl((\ol\bsF^{\bsq'}\cap W'_{n'})V_\bbC
		+\sum_{\bsj'\geq\boldsymbol{0}}(\ol\bsF^{\bsq'-\bsj'}\cap W'_{n'-\lvert\bsj'\rvert-1})V_\bbC\biggr) \cap(\ol F_g^{q_g-j_g}\cap W^g_{n_g-j_g-1})V_\bbC.
	\end{multline*}
	By \eqref{filtration of sum:1}, \eqref{filtration of sum:2}, we have
	\begin{multline*}
		\biggl((\ol\bsF^{\bsq'}\cap W'_{n'})V_\bbC
		+\sum_{\bsj'\geq\boldsymbol{0}}(\ol\bsF^{\bsq'-\bsj'}\cap W'_{n'-\lvert\bsj'\rvert-1})V_\bbC\biggr)\cap(\ol F_g^{q_g}\cap W^g_{n_g})V_\bbC\\
		=(\ol\bsF^{\bsq}\cap W'_{n'}\cap W^g_{n_g})V_\bbC
		+\sum_{\bsj'\geq\boldsymbol{0}}(\ol\bsF^{\bsq-(\bsj',0)}\cap W'_{n'-\lvert\bsj'\rvert-1}\cap W^g_{n_g})V_\bbC
	\end{multline*}
	and
    	\begin{multline*}
		\sum_{j_g\geq0}\biggl((\ol\bsF^{\bsq'}\cap W'_{n'})V_\bbC
		+\sum_{\bsj'\geq\boldsymbol{0}}(\ol\bsF^{\bsq'-\bsj'}\cap W'_{n'-\lvert\bsj'\rvert-1})V_\bbC\biggr)\cap(\ol F_g^{q_g-j_g}\cap W^g_{n_g-j_g-1})V_\bbC\\
		= \sum_{j_g\geq 0}(\ol\bsF^{\bsq'} \cap \ol F^{q_g-j_g}_g \cap W'_{n'}\cap W^g_{n_g-j_g-1})V_\bbC
		+\sum_{\bsj'\geq\boldsymbol 0}\sum_{j_g\geq 0}(\ol\bsF^{\bsq-\bsj'}\cap\ol F^{q_g-j_g}_g \cap W'_{n'-\lvert\bsj'\rvert-1}\cap W^g_{n_g-j_g-1})V_\bbC,
	\end{multline*}
	hence we see that both are subsets of 
	\begin{equation}\label{eq: 3CPO}
		(\ol\bsF^{\bsq}\cap W_{n})V_\bbC+\sum_{\bsj\geq\boldsymbol 0}(\ol\bsF^{\bsq-\bsj}\cap W_{n-\lvert\bsj\rvert-1})V_\bbC.
	\end{equation}
	This and \eqref{eq: R2D2}, we have an inclusion
	\begin{equation}\label{eq: inclusion one}
		A_{1}^{p_1,q_1}(V)\cap\cdots\cap A_{g}^{p_g,q_g}(V)=\bsA^{\bsp',\bsq'}(T^{g-1}_\bbC(V))\cap A_{g}^{p_g,q_g}(V) \subset \bsA^{\bsp,\bsq}(T^g_\bbC(V)).
	\end{equation}
	By Proposition \ref{prop: OF total} and Proposition \ref{prop: psplitting} (1), the homomorphism
	\[\rho:\bsA^{\bsp,\bsq}(T^g_\bbC(V))\rightarrow(\bsF^\bsp\cap\ol\bsF^\bsq)\Gr^W_nV_\bbC\]
	is injective.
	Then we obtain
	\begin{equation}\label{injection to Gr}
		V_\bbC=\bigoplus_{\bsp,\bsq\in\bbZ^g}A_{1}^{p_1,q_1}(V)\cap\cdots\cap A_{g}^{p_g,q_g}(V)
		\hookrightarrow\bigoplus_{\bsp,\bsq\in\bbZ^g} \bsA^{\bsp,\bsq}(T^g_\bbC(V))
		\hookrightarrow\bigoplus_{\bsp,\bsq\in\bbZ^g}(\bsF^\bsp\cap\ol\bsF^\bsq)\Gr^W_{n}V_\bbC.
	\end{equation}
	By Proposition \ref{prop: intermediate}, we have
	\[
		\bigoplus_{\bsp,\bsq\in\bbZ^g}(\bsF^\bsp\cap\ol\bsF^\bsq)\Gr^W_{n}V_\bbC = \bigoplus_{n}\Gr^W_{n}V_\bbC.
	\]
	Since $V_\bbC$ and $\bigoplus_{n\in\bbZ}\Gr^W_nV_\bbC$ have the same dimension, \eqref{injection to Gr} is an isomorphism.
	Hence \eqref{eq: inclusion one} and $\rho$ are isomorphisms
	for any $\bsp,\bsq\in\bbZ^g$, as desired.
\end{proof}

Let $V$ be an object in $\OF_\bbC^g$ and $I\subset\{1,\ldots,g\}$.
By Proposition \ref{prop: intermediate} and Proposition \ref{prop: OF total}, $T^g_\bbC(V)$ satisfies ($\textrm{a}_I$) and ($\textrm{b}_I$) in Definition \ref{def: mixed weak plectic}.
Moreover, by Proposition \ref{prop: pApq} and Proposition \ref{prop: psplitting} (2), $T^g_\bbC(V)$ satisfies ($\textrm{c}_I$).
Hence we completed the proof of Proposition \ref{prop: Hodge}.

%
\subsection{Mixed plectic $\bbC$-Hodge structures}
%

In the previous subsection, we have seen that the functor $T^g_\bbC$ induces the functor $T^g_\bbC:\OF^g_\bbC\rightarrow\wMHS_\bbC^g$.
In this subsection we will characterize the essential image of $\OF_\bbC^g$ by $T^g_\bbC$.

For $I\subset\{1,\ldots,g\}$, we define a functor $P^g_I:\Fil^1_g(\bbC)\rightarrow\Fil^g_{g}(\bbC)$ by sending $V=(V_\bbC,W_\bullet,\{F_\mu^\bullet\},\{\ol F_\mu^\bullet\})$ to $P^g_I(V):=(V_\bbC,\{W^{I,\mu}_\bullet\},\{F_\mu^\bullet\},\{\ol F_\mu^\bullet\})$ with
\begin{equation}
	W^{I,\mu}_nV_\bbC:=\sum_{\substack{\bsp,\bsq\in\bbZ^g\\ p_\mu+q_\mu\leq n}}\bsA_I^{\bsp,\bsq}(V). 
\end{equation}

The goal of this subsection is to prove the following proposition.

\begin{proposition}\label{prop: char of MHS}
	\begin{enumerate}
	\item We have $P_I^g\circ T^g_\bbC(U)=U$ and $T^g_\bbC\circ P_I^g(V)=V$ for any object $U$ in $\OF^g_\bbC$, $V$ in $\wMHS_\bbC^g$, and any subset $I\subset\{1,\ldots,g\}$.
	\item Let $V$ be an object in $\wMHS_\bbC^g$.
		Then $V$ lies in the essential image of $\OF_\bbC^g$ by $T^g_\bbC$ if and only if $P_I^g(V)=P_J^g(V)$ for any $I$ and $J$. 	
	\end{enumerate}
\end{proposition}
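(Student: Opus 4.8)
\emph{Part (1).} Both $T^g_\bbC$ and $P^g_I$ leave the underlying vector space and the partial Hodge filtrations $\{F^\bullet_\mu\},\{\ol F^\bullet_\mu\}$ unchanged, so only the (partial) weight filtrations must be compared. For $U=(V_\bbC,\{W^\mu_\bullet\},\{F^\bullet_\mu\},\{\ol F^\bullet_\mu\})\in\OF^g_\bbC$, Proposition \ref{prop: pApq} identifies $\bsA^{\bsp,\bsq}_I(T^g_\bbC(U))$ with $A_{I,1}^{p_1,q_1}(V)\cap\cdots\cap A_{I,g}^{p_g,q_g}(V)$; summing over $p_\mu+q_\mu\le n$ and using the decomposition \eqref{eq: W by partial A} gives back $W^\mu_nV_\bbC$, so $P^g_I\circ T^g_\bbC(U)=U$. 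For $V=(V_\bbC,W_\bullet,\{F^\bullet_\mu\},\{\ol F^\bullet_\mu\})\in\wMHS^g_\bbC$, Proposition \ref{prop: psplitting}\,(4) gives $V_\bbC=\bigoplus_{\bsp,\bsq}\bsA^{\bsp,\bsq}_I(V)$ with $W^{I,\mu}_nV_\bbC=\bigoplus_{p_\mu+q_\mu\le n}\bsA^{\bsp,\bsq}_I(V)$; intersecting these over $\mu$ and then summing over $n_1+\cdots+n_g=n$ collects exactly the summands with $\lvert\bsp+\bsq\rvert\le n$, which by the first equality of \eqref{eq: W F by A} is $W_nV_\bbC$; hence $T^g_\bbC\circ P^g_I(V)=V$.

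\emph{Part (2).} The ``only if'' direction follows from part (1): if $V\cong T^g_\bbC(U)$ with $U\in\OF^g_\bbC$, then $P^g_I(V)=P^g_I(T^g_\bbC(U))=U$ for every $I$, so the $P^g_I(V)$ all coincide. For ``if'', part (1) gives $T^g_\bbC(P^g_\emptyset(V))=V$, so it is enough to prove that $U:=P^g_\emptyset(V)$ lies in $\OF^g_\bbC$, i.e. satisfies Definition \ref{def: orthogonal}. Denote by $W^\mu_\bullet$ the common partial weight filtration $W^{I,\mu}_\bullet$; by construction, for every $I\subset\{1,\dots,g\}$,
\begin{equation*}
	W^\mu_nV_\bbC=\bigoplus_{\substack{\bsp,\bsq\in\bbZ^g\\ p_\mu+q_\mu\le n}}\bsA^{\bsp,\bsq}_I(V),
\end{equation*}
while by the second equality of \eqref{eq: W F by A} (applied to indices whose $\mu$-th entry is $p$ and whose other entries are very negative) $F^p_\mu V_\bbC=\bigoplus_{r_\mu\ge p}\bsA^{\bsr,\bss}_\emptyset(V)$ and $\ol F^p_\mu V_\bbC=\bigoplus_{r_\mu\ge p}\bsA^{\bsr,\bss}_{\{\mu\}}(V)$.

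The plan is to check the two conditions of Definition \ref{def: orthogonal}. For the first, fix $\mu$ and set $V_{(\mu)}:=(V_\bbC,W^\mu_\bullet,F^\bullet_\mu,\ol F^\bullet_\mu)$. From the displayed formulas, $F^p_\mu\Gr^{W^\mu}_nV_\bbC\cong\bigoplus_{r_\mu\ge p,\,r_\mu+s_\mu=n}\bsA^{\bsr,\bss}_\emptyset(V)$, and since the hypothesis also gives $W^\mu_\bullet=W^{\{\mu\},\mu}_\bullet$, one likewise gets $\ol F^q_\mu\Gr^{W^\mu}_nV_\bbC\cong\bigoplus_{r_\mu\ge q,\,r_\mu+s_\mu=n}\bsA^{\bsr,\bss}_{\{\mu\}}(V)$, the two being subspaces of the same space $\Gr^{W^\mu}_nV_\bbC$ described through two different gradings. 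By \eqref{eq: plectic Hodge} the subspaces $(\bsF^{\bsr}_\emptyset\cap\ol\bsF^{\bss}_\emptyset)\Gr^W_mV_\bbC$ and $(\bsF^{\bsr'}_{\{\mu\}}\cap\ol\bsF^{\bss'}_{\{\mu\}})\Gr^W_mV_\bbC$ coincide when $\bsr',\bss'$ is obtained from $\bsr,\bss$ by interchanging their $\mu$-th entries, so the isomorphisms $\rho_\emptyset,\rho_{\{\mu\}}$ of \eqref{eq: peach} give $\dim\bsA^{\bsr,\bss}_\emptyset(V)=\dim\bsA^{\bsr',\bss'}_{\{\mu\}}(V)$; a dimension count then yields $\dim F^p_\mu\Gr^{W^\mu}_nV_\bbC+\dim\ol F^{\,n+1-p}_\mu\Gr^{W^\mu}_nV_\bbC=\dim\Gr^{W^\mu}_nV_\bbC$, and it remains to see that $F^p_\mu\Gr^{W^\mu}_nV_\bbC\cap\ol F^{\,n+1-p}_\mu\Gr^{W^\mu}_nV_\bbC$ vanishes. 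For this I would pass to the pure weak $g$-plectic Hodge structure $\Gr^W_mV$ (Corollary \ref{cor: coincides}), where the $\mu$-direction of the decomposition \eqref{eq: sunday} is opposed, and combine this with conditions ($\textrm{b}_I$) and ($\textrm{c}_I$) for $I=\emptyset$ and $I=\{\mu\}$. This establishes that each $V_{(\mu)}$ is a mixed $\bbC$-Hodge structure. For the second condition of Definition \ref{def: orthogonal} one then argues as in the proof of Proposition \ref{prop: pApq}: the hypothesis lets one identify $\bsA^{\bsp,\bsq}_\emptyset(V)$ with the common refinement $\bigcap_\mu A^{p_\mu,q_\mu}(V_{(\mu)})$ of the Deligne splittings of the $V_{(\mu)}$, so that $W^\mu_nV_\bbC$, $F^m_\mu V_\bbC$ and $\ol F^m_\mu V_\bbC$ become direct sums of pieces of the Deligne splitting of $V_{(\nu)}$ for $\nu\ne\mu$, whereupon Corollary \ref{cor: sum and intersection} and Proposition \ref{prop: strictness of A} show these are sub-mixed-$\bbC$-Hodge structures of $V_{(\nu)}$, as required. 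An alternative route is to prove the ``if'' direction by induction on $g$, peeling off the $g$-th coordinate through the identification $\OF^g_\bbC\cong\MHS(\OF^{g-1}_\bbC)$ coming from Theorem \ref{thm: main} and Corollary \ref{cor: mhs}.

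\emph{Main obstacle.} The delicate point is the ``if'' direction of part (2), and within it the opposedness statement $F^p_\mu\Gr^{W^\mu}_nV_\bbC\oplus\ol F^{\,n+1-p}_\mu\Gr^{W^\mu}_nV_\bbC=\Gr^{W^\mu}_nV_\bbC$. The filtration $\ol F^\bullet_\mu$ is naturally expressed through the splitting $\{\bsA^{\bsp,\bsq}_{\{\mu\}}(V)\}$ while $F^\bullet_\mu$ and $W^\mu_\bullet$ are expressed through $\{\bsA^{\bsp,\bsq}_\emptyset(V)\}$, and these two $2g$-gradings of $V_\bbC$ genuinely differ — only their images in $\Gr^W_\bullet V_\bbC$ agree, and only after interchanging the $\mu$-th coordinates. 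Showing that the hypothesis $P^g_I(V)=P^g_J(V)$ nevertheless forces all of Definition \ref{def: orthogonal} to hold — in particular that the relevant subspaces are simultaneously compatible with every weight and Hodge filtration in sight — is where conditions ($\textrm{a}_I$)--($\textrm{c}_I$) for the various $I$ have to be brought to bear together, and is the heart of the argument; everything else is bookkeeping with the plectic Deligne splittings.
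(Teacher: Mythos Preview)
Your Part (1) is correct and matches the paper's one-line proof (Proposition \ref{prop: pApq} plus Proposition \ref{prop: psplitting}). The ``only if'' direction of Part (2) is also fine.

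For the ``if'' direction, however, your plan has a real gap and the paper takes a different route. Your strategy is first to show that each $V_{(\mu)}=(V_\bbC,W^\mu_\bullet,F^\bullet_\mu,\ol F^\bullet_\mu)$ is a mixed $\bbC$-Hodge structure, and then to bootstrap to the full orthogonality condition. The step you flag as the ``main obstacle'' --- the vanishing of $F^p_\mu\Gr^{W^\mu}_nV_\bbC\cap\ol F^{\,n+1-p}_\mu\Gr^{W^\mu}_nV_\bbC$ --- is not actually addressed: you propose to ``pass to $\Gr^W_mV$'', but $\Gr^{W^\mu}_n$ and $\Gr^W_m$ are different objects and the opposedness on the latter does not transparently transfer to the former. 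Your second step is also problematic: invoking the identification $\bsA^{\bsp,\bsq}_\emptyset(V)=\bigcap_\mu A^{p_\mu,q_\mu}(V_{(\mu)})$ ``as in Proposition \ref{prop: pApq}'' is circular, since that proposition is proved for objects already known to lie in $\OF^g_\bbC$, and its proof uses the orthogonality hypothesis in an essential way (via Corollary \ref{cor: sum and intersection} applied to the $\nu$-th mixed Hodge structures).

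The paper avoids both issues by never attempting to prove condition (a) of Definition \ref{def: orthogonal} first. Instead it proves condition (b) directly --- that $F^l_\mu V_\bbC$, $\ol F^l_\mu V_\bbC$, $W^\mu_l V_\bbC$ with the $\nu$-th filtrations are mixed $\bbC$-Hodge structures --- by induction on the \emph{total} weight filtration $W_\bullet$. The key ingredients are: Lemma \ref{lem: Gr pure} (which says $\Gr^W_nV$ is still in $\MHS^g_\bbC$, hence pure $g$-plectic), Lemma \ref{lem: pure plectic} (which handles the pure case directly from the decomposition \eqref{eq: pure partial weight}, where both gradings $\{\bsA^{\bsp,\bsq}_I\}$ for $I=\emptyset$ and $I=\{\nu\}$ are explicitly comparable), and Lemma \ref{lem: extension} (mixed $\bbC$-Hodge structures are closed under strict extensions in $\Fil^1_1(\bbC)$). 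The induction step runs along the short exact sequence $0\to(W_{n-1}\cap F^l_\mu)V_\bbC\to(W_n\cap F^l_\mu)V_\bbC\to F^l_\mu\Gr^W_nV_\bbC\to0$, whose strictness with respect to $F^\bullet_\nu\cap W^\nu_\bullet$ and $\ol F^\bullet_\nu\cap W^\nu_\bullet$ follows because all four filtrations involved are direct sums of $\bsA^{\bsp,\bsq}_\emptyset(V)$ (resp.\ $\bsA^{\bsp,\bsq}_{\{\nu\}}(V)$). Condition (a) then comes for free by taking $l$ small. This approach sidesteps the comparison of the two $2g$-gradings on $V_\bbC$ itself, pushing that comparison down to $\Gr^W_nV$ where it is tractable.
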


According to Proposition \ref{prop: char of MHS}, we define the category of mixed $g$-plectic $\bbC$-Hodge structures as follows.

\begin{definition}\label{def: pMHS}
	We define the category of mixed $g$-plectic $\bbC$-Hodge structures $\MHS_\bbC^g$ to be the full subcategory of $\wMHS_\bbC^g$ consisting of objects $V$ satisfying $W^{I,\mu}_nV_\bbC=W^{J,\mu}_nV_\bbC$ for any $I,J\subset\{1,\ldots,g\}$, $\mu=1,\ldots.g$, and $n\in\bbZ$.
	This says that the object $P^g_\bbC(V):=P_I^g(V)$ is independent of $I$.
	We let $W^\mu_n V_\bbC := W^{I,\mu}_n V_\bbC$ for mixed $g$-plectic $\bbC$-Hodge structures.
\end{definition}

Combining Corollary \ref{cor: main cor} and Proposition \ref{prop: char of MHS}, we obtain the following theorem.

\begin{theorem}\label{thm: equiv for C}
	There are equivalences of categories
	\begin{equation}\label{eq: Rep OF MHS}
		\xymatrix{
		\Rep_\bbC(\cG_\bbC^g)\ar@<1mm>[r]^>>>>{\varphi_\bbC^g}&\OF_\bbC^g\ar@<1mm>[l]^>>>>>{\psi_\bbC^g}\ar@<1mm>[r]^>>>>{T^g_\bbC}&\MHS_\bbC^g.\ar@<1mm>[l]^>>>>>{P^g_\bbC}
		}
	\end{equation}
	Moreover $T^g_\bbC$ and $P^g_\bbC$ are isomorphisms of categories.
\end{theorem}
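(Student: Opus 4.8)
The plan is to assemble Theorem \ref{thm: equiv for C} purely formally from the three ingredients already established. First, the equivalence $\varphi^g_\bbC : \Rep_\bbC(\cG^g_\bbC) \cong \OF^g_\bbC$ with quasi-inverse $\psi^g_\bbC$ is exactly Corollary \ref{cor: main cor}, so that leg of the diagram requires nothing new. Second, I would invoke Proposition \ref{prop: Hodge} to see that $T^g_\bbC$ does land in $\wMHS^g_\bbC$, and then Definition \ref{def: pMHS} together with part (2) of Proposition \ref{prop: char of MHS} to see that in fact $T^g_\bbC(U)$ lies in the subcategory $\MHS^g_\bbC$ for every $U \in \OF^g_\bbC$: indeed $P^g_I \circ T^g_\bbC(U) = U$ for \emph{all} $I$ by part (1), so $W^{I,\mu}_\bullet$ is independent of $I$, which is the defining condition of $\MHS^g_\bbC$. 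Thus $T^g_\bbC$ corestricts to a functor $\OF^g_\bbC \to \MHS^g_\bbC$, and symmetrically $P^g_\bbC := P^g_I$ (well-defined on $\MHS^g_\bbC$, again by Definition \ref{def: pMHS}) gives a functor $\MHS^g_\bbC \to \OF^g_\bbC$.

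Next I would verify that $T^g_\bbC$ and $P^g_\bbC$ are mutually inverse \emph{on the nose}, not merely up to natural isomorphism. This is precisely part (1) of Proposition \ref{prop: char of MHS}: $P^g_\bbC \circ T^g_\bbC(U) = P^g_I \circ T^g_\bbC(U) = U$ for $U \in \OF^g_\bbC$, and $T^g_\bbC \circ P^g_\bbC(V) = T^g_\bbC \circ P^g_I(V) = V$ for $V \in \MHS^g_\bbC$. One should note that these functors are the identity on underlying $\bbC$-vector spaces and on the Hodge filtrations $\{F^\bullet_\mu\},\{\ol F^\bullet_\mu\}$, changing only the packaging of the weight data (a single total filtration $W_\bullet$ versus the family $\{W^\mu_\bullet\}$); hence a morphism is compatible with the structure on one side if and only if it is compatible on the other, which shows $T^g_\bbC$ and $P^g_\bbC$ are fully faithful and therefore isomorphisms of categories. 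Composing, $\Rep_\bbC(\cG^g_\bbC) \cong \OF^g_\bbC \cong \MHS^g_\bbC$, with the middle equivalence an isomorphism of categories.

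The only genuine content has already been discharged in the earlier propositions, so the proof of the theorem itself is a short bookkeeping argument; the main point to be careful about is the distinction between ``equivalence of categories'' (which is all we can claim for $\varphi^g_\bbC$, since $\psi^g_\bbC \circ \varphi^g_\bbC = \id$ but $\varphi^g_\bbC \circ \psi^g_\bbC$ is only naturally isomorphic to the identity, as in Proposition \ref{prop: equiv for RMHS}) and ``isomorphism of categories'' (which does hold for the pair $T^g_\bbC, P^g_\bbC$ by the strict identities of Proposition \ref{prop: char of MHS}(1)). I expect the write-up to consist essentially of: (i) citing Corollary \ref{cor: main cor}; (ii) citing Proposition \ref{prop: Hodge} and Proposition \ref{prop: char of MHS}(1) to get that $T^g_\bbC$ takes values in $\MHS^g_\bbC$; and (iii) citing Proposition \ref{prop: char of MHS}(1) again for the two identities $P^g_\bbC \circ T^g_\bbC = \id$ and $T^g_\bbC \circ P^g_\bbC = \id$, remarking that functoriality and the bijection on morphisms are immediate since nothing but the presentation of the weight filtration changes. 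No step here is a real obstacle; the ``hard part'' was Proposition \ref{prop: pApq} and the verification of condition ($\textrm{c}_I$), both already behind us.
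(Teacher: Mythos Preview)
Your proposal is correct and matches the paper's approach exactly: the paper simply states that the theorem follows by ``combining Corollary \ref{cor: main cor} and Proposition \ref{prop: char of MHS},'' and your write-up unpacks precisely how those two results assemble into the claimed equivalences and isomorphism. The one place you could be slightly more explicit is the assertion that $P^g_\bbC$ lands in $\OF^g_\bbC$; this is the nontrivial direction of Proposition \ref{prop: char of MHS}(2) (whose proof uses Lemma \ref{lem: pure plectic} and Lemma \ref{lem: extension}), so citing part (2) there, rather than only Definition \ref{def: pMHS}, would close the loop cleanly.
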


We may define tensor products and internal homomorphisms in $\MHS_\bbC^g$ as follows.
Suppose $U=(U_\bbC,W^\bullet,\{F_\mu^\bullet\},\{\ol F_\mu^\bullet\})$ and $V=(V_\bbC,W^\bullet,\{F_\mu^\bullet\},\{\ol F_\mu^\bullet\})$ are objects in $\MHS_\bbC^g$.
Then we define the tensor product $U\otimes V$ to be the quadruple
\begin{equation}
	U\otimes V=(U_\bbC\otimes_\bbC V_\bbC,W_\bullet,\{F_\mu^\bullet\},\{\ol F_\mu^\bullet\}),
\end{equation}
where the weight filtration is given by
\begin{equation*}
	W_n(U_\bbC\otimes_\bbC V_\bbC):=\sum_{n_1+n_2=n}W_{n_1}U_\bbC\otimes_\bbC W_{n_2}V_\bbC
\end{equation*}
for any $n\in\bbZ$ and the partial Hodge filtrations are given by
\begin{align*}
	F_\mu^p(U_\bbC\otimes_\bbC V_\bbC)&:=\sum_{p_1+p_2=p}F_\mu^{p_1}U_\bbC\otimes_\bbC F_\mu^{p_2}V_\bbC,\\
	\ol F_\mu^q(U_\bbC\otimes_\bbC V_\bbC)&:=\sum_{q_1+q_2=q}\ol F_\mu^{q_1}U_\bbC\otimes_\bbC\ol F_\mu^{q_2}V_\bbC
\end{align*}
for any $p,q\in\bbZ$ and $\mu=1,\ldots,g$.
Next we define the internal homomorphism $\underline{\Hom}(U,V)$ to be the quadruple
\begin{equation}
	\underline{\Hom}(U,V):=(\Hom_\bbC(U_\bbC,V_\bbC),W_\bullet,\{F_\mu^\bullet\},\{\ol F_\mu^\bullet\}),
\end{equation}
where the weight filtration are given by
\begin{equation*}
	W_n(\Hom_\bbC(U_\bbC,V_\bbC)):=\{\alpha\in\Hom_\bbC(U_\bbC,V_\bbC)\mid\forall m\in\bbZ\ \ \ \alpha(W_mU_\bbC)\subset W_{m+n}V_\bbC\}
\end{equation*}
for any $n \in \bbZ$ and the partial Hodge filtrations are given by
\begin{align*}
	F_\mu^p(\Hom_\bbC(U_\bbC,V_\bbC))&:=\{\alpha\in\Hom_\bbC(U_\bbC,V_\bbC)\mid\forall m\in\bbZ\ \ \ \alpha(F_\mu^mU_\bbC)\subset F^{m+p}_\mu V_\bbC\}\\
	\ol F_\mu^q(\Hom_\bbC(U_\bbC,V_\bbC))&:=\{\alpha\in\Hom_\bbC(U_\bbC,V_\bbC)\mid\forall m\in\bbZ\ \ \ \alpha(\ol F_\mu^mU_\bbC)\subset \ol F_\mu^{m+q}V_\bbC\}
\end{align*}
for any $p,q\in\bbZ$ and $\mu=1,\ldots,g$.
Then one can see that the tensor products and internal homomorphisms in $\MHS_\bbC^g$ are compatible with those in $\Rep_\bbC(\cG_\bbC^g)$ via the equivalences \eqref{eq: Rep OF MHS}.
In particular we obtain the following corollary.

\begin{corollary}
	The category $\MHS_\bbC^g$ is a neutral tannakian category over $\bbC$ with respect to the fiber functor
	\begin{equation}
		\omega^g_\bbC:\MHS_\bbC^g\rightarrow\Vec_\bbC
	\end{equation}
	associating to $V=(V_\bbC,W_\bullet,\{F_\mu^\bullet\},\{\ol F_\mu^\bullet\})$ the $\bbC$-vector space
	\[\Gr^{W^1}_\bullet\cdots\Gr^{W^g}_\bullet V_\bbC:=\bigoplus_{n_1,\ldots,n_g\in\bbZ}\Gr^{W^1}_{n_1}\cdots\Gr^{W^g}_{n_g}V_\bbC.\]
\end{corollary}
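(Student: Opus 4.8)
The plan is to deduce the statement by transporting the neutral tannakian structure along the equivalences of Theorem~\ref{thm: equiv for C}. Recall first the classical fact that for any affine group scheme $\cG$ over $\bbC$, the category $\Rep_\bbC(\cG)$ of finite dimensional $\bbC$-linear representations is a neutral tannakian category over $\bbC$, a fiber functor being the forgetful functor $\Rep_\bbC(\cG)\rightarrow\Vec_\bbC$ that sends a representation to its underlying $\bbC$-vector space. Applying this with $\cG=\cG_\bbC^g$, the category $\Rep_\bbC(\cG_\bbC^g)$ is a neutral tannakian category over $\bbC$ with fiber functor $\omega^{\Rep}\colon U\mapsto U_\bbC$.

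First I would verify that the composite equivalence $\Phi:=\psi_\bbC^g\circ P_\bbC^g\colon\MHS_\bbC^g\xrightarrow{\cong}\OF_\bbC^g\xrightarrow{\cong}\Rep_\bbC(\cG_\bbC^g)$ of Theorem~\ref{thm: equiv for C} is a tensor equivalence. By the explicit descriptions of the tensor product and internal homomorphism in $\MHS_\bbC^g$ given just above and those of \eqref{eq: ttensor} and \eqref{eq: tihom} in $\Rep_\bbC(\cG_\bbC^g)$, the functor $\Phi$ carries $\otimes$, $\ul\Hom$ and the unit object $\bbC(\boldsymbol 0)$ of Example~\ref{example: tate} to their counterparts in $\Rep_\bbC(\cG_\bbC^g)$, compatibly with the associativity, commutativity and unit constraints; this is precisely the compatibility recorded in the paragraph preceding the corollary. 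Since a tensor category that is tensor-equivalent to a neutral tannakian category is again a neutral tannakian category, with a fiber functor obtained by composing the tensor equivalence with the original fiber functor, it follows that $\MHS_\bbC^g$ is a neutral tannakian category over $\bbC$ and that $\omega^{\Rep}\circ\Phi\colon\MHS_\bbC^g\rightarrow\Vec_\bbC$ is a fiber functor on it.

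It then remains to identify $\omega^{\Rep}\circ\Phi$ with the functor $\omega_\bbC^g$ of the statement. Here I would unwind the construction of $\psi_\bbC^g$, which is the inverse of the isomorphism $\MHS^g(\Vec_\bbC)\xrightarrow{\cong}\OF_\bbC^g$ of Theorem~\ref{thm: main} followed by the $g$-fold iterate of the quasi-inverse $\psi\colon\MHS(\sA)\rightarrow\BG(\sA)$; at each of the $g$ successive levels $\psi$ replaces the underlying object of $\sA$ by the direct sum of the graded pieces of its weight filtration. Since, by the construction of $P_\bbC^g$ and Definition~\ref{def: pMHS}, the weight filtration appearing at the $\mu$-th level is the partial weight filtration $W^\mu_\bullet$ on $V_\bbC$, the underlying $\bbC$-vector space of $\Phi(V)=\psi_\bbC^g(P_\bbC^g(V))$ is functorially identified with $\bigoplus_{n_1,\ldots,n_g\in\bbZ}\Gr^{W^1}_{n_1}\cdots\Gr^{W^g}_{n_g}V_\bbC$, that is, with $\Gr^{W^1}_\bullet\cdots\Gr^{W^g}_\bullet V_\bbC$. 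Hence $\omega^{\Rep}\circ\Phi$ is naturally isomorphic to $\omega_\bbC^g$, so $\omega_\bbC^g$ is a fiber functor and the corollary follows.

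I expect the only real difficulty to be bookkeeping: tracking the constraint isomorphisms — and not merely the underlying objects — across the chain $\Rep_\bbC(\cG_\bbC^g)\cong\OF_\bbC^g\cong\MHS_\bbC^g$, and checking carefully that the $g$-fold iteration of $\psi$ produces the iterated graded space $\Gr^{W^1}_\bullet\cdots\Gr^{W^g}_\bullet V_\bbC$ with the correct ordering and labelling of the partial weight filtrations. Granting the tensor-compatibility already recorded before the corollary, no further substantive argument is needed.
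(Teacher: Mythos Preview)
Your proposal is correct and follows essentially the same approach as the paper: the corollary is stated there as an immediate consequence of the tensor-compatibility of the equivalences \eqref{eq: Rep OF MHS} recorded in the preceding paragraph, and you have simply unwound that implication, including the identification of the transported fiber functor with $\omega_\bbC^g$ via the iterated $\Gr^{W^\mu}_\bullet$ in the construction of $\psi_\bbC^g$. The paper gives no further argument beyond this, so your elaboration is a faithful expansion of what is implicit there.
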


In order to prove Proposition \ref{prop: char of MHS}, we prepare some results concerning the pure case.

\begin{definition}[pure plectic $\bbC$-Hodge structure]
	Let $n$ be an integer.
	A \textit{pure $g$-plectic $\bbC$-Hodge structure of weight $n$} is a pure weak $g$-plectic $\bbC$-Hodge structure (Definition \ref{def: pure weak plectic}) which is a mixed $g$-plectic $\bbC$-Hodge structure (Definition \ref{def: pMHS}) via the weight filtration given by $W_{n-1}V_\bbC:=\{0\}$ and $W_nV_\bbC:=V_\bbC$.
\end{definition}

Note that, for a pure weak $g$-plectic $\bbC$-Hodge structure $V$ of weight $n$, the partial weight filtrations on $V_\bbC$ are given by
	\begin{equation}\label{eq: pure partial weight}
		W^{I,\mu}_m V_\bbC: =\bigoplus_{\substack{\bsp,\bsq\in\bbZ^g, |\bsp+\bsq|=n\\ p_\mu+q_\mu\leq m}}(\bsF_I^{\bsp}\cap\ol\bsF_I^{\bsq})V_\bbC.
	\end{equation}

\begin{lemma}\label{lem: Gr pure}
	Let $V$ be an object in $\MHS_\bbC^g$. Then for any $n\in\bbZ$, $W_nV$ is also an object in $\MHS_\bbC^g$, and $\Gr^W_nV$ is a pure $g$-plectic $\bbC$-Hodge structure of weight $n$.
\end{lemma}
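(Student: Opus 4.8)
The plan is to express everything in terms of the plectic Deligne splitting $\{\bsA_I^{\bsp,\bsq}(V)\}$ of $V$ (Proposition \ref{prop: psplitting}) together with the behaviour of the functors $W_n$ and $\Gr^W_n$ on $\wMHS_\bbC^g$ already recorded in Corollary \ref{cor: coincides}. By that corollary, $W_nV$ is again an object of $\wMHS_\bbC^g$, $\Gr^W_nV$ is a pure weak $g$-plectic $\bbC$-Hodge structure of weight $n$, and the plectic Hodge filtrations of both $W_nV$ and $\Gr^W_nV$ are the ones induced from $V$. In view of Definition \ref{def: pMHS} it therefore remains only to show that the partial weight filtrations $W^{I,\mu}_\bullet$ of $W_nV$, and those of $\Gr^W_nV$, are independent of the subset $I\subset\{1,\ldots,g\}$; granting this, $W_nV$ lies in $\MHS_\bbC^g$, and $\Gr^W_nV$ --- being pure weak of weight $n$ and lying in $\MHS_\bbC^g$ --- is by definition a pure $g$-plectic $\bbC$-Hodge structure of weight $n$.

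The first step is to compute $\bsA_I^{\bsp,\bsq}(W_nV)$ and $\bsA_I^{\bsp,\bsq}(\Gr^W_nV)$ in terms of the splitting of $V$. Since $W_kV_\bbC\subset W_nV_\bbC$ for $k\le n$ and the plectic Hodge filtrations of $W_nV$ are induced from those of $V$, each subspace appearing in the defining formula \eqref{eq: pApq} for $W_nV$ coincides, when $\lvert\bsp+\bsq\rvert\le n$, with the corresponding subspace for $V$; hence $\bsA_I^{\bsp,\bsq}(W_nV)=\bsA_I^{\bsp,\bsq}(V)$ in that range. When $\lvert\bsp+\bsq\rvert>n$ one has $\Gr^W_{\lvert\bsp+\bsq\rvert}(W_nV_\bbC)=0$, and since $W_nV\in\wMHS_\bbC^g$, the isomorphism $\rho_I$ of Proposition \ref{prop: psplitting}(4) forces $\bsA_I^{\bsp,\bsq}(W_nV)=0$. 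For $\Gr^W_nV$ the induced weight filtration is concentrated in degree $n$, so by the same reasoning $\bsA_I^{\bsp,\bsq}(\Gr^W_nV)=0$ unless $\lvert\bsp+\bsq\rvert=n$; when $\lvert\bsp+\bsq\rvert=n$ the sum over $\bsj\ge\boldsymbol 0$ in \eqref{eq: pApq} vanishes term by term, since it involves $W_{n-\lvert\bsj\rvert-1}(\Gr^W_nV_\bbC)=0$, so $\bsA_I^{\bsp,\bsq}(\Gr^W_nV)=(\bsF_I^\bsp\cap\ol\bsF_I^\bsq)\Gr^W_nV_\bbC$, in agreement with \eqref{eq: pure partial weight}.

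I would then feed these into the identity $W^{I,\mu}_mV_\bbC=\bigoplus_{p_\mu+q_\mu\le m}\bsA_I^{\bsp,\bsq}(V)$ and the direct sum decompositions \eqref{eq: W F by A}. For $W_nV$,
\[
	W^{I,\mu}_m(W_nV_\bbC)=\bigoplus_{\substack{\bsp,\bsq\in\bbZ^g\\ p_\mu+q_\mu\le m,\ \lvert\bsp+\bsq\rvert\le n}}\bsA_I^{\bsp,\bsq}(V)=W^{I,\mu}_mV_\bbC\cap W_nV_\bbC,
\]
the last equality holding because both $W^{I,\mu}_mV_\bbC$ and $W_nV_\bbC$ are sub-sums of the single direct sum $V_\bbC=\bigoplus_{\bsp,\bsq}\bsA_I^{\bsp,\bsq}(V)$. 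As $V\in\MHS_\bbC^g$, the space $W^{I,\mu}_mV_\bbC=W^\mu_mV_\bbC$ does not depend on $I$, and hence neither does $W^{I,\mu}_m(W_nV_\bbC)=W^\mu_mV_\bbC\cap W_nV_\bbC$. For $\Gr^W_nV$, applying the canonical surjection $\pi\colon W_nV_\bbC\to\Gr^W_nV_\bbC$ to the decomposition of $W^\mu_mV_\bbC\cap W_nV_\bbC$ just obtained annihilates the summands with $\lvert\bsp+\bsq\rvert<n$ and, via $\rho_I$, carries those with $\lvert\bsp+\bsq\rvert=n$ onto $(\bsF_I^\bsp\cap\ol\bsF_I^\bsq)\Gr^W_nV_\bbC$; thus $W^{I,\mu}_m(\Gr^W_nV_\bbC)=\pi(W^\mu_mV_\bbC\cap W_nV_\bbC)$, which again does not involve $I$. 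This finishes the argument.

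The only place calling for genuine care is the second step, namely verifying that the intersection \eqref{eq: pApq} computed for $W_nV$ reproduces verbatim the one for $V$ when $\lvert\bsp+\bsq\rvert\le n$; this rests on the inclusions $W_kV_\bbC\subset W_nV_\bbC$ for $k\le n$ and on Corollary \ref{cor: coincides} identifying the plectic Hodge filtrations of $W_nV$ and $\Gr^W_nV$ with the induced ones. Everything else reduces to formal rearrangements of direct sums and the hypothesis $V\in\MHS_\bbC^g$.
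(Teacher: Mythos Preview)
Your argument is correct and follows essentially the same route as the paper's: reduce via Corollary \ref{cor: coincides} to checking that $W^{I,\mu}_m$ on $W_nV$ and on $\Gr^W_nV$ is independent of $I$, then express these as $W^{I,\mu}_mV_\bbC\cap W_nV_\bbC$ and its image in $\Gr^W_nV_\bbC$. The only cosmetic difference is that the paper obtains the identity $\bigoplus_{p_\mu+q_\mu\le m}\bsA_I^{\bsp,\bsq}(W_nV)=W_nV_\bbC\cap\bigoplus_{p_\mu+q_\mu\le m}\bsA_I^{\bsp,\bsq}(V)$ (and its analogue for $\Gr^W_nV$) by invoking Corollary \ref{cor: strictness of pA} applied to the morphisms $W_nV\hookrightarrow V$ and $W_nV\twoheadrightarrow\Gr^W_nV$, whereas you derive the same identities by computing each $\bsA_I^{\bsp,\bsq}(W_nV)$ and $\bsA_I^{\bsp,\bsq}(\Gr^W_nV)$ directly from the defining formula \eqref{eq: pApq}.
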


\begin{proof}
	By Corollary \ref{cor: coincides}, $W_nV$ is an object in $\wMHS_\bbC^g$ and $\Gr^W_nV$ is a pure weak $\bbC$-Hodge structure of weight $n$.
	By Corollary \ref{cor: strictness of pA}, we have
	\begin{equation}\label{eq: pW for W}
		\bigoplus_{\substack{\bsp,\bsq\in\bbZ^g\\ p_\mu+q_\mu\leq m}}\bsA_I^{\bsp,\bsq}(W_nV)
		=W_nV_\bbC\cap\biggl(\bigoplus_{\substack{\bsp,\bsq\in\bbZ^g\\ p_\mu+q_\mu\leq m}}\bsA_I^{\bsp,\bsq}(V)\biggr)
	\end{equation}
	and
	\begin{equation}\label{eq: pW for Gr}
		\bigoplus_{\substack{\bsp,\bsq\in\bbZ^g\\ p_\mu+q_\mu\leq m}}\bsA_I^{\bsp,\bsq}(\Gr^W_nV)
		=\biggl(\bigoplus_{\substack{\bsp,\bsq\in\bbZ^g\\ p_\mu+q_\mu\leq m}}\bsA_I^{\bsp,\bsq}(W_nV)\biggr)/\biggl(\bigoplus_{\substack{\bsp,\bsq\in\bbZ^g\\ p_\mu+q_\mu\leq m}}\bsA_I^{\bsp,\bsq}(W_{n-1}V)\biggr)
	\end{equation}
	for any $m\in\bbZ$, $\mu=1,\ldots,g$, and $I\subset\{1,\ldots,g\}$.
	Since $W^{I,\mu}_mV_\bbC=\bigoplus_{\substack{\bsp,\bsq\in\bbZ^g\\ p_\mu+q_\mu\leq m}}\bsA_I^{\bsp,\bsq}(V)$ is independent of $I$, \eqref{eq: pW for W} and hence \eqref{eq: pW for Gr} are also independent of $I$.
\end{proof}

\begin{example}
	For $\bsn=(n_\mu)\in\bbZ^g$, let
	$
		\bbC(\bsn) = (V_\bbC, \{V^{\bsp,\bsq}\}, \{t_\mu\})
	$
	be the plectic Tate object of Example \ref{example: tate}.  Then the object in $\MHS^g_\bbC$ which is equivalent to $\bbC(\bsn)$
	via the above equivalence of categories, which we again denote by $\bbC(\bsn)$, may be given by
	\begin{equation*}
		\bbC(\bsn)=(V_\bbC, W_\bullet, \{F^\bullet_\mu\},  \{\ol F^\bullet_\mu\}),
	\end{equation*}
	where $V_\bbC:=\bbC$ is a $\bbC$-vector space of dimension one, the weight filtrations on 
	$V_\bbC$ is given by $W_{-2\lvert\bsn\rvert-1} V_\bbC=0$, $W_{-2\lvert\bsn\rvert} V_\bbC=V_\bbC$, and the partial Hodge filtrations
	on $V_\bbC$ are given by 
	\begin{align*}
		F^{-n_\mu}_\mu V_\bbC&= \ol F^{\,-n_\mu}_\mu V_\bbC = V_\bbC, & 
		F^{-n_\mu+1}_\mu V_\bbC&= \ol F^{\,-n_\mu+1}_\mu V_\bbC = \{0\}
	\end{align*} 
	for $\mu=1,\ldots,g$.  The object $\bbC(\bsn)$ is a pure $g$-plectic $\bbC$-Hodge structure of weight $-2\lvert\bsn\rvert$.
\end{example}
%


\begin{lemma}\label{lem: pure plectic}
	Let $n$ be an integer, and let $V$  be a pure $g$-plectic $\bbC$-Hodge structure of weight $n$.
	Then $P^g_\bbC(V)$ is an object in $\OF_\bbC^g$.
\end{lemma}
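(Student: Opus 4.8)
The plan is to reduce the statement to elementary bookkeeping with the plectic Hodge decomposition of $V$. Being a pure $g$-plectic $\bbC$-Hodge structure of weight $n$, $V$ is in particular a pure weak $g$-plectic $\bbC$-Hodge structure of weight $n$; so by \eqref{eq: sunday} and \eqref{eq: monday} (taken with $I=\emptyset$), the subspaces $V^{\bsp,\bsq}:=(\bsF^\bsp\cap\ol\bsF^\bsq)V_\bbC$, where $\bsp,\bsq\in\bbZ^g$ run over pairs with $\lvert\bsp+\bsq\rvert=n$, give a grading
\[
	V_\bbC=\bigoplus_{\lvert\bsp+\bsq\rvert=n}V^{\bsp,\bsq},\qquad
	\bsF^\bsp V_\bbC=\bigoplus_{\bsr\geq\bsp,\ \lvert\bsr+\bss\rvert=n}V^{\bsr,\bss},
\]
with only finitely many $V^{\bsp,\bsq}$ nonzero, and by Remark \ref{rem: weak} (i.e.\ \eqref{eq: monday2}) also $\ol\bsF^\bsq V_\bbC=\bigoplus_{\bss\geq\bsq,\ \lvert\bsr+\bss\rvert=n}V^{\bsr,\bss}$. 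Since each $F^\bullet_\nu$ and $\ol F^\bullet_\nu$ is a finite filtration, choosing $\bsp$ (resp.\ $\bsq$) with $\mu$-th component $k$ and all remaining components small enough yields $\bsF^\bsp V_\bbC=F^k_\mu V_\bbC$ (resp.\ $\ol\bsF^\bsq V_\bbC=\ol F^k_\mu V_\bbC$); combined with the finite support of the grading and the displayed formulas, this gives
\[
	F^k_\mu V_\bbC=\bigoplus_{r_\mu\geq k,\ \lvert\bsr+\bss\rvert=n}V^{\bsr,\bss},\qquad
	\ol F^k_\mu V_\bbC=\bigoplus_{s_\mu\geq k,\ \lvert\bsr+\bss\rvert=n}V^{\bsr,\bss}
\]
for all $\mu$ and $k$. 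Finally, by \eqref{eq: pure partial weight} with $I=\emptyset$, the $\mu$-th partial weight filtration of $P^g_\bbC(V)=P^g_\emptyset(V)$ is $W^\mu_m V_\bbC=\bigoplus_{\lvert\bsp+\bsq\rvert=n,\ p_\mu+q_\mu\leq m}V^{\bsp,\bsq}$. Thus every filtration step occurring in $P^g_\bbC(V)$ is a \emph{coordinate subspace} $\bigoplus_{(\bsp,\bsq)\in S}V^{\bsp,\bsq}$ for a subset $S$ of $\Lambda:=\{(\bsp,\bsq)\in\bbZ^g\times\bbZ^g:\lvert\bsp+\bsq\rvert=n\}$ cut out by a single linear inequality, and any intersection of two such is again a coordinate subspace.

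Next I would check the two conditions in Definition \ref{def: orthogonal} for $P^g_\bbC(V)$. Fix $\mu$. Then $\Gr^{W^\mu}_mV_\bbC\cong\bigoplus_{(\bsp,\bsq)\in\Lambda,\ p_\mu+q_\mu=m}V^{\bsp,\bsq}$, and the filtrations induced by $F^\bullet_\mu$, $\ol F^\bullet_\mu$ are the coordinate subspaces cut out by $p_\mu\geq k$, resp.\ $q_\mu\geq m+1-k$; since these two conditions partition the slice $p_\mu+q_\mu=m$, the decomposition $\Gr^{W^\mu}_mV_\bbC=F^k_\mu\Gr^{W^\mu}_mV_\bbC\oplus\ol F^{m+1-k}_\mu\Gr^{W^\mu}_mV_\bbC$ of \eqref{eq: n-oppose} holds, so $\Gr^{W^\mu}_mV$ is pure of weight $m$ and $(V_\bbC,W^\mu_\bullet,F^\bullet_\mu,\ol F^\bullet_\mu)$ is a mixed $\bbC$-Hodge structure. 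Now fix $\mu\neq\nu$ and let $X$ be any of $W^\mu_mV_\bbC$, $F^k_\mu V_\bbC$, $\ol F^k_\mu V_\bbC$; by the first paragraph $X=\bigoplus_{(\bsp,\bsq)\in S}V^{\bsp,\bsq}$ for a suitable $S\subset\Lambda$, and the filtrations induced on $X$ from $W^\nu_\bullet$, $F^\bullet_\nu$, $\ol F^\bullet_\nu$ are obtained by intersecting $X$ with the corresponding coordinate subspaces, hence are themselves coordinate subspaces. Therefore $\Gr^{W^\nu}_{m'}X\cong\bigoplus_{(\bsp,\bsq)\in S,\ p_\nu+q_\nu=m'}V^{\bsp,\bsq}$, with induced Hodge filtrations cut out by $p_\nu\geq k'$, resp.\ $q_\nu\geq l'$; exactly as before these partition the slice $p_\nu+q_\nu=m'$ when $l'=m'+1-k'$, so $\Gr^{W^\nu}_{m'}X$ is pure of weight $m'$ and $X$ with the induced $\nu$-filtrations is a mixed $\bbC$-Hodge structure. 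This is the remaining requirement in Definition \ref{def: orthogonal}, and as $P^g_\bbC(V)$ is by construction an object of $\Fil^g_g(\bbC)$, we conclude $P^g_\bbC(V)\in\OF^g_\bbC$.

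The only step requiring care is the passage from $\bsF^\bsp=\bigcap_\mu F^{p_\mu}_\mu$ to a formula for the single partial Hodge filtration $F^k_\mu$: one recovers $F^k_\mu$ by letting the other indices $p_\nu$ tend to $-\infty$, which is legitimate precisely because each $F^\bullet_\nu$ is finite and the grading $\{V^{\bsp,\bsq}\}$ has finite support, so the extraneous inequalities $r_\nu\geq p_\nu$ become vacuous. Once this dictionary is established, the rest is the routine observation that the coordinate subspaces of a fixed finite grading are stable under intersection and behave well under passage to subquotients, together with the fact that for fixed value of $p_\mu+q_\mu$ the conditions $p_\mu\geq k$ and $p_\mu<k$ partition the indexing set — which is exactly the $n$-opposedness \eqref{eq: n-oppose} needed for purity in each step.
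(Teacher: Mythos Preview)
Your proof is correct and follows essentially the same approach as the paper: both reduce everything to the grading $V^{\bsp,\bsq}=(\bsF^\bsp\cap\ol\bsF^\bsq)V_\bbC$ on the slice $\lvert\bsp+\bsq\rvert=n$, express all filtration steps of $P^g_\bbC(V)$ as coordinate subspaces, and verify the opposedness condition \eqref{eq: n-oppose} by observing that the relevant index conditions partition the slice. Your version is slightly more streamlined in that it works entirely in the $I=\emptyset$ grading, obtaining $\ol F^k_\mu$ as a coordinate subspace directly via \eqref{eq: monday2}; the paper instead passes to the $I=\{\nu\}$ grading to handle $\ol F^q_\nu$ and then reindexes back to $I=\emptyset$, which makes one extra (harmless) appeal to the independence of $W^{I,\mu}$ from $I$.
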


\begin{proof}
	We will show that for any $\nu\neq\mu$, the $\bbC$-linear subspaces $W^\mu_l V_\bbC$,
	$F^l_\mu V_\bbC$, and $\ol F^l_\mu V_\bbC$ with the $\nu$-th filtrations are mixed $\bbC$-Hodge structure.
	First, for $W^\mu_lV_\bbC$, we have
	\begin{equation*}
		\Gr^{W^\nu}_m W^\mu_l V_\bbC 
		 \cong\bigoplus_{\substack{\bsp,\bsq\in\bbZ^g, |\bsp+\bsq|=n\\p_\nu+q_\nu=m\\ p_\mu+q_\mu\leq l}}(\bsF_I^{\bsp}\cap\ol\bsF_I^{\bsq})V_\bbC
	\end{equation*}
	for any $I$, and
	 \begin{align*}	
		F^p_\nu\Gr^{W^\nu}_mW^\mu_l V_\bbC
		&\cong\bigoplus_{\substack{\bsr,\bss\in\bbZ^g, |\bsr+\bss|=n\\r_\nu\geq p,\, r_\nu+s_\nu=m\\ r_\mu+s_\mu\leq l}}(\bsF^{\bsr}\cap\ol\bsF^{\bss})V_\bbC,\\
		\ol F^q_\nu\Gr^{W^\nu}_m W^\mu_l V_\bbC&\cong
		\bigoplus_{\substack{\bsr,\bss\in\bbZ^g, |\bsr+\bss|=n\\r_\nu\geq q,\,r_\nu+s_\nu=m\\ r_\mu+s_\mu\leq l}}(\bsF_{\{\nu\}}^{\bsr}\cap\ol\bsF_{\{\nu\}}^{\bss})V_\bbC
		=\bigoplus_{\substack{\bsr,\bss\in\bbZ^g, |\bsr+\bss|=n\\s_\nu\geq q,\,r_\nu+s_\nu=m\\ r_\mu+s_\mu\leq l}}(\bsF^{\bsr}\cap\ol\bsF^{\bss})V_\bbC.
	 \end{align*}
	This shows that we have a splitting
	 \begin{equation*}
		 \Gr^{W^\nu}_m W^\mu_l V_\bbC = 
		 F^p_\nu  \Gr^{W^\nu}_m W^\mu_l V_\bbC \oplus \ol F^{m+1-p}_\nu\Gr^{W^\nu}_m W^\mu_l V_\bbC
	 \end{equation*}
	 for any $p,q \in \bbZ$.  Hence we see that $W^\mu_l V_\bbC$ with the $\nu$-th filtrations is a mixed $\bbC$-Hodge structure as desired.
	 Similarly, for $F_\mu^l V_\bbC$, we have
	 	\[\Gr^{W^\nu}_mF_\mu^l V_\bbC\cong\bigoplus_{\substack{\bsp,\bsq\in\bbZ^g,\ \lvert\bsp+\bsq\rvert=n\\ p_\nu+q_\nu=m\\ p_\mu\geq l}}(\bsF_I^\bsp\cap\ol\bsF_I^\bsq)V_\bbC\]
		for any $I\not\ni\mu$, and
		\begin{align*}
			F_\nu^p\Gr^{W^\nu}_mF_\mu^l V_\bbC&\cong\bigoplus_{\substack{\bsr,\bss\in\bbZ^g,\ \lvert\bsr+\bss\rvert=n\\ r_\nu\geq p,\ r_\nu+s_\nu=m\\ r_\mu\geq l}}(\bsF^\bsr\cap\ol\bsF^\bss)V_\bbC,\\
			\ol F_\nu^q\Gr^{W^\nu}_mF_\mu^l V_\bbC&\cong\bigoplus_{\substack{\bsr,\bss\in\bbZ^g,\ \lvert\bsr+\bss\rvert=n\\ r_\nu\geq q,\ r_\nu+s_\nu=m\\ r_\mu\geq l}}(\bsF_{\{\nu\}}^\bsr\cap\ol\bsF_{\{\nu\}}^\bss)V_\bbC
			\cong \bigoplus_{\substack{\bsr,\bss\in\bbZ^g,\ \lvert\bsr+\bss\rvert=n\\ s_\nu\geq q,\ r_\nu+s_\nu=m\\ r_\mu\geq l}}(\bsF^\bsr\cap\ol\bsF^\bss)V_\bbC.
		\end{align*}
	Hence we see that $F_\mu^I V_\bbC$ with $\nu$-th filtrations is a mixed $\bbC$-Hodge structure.
	The assertion for $\ol F_\mu^l V_\bbC$ follows from the same argument.
	\end{proof}

Next we will review some facts concerning the extension of mixed Hodge structures with respect to strict morphisms.
We first define exactness of a sequence in $\Fil^1_1(\bbC)$ and recall 
Lemma \ref{lem: extension} which asserts that mixed $\bbC$-Hodge structures are closed under the extension in $\Fil^1_1(\bbC)$.

\begin{definition}\label{def: strict}
	\begin{enumerate}
	\item A morphism $\alpha: U \rightarrow V$ in $\Fil^1_1(\bbC)$ is said to be \textit{strict} if $\alpha$ is strictly compatible with the filtrations $F^\bullet\cap W_\bullet$ and $\ol F^\bullet\cap W_\bullet$.
	\item A sequence
			\[0\rightarrow T\xrightarrow[]{\alpha} U\xrightarrow[]{\beta} V\rightarrow 0\]
		in $\Fil^1_1(\bbC)$ is said to be \textit{exact} if the sequence of underlying $\bbC$-vector space is exact and $\alpha$ and $\beta$ are strict.
	\end{enumerate}
\end{definition}

\begin{lemma}[\cite{H} Lemma 8.1.4 or \cite{PS} Criterion 3.10]\label{lem: extension}
	Let
	\[0\rightarrow T\rightarrow U\rightarrow V\rightarrow 0\]
	be an exact sequence in $\Fil^1_1(\bbC)$.
	If $T$ and $V$ are mixed $\bbC$-Hodge structures, then $U$ is also a mixed $\bbC$-Hodge structures.
\end{lemma}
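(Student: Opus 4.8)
The plan is to verify the definition of $\MHS_\bbC$ directly on the weight-graded pieces: it suffices to show that for every $n\in\bbZ$ the filtrations $F^\bullet$ and $\ol F^\bullet$ induce on $\Gr^W_n U_\bbC$ a pure $\bbC$-Hodge structure of weight $n$, i.e. that $\Gr^W_n U_\bbC = F^p\Gr^W_n U_\bbC\oplus\ol F^{\,n+1-p}\Gr^W_n U_\bbC$ for every $p\in\bbZ$, in the sense of \eqref{eq: n-oppose}. Here $F^p\Gr^W_n U_\bbC$ denotes the image of $(F^p\cap W_n)U_\bbC$ in $\Gr^W_n U_\bbC$, and likewise for $\ol F^\bullet$. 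Write $\alpha\colon T\hookrightarrow U$ and $\beta\colon U\twoheadrightarrow V$ for the two strict maps and fix $p\in\bbZ$, setting $q:=n+1-p$ so that $p+q=n+1$.

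First I would record the standard consequences of strictness. Taking $p$ sufficiently small in the condition of Definition \ref{def: strict} (so that $F^p$ equals the whole space) shows $\alpha$ and $\beta$ are strict for $W_\bullet$, so that $0\to\Gr^W_n T_\bbC\to\Gr^W_n U_\bbC\to\Gr^W_n V_\bbC\to 0$ is exact for every $n$. Combining strictness for $F^\bullet\cap W_\bullet$ with compatibility with $W_\bullet$ yields two lifting facts I will use throughout: (i) $\beta\bigl((F^p\cap W_m)U_\bbC\bigr)=(F^p\cap W_m)V_\bbC$ for every $m$; and (ii) if $u\in(F^p\cap W_n)U_\bbC$ satisfies $\beta(u)\in W_{n-1}V_\bbC$, then $u$ differs from an element of $\alpha\bigl((F^p\cap W_n)T_\bbC\bigr)$ by an element of $(F^p\cap W_{n-1})U_\bbC$ --- indeed $\beta(u)\in(F^p\cap W_{n-1})V_\bbC$, so by (i) one may subtract a preimage $u'\in(F^p\cap W_{n-1})U_\bbC$, and then $u-u'\in\Ker\beta=\alpha(T_\bbC)$ lies in $\alpha\bigl((F^p\cap W_n)T_\bbC\bigr)$ by strictness of $\alpha$. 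The statements (i) and (ii) with $\ol F$ in place of $F$ hold by the same reasoning.

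The core of the argument is a dimension count. Using (i) and (ii) I would show that $\Gr^W_n\beta$ restricts to a surjection $F^p\Gr^W_n U_\bbC\twoheadrightarrow F^p\Gr^W_n V_\bbC$ with kernel $F^p\Gr^W_n U_\bbC\cap\Gr^W_n\alpha(\Gr^W_n T_\bbC)=\Gr^W_n\alpha\bigl(F^p\Gr^W_n T_\bbC\bigr)$; since $\Gr^W_n\alpha$ is injective, this gives
\[
	\dim_\bbC F^p\Gr^W_n U_\bbC=\dim_\bbC F^p\Gr^W_n T_\bbC+\dim_\bbC F^p\Gr^W_n V_\bbC,
\]
and the same identity with $\ol F^{\,q}$ in place of $F^p$ throughout. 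Next, $F^p\Gr^W_n U_\bbC\cap\ol F^{\,q}\Gr^W_n U_\bbC=\{0\}$: a class in the intersection maps to $F^p\Gr^W_n V_\bbC\cap\ol F^{\,q}\Gr^W_n V_\bbC=\{0\}$ by purity of $\Gr^W_n V$, hence lies in $\Gr^W_n\alpha(\Gr^W_n T_\bbC)$; applying (ii) and its $\ol F$-analogue to two representatives exhibits its unique preimage under $\Gr^W_n\alpha$ as lying in both $F^p\Gr^W_n T_\bbC$ and $\ol F^{\,q}\Gr^W_n T_\bbC$, so in their intersection $\{0\}$ by purity of $\Gr^W_n T$, and the class vanishes. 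Finally, purity of $\Gr^W_n T$ and $\Gr^W_n V$ gives $\dim F^p\Gr^W_n X_\bbC+\dim\ol F^{\,q}\Gr^W_n X_\bbC=\dim\Gr^W_n X_\bbC$ for $X\in\{T,V\}$; adding the two dimension identities for $U$ yields $\dim F^p\Gr^W_n U_\bbC+\dim\ol F^{\,q}\Gr^W_n U_\bbC=\dim\Gr^W_n U_\bbC$, which together with the vanishing intersection is the splitting \eqref{eq: n-oppose}. Running this over all $p$ and $n$ shows $U\in\MHS_\bbC$.

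I expect the main obstacle to be the identity $F^p\Gr^W_n U_\bbC\cap\Gr^W_n\alpha(\Gr^W_n T_\bbC)=\Gr^W_n\alpha\bigl(F^p\Gr^W_n T_\bbC\bigr)$, together with the corresponding lifting step inside the vanishing-intersection argument. One is given an element of $(F^p\cap W_n)U_\bbC$ whose class modulo $W_{n-1}U_\bbC$ lies in the image of $\alpha$, and must produce an $\alpha$-preimage that is still in Hodge level $p$ on the graded piece, whereas a naive lift need only land in $(F^p\cap W_n)U_\bbC+W_{n-1}U_\bbC$. The remedy is to first use strictness of $\beta$ to push the discrepancy into $W_{n-1}$ and then strictness of $\alpha$ to realize it through $T$; this is precisely where strictness with respect to both $W_\bullet$ and $F^\bullet\cap W_\bullet$, rather than merely $F^\bullet$, is essential. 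The argument is that of \cite[Lemma 8.1.4]{H} and \cite[Criterion 3.10]{PS}.
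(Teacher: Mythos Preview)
The paper does not supply its own proof of this lemma; it simply records the statement with a citation to \cite[Lemma 8.1.4]{H} and \cite[Criterion 3.10]{PS}. Your argument is correct and is essentially the standard one from those references: strictness of $\alpha$ and $\beta$ with respect to $F^\bullet\cap W_\bullet$ and $\ol F^\bullet\cap W_\bullet$ yields short exact sequences on each $F^p\Gr^W_n$ and $\ol F^{\,n+1-p}\Gr^W_n$, whence the dimension identity and the vanishing intersection give the $n$-opposedness on $\Gr^W_n U_\bbC$. Your emphasis on why strictness for $F^\bullet\cap W_\bullet$ (rather than for $F^\bullet$ and $W_\bullet$ separately) is essential matches the paper's Remark following the lemma.
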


\begin{remark}
	The strict compatibility with the filtrations $W_\bullet$, $F^\bullet$, and $\ol F^\bullet$ is not sufficient to prove Lemma \ref{lem: extension}.
	Note that by Proposition \ref{prop: strictness of A}, a morphism of mixed $\bbC$-Hodge structures 
	is automatically strict in the sense of Definition \ref{def: strict}.
\end{remark}

\begin{proof}[Proof of Proposition \ref{prop: char of MHS}]
	(1) follows from Proposition \ref{prop: pApq} and Proposition \ref{prop: psplitting}.
	Then it is enough to show that for any object $V=(V_\bbC,W_\bullet,\{F_\mu^\bullet\},\{\ol F_\mu^\bullet\})$ in $\MHS_\bbC^g$,
	the object $P^g_\bbC(V)=(V_\bbC,\{W^\mu_\bullet\},\{F_\mu^\bullet\},\{\ol F_\mu^\bullet\})$ lies in $\OF_\bbC^g$. Here $W^\mu_\bullet$ denotes $W^{I,\mu}_\bullet$, which is independent of $I$.
	First we show that $(W_n\cap F_\mu^l)V_\bbC$ with $\nu$-th filtrations is a mixed $\bbC$-Hodge structure for any $\mu\neq\nu$ and $n,l\in\bbZ$ by induction on $n$.
	This is true for $n$ sufficiently small.
	Assume $(W_{n-1}\cap F_\mu^l)V_\bbC$ with $\nu$-th filtrations is a mixed $\bbC$-Hodge structure.
	We have a short exact sequence of $\bbC$-vector spaces
	\begin{equation}\label{eq: SES2}
		0\rightarrow (W_{n-1}\cap F_\mu^l)V_\bbC\rightarrow (W_n\cap F_\mu^l)V_\bbC\rightarrow F_\mu^l\Gr^W_nV_\bbC\rightarrow 0.
	\end{equation}
	Since $W_\bullet$, $F_\mu^\bullet$, $W^\nu_\bullet$, and $F_\nu^\bullet$ can be written as direct sums of $\bsA^{\bsp,\bsq}(V)$,
	the sequence \eqref{eq: SES2} is strictly compatible with $F_\nu^\bullet\cap W^\nu_\bullet$.
	Similarly, since $W_\bullet$, $F_\mu^\bullet$, $W^\nu_\bullet$, and $\ol F_\nu^\bullet$ can be written as direct sums of $\bsA_{\{\nu\}}^{\bsp,\bsq}(V)$,
	the sequence \eqref{eq: SES2} is strictly compatible with $\ol F_\nu^\bullet\cap W^\nu_\bullet$.
	Moreover $F_\mu^l\Gr^W_nV_\bbC$ with $\nu$-th filtrations is a mixed $\bbC$-Hodge structure by Lemma \ref{lem: Gr pure} and Lemma \ref{lem: pure plectic}.
	Hence $(W_n\cap F_\mu^l)V_\bbC$ with $\nu$-th filtrations is also a mixed $\bbC$-Hodge structure by Lemma \ref{lem: extension}.
	Since $W_nV_\bbC=V_\bbC$ for $n$ sufficiently large, we see that $F_\mu^l V_\bbC$ with $\nu$-th filtrations is again a mixed $\bbC$-Hodge structure as desired.
	The claims for $W^\mu_l V_\bbC$ and $\ol F_\mu^l V_\bbC$ may be proved in a similar fashion.
\end{proof}

\begin{example}
	We note that $\MHS^g_\bbC$ is strictly smaller than $\wMHS_\bbC^g$ for any $g>1$.
	For example, consider the case when $g=2$ and let $V_\bbC:=\bbC e_0\oplus\bbC e_{-4}$ with the filtrations defined by
	\begin{align*}
		W_n V_\bbC &:= \begin{cases}
				0  &  n \leq -5, \\
				\bbC e_{-4} &  n=-4,\ldots,-1,\\
				V_\bbC  &  n \geq 0,
		\end{cases}\\
		F^{p_1}_1 V_\bbC=\ol F^{p_1}_1 V_\bbC &:= \begin{cases}
				V_\bbC &  p_1 \leq 0, \\
				\bbC e_{-4} &  p_1=1,\\
				0  &  p_1\geq 2,
		\end{cases}
	\end{align*}
	\begin{align*}
		F^{p_2}_2 V_\bbC := \begin{cases}
				V_\bbC &  p_2 \leq -3, \\
				\bbC (e_0+ie_{-4}) &  p_2=-2,-1,0,\\
				0  &  p_2\geq 1,
		\end{cases}
		&&\text{and}&&
		\ol F^{p_2}_2 V_\bbC := \begin{cases}
				V_\bbC &  p_2 \leq -3, \\
				\bbC (e_0-ie_{-4}) &  p_2=-2,-1,0,\\
				0  &  p_2\geq 1.
		\end{cases}
	\end{align*}
	Then one can show that $V=(V_\bbC,W_\bullet,\{F_1^\bullet,F_2^\bullet\},\{\ol F_1^\bullet,\ol F_2^\bullet\})$ defined as above is an object in $\wMHS^2_\bbC$.
	However, since $W^{\emptyset,1}_0 V_\bbC=\bbC(e_0+ie_{-4})$ and $W^{\{2\},1}_0V_\bbC=\bbC(e_0-ie_{-4})$, this $V$ is not an object in $\MHS^2_\bbC$.
	\end{example}

%
%
%
%
%
\section{Mixed plectic $\bbR$-Hodge structures and the calculation of extension groups}
%
%
%
%
%

Let $\cG$ be the tannakian fundamental group of the category of mixed $\bbR$-Hodge structures $\MHS_\bbR$,
and for any integer $g\geq0$, consider the category $\Rep_\bbR(\cG^g)$ of finite representations of $\cG^g$.
In this section, we consider the real version of the theory discussed in the previous sections,
and will calculate the extension groups in the category $\Rep_\bbR(\cG^g)$.
In particular, we will define a functor $\Lambda^\bullet$, 
which associates to a complex $U^\bullet$ in $\Rep_\bbR(\cG^g)$
a complex of $\bbR$-vector spaces.  We will prove in Theorem \ref{thm: old theorem}
that $\Lambda^\bullet(U^\bullet)$ calculates the extension groups $\Ext^m_{\Rep_\bbR(\cG^g)}(\bbR(\boldsymbol{0}),U^\bullet)$ of $U^\bullet$ by $\bbR(\boldsymbol{0})$ in $\Rep_\bbR(\cG^g)$.

%
\subsection{Mixed plectic $\bbR$-Hodge structures}
%

Let $g$ be an integer $\geq 0$.
In this subsection, we first give an explicit description of the category $\Rep_\bbR(\cG^g)$.  We then
define the categories $\MHS_\bbR^g$ of mixed $g$-plectic $\bbR$-Hodge structures and $\OF_\bbR^g$ of 
$g$-orthogonal families of mixed $\bbR$-Hodge structures.

\begin{proposition}\label{prop: explicit R-rep}
	An object $\Rep_\bbR(\cG^g)$ uniquely corresponds to a triple
	$U := (U_\bbR, \{  U^{\bsp,\bsq} \}, \{ t_\mu\})$, where $U_\bbR$ is a finite dimensional $\bbR$-vector space,
	$\{ U^{\bsp,\bsq} \}$ is a $2g$-grading of $U_\bbC:=U_\bbR\otimes_\bbR\bbC$ by $\bbC$-linear subspaces 
	\begin{equation*}
		U_\bbC = \bigoplus_{\bsp,\bsq \in \bbZ^g} U^{\bsp,\bsq}
	\end{equation*}
 	such that $\ol{U^{\bsp,\bsq}} = U^{\bsq,\bsp}$ for any $\bsp,\bsq\in\bbZ^g$, 
	and $t_\mu$ for $\mu =1, \ldots, g$ are 
 	$\bbC$-linear automorphisms of $U_\bbC$ commutative with each other, satisfying  $\ol{t_\mu} = t^{-1}_\mu$ and
 	\begin{equation*}
	 	(t_\mu -1)(U^{\bsp,\bsq}) \subset \bigoplus_{\substack{\bsr,\bss\in\bbZ^g\\ (r_\nu,s_\nu)=(p_\nu,q_\nu)\text{ for }
		\nu\neq\mu\\(r_\mu,s_\mu) < (p_\mu, q_\mu)\quad}} U^{\bsr\!,\!\,\bss}
 	\end{equation*}
	for any $\bsp, \bsq\in \bbZ^g$.
	A morphism in  $\Rep_\bbR(\cG^g)$ uniquely corresponds to an $\bbR$-linear homomorphism of underlying $\bbR$-vector spaces compatible 
	with the $2g$-gradings and commutes with $t_\mu$.
\end{proposition}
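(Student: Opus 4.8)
The plan is to deduce the statement from Proposition \ref{prop: gexplicit} by Galois descent along $\bbC/\bbR$. The key input is that $\cG$ is a real form of $\cG_\bbC$: explicitly, in parallel with \S2.2, one has $\cG = \bbS\ltimes\cU_\bbR$ with $\bbS = \Res_{\bbC/\bbR}\bbG_m$ and $\cU_\bbR$ the real form of $\cU$ compatible with the $\bbS$-action, and hence canonical isomorphisms $\cG\times_\bbR\bbC\cong\cG_\bbC$ and $\cG^g\times_\bbR\bbC\cong\cG^g_\bbC$. By the standard descent for representations of an affine group scheme (a form of Hilbert's theorem 90), $\Rep_\bbR(\cG^g)$ is then equivalent to the category of pairs $(M,\sigma)$, where $M$ is an object of $\Rep_\bbC(\cG^g_\bbC)$ and $\sigma\colon M\to M$ is a $c$-semilinear involution satisfying $\sigma(g\cdot m) = {}^{c}\!g\cdot\sigma(m)$ for $g\in\cG^g_\bbC$, $m\in M$, where $c$ denotes complex conjugation and ${}^{c}(-)$ the induced Galois action on $\cG^g_\bbC$; morphisms of pairs are $\cG^g_\bbC$-morphisms commuting with $\sigma$. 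Under this equivalence a real representation $U_\bbR$ corresponds to $U_\bbC := U_\bbR\otimes_\bbR\bbC$ equipped with $\sigma := \id\otimes c$.

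Next I would make the Galois action on $\cG^g_\bbC = \bbS^g_\bbC\ltimes\cU^g_\bbC$ explicit, where $\bbS_\bbC = \bbG_m\times\bbG_m$. On each factor $\bbS_\bbC$ the conjugation $c$ acts by $(x,y)\mapsto(\bar y,\bar x)$, and on the generators of each copy of $\fru_n$ it acts by $T^{i,j}\mapsto -T^{j,i}$; a direct check shows these are compatible with the semidirect-product structure and with the action \eqref{eq: action}, which is exactly what realizes $\cG$ as a real form of $\cG_\bbC$. Writing $M = (U_\bbC,\{U^{\bsp,\bsq}\},\{t_\mu\})$ as in Proposition \ref{prop: gexplicit}: compatibility of $\sigma$ with the $\bbS^g_\bbC$-action says, for $u\in U^{\bsp,\bsq}$, that $\sigma(u)$ lies in the weight space for the $c$-twisted character $\prod_\mu x_\mu^{p_\mu}y_\mu^{q_\mu}$, forcing $\sigma(U^{\bsp,\bsq}) = U^{\bsq,\bsp}$, i.e.\ $\overline{U^{\bsp,\bsq}} = U^{\bsq,\bsp}$; and compatibility with the $\cU^g_\bbC$-action says $\sigma$ intertwines the action of $T^{i,j}_\mu$ with that of $-T^{j,i}_\mu$, hence of $T_\mu = \sum_{i,j>0}T^{i,j}_\mu$ with that of $-T_\mu$, which on exponentiating gives $\sigma\circ t_\mu = t_\mu^{-1}\circ\sigma$, i.e.\ $\overline{t_\mu} = t_\mu^{-1}$. (The $t_\mu$ commute with one another already by Proposition \ref{prop: gexplicit}, and $\sigma$ being bijective upgrades the inclusions above to equalities.) Conversely, any triple $(U_\bbR,\{U^{\bsp,\bsq}\},\{t_\mu\})$ satisfying the listed conditions defines, by Proposition \ref{prop: gexplicit}, a complex representation of $\cG^g_\bbC$ on $U_\bbC = U_\bbR\otimes_\bbR\bbC$ for which $\sigma := \id\otimes c$ is a descent datum, hence descends to a unique real representation of $\cG^g$; and a $\bbC$-linear map of such representations commuting with the $\sigma$'s is precisely an $\bbR$-linear map of the underlying real vector spaces, which gives the assertion about morphisms.

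The main obstacle I anticipate is not the descent formalism but pinning down the Galois action on $\cG_\bbC$ correctly — in particular the signs: that $c$ acts on $\bbS_\bbC = \bbG_m\times\bbG_m$ by swapping the two factors composed with conjugation, and that the induced action $T^{i,j}\mapsto -T^{j,i}$ on $\fru_n$ (so that $t\mapsto t^{-1}$) is the one consistent with Deligne's description of $\MHS_\bbR$. Once the case $g=1$ is recorded — it amounts to \cite{D3} Proposition 2.1 combined with Proposition \ref{prop: explicit} — the general case follows formally, since Galois descent and the semidirect-product decomposition are compatible with forming $g$-fold products of pro-algebraic groups.
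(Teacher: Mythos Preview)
Your approach is correct and in fact considerably more explicit than the paper's own proof, which is a single sentence: ``Our assertion follows the proof of Corollary~\ref{cor: main cor}, noting that the compatibility of the structures for each $\mu$ corresponds to the fact that the action of each component of $\cG$ on the representation is commutative.'' Both arguments reduce to the complex case (Proposition~\ref{prop: gexplicit}), but you spell out the Galois descent explicitly, while the paper simply points to the parallel complex argument and leaves the real structure to the reader. Interestingly, the descent equivalence you use as your starting point is essentially what the paper records \emph{after} Proposition~\ref{prop: explicit R-rep} as Lemma~\ref{lem: real str}; you have reversed the logical order, proving the descent statement first and deducing the explicit description from it, which is arguably cleaner.

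Your identification of the delicate point --- the sign in the Galois action on $\fru_n$ --- is exactly right, and your computation is correct: the swap $(x,y)\mapsto(\bar y,\bar x)$ on $\bbS_\bbC$ forces $\sigma(U^{\bsp,\bsq})=U^{\bsq,\bsp}$, and compatibility with the semidirect product then pins down $c(T^{i,j})=-T^{j,i}$, whence $\overline{t_\mu}=t_\mu^{-1}$. This is consistent with the construction of $t$ via $t=\sqrt{s}$ in \eqref{eq: t}, where complex conjugation interchanges the two splittings $\rho_\bbC$ and $\ol\rho_\bbC$ and hence sends $s$ to $s^{-1}$. The only thing one might add for completeness is a reference for the $g=1$ real case (namely \cite[Proposition~2.1]{D3}), which you already cite.
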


\begin{proof}
	Our assertion follows the proof of Corollary \ref{cor: main cor}, noting that the compatibility of the structures for each $\mu$ corresponds to the
	fact that the action of each component of $\cG$ on the representation is commutative.
\end{proof}

\begin{example}[Tate object]\label{example: realtate}
	The plectic Tate object in $\Rep_\bbR(\cG^g)$ is given by
	$
		\bbR(\boldsymbol 1_\mu) := (V_\bbR, \{V^{\bsp,\bsq}\}, \{t_\mu\}),
	$
	where $V_\bbR := (2\pi i) \bbR \subset \bbC$ and the grading of $V_\bbR\otimes_\bbR\bbC=\bbC$
	is the one-dimensional $\bbC$-vector space whose sole non-trivial index is at
	\begin{equation*}
		\bsp,\bsq = (0, \ldots, -1, \ldots, 0)
	\end{equation*}
	where $-1$ is at the $\mu$-th component,
	and $t_\mu$ is the identity map for $\mu=1, \ldots, g$.  For any $\bsn \in\bbZ^g$, we let 
	\begin{equation*}
		\bbR(\bsn) := \bigotimes_{\mu=1}^g \bbR(\boldsymbol 1_\mu)^{\otimes n_\mu} =  \bbR(\boldsymbol 1_1)^{\otimes n_1} \otimes \cdots 
		\otimes \bbR(\boldsymbol 1_g)^{\otimes n_g}.
	\end{equation*}
\end{example}

\begin{definition}[orthogonal family of mixed $\bbR$-Hodge structures]
	Let $V=(V_\bbR,\{W^\mu_\bullet\},\{F_\mu^\bullet\})$ be a triple consisting of
	a finite dimensional $\bbR$-vector space $V_\bbR$,
	a family of finite ascending filtrations $W^\mu_\bullet$ by $\bbR$-linear subspaces on $V_\bbR$ for $\mu=1,\ldots,g$,
	and a family of finite descending filtrations $F_\mu^\bullet$ by $\bbC$-linear subspaces on $V_\bbC:=V_\bbR\otimes_\bbR\bbC$ for $\mu=1,\ldots,g$.
	We again denote by $W^\mu_\bullet$ the filtration on $V_\bbC$ defined by $W^\mu_nV_\bbC:=W^\mu_nV_\bbR\otimes_\bbR\bbC$.
	Let $\ol F_\mu^\bullet$ be the filtration on $V_\bbC$ given by the complex conjugate of $F_\mu^\bullet$.
	Then $V$ is called an \textit{$g$-orthogonal family of mixed $\bbR$-Hodge structures}
	if the quadruple $(V_\bbC,\{W^\mu_\bullet\},\{F_\mu^\bullet\},\{\ol F_\mu^\bullet\})$ is an $g$-orthogonal family of mixed $\bbC$-Hodge structures.

	A morphism of $g$-orthogonal families of mixed $\bbR$-Hodge structures is an $\bbR$-linear homomorphism of the underlying $\bbR$-vector spaces compatible with $W^\mu_\bullet$ and $F_\mu^\bullet$.

	We denote the category of $g$-orthogonal families of mixed $\bbR$-Hodge structures by $\OF_\bbR^g$.
\end{definition}

\begin{definition}[mixed plectic $\bbR$-Hodge structure]
	Let $V=(V_\bbR,W_\bullet,\{F_\mu^\bullet\})$ be a triple consisting of
	a finite dimensional $\bbR$-vector space $V_\bbR$,
	a finite ascending filtration $W_\bullet$ by $\bbR$-linear subspaces on $V_\bbR$,
	and a family of finite descending filtrations $F_\mu^\bullet$ by $\bbC$-linear subspaces on $V_\bbC:=V_\bbR\otimes_\bbR\bbC$ for $\mu=1,\ldots,g$.
	We again denote by $W_\bullet$ the filtration on $V_\bbC$ defined by $W_nV_\bbC:=W_nV_\bbR\otimes_\bbR\bbC$.
	Let $\ol F_\mu^\bullet$ the filtration on $V_\bbC$ given by the complex conjugate of $F_\mu^\bullet$.
	Then $V$ is called a \textit{mixed $g$-plectic $\bbR$-Hodge structure}
	if the quadruple $(V_\bbC,W_\bullet,\{F_\mu^\bullet\},\{\ol F_\mu^\bullet\})$ is a mixed $g$-plectic $\bbC$-Hodge structure.

	A morphism of mixed $g$-plectic $\bbR$-Hodge structures is an $\bbR$-linear homomorphism of the underlying $\bbR$-vector spaces compatible with $W_\bullet$ and $F_\mu^\bullet$.

	We denote the category of mixed $g$-plectic $\bbR$-Hodge structures by $\MHS_\bbR^g$.
\end{definition}

A real structure on a $\bbC$-vector space $V_\bbC$ is an anti-linear involution $\sigma:V_\bbC\rightarrow V_\bbC$.
Then one can regard an object in $\Rep_\bbR(\cG^g)$ (resp. $\OF_\bbR^g$, $\MHS_\bbR^g$) as a pair of an object in $\Rep_\bbC(\cG_\bbC^g)$ (resp. $\OF_\bbC^g$, $\MHS_\bbC^g$) and a real structure, in the following sense.

\begin{lemma}\label{lem: real str}
	\begin{enumerate}
		\item The category $\Rep_\bbR(\cG^g)$ is naturally equivalent to the category $\wt{\Rep_\bbR(\cG^g)}$ consisting of pairs $(U,\sigma)$,
			where $U=(U_\bbC,\{U^{\bsp,\bsq}\},\{t_\mu\})$ is an object in $\Rep_\bbC(\cG_\bbC^g)$, and $\sigma$ is a real structure on $U_\bbC$
			satisfying $\sigma(U^{\bsp,\bsq})=U^{\bsq,\bsp}$ for any $\bsp,\bsq\in\bbZ^g$ and $\sigma\circ t_\mu\circ\sigma=t_\mu^{-1}$ for any $\mu=1,\ldots,g$.
		\item The category $\OF_\bbR^g$ is naturally equivalent to the category $\wt{\OF_\bbR^g}$ consisting of pairs $(V,\sigma)$,
			where $V=(V_\bbC,\{W^\mu_\bullet\},\{F_\mu^\bullet\},\{\ol F_\mu^\bullet\})$ is an object in $\OF_\bbC^g$, and $\sigma$ is a real structure on $V_\bbC$
			satisfying $\sigma(W^\mu_n V_\bbC)=W^\mu_nV_\bbC$ and $\sigma(F_\mu^p V_\bbC)=\ol F_\mu^pV_\bbC$ for any $\mu=1,\ldots,g$ and $n,p\in\bbZ$.
		\item The category $\MHS_\bbR^g$ is naturally equivalent to the category $\wt{\MHS_\bbR^g}$ consisting of pairs $(V,\sigma)$,
			where $V=(V_\bbC,W_\bullet,\{F_\mu^\bullet\},\{\ol F_\mu^\bullet\})$ is an object in $\MHS_\bbC^g$, and $\sigma$ is a real structure on $V_\bbC$
			satisfying $\sigma(W_n V_\bbC)=W_nV_\bbC$ and $\sigma(F_\mu^p V_\bbC)=\ol F_\mu^pV_\bbC$ for any $\mu=1,\ldots,g$ and $n,p\in\bbZ$.
	\end{enumerate}
\end{lemma}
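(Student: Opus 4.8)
The plan is to read all three statements as instances of the standard descent dictionary between $\bbR$-vector spaces and $\bbC$-vector spaces equipped with a real structure. Concretely, I would first recall that the category of finite dimensional $\bbR$-vector spaces is equivalent to the category of pairs $(M_\bbC,\sigma)$ with $M_\bbC$ a finite dimensional $\bbC$-vector space and $\sigma$ an anti-linear involution, via $M_\bbR\mapsto(M_\bbR\otimes_\bbR\bbC,\ \id\otimes c)$ where $c$ is complex conjugation, with quasi-inverse $(M_\bbC,\sigma)\mapsto M_\bbC^{\sigma=1}$, for which the canonical map $M_\bbC^{\sigma=1}\otimes_\bbR\bbC\to M_\bbC$ is an isomorphism. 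I would also isolate the two facts used repeatedly: (i) a $\bbC$-subspace $N\subseteq M_\bbC$ which is $\sigma$-stable carries the restricted real structure $\sigma|_N$, and its $\bbR$-form $N^{\sigma=1}$ satisfies $N^{\sigma=1}\otimes_\bbR\bbC=N$ inside $M_\bbC$; and (ii) $\bbR$-linear maps $M_\bbR\to M'_\bbR$ correspond bijectively to $\bbC$-linear maps $M_\bbC\to M'_\bbC$ commuting with the respective involutions.

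I would then carry out case (1) in detail, invoking Proposition \ref{prop: explicit R-rep}. There an object of $\Rep_\bbR(\cG^g)$ is already presented as a triple $(U_\bbR,\{U^{\bsp,\bsq}\},\{t_\mu\})$ whose grading and automorphisms live on $U_\bbC=U_\bbR\otimes_\bbR\bbC$ and satisfy $\ol{U^{\bsp,\bsq}}=U^{\bsq,\bsp}$, $\ol{t_\mu}=t_\mu^{-1}$ and the unipotence condition. I would define the functor $\Rep_\bbR(\cG^g)\to\wt{\Rep_\bbR(\cG^g)}$ by discarding $U_\bbR$ and recording instead the canonical real structure $\sigma_0=\id\otimes c$ on $U_\bbC$; the bar-conditions of Proposition \ref{prop: explicit R-rep} are word for word the conditions $\sigma_0(U^{\bsp,\bsq})=U^{\bsq,\bsp}$ and $\sigma_0\circ t_\mu\circ\sigma_0=t_\mu^{-1}$ demanded in the definition of $\wt{\Rep_\bbR(\cG^g)}$, so the functor is well defined. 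Fact (ii) makes it full and faithful, and the reconstruction $U_\bbR:=U_\bbC^{\sigma=1}$, together with the already-given grading and $t_\mu$ (which then satisfy Proposition \ref{prop: explicit R-rep}), makes it essentially surjective; the witnessing natural isomorphisms are the canonical ones from the dictionary. This settles (1).

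For (2) and (3) I would rerun the argument, the only new ingredient being that the weight filtrations are to be $\bbR$-rational. From $(V_\bbR,\{W^\mu_\bullet\},\{F^\bullet_\mu\})\in\OF_\bbR^g$ I would form the quadruple $(V_\bbC,\{W^\mu_\bullet\},\{F^\bullet_\mu\},\{\ol F^\bullet_\mu\})$, which lies in $\OF_\bbC^g$ by the very definition of $\OF_\bbR^g$, together with $\sigma_0$; here $W^\mu_nV_\bbC$ is $\sigma_0$-stable because it is a complexification, and $\sigma_0(F^p_\mu V_\bbC)=\ol F^p_\mu V_\bbC$ because $\ol F^\bullet_\mu$ is by definition the $\sigma_0$-conjugate of $F^\bullet_\mu$. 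Conversely, from $(V,\sigma)\in\wt{\OF_\bbR^g}$ I would set $V_\bbR:=V_\bbC^{\sigma=1}$; by fact (i) each $\sigma$-stable subspace $W^\mu_nV_\bbC$ acquires an $\bbR$-form $W^\mu_nV_\bbR$ with $W^\mu_nV_\bbR\otimes_\bbR\bbC=W^\mu_nV_\bbC$, yielding the $\bbR$-rational weight filtrations, while the $F^\bullet_\mu$ are kept as $\bbC$-filtrations whose $\sigma$-conjugates are the prescribed $\ol F^\bullet_\mu$; the resulting triple lies in $\OF_\bbR^g$ exactly because the quadruple lay in $\OF_\bbC^g$. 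Morphisms and the quasi-inverse property go as in case (1). Statement (3) is identical, with the single filtration $W_\bullet$ in place of the family $\{W^\mu_\bullet\}$ and $\MHS_\bbC^g,\MHS_\bbR^g$ in place of $\OF_\bbC^g,\OF_\bbR^g$.

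I do not expect a genuine obstacle here: the argument is the $\bbC/\bbR$ descent dictionary applied three times, and the definitions in the preceding subsections were arranged precisely so that the compatibility conditions imposed on $\sigma$ coincide with the bar-conditions already built into the real objects. The only point worth a sentence of care is fact (i), that a $\sigma$-stable $\bbC$-subspace has an $\bbR$-form compatible with the ambient one, since this is what makes the weight filtrations descend in (2) and (3); the verification that the two functors are mutually quasi-inverse, and the handling of morphisms, are routine bookkeeping.
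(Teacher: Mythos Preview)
Your proposal is correct and follows essentially the same approach as the paper: the paper's proof is a single sentence invoking exactly the dictionary you spell out, namely that a real structure $\sigma$ on $V_\bbC$ corresponds uniquely to an $\bbR$-subspace $V_\bbR\subset V_\bbC$ with $V_\bbR\otimes_\bbR\bbC\cong V_\bbC$ via the fixed part of $\sigma$. Your write-up simply unpacks this in more detail than the paper does.
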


\begin{proof}
	The lemma immediately follows from the fact that a real structure $\sigma$ on $V_\bbC$ uniquely corresponds to
	an $\bbR$-linear subspace $V_\bbR\subset V_\bbC$ such that the natural homomorphism $V_\bbR\otimes_\bbR\bbC\rightarrow V_\bbC$ is an isomorphism, by taking the fixed part of $\sigma$.
\end{proof}

Let $(V,\sigma)$ be an object in $\wt{\OF_\bbR^g}$.
Since each $W^\mu_\bullet$ is stable under $\sigma$, it induces a real structure $\Gr(\sigma)$ of $\Gr^{W^1}_\bullet\cdots\Gr^{W^g}_\bullet V_\bbC$.

\begin{lemma}\label{lem: equiv for real str}
	The associations
	\begin{align*}
		\wt\varphi^g_\bbR(U,\sigma)&:=(\varphi^g_\bbC(U),\sigma),
		&\wt\psi^g_\bbR(V,\sigma)&:=(\psi_\bbC^g(V),\Gr(\sigma)),\\
		\wt T_\bbR^g(V,\sigma)&:=(T_\bbC^g(V),\sigma),
		&\wt P^g_\bbR(V,\sigma)&:=(P^g_\bbC(V),\sigma)
	\end{align*}
	define functors
	\[\xymatrix{
	\wt{\Rep_\bbR(\cG^g)}\ar@<1mm>[r]^>>>>{\wt\varphi_\bbR^g}
	&\wt{\OF_\bbR^g}\ar@<1mm>[l]^>>>>>{\wt\psi_\bbR^g}\ar@<1mm>[r]^>>>>{\wt T^g_\bbR}
	&\wt{\MHS_\bbR^g},\ar@<1mm>[l]^>>>>>{\wt P^g_\bbR}
	}\]
	which are equivalences of categories.
	Moreover $\wt T^g_\bbR$ and $\wt P^g_\bbR$ are isomorphisms of categories.
\end{lemma}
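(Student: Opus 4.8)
The plan is to reduce the statement to the complex-coefficient results already established, namely Theorem~\ref{thm: equiv for C} together with the dictionary of Lemma~\ref{lem: real str}: once each of the four functors $\varphi^g_\bbC,\psi^g_\bbC,T^g_\bbC,P^g_\bbC$ and the (quasi-)inverse data between them are shown to be compatible with the extra datum of a real structure $\sigma$, the equivalence (respectively isomorphism) statements transport automatically. So the real content lies in two things: verifying that each of the four associations sends a compatible pair $(X,\sigma)$ to a compatible pair, and verifying that the complex-level natural transformations commute with $\sigma$. On morphisms there is nothing to do, since in all four cases the functor acts through the underlying $\bbR$-linear map (equivalently, the $\sigma$-equivariant $\bbC$-linear map), and $\sigma$-equivariance is obviously preserved.

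First I would check well-definedness on objects for $\wt\varphi^g_\bbR$, $\wt T^g_\bbR$, and $\wt P^g_\bbR$, where the underlying $\bbC$-vector space and the involution $\sigma$ are carried along unchanged. For $\wt\varphi^g_\bbR$, feeding the hypotheses $\sigma(U^{\bsp,\bsq})=U^{\bsq,\bsp}$ and $\sigma\circ t_\mu\circ\sigma=t_\mu^{-1}$ into the formulas of Remark~\ref{rem: reordering} for $W^\mu_n,F^p_\mu,\ol F^q_\mu$ immediately gives $\sigma(W^\mu_nV_\bbC)=W^\mu_nV_\bbC$ and $\sigma(F^p_\mu V_\bbC)=\ol F^p_\mu V_\bbC$, so $(\varphi^g_\bbC(U),\sigma)\in\wt{\OF_\bbR^g}$. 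For $\wt T^g_\bbR$, $\sigma$ preserves every $W^\mu_\bullet$ and hence the total weight filtration $W_nV_\bbC=\sum_{n_1+\cdots+n_g=n}(W^1_{n_1}\cap\cdots\cap W^g_{n_g})V_\bbC$ while leaving the partial Hodge filtrations untouched, so $(T^g_\bbC(V),\sigma)\in\wt{\MHS_\bbR^g}$ by Theorem~\ref{thm: equiv for C}. For $\wt P^g_\bbR$, the identity $\sigma(\bsF^\bsp_I V_\bbC)=\ol\bsF^\bsp_I V_\bbC$ (immediate from the definition of $\bsF^\bsp_I$ and the fact that $\sigma$ exchanges $F^\bullet_\mu$ with $\ol F^\bullet_\mu$), combined with Remark~\ref{rem: weak}, yields $\sigma(\bsA^{\bsp,\bsq}_I(V))=\bsA^{\bsp,\bsq}_{I^c}(V)$, whence $\sigma(W^{I,\mu}_nV_\bbC)=W^{I^c,\mu}_nV_\bbC$; for an object $V$ of $\MHS^g_\bbC$ this equals $W^{I,\mu}_nV_\bbC$ by Definition~\ref{def: pMHS}, so $(P^g_\bbC(V),\sigma)\in\wt{\OF_\bbR^g}$.

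The substantive case is $\wt\psi^g_\bbR$, where the target carries the induced real structure $\Gr(\sigma)$ on the iterated associated graded and the commuting automorphisms $t_\mu$ are defined as the distinguished square roots of the unipotent automorphisms $s_\mu$ relating the two Deligne splittings built from the $F$- and $\ol F$-filtrations. Here the key observation is that $\sigma$ interchanges these two splittings (again using $\sigma(\bsF^\bsp_I V_\bbC)=\ol\bsF^\bsp_I V_\bbC$ and that $\sigma$ preserves the weight filtrations), so that $\Gr(\sigma)\circ\rho=\ol\rho\circ\sigma$ and $\Gr(\sigma)\circ\ol\rho=\rho\circ\sigma$ at each stage of the construction of $\psi^g_\bbC$; composing these gives $\Gr(\sigma)\circ s_\mu\circ\Gr(\sigma)^{-1}=s_\mu^{-1}$. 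Since $s_\mu-1$ is nilpotent, the binomial coefficients $\binom{1/2}{k}$ are real, and $\Gr(\sigma)$ is anti-linear, it then follows that $\Gr(\sigma)\circ t_\mu\circ\Gr(\sigma)^{-1}=t_\mu^{-1}$; together with $\Gr(\sigma)(U^{\bsp,\bsq})=U^{\bsq,\bsp}$ (clear from $\sigma$ swapping $F$ and $\ol F$ at each level) this places $\wt\psi^g_\bbR(V,\sigma)$ in $\wt{\Rep_\bbR(\cG^g)}$. I expect this step --- the assertion that extracting the distinguished square root of a unipotent automorphism intertwines correctly with the anti-linear involution --- to be the main obstacle; the remaining verifications are routine bookkeeping.

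Finally I would promote the structural statements from the complex level. Because $T^g_\bbC$ and $P^g_\bbC$ are mutually inverse isomorphisms of categories and leave both the underlying space and $\sigma$ untouched, $\wt T^g_\bbR$ and $\wt P^g_\bbR$ are mutually inverse, hence isomorphisms of categories. For the remaining pair, $\psi^g_\bbC\circ\varphi^g_\bbC=\id$ carries $\sigma$ to $\sigma$, giving $\wt\psi^g_\bbR\circ\wt\varphi^g_\bbR=\id$; and the natural isomorphism $\id\simeq\varphi^g_\bbC\circ\psi^g_\bbC$ --- given on underlying spaces in terms of the Deligne splittings and the $t_\mu$, as in \eqref{eq: equivalence isom} for $g=1$ and its iterate in general --- commutes with the respective real structures precisely because $t_\mu^2=s_\mu$ on each graded piece and $\Gr(\sigma)\circ\rho=\ol\rho\circ\sigma$. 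Hence it is an isomorphism in $\wt{\OF_\bbR^g}$, proving $\wt\varphi^g_\bbR\circ\wt\psi^g_\bbR\simeq\id$; transporting along $\wt T^g_\bbR$ and $\wt P^g_\bbR$ then yields the corresponding facts on the $\wt{\MHS_\bbR^g}$ side, so that all four functors are equivalences of categories as claimed.
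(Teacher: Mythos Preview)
Your proposal is correct and takes essentially the same approach as the paper, which simply says ``By using Theorem~\ref{thm: equiv for C}, one can check straightforwardly.'' You have carefully unpacked what the paper leaves implicit: the compatibility of each of $\varphi^g_\bbC,\psi^g_\bbC,T^g_\bbC,P^g_\bbC$ with the real structure $\sigma$, including the key verification that $\Gr(\sigma)\circ s_\mu\circ\Gr(\sigma)^{-1}=s_\mu^{-1}$ forces $\Gr(\sigma)\circ t_\mu\circ\Gr(\sigma)^{-1}=t_\mu^{-1}$ via the real binomial series, and the observation that $\sigma(\bsA^{\bsp,\bsq}_I(V))=\bsA^{\bsp,\bsq}_{I^c}(V)$ combined with Definition~\ref{def: pMHS} ensures $\sigma$ preserves the partial weight filtrations.
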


\begin{proof}
	By using Theorem \ref{thm: equiv for C}, one can check straightforwardly.
\end{proof}

By Lemma \ref{lem: real str} and Lemma \ref{lem: equiv for real str}, we obtain the following theorem.

\begin{theorem}\label{thm: equiv for R}
	There are equivalences of categories
	\begin{equation}\label{eq: equiv for R}
		\xymatrix{
		\Rep_\bbR(\cG^g)\ar@<1mm>[r]^>>>>{\varphi_\bbR^g}
		&\OF_\bbR^g\ar@<1mm>[l]^>>>>>{\psi_\bbR^g}\ar@<1mm>[r]^>>>>{T^g_\bbR}
		&\MHS_\bbR^g,\ar@<1mm>[l]^>>>>>{P^g_\bbR}
		}
	\end{equation}
	where the functors $\varphi_\bbR^g$, $\psi_\bbR^g$, $T^g_\bbR$, and $P^g_\bbR$ are induced from the functors $\wt\varphi_\bbR^g$, $\wt\psi_\bbR^g$, $\wt T^g_\bbR$, and $\wt P^g_\bbR$ respectively.
	Moreover $T^g_\bbR$ and $P^g_\bbR$ are isomorphisms of categories.
\end{theorem}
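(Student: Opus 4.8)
The strategy is to reduce everything to the complex-coefficient statements already established, namely Corollary \ref{cor: main cor} and Theorem \ref{thm: equiv for C}, together with the dictionary between real objects and pairs (complex object, real structure) furnished by Lemma \ref{lem: real str}. Concretely, I would combine Lemma \ref{lem: real str} and Lemma \ref{lem: equiv for real str} directly: Lemma \ref{lem: real str} gives equivalences $\Rep_\bbR(\cG^g)\cong\wt{\Rep_\bbR(\cG^g)}$, $\OF_\bbR^g\cong\wt{\OF_\bbR^g}$, and $\MHS_\bbR^g\cong\wt{\MHS_\bbR^g}$, while Lemma \ref{lem: equiv for real str} gives the horizontal equivalences among the ``tilde'' categories. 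Transporting the functors $\wt\varphi_\bbR^g$, $\wt\psi_\bbR^g$, $\wt T^g_\bbR$, $\wt P^g_\bbR$ along the vertical equivalences of Lemma \ref{lem: real str} produces the desired $\varphi_\bbR^g$, $\psi_\bbR^g$, $T^g_\bbR$, $P^g_\bbR$, and composites of equivalences are equivalences.

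The one point requiring genuine care is the claim that $T^g_\bbR$ and $P^g_\bbR$ are \emph{isomorphisms} of categories, not merely equivalences. For this I would check that the vertical equivalences of Lemma \ref{lem: real str} on the $\OF$ and $\MHS$ sides are in fact isomorphisms of categories: an object of $\OF_\bbR^g$ (resp.\ $\MHS_\bbR^g$) consists of a genuine $\bbR$-vector space with filtrations, and passing to $(V_\bbC=V_\bbR\otimes_\bbR\bbC,\sigma)$ with $\sigma$ the canonical conjugation, together with the inverse operation ``take the fixed part of $\sigma$'', are mutually strictly inverse assignments on objects and on morphisms, so these are isomorphisms of categories. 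Since $\wt T^g_\bbR$ and $\wt P^g_\bbR$ are isomorphisms of categories by Lemma \ref{lem: equiv for real str}, and $T^g_\bbC$, $P^g_\bbC$ are isomorphisms of categories by Theorem \ref{thm: equiv for C}, the composite $T^g_\bbR$ (resp.\ $P^g_\bbR$) is a composite of isomorphisms of categories, hence an isomorphism of categories; and one verifies $P^g_\bbR\circ T^g_\bbR=\id$, $T^g_\bbR\circ P^g_\bbR=\id$ directly from the corresponding identities $P^g_\bbC\circ T^g_\bbC=\id$, $T^g_\bbC\circ P^g_\bbC=\id$ of Proposition \ref{prop: char of MHS}(1), which are compatible with real structures because $T^g_\bbC$ and $P^g_\bbC$ only manipulate the weight filtration(s) (taking total filtration, resp.\ the partial weight filtrations $W^{I,\mu}_\bullet$ built from the plectic Deligne splitting), and these operations commute with the conjugation $\sigma$.

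The remaining verifications are routine compatibility checks already packaged into the two cited lemmas: that $\wt\psi_\bbR^g$ is quasi-inverse to $\wt\varphi_\bbR^g$ follows from the complex statement $\psi_\bbC^g\circ\varphi_\bbC^g=\id$ and $\varphi_\bbC^g\circ\psi_\bbC^g\simeq\id$ (Corollary \ref{cor: main cor}), noting that the natural isomorphism \eqref{eq: equivalence isom} is built from the Deligne splitting and the square root $t=\sqrt{s}$, both of which are compatible with conjugation — this is precisely why the real structure $\Gr(\sigma)$ on $\Gr^{W^1}_\bullet\cdots\Gr^{W^g}_\bullet V_\bbC$ is the correct one to attach to $\psi_\bbC^g(V)$. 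I do not expect a serious obstacle here; the main thing to be careful about is bookkeeping the direction of the equivalences and confirming that ``isomorphism of categories'' (as opposed to ``equivalence'') genuinely propagates through the $T$/$P$ part of the diagram, which it does because every functor in that part is bijective on objects and on morphism sets.
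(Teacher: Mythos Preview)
Your proposal is correct and follows the same approach as the paper, which simply states that the theorem follows from Lemma~\ref{lem: real str} and Lemma~\ref{lem: equiv for real str}. One small caution: the vertical equivalences of Lemma~\ref{lem: real str} are not literally isomorphisms of categories (the functor $\wt{\OF_\bbR^g}\to\OF_\bbR^g\to\wt{\OF_\bbR^g}$ is only naturally isomorphic to the identity, since $(V_\bbC^\sigma)\otimes_\bbR\bbC$ need not equal $V_\bbC$ on the nose), but your direct verification that $P^g_\bbR\circ T^g_\bbR=\id$ and $T^g_\bbR\circ P^g_\bbR=\id$ via Proposition~\ref{prop: char of MHS}(1) bypasses this and is the right way to secure the ``isomorphism of categories'' claim.
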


We define the tensor products and internal homomorphisms in $\OF_\bbR^g$ and $\MHS_\bbR^g$ in a similar fashion to $\OF_\bbC^g$ and $\MHS_\bbC^g$.
Then one can see that they are compatible with tensor products and internal homomorphism in $\Rep_\bbR(\cG^g)$ via the equivalences \eqref{eq: equiv for R}.
In particular we have the following corollary.

\begin{corollary}
	The category $\MHS_\bbR^g$ is a neutral tannakian category over $\bbR$ with the fiber functor
	\begin{equation}
		\omega_\bbR^g:\MHS_\bbR^g\rightarrow \Vec_\bbR
	\end{equation}
	associating to $V=(V_\bbR,W_\bullet,\{F_\mu^\bullet\})$ the $\bbR$-vector space
	\[\Gr^{W^1}_\bullet\cdots\Gr^{W^g}_\bullet V_\bbR:=\bigoplus_{n_1,\ldots,n_g\in\bbZ}\Gr^{W^1}_{n_1}\cdots\Gr^{W^g}_{n_g}V_\bbR.\]
\end{corollary}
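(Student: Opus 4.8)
The strategy is to transport the neutral tannakian structure from $\Rep_\bbR(\cG^g)$ across the equivalence of Theorem~\ref{thm: equiv for R} and then identify the transported fiber functor with $\omega^g_\bbR$. Since $\cG$ is the tannakian fundamental group of the neutral tannakian category $\MHS_\bbR$ over $\bbR$, it is an affine group scheme over $\bbR$, and hence so is its $g$-fold product $\cG^g$. By the general theory of tannakian categories, $\Rep_\bbR(\cG^g)$ is therefore a neutral tannakian category over $\bbR$ whose forgetful functor to $\Vec_\bbR$ is a fiber functor; in particular it is a rigid abelian $\bbR$-linear tensor category with $\End(\bbR(\boldsymbol{0}))=\bbR$.

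Next I would invoke Theorem~\ref{thm: equiv for R}: the composite $T^g_\bbR\circ\varphi^g_\bbR$ is an equivalence $\Rep_\bbR(\cG^g)\xrightarrow{\cong}\MHS_\bbR^g$ with quasi-inverse $\psi^g_\bbR\circ P^g_\bbR$, and by the compatibility of tensor products and internal homomorphisms recorded just before the statement, this equivalence is an equivalence of tensor categories. An equivalence of tensor categories preserves rigidity, abelianness, $\bbR$-linearity and the endomorphism ring of the unit object; therefore $\MHS_\bbR^g$ is a rigid abelian $\bbR$-linear tensor category with $\End(\bbR(\boldsymbol{0}))=\bbR$, and the composition of $\psi^g_\bbR\circ P^g_\bbR$ with the forgetful functor $\Rep_\bbR(\cG^g)\to\Vec_\bbR$ is an exact, faithful, $\bbR$-linear tensor functor $\MHS_\bbR^g\to\Vec_\bbR$, i.e.\ a fiber functor. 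This already shows that $\MHS_\bbR^g$ is neutral tannakian over $\bbR$.

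It then remains to check that this composite fiber functor coincides with the functor $\omega^g_\bbR$ of the statement. Here I would unwind the definitions of $P^g_\bbR$ and $\psi^g_\bbR$: by Lemma~\ref{lem: equiv for real str} they amount to $P^g_\bbC$ and the iterated quasi-inverse $\psi$ of \eqref{eq: psi} together with the induced real structures, and $P^g_\bbC$ leaves the underlying vector space unchanged (recording only the partial weight filtrations $W^\mu_\bullet$), while each application of $\psi$ replaces an object by the direct sum of the graded pieces of the relevant weight filtration, equipped with its Deligne splitting and the automorphism $\sqrt{s}$. Using the explicit description of the functor \eqref{eq: functor} (and Remark~\ref{rem: reordering}, which shows the order of the successive $\Gr$'s is immaterial on underlying spaces), the $\mu$-th iteration uses $W^\mu_\bullet$, so the underlying $\bbC$-space of $\psi^g_\bbC(P^g_\bbC(V))$ is $\bigoplus_{n_1,\ldots,n_g}\Gr^{W^1}_{n_1}\cdots\Gr^{W^g}_{n_g}V_\bbC$; its real structure is $\Gr(\sigma)$, whose fixed part is $\bigoplus_{n_1,\ldots,n_g}\Gr^{W^1}_{n_1}\cdots\Gr^{W^g}_{n_g}V_\bbR$ because each $W^\mu_\bullet$ is a filtration by $\bbR$-subspaces. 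Hence the transported fiber functor is exactly $\omega^g_\bbR$.

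The step I expect to be most delicate is this last identification: one must correctly match each level of the iterated $\MHS(-)$-construction with its partial weight filtration $W^\mu_\bullet$, and verify that passing to the fixed part of the real structure commutes with forming the iterated associated graded — which holds precisely because the weight filtrations are defined over $\bbR$, so that taking $\sigma$-fixed points is exact on the subquotients in question. Everything else is a formal consequence of the tannakian formalism and of the equivalences already established in Theorem~\ref{thm: equiv for R}.
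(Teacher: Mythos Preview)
Your proposal is correct and follows exactly the approach the paper intends: the corollary is stated without proof, immediately after the remark that tensor products and internal homomorphisms in $\MHS_\bbR^g$ are compatible with those in $\Rep_\bbR(\cG^g)$ via the equivalences of Theorem~\ref{thm: equiv for R}, so the tannakian structure is simply transported across that equivalence. Your unwinding of $\psi^g_\bbR\circ P^g_\bbR$ to identify the transported fiber functor with $\omega^g_\bbR$ supplies the details the paper leaves implicit, and is correct for the reasons you give.
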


%
\subsection{Representations of products of affine group schemes}
%

In this subsection, we will prove Theorem \ref{theorem: external} concerning a property of the representations of products of affine group schemes,
and as a corollary, we show in Corollary \ref{cor: subobject} that any object in $\Rep_\bbR(\cG^g)$ is isomorphic to a subquotient of a
$g$-fold exterior product of objects in $\Rep_\bbR(\cG)$.
This result will be used later in the proof of Theorem \ref{thm: old theorem}.

Let $\cH$ be an affine group scheme over a field $k$.
We let $A: = k(\cH)$ be the affine coordinate ring of $\cH$ so that $\cH = \Spec A$.
Then $A$ is a commutative $k$-algebra, and the group scheme structure on $\cH$ is equivalent to the comultiplication, counit, and inversion maps
\begin{align*}
	\Delta  &: A \rightarrow A \otimes_k A,  &
	\varepsilon  &:  A \rightarrow k, &
	\iota  &:  A \rightarrow A
\end{align*}
which are homomorphisms of $k$-algebras satisfying
\begin{align*}
	(\id \otimes \Delta) \circ \Delta &= (\Delta\otimes\id) \otimes \Delta, &
	 (\varepsilon \otimes\id) \circ \Delta &=  (\id\otimes\varepsilon) \circ \Delta =  \id, \\
	m \circ (\iota\otimes \id) \circ \Delta &= m \circ (\id \otimes \iota) \circ \Delta  = i \circ\varepsilon,&
\end{align*}
where $i: k \rightarrow A$ is the inclusion giving the $k$-algebra structure of $A$
and $m: A \otimes_k A \rightarrow A$ is the multiplication.
A commutative $k$-algebra $A$ with the above additional structures is
called a \textit{commutative $k$-Hopf algebra} (or a \textit{$k$-bialgebra} in \cite{DM}).

In what follows, all unmarked tensor products $\otimes$ are tensor products $\otimes_k$ over the field $k$.
For a $k$-vector space $V$, an \textit{$A$-comodule structure} on $V$ is a $k$-linear homomorphism $\phi: V \rightarrow V \otimes  A$ such that 
the composite
\begin{equation*}
	V \xrightarrow\phi V \otimes  A \xrightarrow{\id\otimes\varepsilon} V \otimes  k \cong V
\end{equation*}
is the identity map and
\begin{equation*}
	(\id\otimes\Delta)\circ\phi=(\phi\otimes\id)\circ\phi.
\end{equation*}
By \cite[Proposition 2.2]{DM}, there exists a one-to-one correspondence between $A$-comodule structures on $V$
and $k$-linear representations of $\cH$ on $V$.  In what follows, a representation will always signify a $k$-linear representation
on a $k$-vector space.

For the special case $U: = A$ with the comodule structure 
\begin{equation*}
	\Delta:  U \rightarrow U \otimes  A
\end{equation*}
induced from the multiplication of $\cH$, the corresponding representation of $\cH$ on $U$ is called the
\textit{regular representation} of $\cH$.  The regular representation $U$ of $\cH$ is faithful; in other words, $\Ker(\cH \rightarrow GL_U) = \{ 1 \}$.

Consider affine group schemes $\cH_1$ and $\cH_2$ over a field $k$, and let $A_1 := k(\cH_1)$ and $A_2:= k(\cH_2)$ be the affine coordinate 
rings of $\cH_1$ and $\cH_2$.
For representations $U_1$ and $U_2$ of $\cH_1$ and $\cH_2$, we denote by $U_1 \boxtimes U_2$ the \textit{exterior product} of $U_1$ and $U_2$, which is
a representation of $\cH_1 \times \cH_2:= \Spec(A_1 \otimes  A_2)$ corresponding to the $A_1 \otimes  A_2$-comodule structure
\begin{equation*}
	U_1 \otimes  U_2 \xrightarrow{\phi_1\otimes\phi_2} (U_1 \otimes  A_1) \otimes  (U_2 \otimes  A_2)
	\cong (U_1 \otimes  U_2) \otimes  (A_1 \otimes  A_2)
\end{equation*}
on $U_1 \otimes  U_2$.
Then we have the following.

\begin{lemma}\label{lemma: regular}
	Let $\cH_1$ and $\cH_2$ be affine group schemes over $k$, and suppose $U_1$ and $U_2$ are regular representations of $\cH_1$ and $\cH_2$.
	Then $U:= U_1 \boxtimes U_2$ is the regular representation of $\cH_1 \times \cH_2$.
\end{lemma}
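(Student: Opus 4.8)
The plan is to verify directly that the $A_1\otimes A_2$-comodule structure on $U_1\otimes U_2 = A_1\otimes A_2$ given by the exterior product coincides with the comodule structure on $k(\cH_1\times\cH_2) = A_1\otimes A_2$ coming from multiplication on $\cH_1\times\cH_2$. First I would recall that for an affine group scheme $\cH = \Spec A$, the regular representation is $U = A$ with comodule map $\Delta\colon A\to A\otimes A$. For the product $\cH_1\times\cH_2 = \Spec(A_1\otimes A_2)$, the Hopf algebra is $A_1\otimes A_2$ with comultiplication $\Delta_{12}$ obtained from $\Delta_1$ and $\Delta_2$ by composing with the ``middle-swap'' isomorphism $\tau_{23}\colon A_1\otimes A_2\otimes A_1\otimes A_2 \xrightarrow{\cong} A_1\otimes A_1\otimes A_2\otimes A_2$ that interchanges the second and third tensor factors; that is, $\Delta_{12} = \tau_{23}\circ(\Delta_1\otimes\Delta_2)$. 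The regular representation of $\cH_1\times\cH_2$ is therefore $A_1\otimes A_2$ with comodule map $\Delta_{12}$.

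Next I would write out the exterior product comodule map explicitly. By definition, $U_1\boxtimes U_2$ is $U_1\otimes U_2$ with comodule structure
\[
	U_1\otimes U_2 \xrightarrow{\phi_1\otimes\phi_2} (U_1\otimes A_1)\otimes(U_2\otimes A_2) \xrightarrow{\tau_{23}} (U_1\otimes U_2)\otimes(A_1\otimes A_2),
\]
where again $\tau_{23}$ swaps the two middle factors. Taking $U_1 = A_1$, $U_2 = A_2$, $\phi_1 = \Delta_1$, $\phi_2 = \Delta_2$, this composite is exactly $\tau_{23}\circ(\Delta_1\otimes\Delta_2) = \Delta_{12}$. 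Hence the exterior product $U_1\boxtimes U_2$ has underlying space $A_1\otimes A_2$ and comodule map $\Delta_{12}$, which is precisely the regular representation of $\cH_1\times\cH_2$. This identifies the two representations, proving the lemma.

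The only point requiring care — and the step I expect to be the main (though minor) obstacle — is bookkeeping the placement of the tensor factors and checking that the ``swap'' isomorphism used in the definition of the exterior product agrees with the one implicit in the identification $k(\cH_1\times\cH_2) = A_1\otimes A_2$ and its comultiplication. Concretely, one must confirm that under $\Spec(A_1\otimes A_2) = \cH_1\times\cH_2$ the group law corresponds to $\Delta_{12} = \tau_{23}\circ(\Delta_1\otimes\Delta_2)$ as an algebra map $A_1\otimes A_2 \to (A_1\otimes A_2)\otimes(A_1\otimes A_2)$, and that the same $\tau_{23}$ appears in the displayed definition of the $A_1\otimes A_2$-comodule structure on $U_1\otimes U_2$. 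Once the conventions are aligned, both comodule maps are literally the same $k$-linear (indeed $k$-algebra) homomorphism, so no further computation is needed; one may also remark that, alternatively, the claim is immediate from the functorial characterization of the regular representation as the one whose comodule structure is the comultiplication of the Hopf algebra.
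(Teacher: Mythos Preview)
Your proposal is correct and follows essentially the same approach as the paper: both identify the comultiplication on $A_1\otimes A_2$ (coming from the group law on $\cH_1\times\cH_2$) with the comodule map defining the exterior product $U_1\boxtimes U_2$, by writing each as $\Delta_1\otimes\Delta_2$ followed by the middle-swap isomorphism. The paper simply writes this swap as an unnamed $\cong$, while you make $\tau_{23}$ explicit, but the argument is the same.
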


\begin{proof}
	Let $A_1 := k(\cH_1)$ and $A_2:= k(\cH_2)$.  Then the multiplication of $\cH_1 \times \cH_2$ corresponds to the map of $k$-algebras
	\begin{equation*}
		A_1 \otimes  A_2 \xrightarrow{\Delta_1 \otimes \Delta_2} (A_1 \otimes  A_1) \otimes  (A_2 \otimes  A_2) 
		\cong  (A_1 \otimes  A_2) \otimes  (A_1 \otimes  A_2).
	\end{equation*}
	If we let $U_1 := A_1$ and $U_2 := A_2$, then the above map becomes
	\begin{equation*}
		U_1 \otimes  U_2 \xrightarrow{\Delta_1 \otimes \Delta_2} (U_1 \otimes  A_1) \otimes  (U_2 \otimes  A_2) 
		\cong  (U_1 \otimes  U_2) \otimes  (A_1 \otimes  A_2),
	\end{equation*}
	which by the definition of the exterior product is exactly the $A_1\otimes A_2$-comodule structure on 
	$U_1 \otimes  U_2$ giving the exterior product $U_1 \boxtimes U_2$.
\end{proof}

In what follows, a \textit{finite representation} of $\cH$ will signify a $k$-linear representation of $\cH$ on a finite dimensional $k$-vector space.
Let $\Rep_k(\cH)$ be the category of finite representations of $\cH$.
The purpose of this subsection is to prove the following result.

\begin{theorem}\label{theorem: external}
	For $\mu=1, \ldots, g$, let $\cH_{\mu}$ be an affine group scheme over $k$.
	If $V$ is a finite representation of $\cH_1\times\cdots\times \cH_g$, then $V$ is isomorphic  to a subquotient of an 
	object of the form $V_1 \boxtimes\cdots\boxtimes V_g$ for some finite representations $V_{\mu}$ of $\cH_{\mu}$.
\end{theorem}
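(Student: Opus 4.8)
The plan is to reduce everything to the faithfulness of the regular representation and the standard fact that every finite representation of an affine group scheme embeds into a finite direct sum of copies of the regular representation. Concretely, let $\cH := \cH_1 \times \cdots \times \cH_g$, and let $R_\mu := k(\cH_\mu)$ be the regular representation of $\cH_\mu$; by Lemma \ref{lemma: regular} (applied inductively), $R_1 \boxtimes \cdots \boxtimes R_g$ is the regular representation $R$ of $\cH$. If $V$ is a finite representation of $\cH$, then $V$ embeds as a subrepresentation of $R^{\oplus N}$ for some $N$ — this is the dual of the statement that the coordinate ring is a union of finite-dimensional subcomodules, together with the fact that any finite comodule is a quotient of a free comodule of finite rank. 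So $V$ is a subquotient of $R^{\oplus N} = (R_1 \boxtimes \cdots \boxtimes R_g)^{\oplus N}$.

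The remaining point is that $R^{\oplus N}$, while not literally of the form $V_1 \boxtimes \cdots \boxtimes V_g$, is a \emph{subquotient} of such an object. Here I would use that $R_1 \boxtimes \cdots \boxtimes R_g$ occurs as a direct summand (indeed a subquotient) of $(R_1^{\oplus N}) \boxtimes R_2 \boxtimes \cdots \boxtimes R_g$, since $\boxtimes$ is additive in each variable and $(R_1^{\oplus N}) \boxtimes R_2 \boxtimes \cdots \boxtimes R_g \cong (R_1 \boxtimes \cdots \boxtimes R_g)^{\oplus N} = R^{\oplus N}$. Thus setting $V_1 := R_1^{\oplus N}$ and $V_\mu := R_\mu$ for $\mu \geq 2$, we get that $R^{\oplus N}$ is literally isomorphic to $V_1 \boxtimes V_2 \boxtimes \cdots \boxtimes V_g$, and hence $V$ is a subquotient of $V_1 \boxtimes \cdots \boxtimes V_g$ with each $V_\mu$ a finite representation of $\cH_\mu$. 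One caveat: the regular representation $R_\mu$ itself is infinite-dimensional when $\cH_\mu$ is not finite, so to stay within $\Rep_k(\cH_\mu)$ one should instead choose, for the given $V$, a finite-dimensional subcomodule $R_\mu' \subset R_\mu$ for each $\mu$ large enough that $V$ already embeds into a sum of copies of $R_1' \boxtimes \cdots \boxtimes R_g'$; this is possible because the finite comodule $V \hookrightarrow R^{\oplus N}$ has image inside a finitely generated, hence finite-dimensional, subcomodule of $R^{\oplus N}$, and any finite-dimensional subcomodule of $R_1 \boxtimes \cdots \boxtimes R_g$ is contained in $R_1' \boxtimes \cdots \boxtimes R_g'$ for suitable finite-dimensional $R_\mu' \subset R_\mu$.

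I would organize the write-up as follows. First, recall (citing \cite{DM}) that for an affine group scheme $\cH$ over $k$ with coordinate ring $A$, every finite $A$-comodule $V$ admits an injective $\cH$-equivariant map $V \hookrightarrow A^{\oplus N}$ for some $N$; this is \cite[Corollary 2.4]{DM} or its standard variants. Second, invoke Lemma \ref{lemma: regular} repeatedly to identify the regular representation of $\cH_1 \times \cdots \times \cH_g$ with $R_1 \boxtimes \cdots \boxtimes R_g$. Third, perform the bookkeeping with $\boxtimes$ and finite-dimensional subcomodules as above to land inside a genuine exterior product $V_1 \boxtimes \cdots \boxtimes V_g$ of finite representations. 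Then $V$ is a subquotient of this object — in fact a subobject — which is the claim.

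The main obstacle is the finite-dimensionality bookkeeping in the last step: the regular representation is typically infinite-dimensional, so one cannot simply take $V_\mu = R_\mu$, and one must be a little careful to check that a finite-dimensional subcomodule of an exterior product $R_1 \boxtimes \cdots \boxtimes R_g$ is contained in a product of finite-dimensional subcomodules $R_\mu' \subset R_\mu$. This follows from the fact that the comultiplication $\Delta_\mu$ sends any finite-dimensional subspace of $R_\mu$ into $R_\mu' \otimes A_\mu$ for some finite-dimensional $R_\mu'$, applied coordinatewise, but it is the one place where a genuine (if routine) argument rather than a formal manipulation is needed.
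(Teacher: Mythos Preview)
Your argument is correct and in fact somewhat more direct than the paper's. Both proofs start from Lemma~\ref{lemma: regular}, identifying the regular representation of $\cH_1\times\cdots\times\cH_g$ with $R_1\boxtimes\cdots\boxtimes R_g$, but then diverge. The paper first reduces to the case of \emph{algebraic} groups via Lemma~\ref{lemma: bialgebra} (finite subsets of a Hopf algebra lie in finitely generated Hopf subalgebras), and then invokes the tensor-generator machinery of \cite[Proposition~2.20(b)]{DM}: one produces a finite faithful exterior product $W_1\boxtimes\cdots\boxtimes W_g$, and $V$ is then a subquotient of some polynomial $P_V(W,W^\vee)$, which embeds in $P_V(W_1,W_1^\vee)\boxtimes\cdots\boxtimes P_V(W_g,W_g^\vee)$. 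You instead use the comodule embedding $V\hookrightarrow A^{\oplus N}$ directly and then cut down each tensor factor $A_\mu$ to a finite-dimensional subcomodule $R_\mu'$ containing the relevant elements; this avoids both the reduction to algebraic groups and the tensor-generator statement, and moreover exhibits $V$ as a \emph{subobject} (not merely a subquotient) of an exterior product. One small remark: the embedding $V\hookrightarrow A^{\oplus N}$ is not literally \cite[Corollary~2.4]{DM} but rather the standard consequence of the comodule axiom $(\phi\otimes\id)\circ\phi=(\id\otimes\Delta)\circ\phi$, which makes $\phi:V\to V\otimes A\cong A^{\oplus\dim V}$ an injective comodule map; you may want to state this explicitly.
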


We say that an affine group scheme $\cH$ over $k$ is an \textit{algebraic group}, if the affine coordinate ring $A := k(\cH)$ is  finitely 
generated as an algebra over $k$.  
We will first prove Proposition \ref{prop: external}, which is a particular case of Theorem \ref{theorem: external} 
when $\cH_{\mu}$ are algebraic groups.
The following result characterizes algebraic groups.

\begin{proposition}[\cite{DM} Corollary 2.5] 
	Suppose $\cH$ is an affine group scheme.
	Then $\cH$ is an algebraic group if and only if there exists a finite faithful representation of $\cH$.
\end{proposition}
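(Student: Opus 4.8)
The plan is to route both implications through the standard factorization of a finite-dimensional representation $\rho\colon\cH\to\GL_V$ as a faithfully flat quotient followed by a closed immersion, and then to compare coordinate rings. Throughout put $A:=k(\cH)$ and recall from \cite[Proposition~2.2]{DM} that a representation of $\cH$ on a $k$-vector space $V$ is the same datum as an $A$-comodule structure $V\to V\otimes A$, and that the regular representation of $\cH$ on $A$ (via $\Delta$) is faithful.

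Assume first that $A$ is finitely generated as a $k$-algebra; the goal is to produce a finite faithful representation. First I would recall the standard local finiteness of comodules: every $A$-comodule is the union of its finite-dimensional subcomodules. Concretely, for $a\in A$ write $\Delta(a)=\sum_i a_i\otimes b_i$ with the $b_i$ linearly independent over $k$; then $\sum_i ka_i$ is a finite-dimensional subcomodule of $A$ (a subcomodule by coassociativity, and containing $a$ because $a=(\id\otimes\varepsilon)\Delta(a)=\sum_i\varepsilon(b_i)a_i$). Choosing $k$-algebra generators $a_1,\dots,a_m$ of $A$ and a finite-dimensional subcomodule $V\subseteq A$ containing all of them, I claim $V$ is faithful. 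Fix a $k$-basis $e_1,\dots,e_n$ of $V$ and write $\Delta(e_j)=\sum_i e_i\otimes a_{ij}$ with $a_{ij}\in A$; applying $\varepsilon\otimes\id$ gives $e_j=\sum_i\varepsilon(e_i)a_{ij}$, so every $e_j$, hence every generator $a_\ell$, lies in the $k$-span of the $a_{ij}$, whence the $a_{ij}$ generate $A$ as a $k$-algebra. Since the ring homomorphism $k(\GL_V)=k[x_{ij},(\det x)^{-1}]\to A$ classifying $\rho$ sends $x_{ij}\mapsto a_{ij}$, it is surjective; thus $\rho$ is a closed immersion and in particular $\Ker(\rho)=\{1\}$, so $V$ is faithful.

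Conversely, let $V$ be a finite faithful representation with $n:=\dim_k V$; fix a basis so that $\rho\colon\cH\to\GL_V=\GL_n$ is encoded by matrix coefficients $a_{ij}\in A$ as above. Let $B\subseteq A$ be the image of the classifying homomorphism $k(\GL_n)\to A$, i.e.\ the $k$-subalgebra generated by the $a_{ij}$ together with $\det(a_{ij})^{-1}$. The comultiplication rule $\Delta(a_{ij})=\sum_k a_{ik}\otimes a_{kj}$ and the antipode formula for $\GL_n$ show that $B$ is a Hopf subalgebra of $A$, so $\rho$ factors as $\cH\xrightarrow{\pi}\cH':=\Spec B\hookrightarrow\GL_n$, the second map being a closed immersion and hence a monomorphism; therefore $\Ker(\pi)=\Ker(\rho)=\{1\}$. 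I would then invoke the structure theory of commutative Hopf algebras over a field: $A$ is faithfully flat over the Hopf subalgebra $B$, so $\pi$ is faithfully flat, and a faithfully flat homomorphism of affine group schemes realizes its target as the quotient of its source by its scheme-theoretic kernel. As $\Ker(\pi)$ is trivial, $\pi$ is an isomorphism, so $A\cong B$ is a quotient of the finitely generated $k$-algebra $k(\GL_n)$; hence $A$ is finitely generated and $\cH$ is an algebraic group.

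The only genuinely non-formal input, and the step I expect to be the main obstacle, is the appeal in the converse to Hopf-algebraic descent: that a commutative Hopf algebra over a field is faithfully flat over any Hopf subalgebra, together with the consequent fact that a faithfully flat homomorphism of affine group schemes with trivial kernel is an isomorphism. Everything else is bookkeeping with comodules and coordinate rings. An equivalent packaging, which is essentially what \cite{DM} supplies around Corollary~2.5, is to quote directly that the image of a homomorphism of affine group schemes over a field is a closed subgroup scheme and that the induced surjection onto the image is faithfully flat.
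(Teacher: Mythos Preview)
The paper does not give its own proof of this proposition: it is quoted verbatim as \cite[Corollary~2.5]{DM} and used as a black box. So there is no argument in the paper to compare against.

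That said, your proof is correct and is essentially the standard one found in \cite{DM} and in Waterhouse's textbook. The forward direction (local finiteness of comodules plus the observation that the matrix coefficients of a subcomodule containing algebra generators of $A$ must themselves generate $A$) is exactly the argument behind \cite[Corollary~2.4 and Proposition~2.3]{DM}, which the paper does invoke elsewhere. For the converse, you correctly isolate the one substantive input: faithful flatness of a commutative Hopf algebra over a field over any Hopf subalgebra, which forces the factorization $\cH\to\cH'\hookrightarrow\GL_n$ to have $\pi$ an isomorphism once the kernel is trivial. This is precisely the content packaged in \cite{DM} around the cited corollary, so your proof is not a different route but a faithful unwinding of the reference.
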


We say that a finite representation $W$ of $\cH$ is a \textit{tensor generator} of $\Rep_k(\cH)$, if every object $V$ in $\Rep_k(\cH)$ is 
isomorphic to a subquotient of $P_V(W, W^\vee)$ for some polynomial $P_V(X,Y) \in \bbN[X,Y]$.
Note that if $P_V(X, Y) = \sum_{m,n \in \bbN} a_{mn} X^m Y^n  \in \bbN[X,Y]$, then 
\begin{equation*}
	P_V(W, W^\vee) :=  \bigoplus_{m,n \geq 0}(W^{\otimes  m}\otimes  W^{\vee \otimes  n})^{\oplus a_{mn}}.
\end{equation*}
Proposition \ref{prop: external} will be proved using the following result.

\begin{proposition}[\cite{DM} Proposition 2.20(b)]\label{prop: generator}
	Suppose $\cH$ is an algebraic group.  If $W$ is a finite faithful representation of $\cH$,
	then $W$ is a tensor generator of $\Rep_k(\cH)$.  
	Conversely, any tensor generator of $\Rep_k(\cH)$ is a finite faithful representation of $\cH$.
\end{proposition}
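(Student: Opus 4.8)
The plan is to reduce both implications to a single \emph{coefficient criterion}. Fix a basis $\{e_j\}$ of $W$ and write the comodule structure map as $\phi(e_j)=\sum_i e_i\otimes a_{ij}$ with $a_{ij}\in A$; I call the $a_{ij}$ the matrix coefficients of $W$. For an arbitrary finite representation $V$ let $C_V\subseteq A$ denote its coefficient coalgebra, the $k$-span of the matrix coefficients of $V$, which is the smallest subcoalgebra of $A$ with $\phi(V)\subseteq V\otimes C_V$. Let $B\subseteq A$ be the $k$-subalgebra generated by the matrix coefficients of $W$ together with those of $W^\vee$ (the latter being images under the antipode $\iota$). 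Using $\Delta(a_{ij})=\sum_k a_{ik}\otimes a_{kj}$ and $\varepsilon(a_{ij})=\delta_{ij}$ one checks that $B$ is a Hopf subalgebra of $A$. I would then establish the criterion: a finite representation $V$ is isomorphic to a subquotient of some $P(W,W^\vee)$ with $P\in\bbN[X,Y]$ if and only if $C_V\subseteq B$.

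Granting the criterion, the first assertion is quick. Since $W$ is faithful, the homomorphism $\cH\to\GL_W$ has trivial kernel, and because $\cH$ is an algebraic group this homomorphism is a closed immersion; hence $A$ is a quotient of the coordinate ring $k[\GL_W]=k[x_{ij},\det^{-1}]$. The images of the $x_{ij}$ are exactly the matrix coefficients $a_{ij}$ of $W$, while the image of $\det^{-1}$ is the single coefficient of $\Lambda^{\dim W}W^\vee$ and therefore lies in the subalgebra generated by the coefficients of $W^\vee$. Thus every generator of $k[\GL_W]$ maps into $B$, so $B=A$. Consequently $C_V\subseteq A=B$ for every $V$, and the criterion shows each $V$ is a subquotient of some $P(W,W^\vee)$; that is, $W$ is a tensor generator.

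The substance of the argument, and the step I expect to be the main obstacle, is the coefficient criterion. Its forward direction is routine: the coefficients of a tensor product lie in $C_{V_1}\cdot C_{V_2}$, those of a direct sum in the sum, and those of a subquotient inside $C_V$, so any object built from $W$ and $W^\vee$ by $\otimes$, $\oplus$, and subquotients has coefficients in the algebra $B$. For the converse I would combine three comodule facts. First, the structure map $\phi_V\colon V\to V\otimes A$ realizes $V$ as a subcomodule of $A^{\oplus\dim V}$ landing inside $C_V^{\oplus\dim V}$. Second, for a representation $T$ with coefficients $b_{ij}$, each row space $\operatorname{span}\{b_{ij}\}_j$ is a quotient comodule of $T$ via the surjection sending the $j$-th basis vector of $T$ to $b_{ij}$, so $C_T$ is a quotient of $T^{\oplus\dim T}$. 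Third, because $B$ is generated as an algebra by finitely many coefficients of $W$ and $W^\vee$, the finite-dimensional space $C_V\subseteq B$ lies in the span of monomials of bounded bidegree, hence in $C_T$ for $T=\bigoplus_{(m,n)\in S}W^{\otimes m}\otimes W^{\vee\otimes n}$ with $S$ finite. Combining these, $V\hookrightarrow C_V^{\oplus\dim V}\subseteq C_T^{\oplus\dim V}$, and $C_T^{\oplus\dim V}$ is a quotient of a power of $T$; since $T=P(W,W^\vee)$, this exhibits $V$ as a subquotient of some $P'(W,W^\vee)$. The delicate point here is matching the correct comodule structures in the row-space surjection $T^{\oplus\dim T}\twoheadrightarrow C_T$.

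Finally, for the converse implication I would argue by contradiction. A tensor generator is finite by definition, so only faithfulness is at issue. If $N:=\Ker(\cH\to\GL_W)$ were nontrivial, then $N$ would act trivially on $W$, hence on $W^\vee$ and on every tensor product, direct sum, and subquotient of these; as $W$ generates, $N$ would act trivially on every finite representation, and therefore on every finite-dimensional subcomodule of the regular representation $A$. Since the regular representation is faithful, this forces $N=\{1\}$, a contradiction. Hence any tensor generator is faithful, completing the proof modulo the coefficient criterion.
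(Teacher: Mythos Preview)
The paper does not give its own proof of this proposition; it simply cites \cite{DM} Proposition~2.20(b). Your sketch is essentially the standard Deligne--Milne argument from that reference: reduce to a coefficient criterion, show that faithfulness forces the Hopf subalgebra $B$ generated by the matrix coefficients of $W$ and $W^\vee$ to equal $A$ (via the closed immersion $\cH\hookrightarrow\GL_W$), and deduce the converse from the faithfulness of the regular representation. The outline is correct and matches the source the paper invokes; the only step you should be careful to justify cleanly is that a homomorphism of affine algebraic group schemes over a field with trivial scheme-theoretic kernel is a closed immersion, which is itself a nontrivial input (also available in \cite{DM}).
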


\begin{proposition}\label{prop: external}
	For $\mu=1, \ldots, g$, let $\cH_{\mu}$ be an affine algebraic group over $k$.  
	If $V$ is a finite representation of $\cH_1\times \cdots \times \cH_g$, then $V$ is isomorphic  to a subquotient of an 
	object of the form $V_1 \boxtimes \cdots \boxtimes V_g$ for some finite representations $V_{\mu}$  of $\cH_{\mu}$.
\end{proposition}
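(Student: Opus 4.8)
The plan is to reduce the statement to the case $g=2$ by an easy induction, and then for $g=2$ to exploit the fact that each $\cH_\mu$, being algebraic, admits a finite faithful representation, which by Proposition \ref{prop: generator} is a tensor generator. First I would treat $g=1$, which is trivial since $V$ itself is of the required form. For the inductive step, writing $\cH_1 \times \cdots \times \cH_g = \cH_1 \times (\cH_2 \times \cdots \times \cH_g)$, the $g=2$ case would produce finite representations $V_1$ of $\cH_1$ and $V'$ of $\cH_2 \times \cdots \times \cH_g$ with $V$ a subquotient of $V_1 \boxtimes V'$; then the induction hypothesis applied to $V'$ gives $V'$ a subquotient of $V_2 \boxtimes \cdots \boxtimes V_g$, and since $\boxtimes$ is exact in each variable (it is just the tensor product of underlying vector spaces with the product comodule structure), $V_1 \boxtimes V'$ is a subquotient of $V_1 \boxtimes V_2 \boxtimes \cdots \boxtimes V_g$, and subquotients of subquotients are subquotients.

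For the key case $g=2$, let $W_1$ be a finite faithful representation of $\cH_1$ and $W_2$ a finite faithful representation of $\cH_2$; these exist by the characterization of algebraic groups quoted before Proposition \ref{prop: generator}. Then $W_1 \boxtimes W_2$ is a finite representation of $\cH_1 \times \cH_2$, and I claim it is faithful: an element $(h_1,h_2)$ acting trivially on $W_1 \otimes W_2$ must, by restricting to the two tensor factors via the embeddings $\cH_1 \hookrightarrow \cH_1 \times \cH_2$ and $\cH_2 \hookrightarrow \cH_1 \times \cH_2$, act trivially on $W_1$ and on $W_2$ separately, hence $h_1 = 1$ and $h_2 = 1$ by faithfulness of $W_1$ and $W_2$. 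Now $\cH_1 \times \cH_2$ is again an algebraic group (its coordinate ring $A_1 \otimes_k A_2$ is finitely generated since $A_1$ and $A_2$ are), so by Proposition \ref{prop: generator} the faithful representation $W_1 \boxtimes W_2$ is a tensor generator of $\Rep_k(\cH_1 \times \cH_2)$. Hence our given $V$ is a subquotient of $P_V(W_1 \boxtimes W_2, (W_1 \boxtimes W_2)^\vee)$ for some $P_V \in \bbN[X,Y]$.

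The remaining point is to rewrite this polynomial expression as a single exterior product $V_1 \boxtimes V_2$. Here one uses the compatibilities $(W_1 \boxtimes W_2)^\vee \cong W_1^\vee \boxtimes W_2^\vee$, $(W_1 \boxtimes W_2) \otimes (W_1' \boxtimes W_2') \cong (W_1 \otimes W_1') \boxtimes (W_2 \otimes W_2')$, and the distributivity of $\boxtimes$ over direct sums; these are all immediate from the definition of $\boxtimes$ at the level of comodules. Applying them termwise, $P_V(W_1 \boxtimes W_2, (W_1 \boxtimes W_2)^\vee)$ becomes a direct sum of terms $(W_1^{\otimes m} \otimes W_1^{\vee \otimes n}) \boxtimes (W_2^{\otimes m} \otimes W_2^{\vee \otimes n})$, but these are not yet literally of the form $V_1 \boxtimes V_2$ because the exponents on the two sides are correlated. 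The fix is to set $V_1 := \bigoplus_{m,n}(W_1^{\otimes m} \otimes W_1^{\vee \otimes n})^{\oplus a_{mn}}$ and $V_2 := \bigoplus_{m,n} W_2^{\otimes m} \otimes W_2^{\vee \otimes n}$, where $a_{mn}$ are the coefficients of $P_V$, and observe that $P_V(W_1 \boxtimes W_2, (W_1 \boxtimes W_2)^\vee)$ is a direct summand (hence a subquotient) of $V_1 \boxtimes V_2$, since the latter contains all the diagonal terms $(W_1^{\otimes m}\otimes W_1^{\vee\otimes n})^{\oplus a_{mn}} \boxtimes (W_2^{\otimes m}\otimes W_2^{\vee\otimes n})$ as summands. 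Composing, $V$ is a subquotient of $V_1 \boxtimes V_2$, completing the $g=2$ case. I expect the bookkeeping in this last paragraph — producing $V_1, V_2$ from the polynomial so that the diagonal terms appear — to be the only slightly fiddly part; everything else is formal.
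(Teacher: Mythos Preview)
Your argument has a genuine gap at the faithfulness step. The claim that $W_1 \boxtimes W_2$ is faithful whenever $W_1$ and $W_2$ are is false: take $\cH_1=\cH_2=\bbG_m$ with $W_1=W_2$ the standard one-dimensional representation; then $(t_1,t_2)$ acts on $W_1\boxtimes W_2$ by the scalar $t_1 t_2$, so the entire antidiagonal $\{(t,t^{-1})\}$ lies in the kernel. Your justification (``restricting to the two tensor factors via the embeddings $\cH_\mu\hookrightarrow\cH_1\times\cH_2$'') does not apply, because an arbitrary element $(h_1,h_2)$ in the kernel need not lie in either of those embedded copies. The fix is easy: replace each $W_\mu$ by $W_\mu\oplus\mathbf{1}$ (still finite and faithful); then $(W_1\oplus\mathbf{1})\boxtimes(W_2\oplus\mathbf{1})$ contains $W_1\boxtimes\mathbf{1}$ and $\mathbf{1}\boxtimes W_2$ as direct summands, and triviality of the action on these forces $h_1=1$ and $h_2=1$ separately. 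With this correction the rest of your proof goes through.

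For comparison, the paper avoids this pitfall by a different construction of a faithful $W$ of product form: it starts from the regular representations $U_\mu$, uses that $U_1\boxtimes\cdots\boxtimes U_g$ is the regular (hence faithful) representation of the product, extracts a finite faithful subrepresentation $U^\alpha$ by Noetherianity, and then enlarges to finite $W_\mu\subset U_\mu$ so that $W_1\boxtimes\cdots\boxtimes W_g\supset U^\alpha$ is automatically faithful. The paper also dispenses with your induction on $g$ and with the ``fiddly'' bookkeeping at the end: it simply observes the inclusion
\[
P_V(W_1\boxtimes\cdots\boxtimes W_g,\ W_1^\vee\boxtimes\cdots\boxtimes W_g^\vee)\ \subset\ P_V(W_1,W_1^\vee)\boxtimes\cdots\boxtimes P_V(W_g,W_g^\vee)
\]
and takes $V_\mu:=P_V(W_\mu,W_\mu^\vee)$ directly, which is the same idea as your final paragraph but packaged more cleanly.
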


\begin{proof}
	Let $U_{\mu}$ be the regular representations of $\cH_{\mu}$. 
	Then  by Lemma \ref{lemma: regular}, $U: = U_1 \boxtimes \cdots \boxtimes U_g$ is the regular representation of $\cH:=\cH_1\times \cdots \times \cH_g$.
	By \cite[Corollary 2.4]{DM}, $U$ is the directed union $U = \bigcup_\alpha U^\alpha$ of finite subrepresentations 
	$U^\alpha$ of $\cH$.  Since $U$ is regular and is in particular faithful, we have 
	\begin{equation*}
		\Ker(\cH \rightarrow GL_U) = \bigcap_\alpha \Ker(\cH \rightarrow GL_{U^\alpha}) = \{1\}.
	\end{equation*}
	Since $\cH$ is Noetherian as a topological space, we have $\Ker(\cH \rightarrow GL_{U^\alpha}) = \{1\}$ for some $\alpha$.
	Hence $U^\alpha$ is a finite dimensional faithful representation of $\cH$.  Let $\{  w^{(i)} \}_{i}$ be a $k$-basis of $U^\alpha$.
	Since $U^\alpha \subset U = U_1 \boxtimes \cdots \boxtimes U_g$, we may write $w^{(i)}$ as a finite sum 
	$w^{(i)} = \sum_{j} a_{ij} w_1^{(i,j)} \otimes \cdots \otimes w_g^{(i,j)}$
	for $a_{ij} \in k$ and $w_{\mu}^{(i,j)} \in U_{\mu}$.  By \cite[Proposition 2.3]{DM}, there exists a finite representation 
	$W_{\mu} \subset U_{\mu}$ of $\cH_{\mu}$ containing
	$\{ w_{\mu}^{(i,j)} \}_{i,j}$.
	Then $W := W_1 \boxtimes \cdots \boxtimes W_g$ is a finite representation of $\cH$, which is faithful since it contains $U^\alpha$ by construction.
	Hence by Proposition \ref{prop: generator}, $W$ is a tensor generator of $\Rep_k(\cH)$.
	By definition of the tensor generator, there exists $P_V(X,Y) \in \bbN[X,Y]$ such that $V$ is isomorphic to a subquotient of $P_V(W, W^\vee)$.
	Since
	\begin{equation*}
		P_V(W, W^\vee) = P_V(W_1 \boxtimes \cdots\boxtimes W_g, W_1^\vee \boxtimes \cdots \boxtimes W_g^\vee) \subset P_V(W_1, W_1^\vee) \boxtimes 
		\cdots \boxtimes P_V(W_g, W_g^\vee),
	\end{equation*}
 	if we let $V_{\mu} := P_V(W_{\mu}, W_{\mu}^\vee)$, then we see that $V$ is isomorphic to a subquotient of $V_1 \boxtimes \cdots \boxtimes V_g$
	as desired.
\end{proof}

The following result will be used to reduce the proof of Theorem \ref{theorem: external} to the case of algebraic groups.

\begin{lemma}[\cite{DM} Proposition 2.6]\label{lemma: bialgebra}
	Let $A$ be a commutative $k$-Hopf algebra.
	Every finite subset of $A$ is contained in a commutative $k$-Hopf subalgebra that is finitely generated as a commutative $k$-algebra.
\end{lemma}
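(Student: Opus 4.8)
The plan is to reduce the statement to the \emph{fundamental theorem of coalgebras} — that every element of $A$ lies in a finite-dimensional $k$-subspace closed under $\Delta$ — and then to enlarge such a subspace to a finitely generated Hopf subalgebra, using that the inversion $\iota$ is a homomorphism of $k$-algebras (since $A$ is commutative) and an involution. The comultiplication $\Delta$, counit $\varepsilon$, and antipode $\iota$ of the Hopf algebra $A$ are exactly the maps appearing earlier in this subsection, and I will only use their defining axioms.

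First I would establish that for each $a\in A$ there is a finite-dimensional $k$-subspace $C_a\subseteq A$ with $a\in C_a$ and $\Delta(C_a)\subseteq C_a\otimes C_a$. To produce $C_a$, write the iterated coproduct $(\Delta\otimes\id)\Delta(a)=\sum_{i,j} x_i\otimes a_{ij}\otimes y_j$ with $\{x_i\}$ and $\{y_j\}$ each linearly independent over $k$, and set $C_a:=\text{span}_k\{a_{ij}\}$. Applying $\varepsilon\otimes\id\otimes\varepsilon$ and invoking the counit axioms gives $a=\sum_{i,j}\varepsilon(x_i)\varepsilon(y_j)\,a_{ij}\in C_a$. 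Comparing the three expressions for the triple coproduct of $a$ afforded by coassociativity, and extracting coefficients against dual functionals to the linearly independent families $\{x_i\}$ and $\{y_j\}$, yields $\Delta(a_{ij})\in(A\otimes C_a)\cap(C_a\otimes A)$; since $(A\otimes C_a)\cap(C_a\otimes A)=C_a\otimes C_a$ for any subspace, $C_a$ is a finite-dimensional subcoalgebra.

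To finish, I would first render the subcoalgebra stable under $\iota$. The identity $\Delta\circ\iota=(\iota\otimes\iota)\circ s\circ\Delta$, with $s$ the flip of the two tensor factors, shows that $\iota(C_a)$ is again a subcoalgebra, so $D_a:=C_a+\iota(C_a)$ is a finite-dimensional subcoalgebra; since the antipode of a commutative Hopf algebra is an involution, $\iota(D_a)=D_a$. Now, given a finite subset $S\subseteq A$, set $D:=\sum_{a\in S}D_a$, a finite-dimensional $\iota$-stable subcoalgebra containing $S$, and let $B$ be the $k$-subalgebra of $A$ generated by $D$, which is finitely generated as a $k$-algebra. Since $\Delta$ is a homomorphism of $k$-algebras and $B\otimes B$ is a subalgebra of $A\otimes A$ containing $\Delta(D)\subseteq D\otimes D$, we have $\Delta(B)\subseteq B\otimes B$; since $\iota$ is a homomorphism of $k$-algebras (here we use that $A$ is commutative) with $\iota(D)=D\subseteq B$, we have $\iota(B)\subseteq B$; and $\varepsilon$ restricts to $B$. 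Hence $B$ is a commutative $k$-Hopf subalgebra, finitely generated as a $k$-algebra and containing $S$, as required.

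The main obstacle is the fundamental theorem of coalgebras itself, namely verifying that the coefficient space $C_a$ is closed under $\Delta$; this rests on the careful bookkeeping with coassociativity and the linear independence of $\{x_i\}$ and $\{y_j\}$ indicated above, whereas every step thereafter is formal.
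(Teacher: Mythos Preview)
Your proof is correct and follows the standard argument; in fact it is essentially the proof given in the cited reference \cite{DM}. Note, however, that the paper itself does not supply a proof of this lemma at all: it merely states the result and attributes it to \cite[Proposition 2.6]{DM}, so there is no ``paper's own proof'' to compare against beyond the one you have reproduced from the source.
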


\begin{proof}[Proof of Theorem \ref{theorem: external}]
	Suppose $V$ is a finite representation of $\cH_1 \times \cdots\times \cH_g$.   Let $A_\mu:=k(\cH_\mu)$ for $\mu=1,\ldots,g$.
	Then the representation $V$ is given by some $A_1\otimes \cdots \otimes A_g$-comodule structure
	\begin{equation}\label{eq: comodule}
		\phi: V \rightarrow V \otimes (A_1 \otimes \cdots\otimes A_g)
	\end{equation}
	on $V$.
	Let $\{ v^{(i)}\}_i$ be a $k$-basis of $V$.  Then $\phi(v^{(i)})$ may be written as a finite sum
	\begin{equation*}
		\phi(v^{(i)}) = \sum_{j} v^{(j)} \otimes \biggl( \sum_{k}  a_1^{(i,j,k)} \otimes  \cdots \otimes a_g^{(i,j,k)} \biggr)
	\end{equation*}
	for some $a_{\mu}^{(i,j,k)} \in A_{\mu}$.  By Lemma \ref{lemma: bialgebra}, there exists a Hopf subalgebra
	 $A_{\mu}'$ of $A_{\mu}$ containing $\{  a_{\mu}^{(i,j,k)} \}_{i,j,k}$ which is finitely generated as a $k$-algebra.  
	 Then $\cH'_{\mu}: = \Spec A'_{\mu}$ is an algebraic group over $k$ which is a quotient group scheme of $\cH_{\mu}$. 
	By construction,  the comodule structure \eqref{eq: comodule} on $V$ induces the comodule structure
	\begin{equation*}
		\phi: V \rightarrow V \otimes (A'_1 \otimes \cdots \otimes A'_g),
	\end{equation*}
	hence $V$ is a representation of the algebraic group $\cH_1' \times \cdots\times \cH'_g$.
	By Proposition \ref{prop: external}, $V$ is isomorphic  to a subquotient of an 
	object of the form $V_1 \boxtimes \cdots \boxtimes V_g$ for some finite representations $V_{\mu}$ of $\cH'_{\mu}$.
	Since $\cH'_{\mu}$ is a quotient of $\cH_{\mu}$, the representation $V_{\mu}$ 
	may also be regarded as representation of $\cH_{\mu}$.  Hence $V_1, \ldots,V_g$ satisfy the desired property of our assertion.
\end{proof}

We now return to the case of mixed $\bbR$-Hodge structures.
Let $\cG$ be the tannakian fundamental group of the category of mixed $\bbR$-Hodge structures $\MHS_\bbR$,
and for any integer $g\geq0$, consider the category $\Rep_\bbR(\cG^g)$ of finite representations of $\cG^g$.
Note that the category $\Rep_\bbR(\cG^0)$ is the category 
$\Vec_\bbR$ of finite dimensional $\bbR$-vector spaces.  For $g>0$,
the category $\Rep_\bbR(\cG^g)$ is equivalent to the $g$-fold Deligne tensor product of $\Rep_\bbR(\cG)$ over $\bbR$.
Recall that the \textit{Deligne tensor product} $\sA \boxtimes \sB$ of $k$-linear abelian categories $\sA$ and $\sB$ over a field $k$ is a 
$k$-linear abelian category with a $k$-bilinear functor 
\begin{equation*}
	\boxtimes:  \sA \times  \sB \rightarrow  \sA \boxtimes \sB
\end{equation*}
right exact in each variable, characterized by the property that for any $k$-linear abelian category $\sC$, the induced functor
\begin{equation*}
	\Rex[  \sA \boxtimes \sB,  \sC ] \rightarrow  \Rex_\bil[   \sA \times \sB,  \sC ]
\end{equation*}
gives an equivalence of categories, where 
$\Rex[ \sA \boxtimes \sB, \sC]$ denotes the category of right exact $k$-linear functors from $\sA \boxtimes \sB$ to $\sC$, 
and  $\Rex_\bil[   \sA \times \sB,  \sC ]$ denotes the category of $k$-bilinear functors $\sA \times \sB \rightarrow \sC$ which are 
right exact in each variable.

Since $\MHS_\bbR$ is a tannakian category, it satisfies condition \cite[(2.12.1)]{D2}.  
Hence by \cite[Proposition 5.13 (i)]{D2}, the Deligne tensor products of $\MHS_\bbR$ over $\bbR$ exist.  
A group scheme may be regarded as a groupoid whose class of objects consists of a single element, hence is transitive as a groupoid.  
Then by \cite[5.18]{D2}, there exists a natural equivalence of categories $\Rep_\bbR(\cG^g) \cong\Rep_\bbR(\cG) \boxtimes \cdots \boxtimes \Rep_\bbR(\cG)$,
which gives the equivalence of categories
\begin{equation*}
	\Rep_\bbR(\cG^g)
	\cong  \Rep_\bbR(\cG) \boxtimes \cdots \boxtimes \Rep_\bbR(\cG) 
	\cong \MHS_\bbR \boxtimes \cdots \boxtimes \MHS_\bbR.
\end{equation*}

Hence as a corollary of Theorem \ref{theorem: external}, we have the following.

\begin{corollary}\label{cor: subobject}
	Let $V$ be an object in $\Rep_\bbR(\cG^g)$.
	Then $V$ is isomorphic to a subquotient of $V_1 \boxtimes \cdots \boxtimes V_g$
	for some objects $V_1, \ldots, V_g$ in $\Rep_\bbR(\cG)$.
\end{corollary}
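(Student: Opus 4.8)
The plan is to obtain this as a direct specialization of Theorem \ref{theorem: external}. First I would take $k=\bbR$ and $\cH_1=\cdots=\cH_g=\cG$, where $\cG$ is the tannakian fundamental group of $\MHS_\bbR$; note that $\cG$ is in general only a pro-algebraic (i.e.\ not algebraic) affine group scheme over $\bbR$, so it is exactly the generality of Theorem \ref{theorem: external}, rather than of the intermediate Proposition \ref{prop: external}, that is needed here. Since $\cH_1\times\cdots\times\cH_g=\cG^g$ and $\Rep_\bbR(\cG^g)$ is by definition the category of finite-dimensional $\bbR$-linear representations of $\cG^g$, Theorem \ref{theorem: external} applies to an arbitrary object $V$ of $\Rep_\bbR(\cG^g)$ and produces finite representations $V_1,\ldots,V_g$ of $\cG$ --- that is, objects of $\Rep_\bbR(\cG)$ --- such that $V$ is isomorphic to a subquotient of $V_1\boxtimes\cdots\boxtimes V_g$.

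The one point requiring a line of justification is that the exterior product $\boxtimes$ occurring in Theorem \ref{theorem: external}, defined there through the $k(\cH_1)\otimes\cdots\otimes k(\cH_g)$-comodule structure on the tensor product of the underlying vector spaces, agrees with the exterior product on $\Rep_\bbR(\cG^g)\cong\Rep_\bbR(\cG)\boxtimes\cdots\boxtimes\Rep_\bbR(\cG)$ that appears in the statement of the corollary. This follows by unwinding definitions: under the equivalence recalled just before the corollary between the $g$-fold Deligne tensor product of $\Rep_\bbR(\cG)$ and $\Rep_\bbR(\cG^g)$, the external tensor product of representations corresponds precisely to the comodule-theoretic construction used in Theorem \ref{theorem: external}, so the two notions of subquotient-of-an-exterior-product coincide.

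I do not expect any genuine obstacle at the level of the corollary itself; all the substance is carried by Theorem \ref{theorem: external}, whose proof reduces via Lemma \ref{lemma: bialgebra} to the algebraic-group case of Proposition \ref{prop: external}, which in turn rests on Lemma \ref{lemma: regular} and the tensor-generator criterion of Proposition \ref{prop: generator}. The corollary is then a one-line application of Theorem \ref{theorem: external} together with the above identification of the two exterior products.
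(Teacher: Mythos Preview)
Your proposal is correct and matches the paper's approach exactly: the corollary is a direct application of Theorem \ref{theorem: external} with $k=\bbR$ and $\cH_1=\cdots=\cH_g=\cG$. Your extra paragraph about reconciling the two notions of exterior product is harmless but unnecessary here, since the $\boxtimes$ in the corollary is simply the comodule-theoretic exterior product of representations already used in Theorem \ref{theorem: external}; the Deligne tensor product discussion preceding the corollary is only contextual.
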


%
\subsection{The functor $\Lambda^\bullet$.}
%

In this subsection, we will define the functor $\Lambda^\bullet$.
In what follows, for any abelian category $\sA$, we denote by $\sC^b(\sA)$ the category of bounded complexes in $\sA$.
We denote its homotopy and derived categories by $\sK^b(\sA)$ and $\sD^b(\sA)$.

Let $U =  (U_\bbR, \{  U^{\bsp,\bsq} \}, \{ t_\mu\})$ be an object in $\Rep_\bbR(\cG^g)$. 
For each integer $\mu=1,\ldots,g$, we let
\begin{align}\label{eq: A}
	\cA^0_\mu(U) &:= \bigoplus_{\substack{\bsp,\bsq \in \bbZ^g\\ p_\mu = q_\mu = 0}} U^{\bsp,\bsq},  &
	\cA^1_\mu(U) &:= \bigoplus_{\substack{\bsp,\bsq \in \bbZ^g\\ p_\mu, q_\mu < 0}} U^{\bsp,\bsq}.
\end{align}

\begin{definition}\label{def: gamma nu}
	For any non-negative integer $\mu \leq g$, note that we have a natural decomposition $\cG^g =  \cG^\mu \times \cG^{g-\mu}$ of pro-algebraic groups.
	By taking the fixed part with respect to the action of $\cG^{g-\mu}$, we have a functor
	\begin{equation*}
		\Gamma_{\mu}: \Rep_\bbR(\cG^g)\rightarrow\Rep_\bbR(\cG^\mu).
	\end{equation*} 
	On the level of objects, this functor may be described by associating to any object $U$ in $\Rep_\bbR(\cG^g)$ the $\bbR$-vector space
	\begin{equation*}
		\Gamma_{\mu}(U)_\bbR :=  \bigl\{ u \in \cA^0_{\mu+1}(U) \cap \cdots \cap \cA^0_g(U) \mid \ol u=u, \text{$(t_{\nu}-1) u =0$ $(\mu < \nu \leq g)$} \bigr\}
	\end{equation*}
	with the induced $2\mu$-grading and $\bbC$-linear automorphism $t_\nu$ for $\nu=1,\ldots, \mu$, giving an object in $\Rep_\bbR(\cG^\mu)$.
\end{definition}

The functor $\Gamma_\mu:  \Rep_\bbR(\cG^g) \rightarrow \Rep_\bbR(\cG^\mu)$ defines a functor 
\begin{equation*}
	\Gamma_{\mu}:  \sC^b(\Rep_\bbR(\cG^g)) \rightarrow \sC^b(\Rep_\bbR(\cG^\mu))
\end{equation*} 
from the category of complexes of $\Rep_\bbR(\cG^g)$ to that of $\Rep_\bbR(\cG^\mu)$.
Let $T^\bullet$ and $U^\bullet$ be complexes in $\sC^b(\Rep_\bbR(\cG^g))$.  We let $\ul\Hom^\bullet(T^\bullet, U^\bullet)$ be the complex
\begin{equation*}
	\ul\Hom^n(T^\bullet, U^\bullet) := \prod_{i \in \bbZ} \ul\Hom(T^i, U^{i+n})
\end{equation*}
given by the internal homomorphisms in $\Rep_\bbR(\cG^g)$,
whose differential is defined by
\begin{equation*}
	d^n(\{f^i\}) := \{d_U^{i+1}\circ f^i-(-1)^nf^{i+1}\circ d^i_T\}
\end{equation*}
for any $\{f^i\}\in \ul\Hom^n(T^\bullet,U^\bullet)_\bbR$.  
Then we have the following.

\begin{lemma}\label{trivial}
	For any $m \in \bbZ$, we have
	\begin{equation*}
		H^m(\Gamma_0(\ul\Hom^\bullet(T^\bullet, U^\bullet))) = \Hom_{\sK^b(\Rep_\bbR(\cG^g))}(T^\bullet, U^\bullet[m]).
	\end{equation*}
\end{lemma}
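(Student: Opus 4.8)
\emph{Strategy.} The plan is to show that the complex $\Gamma_0(\ul\Hom^\bullet(T^\bullet,U^\bullet))$ is nothing but the ordinary Hom-complex of $T^\bullet$ and $U^\bullet$ formed in the abelian category $\Rep_\bbR(\cG^g)$, and then to invoke the standard identification of the cohomology of a Hom-complex with morphisms in the homotopy category. So the proof splits into two parts: (i) an explicit identification of the terms and differentials of $\Gamma_0(\ul\Hom^\bullet(T^\bullet,U^\bullet))$, carried out degreewise; (ii) a purely homological-algebra step valid in any abelian category.

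\emph{Step (i): degreewise computation.} Fix integers $i$ and $n$ and put $j := i+n$. By the analogue of \eqref{eq: tihom} in $\Rep_\bbR(\cG^g)$, the $2g$-grading of $\ul\Hom(T^i,U^j)_\bbC$ is given by $\Hom(T^i,U^j)^{\bsp,\bsq}$, and the automorphisms act by $t_\mu(\alpha)=t''_\mu\circ\alpha\circ (t'_\mu)^{-1}$. Applying Definition \ref{def: gamma nu} with $\mu=0$ and using \eqref{eq: A}, the intersection $\cA^0_1(\ul\Hom(T^i,U^j))\cap\cdots\cap\cA^0_g(\ul\Hom(T^i,U^j))$ is the piece $\Hom(T^i,U^j)^{\boldsymbol 0,\boldsymbol 0}$, which consists of the $\bbC$-linear maps $\alpha$ with $\alpha((T^i)^{\bsp,\bsq})\subset (U^j)^{\bsp,\bsq}$ for all $\bsp,\bsq\in\bbZ^g$; the condition $(t_\mu-1)\alpha=0$ for every $\mu$ says exactly that $\alpha$ commutes with all the $t_\mu$; and the condition $\ol\alpha=\alpha$ says that $\alpha$ respects the real structures, i.e.\ descends to an $\bbR$-linear map $T^i_\bbR\to U^j_\bbR$. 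By Proposition \ref{prop: explicit R-rep}, such data is precisely a morphism $T^i\to U^j$ in $\Rep_\bbR(\cG^g)$, so
\[
	\Gamma_0\bigl(\ul\Hom^\bullet(T^\bullet,U^\bullet)\bigr)^n \;=\; \prod_{i\in\bbZ}\Hom_{\Rep_\bbR(\cG^g)}(T^i,U^{i+n}).
\]
One then checks that the differential induced on the left coincides with the usual Hom-complex differential $\{f^i\}\mapsto\{d_U\circ f^i-(-1)^n f^{i+1}\circ d_T\}$; this is immediate since $\Gamma_0$ is additive and the differential on $\ul\Hom^\bullet$ is built from the differentials of $T^\bullet$ and $U^\bullet$ by the formula given just before the statement. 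Hence $\Gamma_0(\ul\Hom^\bullet(T^\bullet,U^\bullet))$ equals the Hom-complex $\Hom^\bullet_{\Rep_\bbR(\cG^g)}(T^\bullet,U^\bullet)$.

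\emph{Step (ii): from the Hom-complex to the homotopy category.} This is classical: for bounded complexes in an abelian category, a cocycle in degree $m$ of the Hom-complex is the same as a morphism of complexes $T^\bullet\to U^\bullet[m]$, and a coboundary in degree $m$ is the same as a null-homotopic such morphism; therefore $H^m$ of the Hom-complex is $\Hom_{\sK^b(\Rep_\bbR(\cG^g))}(T^\bullet,U^\bullet[m])$. Combining this with Step (i) gives the claim. The only points requiring any care are the bookkeeping in Step (i)---matching the reality condition $\ol\alpha=\alpha$ with $\bbR$-rationality and keeping the shift and sign conventions consistent between $\ul\Hom^\bullet$ and $U^\bullet[m]$---but there is no genuine obstacle; everything is routine once the degreewise identification is set up.
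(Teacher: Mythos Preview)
Your proof is correct and follows essentially the same approach as the paper's: both identify the conditions defining $\Gamma_0$ on the internal Hom (lying in the $(\boldsymbol 0,\boldsymbol 0)$-piece, being fixed by each $t_\mu$, and being real) with the conditions characterizing morphisms in $\Rep_\bbR(\cG^g)$, and then invoke the standard cocycle/coboundary description of homotopy classes of chain maps. The paper simply compresses your two steps into a single terse paragraph.
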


\begin{proof}
	An element $f \in \ul\Hom^m(T^\bullet, U^\bullet)_\bbR=  \prod_{n \in \bbZ} \ul\Hom(T^{n}, U^{m+n})_\bbR$ 
	defines an $\bbR$-linear homomorphism $f: T^\bullet_\bbR \rightarrow U^\bullet_\bbR[m]$ if and only if $f$ is an $m$-cocycle.
	Such an $f$ preserves the grading if and only if $f\in \ul\Hom^m(T^\bullet, U^\bullet)^{\boldsymbol{0},\boldsymbol{0}}$,
	and commutes with $t_\mu$ if and only if $t_\mu(f) = f$ in $\ul\Hom^m(T^\bullet, U^\bullet)_\bbC$.
	Finally, the map of complexes induced by $f$ is homotopic to zero if and only if $f$ is a coboundary.
\end{proof}

In order to study the functor $\Gamma_0$, we 
first define a series of exact functors
$
	\cA^{m_1,\ldots,m_g} 
$
as follows.
\begin{definition}\label{def: A}
Let $(m_1, \ldots, m_g) \in \{ 0, 1 \}^g$.  We define the functor
$
	\cA^{m_1,\ldots,m_g} : \Rep_\bbR(\cG^g) \rightarrow \Vec_\bbR
$
by associating to any $U \in \Rep_\bbR(\cG^g)$ the $\bbR$-vector space
\begin{equation*}
	\cA^{m_1,\ldots,m_g}(U) := \bigl\{ v \in \cA^{m_1}_1(U)  \cap  \cdots \cap \cA^{m_g}_g(U) \mid (-t_1)^{m_1} \cdots (-t_g)^{m_g} \ol v = v\bigr\},
\end{equation*}
where  $\cA^{m_\mu}_\mu(U)$ are defined as in \eqref{eq: A}.
\end{definition}

\begin{lemma}\label{lem: exact}
	The functors
	$
		\cA^{m_1,\ldots,m_g}	
	$ 
	are exact.
\end{lemma}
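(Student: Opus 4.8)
The plan is to identify $\cA^{m_1,\ldots,m_g}(U)$ with an explicit real form of a graded subspace of $U_\bbC$, and then deduce exactness formally. Fix $(m_1,\ldots,m_g)\in\{0,1\}^g$ and set
\[
	W(U):=\cA^{m_1}_1(U)\cap\cdots\cap\cA^{m_g}_g(U)=\bigoplus_{(\bsp,\bsq)\in S}U^{\bsp,\bsq},
\]
where $S\subset\bbZ^g\times\bbZ^g$ is the set of $(\bsp,\bsq)$ with $p_\mu=q_\mu=0$ whenever $m_\mu=0$ and $p_\mu,q_\mu<0$ whenever $m_\mu=1$; the intersection of the coordinate subspaces $\cA^{m_\mu}_\mu(U)$ is again a coordinate subspace for the $2g$-grading. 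First I would check that the $\bbC$-linear operator $T_U:=(-t_1)^{m_1}\cdots(-t_g)^{m_g}$ and complex conjugation $\sigma_U$ on $U_\bbC$ both preserve $W(U)$: conjugation because $\sigma_U(U^{\bsp,\bsq})=U^{\bsq,\bsp}$ and $S$ is stable under swapping $\bsp$ and $\bsq$; and, for each $\mu$ with $m_\mu=1$, the automorphism $t_\mu$ because the unipotence relation of Proposition \ref{prop: explicit R-rep} sends $U^{\bsp,\bsq}$ into the sum of the $U^{\bsr,\bss}$ with $(r_\nu,s_\nu)=(p_\nu,q_\nu)$ for $\nu\neq\mu$ and $r_\mu<p_\mu<0$, $s_\mu<q_\mu<0$, all of which still lie in $S$. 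Hence $c_U:=T_U\circ\sigma_U$ restricts to an anti-linear endomorphism of $W(U)$, and the relations $\ol{t_\mu}=t_\mu^{-1}$ give $\sigma_U\circ T_U\circ\sigma_U=T_U^{-1}$, so $c_U^2=\id$. Thus $c_U$ is an anti-linear involution of $W(U)$, and by Definition \ref{def: A} we have $\cA^{m_1,\ldots,m_g}(U)=W(U)^{c_U=1}$, an $\bbR$-form of $W(U)$.

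Next I would organise this as a factorisation. A morphism in $\Rep_\bbR(\cG^g)$ preserves the $2g$-grading, commutes with the $t_\mu$, and commutes with complex conjugation, hence carries the pair $(W(U),c_U)$ functorially; this defines a functor $\cW\colon\Rep_\bbR(\cG^g)\to\cP$, where $\cP$ is the category whose objects are pairs of a finite dimensional $\bbC$-vector space together with an anti-linear involution and whose morphisms are the $\bbC$-linear maps commuting with the involutions, and $\cA^{m_1,\ldots,m_g}$ is the composite of $\cW$ with the fixed-point functor $\cP\to\Vec_\bbR$. The fixed-point functor is an equivalence of categories (a quasi-inverse sends $V_\bbR$ to $V_\bbR\otimes_\bbR\bbC$ with the conjugation involution), hence exact; and the forgetful functor $\cP\to\Vec_\bbC$ is faithful and exact, so it suffices to prove that $\cW$ is exact after passing to underlying $\bbC$-vector spaces.

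Finally, exactness of $\cW$ is immediate: given a short exact sequence $0\to T\to U\to V\to 0$ in $\Rep_\bbR(\cG^g)$, the underlying sequence $0\to T_\bbC\to U_\bbC\to V_\bbC\to 0$ is exact with graded maps, so for each $(\bsp,\bsq)\in S$ it restricts to an exact sequence $0\to T^{\bsp,\bsq}\to U^{\bsp,\bsq}\to V^{\bsp,\bsq}\to 0$, and taking the direct sum over $S$ yields the exact sequence $0\to W(T)\to W(U)\to W(V)\to 0$. Composing with the exact fixed-point functor gives exactness of $0\to\cA^{m_1,\ldots,m_g}(T)\to\cA^{m_1,\ldots,m_g}(U)\to\cA^{m_1,\ldots,m_g}(V)\to 0$, as required. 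I do not anticipate a genuine difficulty here: the one point that needs care is the verification in the first step that $c_U$ is an anti-linear involution of $W(U)$ — that is, the compatibility of the unipotence relation with the symmetric index set $S$ and with $\ol{t_\mu}=t_\mu^{-1}$ — after which everything is formal. One could equally argue directly without the category $\cP$: injectivity of $\cA^{m_1,\ldots,m_g}(\alpha)$ is clear, exactness in the middle follows from $\alpha(T_\bbC)\cap U^{\bsp,\bsq}=\alpha(T^{\bsp,\bsq})$ for a graded injection $\alpha$, and surjectivity follows by lifting graded components of a preimage and then averaging against $c_U$ to land in the real form.
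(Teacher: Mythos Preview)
Your proof is correct. The underlying idea is the same as the paper's: the functor $W(U)=\cA^{m_1}_1(U)\cap\cdots\cap\cA^{m_g}_g(U)$ is manifestly exact on graded pieces, and passing to the condition $(-t_1)^{m_1}\cdots(-t_g)^{m_g}\ol v=v$ amounts to taking fixed points of an anti-linear involution, hence preserves exactness. The paper does this last step by hand---it observes left exactness is immediate and then, for a surjection $T\to U$, lifts $v\in\cA^{m_1,\ldots,m_g}(U)$ to some $u\in W(T)$ and replaces it by the average $u'=(u+(-t_1)^{m_1}\cdots(-t_g)^{m_g}\ol u)/2$, which is exactly the averaging-against-$c_U$ you mention in your final sentence. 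Your main argument repackages this as Galois descent: the pair $(W(U),c_U)$ lives in the category of $\bbC$-vector spaces with an anti-linear involution, which is equivalent to $\Vec_\bbR$, so exactness of $W$ transports for free. Both routes rest on the same verification---that $T_U$ and $\sigma_U$ preserve $W(U)$ and that $\sigma_U T_U\sigma_U=T_U^{-1}$---which you carry out correctly. Your version is a little more structured and makes the role of the real form explicit; the paper's version is shorter and avoids introducing the auxiliary category.
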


\begin{proof}
		By definition, the functor $\cA^{m_1}_1 \cap \cdots \cap \cA^{m_g}_g$ is exact,
		hence the functor $\cA^{m_1, \ldots, m_g}$ is left exact.  Suppose we have a surjective map  $T \rightarrow U$ in $\Rep_\bbR(\cG^g)$.
		For $v \in \cA^{m_1, \ldots, m_g}(U) \subset \cA^{m_1}_1(U) \cap \cdots \cap \cA^{m_g}_g(U)$, take a lift 
		$	
			u \in \cA^{m_1}_1(T) \cap \cdots \cap \cA^{m_g}_g(T).
		$
		Then
		\begin{equation*}
			u' := (u+(-t_1)^{m_1} \cdots (-t_g)^{m_g} \ol u)/2
		\end{equation*}
		is again a lift of $v$ satisfying $u' \in \cA^{m_1, \ldots, m_g}(T)$.
\end{proof}

Suppose $U$ is an object in $\Rep_\bbR(\cG^g)$.  Then $\cA^{\bullet, \ldots, \bullet}(U)$ gives a $g$-tuple complex,
with the $\mu$-th differential given by
\begin{equation*}\xymatrix{%
	\partial_\mu^{m_1,\ldots, m_{\mu-1}, 0, m_{\mu+1}, \ldots,m_g}: \cA^{m_1, \ldots,m_{\mu-1},0,m_{\mu+1},\ldots, m_g}(U) 
	\ar[r]^<<<<{t_\mu-1}&  \cA^{m_1, \ldots,m_{\mu-1},1,m_{\mu+1},\ldots,m_g}(U).
}\end{equation*}%
\begin{example}
	For $g=2$, the double complex $\cA^{\bullet,\bullet}(U)$ for $U$ in $\Rep_\bbR(\cG^2)$ is given by
	\begin{equation*}
	\xymatrix{%
		\cA^{0,0}(U)  \ar[d]_{t_2-1} \ar[r]^{t_1-1}& \cA^{1,0}(U)  \ar[d]^{t_2-1}\\
		\cA^{0,1}(U)  \ar[r]^{t_1-1}&\cA^{1,1}(U).
	}%
	\end{equation*}
\end{example}

If $U^\bullet$ is a complex in $\sC^b(\Rep_\bbR(\cG^g))$, then $\cA^{\bullet, \ldots, \bullet}(U^\bullet)$ becomes a $(g+1)$-tuple complex, 
with the $(g+1)$-st differential being the differential induced from that of $U^\bullet$.  
Let $h$ be an integer $>0$.
For any $h$-tuple complex $U^{\bullet,\ldots,\bullet}$, we define the total complex $\Tot^\bullet(U^{\bullet,\ldots,\bullet})$ to be the complex
whose $m$-th term is given by
\begin{equation*}
	\Tot^m(U^{\bullet,\ldots,\bullet}) :=  \bigoplus_{\substack{(m_1,\ldots,m_h)\in\bbZ^h\\m_1+\cdots+m_h =m}} U^{m_1,\ldots,m_h}
\end{equation*}
and whose $m$-th differential $d^m: \Tot^m(U^{\bullet,\ldots,\bullet}) \rightarrow \Tot^{m+1}(U^{\bullet,\ldots,\bullet})$ is given by
\begin{equation*}
	d^m:= \sum_{\substack{(m_1,\ldots,m_h)\in\bbZ^h\\m_1+\cdots+m_h =m}}  \partial^{m_1}_1+(-1)^{m_1} \partial^{m_2}_2 + \cdots + (-1)^{m_1+\cdots+m_{h-1}}\partial^{m_h}_h,
\end{equation*}
where $\partial^{m_\mu}_{\mu}$ is the partial differential on $U^{m_1,\ldots,m_h}$.

\begin{definition}\label{def: explicit complex}
	We define the functor
	$
		\Lambda^{\bullet}:\sC^b\left(\Rep_\bbR(\cG^g)\right)\rightarrow \sC^b\left(\Vec_{\bbR}\right)
	$
	by
	\begin{equation*}
		\Lambda^{\bullet}\left(U^{\bullet}\right):=\Tot^{\bullet}\left(\cA^{\bullet,\ldots,\bullet}\left(U^{\bullet}\right)\right).
	\end{equation*}
\end{definition}

\begin{lemma}\label{lem: democracy}
 	If $U^\bullet\rightarrow V^\bullet$ is a quasi-isomorphism in $\sC^b(\Rep_\bbR(\cG^g))$, then
	$\Lambda^\bullet(U^\bullet) \rightarrow\Lambda^\bullet(V^\bullet)$ is a quasi-isomorphism of complexes of $\bbR$-vector spaces.
\end{lemma}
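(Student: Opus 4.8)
The plan is to reduce the statement to the exactness of the functors $\cA^{m_1,\ldots,m_g}$ established in Lemma \ref{lem: exact}, via a spectral sequence argument. First I would recall that $\Lambda^\bullet(U^\bullet) = \Tot^\bullet(\cA^{\bullet,\ldots,\bullet}(U^\bullet))$ is the total complex of a $(g+1)$-tuple complex: the first $g$ directions come from the internal differentials $t_\mu - 1$ on $\cA^{\bullet,\ldots,\bullet}$, and the last direction is induced by the differential of the complex $U^\bullet$. Since $U^\bullet$ is a bounded complex in $\Rep_\bbR(\cG^g)$ and each $\cA^{m_1,\ldots,m_g}$ takes values in $\Vec_\bbR$, the $(g+1)$-tuple complex $\cA^{\bullet,\ldots,\bullet}(U^\bullet)$ is bounded in every direction (the first $g$ directions have length at most two each, since $(m_1,\ldots,m_g) \in \{0,1\}^g$), so there are no convergence issues and all the relevant spectral sequences are genuinely first-quadrant-type and converge to the cohomology of $\Lambda^\bullet(U^\bullet)$.

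Next I would filter the total complex by the last ($U^\bullet$-)direction. This produces a spectral sequence whose $E_1$-page in the relevant slot is obtained by applying the functors $\cA^{m_1,\ldots,m_g}$ and the differentials $t_\mu - 1$ to the terms $U^n$ of the complex, and then taking cohomology in the $U^\bullet$-direction last. Concretely, because each $\cA^{m_1,\ldots,m_g}$ is an exact functor (Lemma \ref{lem: exact}), applying it commutes with taking cohomology: the cohomology of $\cA^{m_1,\ldots,m_g}(U^\bullet)$ in the $U^\bullet$-direction is $\cA^{m_1,\ldots,m_g}(H^\bullet(U^\bullet))$. A quasi-isomorphism $f\colon U^\bullet \to V^\bullet$ by definition induces an isomorphism $H^\bullet(U^\bullet) \xrightarrow{\cong} H^\bullet(V^\bullet)$ in $\Rep_\bbR(\cG^g)$, hence — again by exactness of $\cA^{m_1,\ldots,m_g}$ and functoriality of everything in sight — an isomorphism between the corresponding $E_1$-pages (after the $U^\bullet$-direction cohomology is taken). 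The remaining differentials on later pages, which incorporate the $t_\mu - 1$ maps, are natural in $U^\bullet$, so the induced map of spectral sequences is an isomorphism from $E_1$ onward. By the comparison theorem for spectral sequences this gives an isomorphism on the abutments, i.e. $H^m(\Lambda^\bullet(U^\bullet)) \xrightarrow{\cong} H^m(\Lambda^\bullet(V^\bullet))$ for all $m$, which is exactly the claim.

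Alternatively — and perhaps more cleanly for the writeup — I would phrase this without explicitly invoking spectral sequences: since $\cA^{m_1,\ldots,m_g}$ is exact, the functor $\Lambda^\bullet$ on the level of the underlying $\bbR$-complexes is obtained by applying a finite collection of exact functors $\Rep_\bbR(\cG^g) \to \Vec_\bbR$ degreewise and then forming a finite total complex. An exact functor applied degreewise to complexes preserves quasi-isomorphisms (because it commutes with the formation of kernels, cokernels, images, and hence with cohomology), and forming the total complex of a bounded multicomplex likewise preserves degreewise quasi-isomorphisms — this is the standard fact that a map of bounded multicomplexes which is a quasi-isomorphism in one of the directions induces a quasi-isomorphism on totalizations, proved by an easy induction on the number of nonzero rows using the long exact sequence of a mapping cone, or equivalently the two-term filtration argument. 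Applying this with the "one direction" being the $U^\bullet$-direction finishes the proof.

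The main obstacle is purely bookkeeping rather than conceptual: one must be careful that the differentials $t_\mu - 1$ in the first $g$ directions are morphisms of $\bbR$-vector spaces that are natural in $U$, so that they genuinely assemble into a map of multicomplexes and are compatible with the exact functors $\cA^{m_1,\ldots,m_g}$; and one must keep the signs in the total differential (the $(-1)^{m_1+\cdots+m_{\mu-1}}$ factors in Definition \ref{def: explicit complex}) straight when checking naturality. Once the multicomplex $\cA^{\bullet,\ldots,\bullet}(U^\bullet)$ is seen to be functorial in $U^\bullet$ and bounded in all directions, the conclusion is immediate from exactness of the $\cA^{m_1,\ldots,m_g}$ together with the elementary stability of quasi-isomorphisms under totalization of bounded multicomplexes.
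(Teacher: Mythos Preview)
Your proof is correct and follows essentially the same approach as the paper: the paper's proof is a single sentence invoking Lemma \ref{lem: exact} (the exactness of the functors $\cA^{m_1,\ldots,m_g}$), and your argument simply unpacks the standard reasoning behind that one-line justification.
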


\begin{proof}
	This follows from Lemma \ref{lem: exact}, which states that $\cA^{m_1,\ldots,m_g}$ are exact functors.
\end{proof}

We will use the functor $\Lambda^\bullet$ to calculate the functor $\Gamma_0$.
We will define intermediate functors $\cB$ and $\cC$ which will be used to relate the functors $\Lambda^\bullet$ and $\Gamma_0$.
Let $(m_1, \ldots, m_g) \in \{0,1\}^g$.  For $\mu=0, \ldots, g$, 
we inductively define the functors
\begin{align*}
	\cB^{m_1, \ldots, m_\mu}_\mu &: \Rep_\bbR(\cG^g) \rightarrow \Vec_\bbR,  &
	\cC^{m_1, \ldots, m_\mu}_\mu &: \Rep_\bbR(\cG^g) \rightarrow \Vec_\bbR
\end{align*}
as follows.  For $\mu =g$, we let
$
	\cB^{m_1, \ldots, m_g}_g(U) := \cA^{m_1, \ldots, m_g}(U)
$
and
$
	\cC^{m_1, \ldots, m_g}_g(U) :=0.
$
For an integer $\mu\geq0$, if $\cB^{m_1, \ldots, m_{\mu+1}}_{\mu+1}$ is defined, we define the functors for $\mu$ by
\begin{equation*}
	\cB^{m_1, \ldots, m_\mu}_\mu(U) := \Ker \left( \cB^{m_1, \ldots, m_\mu,0}_{\mu+1}(U) \xrightarrow{t_{\mu+1}-1} \cB^{m_1, \ldots, m_\mu,1}_{\mu+1}(U)  \right)
\end{equation*}
and
\begin{equation*}
	\cC^{m_1, \ldots, m_\mu}_\mu(U) := \Coker \left( \cB^{m_1, \ldots, m_\mu,0}_{\mu+1}(U) \xrightarrow{t_{\mu+1}-1} \cB^{m_1, \ldots, m_\mu,1}_{\mu+1}(U)  \right).
\end{equation*}
Note that we have
\begin{equation}\label{eq: Gamma0}
	\Gamma_0(U)= \cB_0(U):=  \Ker \left( \cB^0_1(U) \xrightarrow{t_{1}-1} \cB^{1}_{1}(U)  \right).
\end{equation}

\begin{example}
	The $\bbR$-vector spaces $\cB^{m_1, \ldots,m_\mu}_\mu(U)$ and  $\cC^{m_1, \ldots,m_\mu}_\mu(U)$ for $g=2$ fit into the following diagram,
	whose horizontal and vertical sequences are exact.
	\begin{equation*}
		\xymatrix{%
				&  & 0\ar[d] &0\ar[d] &  &\\
			0 \ar[r] &  \Gamma_0(U)  \ar[r]  &  \cB^0_1(U)  \ar[d]  \ar[r]^{t_1-1}   & \cB^1_1(U)   \ar[d] \ar[r]  &  \cC_0(U)    \ar[r] & 0\\
				&  & \cB^{0,0}_2(U)  \ar[d]_{t_2-1} \ar[r]^{t_1-1}& \cB^{1,0}_2(U)  \ar[d]^{t_2-1} &  &\\
				&  & \cB^{0,1}_2(U)  \ar[d]\ar[r]^{t_1-1}&\cB^{1,1}_2(U)  \ar[d] &  &\\
				&  & \cC^0_1 (U) \ar[d] \ar[r]^{t_1-1}& \cC^1_1(U) \ar[d] &  &\\
				&  & 0 &0. &  &
		}%
	\end{equation*}
	Note that $\cB^{m_1, m_2}_2(U) = \cA^{m_1, m_2}(U)$ in this case.
\end{example}

Again, if $U^\bullet$ is a complex in $\sC^b(\Rep_\bbR(\cG^g))$, then $\cB^{\bullet, \ldots, \bullet}_\mu(U^\bullet)$ and  $\cC^{\bullet, \ldots, \bullet}_\mu(U^\bullet)$
becomes $(\mu+1)$-tuple complexes  with the $(\mu+1)$-st differential being the differential induced from that of $U^\bullet$.  
We have an exact sequence of complexes
\begin{equation}\label{eq: triangle}
	\xymatrix{%
		0 \ar[r]&	\Tot^{\bullet}(\cB^{\bullet,\ldots,\bullet}_\mu(U^{\bullet})) \ar[r] & \Tot^{\bullet}(\cB^{\bullet,\ldots,\bullet}_{\mu+1}(U^{\bullet})) \ar[r] & \Tot^\bullet(\cC^{\bullet, \ldots, \bullet}_\mu(U^\bullet))[-1]\ar[r]&0.
	}%
\end{equation}
Note that we have
\begin{align}\label{eq: B}
	\Gamma_0(U^\bullet) &= \Tot^{\bullet}(\cB_0(U^{\bullet})), &
	\Lambda^\bullet(U^\bullet) &= \Tot^{\bullet}(\cB^{\bullet,\ldots,\bullet}_g(U^{\bullet})) 
\end{align}
by \eqref{eq: Gamma0}, the definition of the functor $\cB^{\bullet,\ldots,\bullet}_g$, and Definition \ref{def: explicit complex}.

%
\subsection{The vanishing of classes}
%

The main goal of this subsection is to prove Proposition \ref{prop: crucial}.
\begin{proposition}\label{prop: crucial}
	Let $\mu =0, \ldots, g$ and $m_1, \ldots, m_\mu \in \{0,1\}$.  For any $U^\bullet \in \sC^b(\Rep_\bbR(\cG^g))$, we have
	\begin{equation*}
		\lim_{s: U^\bullet \rightarrow V^\bullet} H^m(\cC^{m_1, \ldots, m_\mu}_\mu(V^\bullet) ) = 0
	\end{equation*}
	for any $m \in \bbZ$,
	where the direct limit is over quasi-isomorphisms $s: U^\bullet \rightarrow V^\bullet$.
\end{proposition}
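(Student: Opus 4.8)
The plan is to replace $U^\bullet$, up to quasi-isomorphism, by a bounded complex whose terms are built from exterior products on which the functors $\cC^{m_1,\dots,m_\mu}_\mu$ manifestly vanish, and then to observe that such replacements are cofinal among quasi-isomorphisms out of $U^\bullet$, so that the direct limit must be $0$. (The case $\mu=g$ is trivial, $\cC^{m_1,\dots,m_g}_g=0$ by definition, so I only worry about $\mu<g$.) The first ingredient I would establish is how $\cA^{m_1,\dots,m_g}$, hence also $\cB^{m_1,\dots,m_\mu}_\mu$ and $\cC^{m_1,\dots,m_\mu}_\mu$, behaves on an exterior product $E_1\boxtimes\cdots\boxtimes E_g$ with $E_\nu\in\Rep_\bbR(\cG)=\MHS_\bbR$. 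Writing $\cA^\bullet(E_\nu)=[\cA^0(E_\nu)\xrightarrow{t-1}\cA^1(E_\nu)]$ for the $g=1$ two-term complex of \eqref{eq: A}, one checks from \eqref{eq: A} and the real analogue of Remark \ref{rem: exterior product} that $\cA^{m_1,\dots,m_g}(E_1\boxtimes\cdots\boxtimes E_g)=\cA^{m_1}(E_1)\otimes_\bbR\cdots\otimes_\bbR\cA^{m_g}(E_g)$, compatibly with the differentials $t_\mu-1$, which act through the $\mu$-th factor. The one point needing care is the real structure: on the complexified tensor product it is the tensor product of the twisted conjugations $v\mapsto(-t_\mu)^{m_\mu}\overline v$ on the factors, which is an involution precisely because $\overline{t_\mu}=t_\mu^{-1}$. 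Since $\otimes_\bbR$ is exact, the iterated kernels in directions $\mu+1,\dots,g$ and the single cokernel in direction $\mu+1$ that define $\cC^{m_1,\dots,m_\mu}_\mu$ pass to the factors, giving $\cC^{m_1,\dots,m_\mu}_\mu(E_1\boxtimes\cdots\boxtimes E_g)=\cA^{m_1}(E_1)\otimes_\bbR\cdots\otimes_\bbR\cA^{m_\mu}(E_\mu)\otimes_\bbR H^1(\cA^\bullet(E_{\mu+1}))\otimes_\bbR H^0(\cA^\bullet(E_{\mu+2}))\otimes_\bbR\cdots\otimes_\bbR H^0(\cA^\bullet(E_g))$; in particular this vanishes as soon as $t-1\colon\cA^0(E_{\mu+1})\to\cA^1(E_{\mu+1})$ is surjective.

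Next I would check there are enough such objects. Call $E\in\MHS_\bbR$ \emph{good} if $t-1\colon\cA^0(E)\to\cA^1(E)$ is surjective; by the Deligne--Beilinson formula for $\Ext^1$ this is equivalent to $\Ext^1_{\MHS_\bbR}(\bbR(0),E)=0$. Every object of $\MHS_\bbR$ embeds into a good one: given $v\in\cA^1(E)$ not in the image of $t-1$, I adjoin a vector $e$ in bidegree $(0,0)$ with $\overline e=e$ and $t'e:=e+v$; the identity $-t\overline v=v$ (valid since $v\in\cA^1(E)$) makes $(E_\bbC\oplus\bbC e,\dots,t')$ an object of $\MHS_\bbR$ containing $E$, on which the relevant cokernel is strictly smaller, and I iterate finitely often. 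Combining this with the fact --- provable by coinduction from $\bbS^g$, by an argument parallel to (and slightly simpler than) Corollary \ref{cor: subobject} --- that every object of $\Rep_\bbR(\cG^g)$ embeds into an exterior product $E_1\boxtimes\cdots\boxtimes E_g$, and then enlarging each $E_\nu$ to a good object, I conclude that every object of $\Rep_\bbR(\cG^g)$ embeds into a direct sum of exterior products of good objects; on all such objects every $\cC^{m_1,\dots,m_\mu}_\mu$ vanishes by the computation above.

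Finally, given $U^\bullet\in\sC^b(\Rep_\bbR(\cG^g))$, I would construct a quasi-isomorphism $U^\bullet\to V^\bullet$ with $V^\bullet$ bounded and each $V^n$ of this last type. Such resolutions should exist with uniformly bounded length: since $\MHS_\bbR$ has cohomological dimension $1$, a good-object embedding of each $E_\nu$ extends to a resolution of length $\le1$ by good objects, and tensoring these gives resolutions of exterior products --- and hence of arbitrary objects and of bounded complexes --- by direct sums of exterior products of good objects, of length $\le g$. For such $V^\bullet$ the complex $\cC^{m_1,\dots,m_\mu}_\mu(V^\bullet)$ is identically zero, so $H^m(\cC^{m_1,\dots,m_\mu}_\mu(V^\bullet))=0$ for all $m$; and since any quasi-isomorphism out of $U^\bullet$ can be refined to one of this form, these $V^\bullet$ are cofinal, so the direct limit vanishes. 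I expect the main obstacle to be exactly this last step --- producing the bounded replacement by precisely the right class of objects, equivalently pinning down the relevant finite homological dimension; the exterior-product computation and the ``adjoin a vector'' construction are routine once the real-structure bookkeeping in the first step is carried out. Alternatively, one could organize everything as a downward induction on $\mu$ from the trivial case $\mu=g$, using the exact sequences defining $\cB^{m_1,\dots,m_\mu}_\mu$, $\cC^{m_1,\dots,m_\mu}_\mu$ together with \eqref{eq: triangle}.
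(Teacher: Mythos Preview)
Your ingredients are essentially the same as the paper's---the tensor factorization of $\cA^{m_1,\ldots,m_g}$ on exterior products is Lemma~\ref{lemma: tensor}, and your ``adjoin a vector'' construction is precisely the extension built in Lemma~\ref{lemma: S}---but your assembly is more ambitious than what the paper does, and the step you flag as the main obstacle is a genuine gap.

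The paper does \emph{not} try to produce a $V^\bullet$ on which $\cC^{m_1,\ldots,m_\mu}_\mu$ vanishes identically. Instead it kills one cohomology class at a time: given a cocycle $\xi\in\cC^{m_1,\ldots,m_\mu}_\mu(U^m)$, it observes that $\cC^{m_1,\ldots,m_\mu}_\mu(U^m)=\cC^{m_1,\ldots,m_\mu}_\mu(R)$ for $R:=\Gamma_{\mu+1}(U^m)\in\Rep_\bbR(\cG^{\mu+1})$, applies your adjoin-a-vector idea (packaged as Lemma~\ref{lemma: S}, using that $R$ is a \emph{quotient} of an exterior product via Theorem~\ref{theorem: external} and the surjectivity of $\cA^{m_1,\ldots,m_\mu,1}$ on quotients) to find a single injection $R\hookrightarrow S$ killing $\xi$, and then modifies $U^\bullet$ only in degrees $m$ and $m+1$ by the pushout $(U^m\oplus S)/R$ to obtain a quasi-isomorphism $U^\bullet\hookrightarrow V^\bullet$ with $s(\xi)=0$. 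This is already enough for the direct limit to vanish; no bounded resolution by any special class of objects is needed.

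Your strategy would require showing that bounded complexes admit bounded resolutions by direct sums of exterior products of good objects, and your sketch for this does not go through: resolving each $E_\nu$ by good objects and tensoring resolves $E_1\boxtimes\cdots\boxtimes E_g$, but an arbitrary $V$ is only a subobject of such a product, and the cokernel $P/V$ is not again of this shape, so there is no evident reason the process terminates in length $\le g$. (Arguing via finite cohomological dimension of $\Rep_\bbR(\cG^g)$ would be circular here, since that is essentially what Theorem~\ref{thm: old theorem} establishes using the present proposition.) The paper's local, class-by-class approach sidesteps this entirely.
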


We will give the proof of Proposition \ref{prop: crucial} at the end of this subsection.
The main idea of the proof is to reduce the statement to Lemma \ref{lemma: S}, which is the case when $U$ is a single object in $\Rep_\bbR(\cG^g)$ given
as a quotient of an exterior product $T \boxtimes T'$ of objects $T$ and $T'$ in $\Rep_\bbR(\cG^\mu)$.
In order to prove Lemma \ref{lemma: S},
we will first prove that the functor $\cA^{m_1, \ldots, m_\mu}$ preserves
exterior products.

\begin{lemma}\label{lemma: tensor}
	Let $T$ and $T'$ be objects in $\Rep_\bbR(\cG^\mu)$ and $\Rep_\bbR(\cG)$ respectively.
	The natural injection
	\begin{equation*}
		\cA^{m_1,\ldots,m_\mu}(T)\otimes_\bbR \cA^{m_{\mu+1}}(T') \rightarrow \cA^{m_1,\ldots,m_{\mu+1}}(T\boxtimes T')
	\end{equation*}
	is an isomorphism.
\end{lemma}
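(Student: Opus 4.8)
The plan is to reduce the assertion to a standard statement about real forms of complex vector spaces, obtained by unwinding Definition \ref{def: A} in terms of the $\bbC$-linear data. Fix $(m_1,\ldots,m_{\mu+1})\in\{0,1\}^{\mu+1}$. For an object $U$ in $\Rep_\bbR(\cG^h)$ write $W(U):=\cA^{m_1}_1(U)\cap\cdots\cap\cA^{m_h}_h(U)$, which by \eqref{eq: A} is the $\bbC$-linear subspace of $U_\bbC$ spanned by those graded pieces $U^{\bsp,\bsq}$ with $p_\nu=q_\nu=0$ for each $\nu$ with $m_\nu=0$ and $p_\nu<0$, $q_\nu<0$ for each $\nu$ with $m_\nu=1$, and set $c_U:=(-t_1)^{m_1}\cdots(-t_h)^{m_h}\circ\ol{(\,\cdot\,)}$, an $\bbR$-linear endomorphism of $U_\bbC$. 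The first step is to show that $c_U$ restricts to an anti-linear involution of $W(U)$ whose $\bbR$-subspace of fixed vectors is exactly $\cA^{m_1,\ldots,m_h}(U)$; equivalently, that $\cA^{m_1,\ldots,m_h}(U)$ is a real form of $W(U)$, so that the natural map $\cA^{m_1,\ldots,m_h}(U)\otimes_\bbR\bbC\to W(U)$ is an isomorphism.

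Verifying this first step is where the compatibility conditions of Proposition \ref{prop: explicit R-rep} enter. Stability of $W(U)$ under complex conjugation follows from $\ol{U^{\bsp,\bsq}}=U^{\bsq,\bsp}$ together with the fact that the index conditions defining $W(U)$ are symmetric under exchanging $\bsp$ and $\bsq$; stability under each $t_\nu$ with $m_\nu=1$ follows because the unipotence relation for $t_\nu$ only decreases the $\nu$-th pair of indices, which are already negative, leaving the other indices unchanged; and for $m_\nu=0$ the factor $t_\nu$ does not occur. That $c_U^2=\id$ on $W(U)$ follows from $\ol{t_\nu}=t_\nu^{-1}$ and the mutual commutativity of the $t_\nu$, while anti-linearity is immediate. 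Hence $\cA^{m_1,\ldots,m_h}(U)=W(U)^{c_U=\id}$ is a real form, as claimed.

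The remaining step is the computation for the exterior product. From the description of $T\boxtimes T'$ — the grading being the tensor product of the gradings of $T$ and $T'$, the automorphisms acting on the respective factors, and the underlying real vector space being $T_\bbR\otimes_\bbR T'_\bbR$ — one reads off directly that $W(T\boxtimes T')=W(T)\otimes_\bbC W(T')$ inside $T_\bbC\otimes_\bbC T'_\bbC$ and that $c_{T\boxtimes T'}$ acts on this subspace as the anti-linear involution $c_T\otimes c_{T'}$, i.e. $c_{T\boxtimes T'}(v\otimes v')=c_T(v)\otimes c_{T'}(v')$. Combining this with the first step gives
\[
	\cA^{m_1,\ldots,m_{\mu+1}}(T\boxtimes T')=\bigl(W(T)\otimes_\bbC W(T')\bigr)^{c_T\otimes c_{T'}=\id},
\]
so it remains to invoke the elementary fact that, for anti-linear involutions $c$ of a complex vector space $W$ and $c'$ of a complex vector space $W'$, one has $(W\otimes_\bbC W')^{c\otimes c'=\id}=W^{c=\id}\otimes_\bbR{W'}^{c'=\id}$. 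Applied here, this identifies the right-hand side with $\cA^{m_1,\ldots,m_\mu}(T)\otimes_\bbR\cA^{m_{\mu+1}}(T')$ and shows that the natural map — which is injective since it carries an $\bbR$-basis of the source to a $\bbC$-basis of $W(T)\otimes_\bbC W(T')$ — is surjective. I do not anticipate a substantial obstacle; the only step needing genuine care is the stability-and-involution check in the second paragraph, after which the exterior-product computation and the real-form lemma are routine.
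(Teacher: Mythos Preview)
Your proof is correct and is essentially the same argument as the paper's, just packaged more conceptually: the paper writes out the averaging formulas $u'_k=(u_k+c_T(u_k))/2$, $u''_k=i(u_k-c_T(u_k))/2$ (and similarly for $v_k$) and checks directly that $w=\sum(u'_k\otimes v'_k-u''_k\otimes v''_k)$, which is exactly the explicit proof of your ``elementary fact'' about real forms of a tensor product. Your abstraction via the anti-linear involution $c_U$ on $W(U)$ makes the structure clearer and the verification that $c_U$ is an involution preserving $W(U)$ is carried out correctly.
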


\begin{proof}
		Let $w := \sum_{k=1}^N u_k \otimes v_k \in \cA^{m_1,\ldots,m_{\mu+1}}(T \boxtimes T')$
		for some $u_k \in (\cA^{m_1}_1 \cap\cdots \cap \cA^{m_\mu}_\mu)(T)$ and $v_k \in \cA_1^{m_{\mu+1}}(T')$. 
		Then
		\begin{align*}
			u'_k := (u_k + (-t_1)^{m_1} \cdots (-t_\mu)^{m_\mu} \ol u_k)/2
			&&\text{and}&&
			u''_k := i (u_k-(-t_1)^{m_1}\cdots (-t_\mu)^{m_\mu} \ol u_k)/2
		\end{align*}
		are elements in $\cA^{m_1, \ldots, m_\mu}(T)$, and
		\begin{align*}
			v'_k := (v_k + (-t)^{m_{\mu+1}} \ol v_k)/2
			&&\text{and}&&
			v''_k:= i  (v_k - (-t)^{m_{\mu+1}} \ol v_k)/2
		\end{align*}
		are elements in $\cA^{m_{\mu+1}}(T')$. 
		Then we see that
		\begin{equation*}
			w =(w + (-t_1)^{m_1} \cdots (-t_{\mu+1})^{m_{\mu+1}} \ol w)/2 = \sum_{k=1}^N (u'_k \otimes v'_k - u''_k \otimes v''_k) 
		\end{equation*}
		is an element in $\cA^{m_1,\ldots,m_\mu}(T) \otimes_\bbR \cA^{m_{\mu+1}}(T')$ as desired.
\end{proof}

We will now prove Lemma \ref{lemma: S}.

\begin{lemma}\label{lemma: S}
	Let $R$ be an object in $\Rep_\bbR(\cG^{\mu+1})$.
	For any $\xi \in \cC^{m_1,\ldots, m_\mu}_\mu(R)$, there exists  an injection 
	$R \hookrightarrow S$ in $\Rep_\bbR(\cG^{\mu+1})$ such that the image of $\xi$ 
	in $\cC^{m_1,\ldots, m_\mu}_{\mu}(S)$ is \textit{zero}.
\end{lemma}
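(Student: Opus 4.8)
The plan is to reduce, via Corollary~\ref{cor: subobject} and the exactness of the functors $\cA^{m_1,\ldots,m_g}$ (Lemma~\ref{lem: exact}), to the case in which $R$ is a sub-object of an exterior product of objects of $\Rep_\bbR(\cG)$, and then to build $S$ by adjoining to $R$ a ``logarithm of monodromy'' in the last tensor factor. Note first that, since here $g=\mu+1$, the base case of the inductive definition of $\cB$ gives $\cB^{m_1,\ldots,m_{\mu+1}}_{\mu+1}=\cA^{m_1,\ldots,m_{\mu+1}}$, so that $\cC^{m_1,\ldots,m_\mu}_\mu(R)=\cA^{m_1,\ldots,m_\mu,1}(R)/(t_{\mu+1}-1)\cA^{m_1,\ldots,m_\mu,0}(R)$. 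For the reduction, use Corollary~\ref{cor: subobject} to write $R\cong R_2/R_1$ with $R_1\subseteq R_2\subseteq P:=V_1\boxtimes\cdots\boxtimes V_{\mu+1}$, $V_i\in\Rep_\bbR(\cG)$. Exactness of $\cA^{m_1,\ldots,m_\mu,1}$ makes $\cC^{m_1,\ldots,m_\mu}_\mu(R_2)\to\cC^{m_1,\ldots,m_\mu}_\mu(R)$ surjective, so $\xi$ lifts to some $\xi_2$; granting the claim for sub-objects of exterior products, pick $R_2\hookrightarrow S_2$ killing $\xi_2$, and then $S:=S_2/R_1\supseteq R_2/R_1=R$ does the job by functoriality of $\cC^{m_1,\ldots,m_\mu}_\mu$. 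So I may assume $R\subseteq P=T\boxtimes T'$ with $T:=V_1\boxtimes\cdots\boxtimes V_\mu\in\Rep_\bbR(\cG^\mu)$ (the unit object if $\mu=0$) and $T':=V_{\mu+1}\in\Rep_\bbR(\cG)$.

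For the main construction, lift $\xi$ to $\tilde\xi\in\cA^{m_1,\ldots,m_\mu,1}(R)\subseteq\cA^{m_1,\ldots,m_\mu,1}(P)$ and identify $\cA^{m_1,\ldots,m_\mu,1}(P)=\cA^{m_1,\ldots,m_\mu}(T)\otimes_\bbR\cA^1(T')$ by Lemma~\ref{lemma: tensor}. Fixing an $\bbR$-basis $\{a_k\}$ of the finite-dimensional space $\cA^{m_1,\ldots,m_\mu}(T)$, write $\tilde\xi=\sum_k a_k\otimes b_k$ with $b_k\in\cA^1(T')$. Now enlarge the last factor: let $T''_\bbR:=T'_\bbR\oplus\bigoplus_k\bbR\,\beta_k$ for new symbols $\beta_k$, place $\beta_k$ in bidegree $(0,0)$, let the real structure fix $\beta_k$, and let $t_1$ extend the automorphism of $T'$ by $t_1(\beta_k):=\beta_k+b_k$. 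Because $b_k\in\cA^1(T')\subseteq\bigoplus_{p,q<0}(T')^{p,q}$, the nilpotence condition on $t_1-1$ holds, and a direct computation shows that the relation $\ol{t_1}=t_1^{-1}$ on $T''$ is equivalent to the reality identity $(-t_1)\ol{b_k}=b_k$ defining $\cA^1(T')$. Hence $T''$ is an object of $\Rep_\bbR(\cG)$ (Proposition~\ref{prop: explicit R-rep}), $T'\hookrightarrow T''$ is a morphism, and $\beta_k\in\cA^0(T'')$ with $(t_1-1)\beta_k=b_k$.

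Finally, set $\tilde u:=\sum_k a_k\otimes\beta_k\in\cA^{m_1,\ldots,m_\mu}(T)\otimes_\bbR\cA^0(T'')=\cA^{m_1,\ldots,m_\mu,0}(T\boxtimes T'')$ (Lemma~\ref{lemma: tensor} once more), and let $S\subseteq T\boxtimes T''$ be the $\cG^{\mu+1}$-subrepresentation generated by $R$ and $\tilde u$; it is conjugation-stable, since $\ol{\tilde u}$ lies in the $\cG^{\mu+1}$-orbit of $\tilde u$ by the reality identity for $\cA^{m_1,\ldots,m_\mu,0}$, so $S\in\Rep_\bbR(\cG^{\mu+1})$ and $R\hookrightarrow S$. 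By Remark~\ref{rem: exterior product} we have $t_{\mu+1}=1\otimes t_1$ on $T\boxtimes T''$, whence $(t_{\mu+1}-1)\tilde u=\sum_k a_k\otimes(t_1-1)\beta_k=\tilde\xi$; and by exactness of $\cA^{m_1,\ldots,m_\mu,\bullet}$ we get $\tilde\xi\in\cA^{m_1,\ldots,m_\mu,1}(S)$ and $\tilde u\in\cA^{m_1,\ldots,m_\mu,0}(S)$. Therefore the image of $\xi$ in $\cC^{m_1,\ldots,m_\mu}_\mu(S)$ equals $[(t_{\mu+1}-1)\tilde u]=0$, as desired. The main obstacle is purely bookkeeping with the (twisted) real structure: checking that $T''$ is genuinely defined over $\bbR$ and that $S$ is conjugation-stable. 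The one real point is that the reality identity cutting out $\cA^1(T')$ is precisely what makes the monodromy extension $T''$ real; everything else is routine given Corollary~\ref{cor: subobject}, Lemma~\ref{lemma: tensor} and Remark~\ref{rem: exterior product}.
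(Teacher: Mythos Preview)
Your proof is correct and follows the same overall strategy as the paper: reduce via Corollary~\ref{cor: subobject} to objects built from an exterior product $T\boxtimes T'$, use Lemma~\ref{lemma: tensor} to write a lift $\tilde\xi$ as a sum of simple tensors, and then build an extension in which a new degree-$(0,0)$ element has $(t_{\mu+1}-1)$ equal to~$\tilde\xi$.

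The packaging differs in two minor but pleasant ways. First, the paper reduces to \emph{quotients} $R=(T\boxtimes T')/N$ (the sentence ``we can reduce to the case when $R=(T\boxtimes T')/N$'' suppresses the argument), whereas you reduce to \emph{subobjects} $R\subseteq T\boxtimes T'$ and handle the quotient part explicitly by functoriality of $\cC^{m_1,\ldots,m_\mu}_\mu$. Second, the paper constructs $S$ directly as an extension $0\to R\to S\to (T\boxtimes\bbR(0))^N\to 0$ with a hand-built $t_{\mu+1}$, while you enlarge only the last tensor factor to $T''\supset T'$ in $\Rep_\bbR(\cG)$ and take $S$ inside $T\boxtimes T''$. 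These are the same extension viewed from opposite ends; your version has the small advantage that the real-structure check (that $\ol{t_1}=t_1^{-1}$ on $T''$) reduces cleanly to the identity $-t_1\ol{b_k}=b_k$ defining $\cA^1(T')$, exactly as you note.
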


\begin{proof}
	By Theorem \ref{theorem: external}, we can reduce to the case when 
	$R=(T\boxtimes T')/N$, where $T$, $T'$ are objects respectively in $\Rep_\bbR(\cG^\mu)$, $\Rep_\bbR(\cG)$ and $N$ is a subobject of $T\boxtimes T'$. 
	We let $\wt\xi$ be an element of $\cB^{m_1,\ldots,m_\mu,1}_{\mu+1}(R) = \cA^{m_1, \ldots, m_\mu, 1}(R)$ representing $\xi$.
	By definition of the functor $\cC^{m_1,\ldots,m_\mu}_{\mu}$, it is sufficient to show that there exists an injection $R \hookrightarrow S$ in $\Rep_\bbR(\cG^{\mu+1})$ such
	that $\wt\xi$ is in the image of $t_{\mu+1}-1$ on $S$.
	Since the functor $\cA^{m_1, \ldots, m_\mu, 1}$ is exact, we have a surjection 
	\begin{equation}\label{eq: surj}
		 \cA^{m_1, \ldots, m_\mu}(T) \otimes_\bbR \cA^1(T') \cong \cA^{m_1, \ldots, m_\mu, 1}(T \boxtimes T') \rightarrow \cA^{m_1, \ldots, m_\mu, 1}(R),
	\end{equation}
	where the first isomorphism is given by Lemma \ref{lemma: tensor}.
	Hence there exists an element
	\begin{equation*}
		 \sum_{k=1}^N u_k \otimes u'_k  \in  \cA^{m_1, \ldots, m_\mu}(T) \otimes_\bbR \cA^1(T') \subset T_\bbC \otimes_\bbC T'_\bbC
	\end{equation*}	
	mapping by \eqref{eq: surj} to $\wt\xi$. We let $S = (S_\bbR, \{ S^{\bsp,\bsq} \}, \{ t_\mu \})$ be an object in 
	$\Rep_\bbR(\cG^{\mu+1})$ given as an extension
	\begin{equation*}\begin{CD}
		0  @>>> R @>>> S @>>> \bigoplus_{k=1}^N  T \boxtimes \bbR(0) @>>> 0,
	\end{CD}\end{equation*}
	whose underlying $\bbR$-vector space is the direct sum
	\begin{equation*}
		S_\bbR:= R_\bbR \oplus \bigoplus_{k=1}^N  (T \boxtimes \bbR(0))_\bbR,
	\end{equation*}
	the $2(\mu+1)$-grading and the $\bbC$-linear automorphisms $t_1, \ldots, t_{\mu}$ on $S_\bbC$ are also given by the direct sum, 
	and the $\bbC$-linear automorphism $t_{\mu+1}$ is given by $t_{\mu+1} := \id \otimes t$ when restricted to $R_\bbC$ and 
	\begin{equation*}
		t_{\mu+1}(w_1, \ldots, w_N): = (w_1, \ldots, w_N)+ \sum_{k=1}^N [w_k \otimes u_k']
	\end{equation*}
	for any $(w_1, \ldots, w_N)$ in 
	$
		\bigoplus_{k=1}^N  (T \boxtimes \bbR(0))_\bbC = \bigoplus_{k=1}^N  T_\bbC,
	$
	where $\sum_{k=1}^N [w_k \otimes u_k']$ is the image of  $\sum_{k=1}^N w_k \otimes u_k'$ by
	the surjection $(T \boxtimes T')_\bbC \rightarrow R_\bbC$.
	We show that $\ol{t_{\mu+1}} = t_{\mu+1}^{-1}$ from the fact that $t(\ol u'_k) = - u'_k$ since $u_k' \in \cA^1(T')$.  Then 
	$S$ defined as above is an object in $\Rep_\bbR(\cG^{\mu+1})$.  
	If we let
	\begin{equation*}	
		\eta: = (u_1, \ldots, u_N) \in \bigoplus_{k=1}^N  (T \boxtimes \bbR(0))_\bbC \subset S_\bbC,
	\end{equation*}
	then $\eta \in \cA^{m_1, \ldots, m_\mu,0}(S)$ by construction, and we have
	\begin{equation*}
		(t_{\mu+1} -1) \eta = \sum_{k=1}^N [u_k \otimes u_k'] = \wt\xi.
	\end{equation*}
	This shows that the class of $\xi$ in $\cC^{m_1,\ldots, m_\mu}_\mu(S)$ is zero as desired.
\end{proof}

Suppose $U$ is an object in $\Rep_\bbR(\cG^{\mu+1})$.  Then by Remark \ref{rem: inclusion}, we may view $U$ as an object in $\Rep_\bbR(\cG^g)$.
Since $t_{\mu+2}, \ldots, t_g$ for $U$ is the identity map, we have $\cB^{m_1, \ldots,m_{\mu+1}}_{\mu+1}(U) = \cA^{m_1, \ldots,  m_{\mu+1}}(U)$,
hence
\begin{equation*}
	\cC^{m_1,\ldots, m_\mu}_\mu(U) = \Coker\bigl(\cA^{m_1, \ldots, m_{\mu},0}(U) \xrightarrow{t_{\mu+1}-1} \cA^{m_1, \ldots, m_{\mu},1}(U)\bigr)
\end{equation*}
in this case.
Now we are ready to prove Proposition \ref{prop: crucial}.

\begin{proof}[Proof of Proposition \ref{prop: crucial}]
	It is sufficient to show that for any $U^\bullet \in \sC^b(\Rep_\bbR(\cG^g))$ and $m$-cocycle $\xi\in \cC^{m_1, \ldots, m_\mu}_\mu(U^m)$,
	there exists a quasi-isomorphism $s: U^\bullet \rightarrow V^\bullet$ such that $s(\xi)$ is zero in $\cC^{m_1, \ldots, m_\mu}_\mu(V^m)$.
	Let $R: = \Gamma_{\mu+1}(U^m)$, which is a  mixed $(\mu+1)$-plectic $\bbR$-Hodge structure of Definition \ref{def: gamma nu}.
	Then by definition, we have
	\begin{equation*}
		\cB^{m_1, \ldots, m_{\mu+1}}_{\mu+1}(U^m) = \cA^{m_1, \ldots, m_{\mu+1}}(R),
	\end{equation*}
	which shows that
	\begin{equation*}
		\cC^{m_1, \ldots, m_\mu}_\mu(U^m) = \cC^{m_1, \ldots, m_\mu}_\mu(R).
	\end{equation*}
	By Lemma \ref{lemma: S}, there exists an injection $\iota:R \hookrightarrow S$ in $\Rep_\bbR(\cG^{\mu+1})$ such that the image of $\xi$ in $\cC^{m_1, \ldots, m_\mu}_\mu(S)$ is zero, which we also view as an injection in $\Rep_\bbR(\cG^g)$.
	Then we have a commutative diagram
	\begin{equation*}
		\xymatrix{%
			R \,\,\ar@{^{(}->}[r]^{\iota}\ar@{^{(}->}[d]_r & S \ar@{^{(}->}[d] \\
			U^m  \,\,\ar@{^{(}->}[r] &  (U^m \oplus S)/R
		}%
	\end{equation*}
	in $\Rep_\bbR(\cG^g)$,
	where $r$ is the natural inclusion and the quotient $(U^m\oplus S)/R$ is taken by the injection $(r,-\iota):R\hookrightarrow U^m\oplus S$.
	Note that the image of $\xi$ in $\cC^{m_1, \ldots, m_\mu}_\mu( (U^m \oplus S)/R)$ is zero.	
	We let $V^\bullet$ be the complex obtained from $U^\bullet$ by replacing $U^m$ by $ (U^{m} \oplus S)/R$
	and $U^{m+1}$ by $(U^{m+1} \oplus S)/R$, with the differential induced by $d^m_U \oplus \id : U^m\oplus S \rightarrow U^{m+1}\oplus S$.
	Now we have an exact sequence of complexes
	\begin{equation*}
	\xymatrix{%
		&\ar[d] & \ar[d]    &  \ar[d]  &\\
		0\ar[r] & 0  \ar[d] \ar[r]&  U^{m-1} \ar[d]^{(d,0)} \ar[r]^\id& U^{m-1} \ar[d]\ar[r] & 0\\
		0\ar[r]  &R \ar[d]_{\id} \ar[r]^<<<<<<{(r,-\iota)}& U^{m} \oplus S \ar[r] \ar[d]^{d \oplus \id}&    (U^{m} \oplus S)/R \ar[d] \ar[r]& 0\\
		0\ar[r]  &R \ar[d] \ar[r]^<<<<<{(r,-\iota)} & U^{m+1} \oplus S\ar[d]^{d\oplus 0} \ar[r]& (U^{m+1} \oplus S)/R \ar[d]\ar[r]&0 \\
		  0\ar[r]&0  \ar[d]\ar[r]& U^{m+2}\ar[d]\ar[r]^\id&U^{m+2} \ar[d]\ar[r]&0\\
		   & & & &,
	}%
	\end{equation*}
	in which the left vertical complex is acyclic and the middle vertical complex is quasi-isomorphic to $U^\bullet$.  Hence
	the right vertical complex $V^\bullet$ is quasi-isomorphic to $U^\bullet$ with respect to the natural inclusion
	$U^\bullet \hookrightarrow V^\bullet$.  Then the complex $V^\bullet$ satisfies the desired assertion.
\end{proof}

%
\subsection{The calculation of the extension groups}\label{section: 4-3}
%

The purpose of this subsection is to prove Theorem \ref{thm: old theorem}, which calculates the extension groups in 
$\Rep_\bbR(\cG^g)$ in terms of the functor $\Lambda^\bullet$.
%
%
%
%
%
%
%
\begin{theorem}\label{thm: old theorem}
	For any object $U^{\bullet}$ in $\sC^b\left(\Rep_\bbR(\cG^g)\right)$ and $m\in\bbZ$, there exists a canonical isomorphism
	\begin{equation*}
		\Ext^m_{\Rep_\bbR(\cG^g)}\left(\bbR(0),U^{\bullet}\right)\rightarrow H^m\left(\Lambda^{\bullet}\left(U^{\bullet}\right)\right).
	\end{equation*}
\end{theorem}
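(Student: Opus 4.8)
The plan is to recognise $\Lambda^\bullet$ as a complex computing the derived functor of $\Gamma_0=\cB_0=\Hom_{\Rep_\bbR(\cG^g)}(\bbR(0),-)$, and to extract the theorem from Lemma \ref{trivial}, Lemma \ref{lem: democracy} and Proposition \ref{prop: crucial}. Since $\Rep_\bbR(\cG^g)$ is essentially small, the quasi-isomorphisms $s\colon U^\bullet\to V^\bullet$ in $\sC^b(\Rep_\bbR(\cG^g))$ form a filtered system, and the calculus of fractions in $\sD^b(\Rep_\bbR(\cG^g))$ gives
\[
	\Ext^m_{\Rep_\bbR(\cG^g)}(\bbR(0),U^\bullet)=\Hom_{\sD^b}(\bbR(0),U^\bullet[m])=\varinjlim_{s\colon U^\bullet\to V^\bullet}\Hom_{\sK^b}(\bbR(0),V^\bullet[m]).
\]
Because $\bbR(0)$ is the unit object, $\ul\Hom^\bullet(\bbR(0),V^\bullet)=V^\bullet$ as complexes in $\Rep_\bbR(\cG^g)$, so Lemma \ref{trivial} applied with $T^\bullet=\bbR(0)$ specialises to $\Hom_{\sK^b}(\bbR(0),V^\bullet[m])=H^m(\Gamma_0(V^\bullet))$, whence
\[
	\Ext^m_{\Rep_\bbR(\cG^g)}(\bbR(0),U^\bullet)=\varinjlim_{s\colon U^\bullet\to V^\bullet}H^m(\Gamma_0(V^\bullet)).
\]
On the other hand, by Lemma \ref{lem: democracy} every $\Lambda^\bullet(s)$ is a quasi-isomorphism, so $\{H^m(\Lambda^\bullet(V^\bullet))\}_s$ is a constant system and $\varinjlim_s H^m(\Lambda^\bullet(V^\bullet))=H^m(\Lambda^\bullet(U^\bullet))$. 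Thus it suffices to show that the canonical inclusion $\Gamma_0(V^\bullet)\hookrightarrow\Lambda^\bullet(V^\bullet)$ induces an isomorphism after applying $\varinjlim_s H^m$.

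\textbf{Comparison along the tower $\cB_0\subset\cdots\subset\cB_g$.} By \eqref{eq: B} the inclusion $\Gamma_0(V^\bullet)\hookrightarrow\Lambda^\bullet(V^\bullet)$ factors as a chain of inclusions of complexes, functorial in $V^\bullet$,
\[
	\Gamma_0(V^\bullet)=\Tot^\bullet(\cB_0(V^\bullet))\hookrightarrow\Tot^\bullet(\cB^{\bullet,\ldots,\bullet}_1(V^\bullet))\hookrightarrow\cdots\hookrightarrow\Tot^\bullet(\cB^{\bullet,\ldots,\bullet}_g(V^\bullet))=\Lambda^\bullet(V^\bullet),
\]
whose successive quotients, by \eqref{eq: triangle}, are the shifted complexes $\Tot^\bullet(\cC^{\bullet,\ldots,\bullet}_\mu(V^\bullet))[-1]$ for $\mu=0,\ldots,g-1$. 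For each such $\mu$ and each fixed tuple $(m_1,\ldots,m_\mu)\in\{0,1\}^\mu$, Proposition \ref{prop: crucial} gives $\varinjlim_s H^m(\cC^{m_1,\ldots,m_\mu}_\mu(V^\bullet))=0$ for all $m$. Since there are only finitely many such tuples, $\Tot^\bullet(\cC^{\bullet,\ldots,\bullet}_\mu(V^\bullet))$ carries a finite filtration whose graded pieces are the (shifted) single complexes $\cC^{m_1,\ldots,m_\mu}_\mu(V^\bullet)$; because filtered colimits are exact they commute with this finite totalisation and with cohomology, so that $\varinjlim_s H^m(\Tot^\bullet(\cC^{\bullet,\ldots,\bullet}_\mu(V^\bullet)))=0$ for all $m$. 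Applying the exact functor $\varinjlim_s$ to the long exact cohomology sequences attached to \eqref{eq: triangle} (with $V^\bullet$ in place of $U^\bullet$) shows that each inclusion in the chain induces an isomorphism on $\varinjlim_s H^m$, and hence so does $\Gamma_0(V^\bullet)\hookrightarrow\Lambda^\bullet(V^\bullet)$.

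\textbf{Conclusion and main obstacle.} Combining the two steps,
\[
	\Ext^m_{\Rep_\bbR(\cG^g)}(\bbR(0),U^\bullet)=\varinjlim_s H^m(\Gamma_0(V^\bullet))\xrightarrow{\ \sim\ }\varinjlim_s H^m(\Lambda^\bullet(V^\bullet))=H^m(\Lambda^\bullet(U^\bullet)),
\]
and unwinding the colimits one sees that this isomorphism sends a class represented by a roof $\bbR(0)\xrightarrow{f}V^\bullet[m]\xleftarrow{s[m]}U^\bullet[m]$ to the image in $H^m(\Lambda^\bullet(V^\bullet))$ of the $m$-cocycle of $\Gamma_0(V^\bullet)$ corresponding to $f$ under Lemma \ref{trivial}, transported to $H^m(\Lambda^\bullet(U^\bullet))$ via the isomorphism induced by $\Lambda^\bullet(s)$; as every ingredient is canonical, so is the resulting map. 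I expect the main obstacle to lie in the bookkeeping of the comparison step: one must check carefully that passing to the filtered colimit over quasi-isomorphisms commutes with the finite totalisations and with cohomology, so that Proposition \ref{prop: crucial} can be promoted from the individual rows $\cC^{m_1,\ldots,m_\mu}_\mu$ to the totalised complexes $\Tot^\bullet(\cC^{\bullet,\ldots,\bullet}_\mu)$, and then to organise the resulting long exact sequences along the tower $\cB_0\subset\cdots\subset\cB_g$. The substantive input — the vanishing of the $\cC$-classes after stabilising along quasi-isomorphisms — is already supplied by Proposition \ref{prop: crucial}, so no genuinely new idea should be required.
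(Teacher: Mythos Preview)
Your proof is correct and follows essentially the same approach as the paper: both express $\Ext^m$ as $\varinjlim_s H^m(\Gamma_0(V^\bullet))$ via Lemma \ref{trivial} and the calculus of fractions, climb the tower $\cB_0\subset\cdots\subset\cB_g$ using the short exact sequences \eqref{eq: triangle} together with Proposition \ref{prop: crucial} to kill the $\cC$-quotients in the colimit, and invoke Lemma \ref{lem: democracy} to identify the result with $H^m(\Lambda^\bullet(U^\bullet))$. Your write-up is in fact slightly more careful than the paper's in spelling out the finite-filtration argument needed to pass from the vanishing of the individual $\cC^{m_1,\ldots,m_\mu}_\mu$ to that of their total complex.
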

\begin{proof}
	By \eqref{eq: triangle}, we have a distinguished triangle 
	\begin{equation*}\xymatrix{%
		\Tot^{\bullet}(\cB^{\bullet,\ldots,\bullet}_\mu(U^{\bullet})) \ar[r]&
		\Tot^{\bullet}(\cB^{\bullet,\ldots,\bullet}_{\mu+1}(U^{\bullet})) \ar[r]&
		\Tot^{\bullet}\left(\cC_{\mu}^{\bullet,\ldots,\bullet}\left(U^{\bullet}\right)\right)[-1]
	}\end{equation*}
	in $\sK^b\left(\Rep_\bbR(\cG^g)\right)$ for $\mu = 0,\ldots,g-1$.
	By Proposition \ref{prop: crucial} we have
	\begin{equation*}
		\varinjlim_{U^{\bullet}\rightarrow V^{\bullet}}H^n\left(\Tot^{\bullet}\left(\cC_{\mu}^{\bullet\ldots\bullet}\left(V^{\bullet}\right)\right)\right)=0,
	\end{equation*}
	where the direct limit is over quasi-isomorphisms $s: U^\bullet \rightarrow V^\bullet$.
	Hence we have an isomorphism
	\begin{equation*}\xymatrix{
		\displaystyle\varinjlim_{U^{\bullet}\rightarrow V^{\bullet}}
		H^m(\Tot^{\bullet}(\cB^{\bullet,\ldots,\bullet}_\mu(V^{\bullet})) )
		\ar[r]^<<<<\cong &
		\displaystyle\varinjlim_{U^{\bullet}\rightarrow V^{\bullet}}H^m(\Tot^{\bullet}(\cB^{\bullet,\ldots,\bullet}_{\mu+1}(V^{\bullet})) ),
	}\end{equation*}
	since direct limit preserves exactness.
	By \eqref{eq: B}, we have by induction an isomorphism
	\begin{equation}\label{isom1}
	\xymatrix{
		\displaystyle\varinjlim_{U^{\bullet}\rightarrow V^{\bullet}}
		H^m(\Gamma_0(V^{\bullet}))
		\ar[r]^<<<<\cong &
		\displaystyle\varinjlim_{U^{\bullet}\rightarrow V^{\bullet}}H^m(\Lambda^\bullet(V^{\bullet})).
	}
	\end{equation}
	By Lemma \ref{lem: democracy}, the map
	\begin{equation}\label{isom2}
	\xymatrix{%
		H^m\left(\Lambda^{\bullet}\left(U^{\bullet}\right)\right)\ar[r]&
		\displaystyle\varinjlim_{U^{\bullet}\rightarrow V^{\bullet}}
		H^m\left(\Lambda^{\bullet}\left(V^{\bullet}\right)\right)
	}%
	\end{equation}
	is an isomorphism.  
	On the other hand, we have
	\begin{multline}\label{isom3}
		\Ext^m_{\Rep_\bbR(\cG^g)}\left(\bbR(0),U^{\bullet}\right) := \Hom_{\sD^b\left(\Rep_\bbR(\cG^g)\right)}\left(\bbR(0),U^{\bullet}[m]\right) \\
		=\varinjlim_{U^{\bullet}\rightarrow V^{\bullet}}\Hom_{\sK^b\left(\Rep_\bbR(\cG^g)\right)}\left(\bbR(0),V^{\bullet}[m]\right)
		\cong\varinjlim_{U^{\bullet}\rightarrow V^{\bullet}}H^m\left(\Gamma_0(V^{\bullet})\right),
	\end{multline}
	where the last isomorphism is Lemma \ref{trivial}.
	Hence the composition of isomorphisms \eqref{isom1}, \eqref{isom2}, and \eqref{isom3} gives our assertion.
\end{proof}

\begin{example}
	Let $\bsn\in\bbZ^g$.
	When $\bbR(\bsn)$ is the plectic Tate object of Example \ref{example: realtate}, then we have by \eqref{eq: A}
	\begin{align*}
		\cA^{0}_\mu(\bbR(\bsn)) 
		&=\begin{cases}
			0   &   n_\mu \neq 0, \\
			\bbR &  n_\mu=0,
		\end{cases}
		&
		\cA^{1}_\mu(\bbR(\bsn)) 
		&=\begin{cases}
			0   &   n_\mu \leq 0, \\
			\bbR &  n_\mu>0.
		\end{cases}
	\end{align*}
	In particular, if $\bsn=(n,\ldots,n)$ for some $n\in\bbZ$, then we have
	\begin{equation*}
		\cA^{\bsm}(\bbR(\bsn)) =
		\begin{cases}
			\bbR  & n=0, \quad \bsm=(0,\ldots,0),\\
			(2\pi i)^{g(n-1)}\bbR  & n>0, \quad \bsm=(1,\ldots,1),\\
			0 &\text{otherwise}.
		\end{cases}
	\end{equation*}
	Then all of the differentials of the complex $\cA^{\bullet,\ldots,\bullet}(\bbR(\bsn))$ are zero maps, hence Theorem \ref{thm: old theorem} shows that we have
	\begin{align*}
		\Ext^0_{\Rep_\bbR(\cG^g)}(\bbR(0), \bbR(\bsn)) 
		& =
		  \begin{cases}
		  	\bbR  &  n=0, \\
			0  & \text{otherwise},
		  \end{cases}  \\
		  \Ext^g_{\Rep_\bbR(\cG^g)}(\bbR(0), \bbR(\bsn)) 
		& =
		  \begin{cases}
		  	(2\pi i)^{g(n-1)} \bbR  & n>0,  \\
			0  & \text{otherwise},
		  \end{cases}  
	\end{align*}
	and $\Ext^m_{\Rep_\bbR(\cG^g)}(\bbR(0), \bbR(\bsn)) =0$ for $m\neq0,g$.
\end{example}

\begin{corollary}
	For an object $U^{\bullet}$ in $\sC^b\left(\Rep_\bbR(\cG^g)\right)$, there exists a spectral sequence
	\begin{equation}\label{spectralseq}
		E_2^{m,n}=\Ext_{\Rep_\bbR(\cG^g)}^m\left(\bbR(0),H^n\left(U^{\bullet}\right)\right)
		\Rightarrow \Ext^{m+n}_{\Rep_\bbR(\cG^g)}\left(\bbR(0),U^{\bullet}\right),
	\end{equation}
	which degenerates at $E_{g+1}$.
\end{corollary}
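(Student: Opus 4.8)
The plan is to recognize $\Lambda^\bullet(U^\bullet)$ as the total complex of a double complex, one of whose two partial differentials is that of $U^\bullet$, and to feed the associated spectral sequence into Theorem~\ref{thm: old theorem}.

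First I would isolate the $\cA$-directions. For an object $V$ of $\Rep_\bbR(\cG^g)$, set
\[
	L^k(V) := \bigoplus_{\substack{(m_1,\ldots,m_g)\in\{0,1\}^g \\ m_1+\cdots+m_g=k}} \cA^{m_1,\ldots,m_g}(V),
\]
with differential the total differential of the $g$-tuple complex $\cA^{\bullet,\ldots,\bullet}(V)$; this is a complex of functors $\Rep_\bbR(\cG^g)\rightarrow\Vec_\bbR$ supported in degrees $0\le k\le g$, and each $L^k$ is exact by Lemma~\ref{lem: exact}. By Definition~\ref{def: explicit complex}, for a complex $U^\bullet$ the complex $\Lambda^\bullet(U^\bullet)$ is exactly the total complex of the double complex $C^{k,n} := L^k(U^n)$, the two differentials being the one coming from $L^\bullet$ and the one induced by $d^\bullet_U$. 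Applying Theorem~\ref{thm: old theorem} to an object $V$, viewed as a complex concentrated in degree $0$, gives $H^k\big(L^\bullet(V)\big)\cong\Ext^k_{\Rep_\bbR(\cG^g)}(\bbR(0),V)$ for every $V$.

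Next I would run the spectral sequence of the double complex $C^{\bullet,\bullet}$, filtering by the $L^\bullet$-degree. Since $U^\bullet$ is bounded and $L^\bullet$ is supported in degrees $0,\ldots,g$, the double complex is supported in a bounded region, so the spectral sequence converges, and its abutment is $H^{m+n}(\Tot^\bullet C) = H^{m+n}(\Lambda^\bullet(U^\bullet))$, which is $\Ext^{m+n}_{\Rep_\bbR(\cG^g)}(\bbR(0),U^\bullet)$ again by Theorem~\ref{thm: old theorem}. Taking cohomology first in the $U^\bullet$-direction and using exactness of each $L^k$ to commute $H^n$ past $L^k$, the $E_1$-page is $E_1^{m,n} = L^m\big(H^n(U^\bullet)\big)$; taking the remaining cohomology, the $E_2$-page is
\[
	E_2^{m,n} = H^m\big(L^\bullet(H^n(U^\bullet))\big) = \Ext^m_{\Rep_\bbR(\cG^g)}\big(\bbR(0),H^n(U^\bullet)\big)
\]
by the identity of the previous paragraph, which is precisely \eqref{spectralseq}.

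Finally, degeneration at $E_{g+1}$ is formal: since $L^\bullet$ lives in degrees $0,\ldots,g$, the $E_2$-page (hence every later page) vanishes outside the strip $0\le m\le g$, so the differential $d_r\colon E_r^{m,n}\rightarrow E_r^{m+r,n-r+1}$ is zero once $r\ge g+1$ — if $m\in\{0,\ldots,g\}$ the target lies outside the strip, and otherwise the source does — whence $E_{g+1}=E_\infty$. The only non-formal inputs are Lemma~\ref{lem: exact} (exactness of the $\cA$-functors, used to interchange the two cohomology operations) and Theorem~\ref{thm: old theorem} (used for both the abutment and the $E_2$-identification); the step requiring the most care is verifying that $\Lambda^\bullet(U^\bullet)$ really is the total complex of $C^{\bullet,\bullet}$ with compatible signs and that exactness genuinely licenses the interchange $H^n\circ L^k = L^k\circ H^n$, after which the statement is a standard application of the spectral sequence of a bounded double complex.
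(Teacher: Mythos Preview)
Your argument is correct and takes a genuinely different route from the paper. The paper obtains the spectral sequence abstractly: it passes to the ind-category $\Ind(\Rep_\bbR(\cG^g))$, which has enough injectives, uses the hyperext spectral sequence attached to the canonical filtration on $U^\bullet$ there, and then invokes full faithfulness of $\sD^b(\Rep_\bbR(\cG^g))\hookrightarrow\sD^b(\Ind(\Rep_\bbR(\cG^g)))$ to descend back; degeneration at $E_{g+1}$ is then deduced from the vanishing of $\Ext^m$ for $m>g$ given by Theorem~\ref{thm: old theorem}. Your approach instead stays entirely inside the explicit machinery already built: you read $\Lambda^\bullet(U^\bullet)$ as the total complex of the double complex $L^k(U^n)$ and run its column-filtration spectral sequence, using Lemma~\ref{lem: exact} to identify $E_1$ and Theorem~\ref{thm: old theorem} (for single objects and for $U^\bullet$) to identify $E_2$ and the abutment. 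What your approach buys is self-containment---no appeal to ind-categories or the external references \cite{Stauffer}, \cite{H2}---and a transparent reason for the degeneration bound (the $L^\bullet$-complex visibly has length $g$). What the paper's approach buys is that the resulting spectral sequence is manifestly the standard hyperext spectral sequence, hence automatically compatible with whatever functorialities that carries; in your version one would have to check such compatibilities by hand if they were needed later.
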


\begin{proof}
	Let $\Ind(\Rep_\bbR(\cG^g))$ be the ind-category of $\Rep_\bbR(\cG^g)$ (See \cite{KS} Definition 6.1.1).
	By \cite{Stauffer} Theorem 2.2, $\Ind(\Rep_\bbR(\cG^g))$ is an abelian category with enough injectives and 
	the canonical fully faithful functor $\Rep_\bbR(\cG^g)\rightarrow\Ind(\Rep_\bbR(\cG^g))$ is exact, 
	since $\Rep_\bbR(\cG^g)$ is essentially small.
	Then for an object $U^{\bullet}$ in $\sC^b\left(\Rep_\bbR(\cG^g)\right)$, we have a spectral sequence
	\begin{equation*}
		E_1^{m,n}=\Ext_{\Ind\left(\Rep_\bbR(\cG^g)\right)}^m\left(\bbR(0),H^n\left(U^{\bullet}\right)[-n]\right)
		\Rightarrow \Ext_{\Ind\left(\Rep_\bbR(\cG^g)\right)}^{m+n}\left(\bbR(0),U^{\bullet}\right)
	\end{equation*}
	associated to the canonical filtration on $U^{\bullet}$ (See \cite{D1} 1.4.5 and 1.4.6).
	By renumbering this gives
	\begin{equation*}
		E_2^{m,n}=\Ext^m_{\Ind\left(\Rep_\bbR(\cG^g)\right)}\left(\bbR(0),H^n\left(U^{\bullet}\right)\right)
		\Rightarrow\Ext^{m+n}_{\Ind\left(\Rep_\bbR(\cG^g)\right)}\left(\bbR(0),U^{\bullet}\right).
	\end{equation*}
	Since $\Rep_\bbR(\cG^g)$ is noetherian, $\sD^b\left(\Rep_\bbR(\cG^g)\right)\rightarrow \sD^b\left(\Ind(\Rep_\bbR(\cG^g))\right)$ 
	is fully faithful by \cite{H2} Proposition 2.2. Hence, when $U^{\bullet}$ is lying in $\sC^b\left(\Rep_\bbR(\cG^g)\right)$ 
	we obtain the spectral sequence \eqref{spectralseq}.
	By Theorem \ref{thm: old theorem} we have $\Ext_{\Rep_\bbR(\cG^g)}^m\left(\bbR(0),H^n\left(U^{\bullet}\right)\right)=0$ 
	for $m>g$, hence \eqref{spectralseq} degenerates at $E_{g+1}$.
\end{proof}
 
\begin{corollary}
	Let $U_1,\ldots, U_g$ be objects in $\Rep_\bbR(\cG)$.
	Then there exists a canonical isomorphism
	\begin{equation*}
		\bigoplus_{\substack{(m_1,\ldots,m_g)\in\bbZ^g\\m_1+\cdots+m_g=m}}\bigotimes_{1\leq\mu\leq g}\Ext_{\Rep_\bbR(\cG)}^{m_\mu}\left(\bbR(0),U_\mu\right)
		\rightarrow\Ext^m_{\Rep_\bbR(\cG^g)}\left(\bbR(0),U_1\boxtimes\cdots\boxtimes U_g\right),
	\end{equation*}
	for each $m\in\bbZ$. In particular, we have a canonical isomorphism
	\begin{equation*}
		\bigotimes_{1\leq\mu\leq g}\Ext_{\Rep_\bbR(\cG)}^1\left(\bbR(0),U_\mu\right)\rightarrow\Ext^g_{\Rep_\bbR(\cG^g)}\left(\bbR(0),U_1\boxtimes\cdots\boxtimes U_g\right).
	\end{equation*}
\end{corollary}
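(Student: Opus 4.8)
The idea is to reduce everything to Theorem~\ref{thm: old theorem}, which identifies $\Ext^m_{\Rep_\bbR(\cG^g)}(\bbR(0),-)$ with $H^m(\Lambda^\bullet(-))=H^m(\Tot^\bullet(\cA^{\bullet,\ldots,\bullet}(-)))$, and to combine it with the multiplicativity of the functors $\cA^{m_1,\ldots,m_g}$ on exterior products established in Lemma~\ref{lemma: tensor}. First I would note that for a single object $U_\mu$ in $\Rep_\bbR(\cG)$, applying Theorem~\ref{thm: old theorem} in the case $g=1$ gives $\Ext^{m_\mu}_{\Rep_\bbR(\cG)}(\bbR(0),U_\mu)\cong H^{m_\mu}(\Lambda^\bullet(U_\mu))$, where $\Lambda^\bullet(U_\mu)$ is the two-term complex $\cA^0_1(U_\mu)\xrightarrow{t-1}\cA^1_1(U_\mu)$ placed in degrees $0$ and $1$.

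Next I would form the $g$-fold exterior product $U_1\boxtimes\cdots\boxtimes U_g$ in $\Rep_\bbR(\cG^g)$ and compute $\Lambda^\bullet(U_1\boxtimes\cdots\boxtimes U_g)$. By iterating Lemma~\ref{lemma: tensor} one gets, for each $(m_1,\ldots,m_g)\in\{0,1\}^g$, a natural isomorphism
\begin{equation*}
	\cA^{m_1,\ldots,m_g}(U_1\boxtimes\cdots\boxtimes U_g)\cong\cA^{m_1}(U_1)\otimes_\bbR\cdots\otimes_\bbR\cA^{m_g}(U_g),
\end{equation*}
and one checks that under this identification the $\mu$-th partial differential $t_\mu-1$ corresponds to $1\otimes\cdots\otimes(t-1)\otimes\cdots\otimes 1$ acting in the $\mu$-th slot. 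Hence the $(g+1)$-tuple complex $\cA^{\bullet,\ldots,\bullet}(U_1\boxtimes\cdots\boxtimes U_g)$ is (up to the usual Koszul sign conventions built into $\Tot^\bullet$) the tensor product over $\bbR$ of the two-term complexes $\cA^\bullet(U_\mu)$, so that $\Lambda^\bullet(U_1\boxtimes\cdots\boxtimes U_g)\cong\Lambda^\bullet(U_1)\otimes_\bbR\cdots\otimes_\bbR\Lambda^\bullet(U_g)$ as complexes of $\bbR$-vector spaces. Taking cohomology and applying the algebraic Künneth formula over the field $\bbR$ (where it holds with no $\Tor$ terms) gives
\begin{equation*}
	H^m(\Lambda^\bullet(U_1\boxtimes\cdots\boxtimes U_g))\cong\bigoplus_{m_1+\cdots+m_g=m}\bigotimes_{1\leq\mu\leq g}H^{m_\mu}(\Lambda^\bullet(U_\mu)).
\end{equation*}
Combining this with the three instances of Theorem~\ref{thm: old theorem} above yields the first displayed isomorphism. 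The second follows by specializing to $m=g$: since each $\Lambda^\bullet(U_\mu)$ is concentrated in degrees $0$ and $1$, the only summand with $m_1+\cdots+m_g=g$ is $(m_1,\ldots,m_g)=(1,\ldots,1)$, giving $\bigotimes_\mu\Ext^1_{\Rep_\bbR(\cG)}(\bbR(0),U_\mu)$.

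The main obstacle is verifying that the identification $\Lambda^\bullet(U_1\boxtimes\cdots\boxtimes U_g)\cong\bigotimes_\mu\Lambda^\bullet(U_\mu)$ is an isomorphism of \emph{complexes} and not merely of graded vector spaces: one must carefully match the total differential of $\Tot^\bullet(\cA^{\bullet,\ldots,\bullet}(-))$, with its signs $(-1)^{m_1+\cdots+m_{h-1}}\partial_h^{m_h}$, against the standard sign rule for the differential on a tensor product of complexes, and check that the naturality of the isomorphism in Lemma~\ref{lemma: tensor} is compatible with the maps $t_\mu-1$. Once the signs are pinned down this is routine; I would also remark that $\bbR(0)=\bbR(\boldsymbol{0})$ on the $\Rep_\bbR(\cG)$ side exterior-multiplies to $\bbR(\boldsymbol{0})$ on the $\Rep_\bbR(\cG^g)$ side, so the target $\Ext$-group is indeed the one appearing in Theorem~\ref{thm: old theorem}. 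The canonicity of the isomorphism is then inherited from the canonicity of the isomorphism in Theorem~\ref{thm: old theorem} together with the naturality in Lemma~\ref{lemma: tensor}.
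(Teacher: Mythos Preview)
Your proposal is correct and follows essentially the same approach as the paper: apply Lemma~\ref{lemma: tensor} (iterated) to identify $\cA^{m_1,\ldots,m_g}(U_1\boxtimes\cdots\boxtimes U_g)$ with $\cA^{m_1}(U_1)\otimes_\bbR\cdots\otimes_\bbR\cA^{m_g}(U_g)$, then invoke flatness over $\bbR$ (K\"unneth) and Theorem~\ref{thm: old theorem}. The paper's proof is in fact much terser than yours---it simply cites Lemma~\ref{lemma: tensor} and remarks that every $\bbR$-module is flat---so your careful discussion of matching the differentials and signs fills in detail that the paper leaves to the reader.
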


\begin{proof}
	By Lemma \ref{lemma: tensor} we have 
	\begin{equation*}
		\cA^{m_1,\ldots,m_g}\left(U_1\boxtimes\cdots\boxtimes U_g\right)=\cA^{m_1}\left(U_1\right)\otimes_{\bbR}\cdots\otimes_{\bbR}\cA^{m_g}\left(U_g\right).
	\end{equation*}
	Since every $\bbR$-module is flat, we have an isomorphism.  This proves our assertion.
\end{proof}

\subsection*{Acknowledgement}
	This article is a result of a series of workshops held at Keio University attended by the authors to understand the article \cite{NS}.
	The authors would like to thank the KiPAS program at the Faculty of Science and Technology at Keio University for continuous support for this research.
	The authors would also like to thank the coordinator Masato Kurihara of the JSPS Core-to-Core program 
	``Foundation of a Global Research Cooperative Center in Mathematics focused on Number Theory and Geometry'' for funding our research.
	The authors thank the referee for comments concerning the article.


\begin{thebibliography}{999}
	\bibitem[D1]{D1}{P. Deligne}, Th\'eorie de Hodge II, Publ. Math. I.H.E.S. \textbf{40} (1971), 5--58.
	\bibitem[D2]{D2}{\bysame}, Cat\'egories tannakiennes,  111--195, In: \textit{The Grothendieck Festschrift}, Vol. II, Progr. Math. Birkh\"auser Boston, Boston, MA,  1990.
	\bibitem[D3]{D3}{\bysame}, Structures de Hodge mixtes r\'{e}elles, In: \textit{Motives}, Seattle, WA, 1991, 
	Proc. Sympos. Pure Math. \textbf{55} part 1, (Amer. Math. Soc., Providence, RI, 1994), 509--514.
	\bibitem[DM]{DM}{P. Deligne and J. S. Milne}, Tannakian Categories, In: \textit{Hodge Cycles, Motives, and Shimura Varieties}, 
	LNM \textbf{900}, Springer, 1982, 101--228.
	\bibitem[H1]{H}{A. Huber}, \textit{Mixed Motives and their Realizations in Derived Categories}, LNM \textbf{1604} (1995).
	\bibitem[H2]{H2}{\bysame}, Calculation of derived functors via $\mathrm{Ind}$-categories, J. Pure Appl. Algebra \textbf{90} (1993), 39--48.
	\bibitem[KS]{KS}M.~Kashiwara and P.~Schapira, {\it Categories and sheaves}, 
	Gr\"undlehren der Mathematischen Wissenschaften \textbf{332}, Springer-Verlag, Berlin, 2006.
	\bibitem[NS1]{NS}{J. Nekov\'{a}\v{r} and A. J. Scholl}, Introduction to Plectic Cohomology, 
	In: \textit{Advances in the Theory of Automorphic Forms and Their $L$-functions: Workshop in Honor of 
	James Cogdell's 60th Birthday, October 16--25, 2013}, Contemp. Math. \textbf{664} (2016),  321--338.
	\bibitem[NS2]{NS2}{J. Nekov\'{a}\v{r} and A. J. Scholl}, Plectic Hodge theory I., preprint, April 17, 2017.
	\bibitem[PS]{PS}{C. Peters and J. Steenbrink}, \textit{Mixed Hodge Structures}, Ergebnisse der Mathematik, Springer, 2007.
	\bibitem[St]{Stauffer}H.~Stauffer, Derived functors without injectives, 
	In: \textit{Category Theory, Homology Theory and their Applications I}, 
	Batelle Inst. Conference 1968, LNM \textbf{86} Vol. 1 (1969), 159--166.
\end{thebibliography}
\end{document}